\DeclareRobustCommand{\SkipTocEntry}[9]{}
\def\changed#1{\textcolor{black}{#1}}
\numberwithin{equation}{section}
\theoremstyle{plain}
\newtheorem{thm}{Theorem}[section]
\newtheorem{lm}[thm]{Lemma}
\newtheorem{prop}[thm]{Proposition}
\newtheorem{cor}[thm]{Corollary}
\theoremstyle{definition}
\newenvironment{df}
{\pushQED{\qed}\dfx}
{\popQED\enddfx}
\newtheorem{rem}[thm]{Remark}
\newtheorem{exa}[thm]{Example}
\def\={\;=\;}  \def\+{\,+\,} 
\def\ve{\epsilon}
\def\be{\begin{equation}}   \def\ee{\end{equation}}     \def\bes{\begin{equation*}}    \def\ees{\end{equation*}}
\def\ba{\be\begin{aligned}} \def\ea{\end{aligned}\ee}   \def\bas{\bes\begin{aligned}}  \def\eas{\end{aligned}\ees}
\definecolor{Mgreen}{RGB}{140,180,0} %
\newcommand*{\inj}{\hookrightarrow}
\newcommand*{\bdry}{\partial}
\newcommand*{\wt}{\widetilde}
\newcommand*{\zed}{\mathbb Z}
\newcommand*{\cleq}{\preccurlyeq}
\newcommand*{\tw}{\mathrm{tw}} 
\renewcommand{\tilde}{\widetilde}
\renewcommand{\Re}{\operatorname{Re}}
\newcommand*{\Bl}{\operatorname{Bl}}
\newcommand*{\Hom}{\operatorname{Hom}}
\newcommand*{\id}{\operatorname{id}}
\newcommand*{\lcm}{\operatorname*{lcm}}
\newcommand*{\QC}{\operatorname{QC}}
\newcommand*{\OPl}{\Omega\operatorname{Pl}}
\newcommand*{\OPlv}{\Omega\operatorname{Pl}^v}
\newcommand*{\Res}{\operatorname{Res}}
\newcommand*{\SL}{\mathrm{SL}}
\newcommand*{\GL}{\mathrm{GL}}
\newcommand*{\Spec}{\operatorname{Spec}}
\newcommand*{\alg}{\operatorname{alg}}
\newcommand*{\an}{\operatorname{an}}
\newcommand*{\ord}{\operatorname{ord}}
\newcommand{\banach}[2]{\calO(#1)_{#2}}
\newcommand{\isom}{\cong}
\newcommand{\rom}[1]{\textup{\uppercase\expandafter{\romannumeral#1}}}
\newcommand{\op}{\operatorname}
\newcommand{\val}{\op{val}}
\newcommand{\ver}{{v}}
\newcommand{\hor}{{h}}
\newcommand{\Per}{{\rm Per}}
\newcommand{\PPer}{{\rm PPer}}
\newcommand{\ePP}{{\rm ePP}}
\newcommand{\Phor}{{\rm Phor}}
\newcommand{\del}{\partial}
\newcommand{\delbar}{\bar{\partial}}
\newcommand*{\degen}{{\rm dg}}
\newcommand*{\degenbar}{\overline{{\rm dg}}}
\newcommand{\ol}{\overline}
\newcommand*{\Poincare}{Poincar\'e\xspace}
\newcommand*{\Teichmuller}{Teich\-m\"uller\xspace}
\newcommand*{\Caratheodory}{Carath\'eodory\xspace}
\newcommand*{\calA}{\mathcal A}
\newcommand*{\calB}{\mathcal B}
\newcommand*{\calC}{\mathcal C}
\newcommand*{\calD}{\mathcal D}
\newcommand*{\calE}{\mathcal E}
\newcommand*{\calF}{\mathcal F}
\newcommand*{\calI}{\mathcal I}
\newcommand*{\calJ}{\mathcal J}
\newcommand*{\calL}{\mathcal L}
\newcommand{\calM}{{\mathcal M}}
\newcommand{\calO}{{\mathcal O}}
\newcommand*{\calP}{\mathcal P}
\newcommand{\calN}{{\mathcal N}}
\newcommand*{\calR}{\mathcal R}
\newcommand{\calU}{{\mathcal U}}
\newcommand{\calV}{{\mathcal V}}
\newcommand*{\calX}{\mathcal X}
\newcommand{\calW}{\mathcal W}
\newcommand{\calZ}{{\mathcal Z}}
\newcommand{\bfcalZ}{{\boldsymbol{\mathcal Z}}}
\newcommand{\calY}{{\mathcal Y}}
\newcommand{\CC}{{\mathbb{C}}}
\renewcommand{\AA}{{\mathbb{A}}}
\newcommand{\WW}{{\mathbb{W}}}
\newcommand{\PP}{{\mathbb{P}}}
\newcommand{\QQ}{{\mathbb{Q}}}
\newcommand{\RR}{{\mathbb{R}}}
\newcommand{\ZZ}{{\mathbb{Z}}}
\newcommand{\NN}{{\mathbb{N}}}
\newcommand{\DD}{{\mathbb{D}}}
\newcommand{\VV}{{\mathbb{V}}}
\newcommand*{\ratls}{\mathbb{Q}}
\newcommand*{\reals}{\mathbb{R}}
\newcommand*{\cx}{\mathbb{C}}
\newcommand*{\nats}{\mathbb{N}}
\newcommand*{\proj}{\mathbb{P}}
\newcommand*{\half}{\mathbb{H}}
\newcommand{\bfa}{{\boldsymbol{a}}}
\newcommand{\bfb}{{\boldsymbol{b}}}
\newcommand{\bfc}{{\boldsymbol{c}}}
\newcommand{\bfd}{{\boldsymbol{d}}}
\newcommand{\bfe}{{\boldsymbol{e}}}
\newcommand{\bff}{{\boldsymbol{f}}}
\newcommand{\bfn}{{\boldsymbol{n}}}
\newcommand{\bfq}{{\boldsymbol{q}}}
\newcommand{\bfw}{{\boldsymbol{w}}}
\newcommand{\bfy}{{\boldsymbol{y}}}
\newcommand{\bfx}{{\boldsymbol{x}}}
\newcommand{\bfs}{{\boldsymbol{s}}}
\newcommand{\bfS}{{\boldsymbol{S}}}
\newcommand{\bft}{{\boldsymbol{t}}}
\newcommand{\bfz}{{\boldsymbol{z}}}
\newcommand{\bfsigma}{{\boldsymbol{\sigma}}}
\newcommand{\bfeta}{{\boldsymbol{\eta}}}
\newcommand{\bfomega}{{\boldsymbol{\omega}}}
\newcommand{\bfxi}{{\boldsymbol{\xi}}}
\newcommand{\bfzero}{{\boldsymbol{0}}}
\newcommand*\uM{{\underline{M}}}
\newcommand*\uN{{\underline{N}}}
\newcommand*\on{{\overline{n}}}
\newcommand{\barmoduli}[1][g]{{\overline{\mathcal M}}_{#1}}
\newcommand{\moduli}[1][g]{{\mathcal M}_{#1}}
\newcommand{\omoduli}[1][g]{{\Omega\mathcal M}_{#1}}
\newcommand{\omodulin}[1][g,n]{{\Omega\mathcal M}_{#1}}
\newcommand{\pomodulin}[1][g,n]{{\proj\Omega\mathcal M}_{#1}}
\newcommand{\pobarmoduli}[1][g]{{\proj\Omega\overline{\mathcal M}}_{#1}}
\newcommand{\obarmoduli}[1][g]{{\Omega\overline{\mathcal M}}_{#1}}
\newcommand{\obarmoduliinc}[2][g,n]{{\Omega\overline{\mathcal M}}_{#1}^{{\rm inc}}(#2)}
\newcommand{\obarmodulin}[1][g,n]{{\Omega\overline{\mathcal M}}_{#1}}
\newcommand{\N}[1][X]{N_{#1}} 
\newcommand{\Nver}[1][X]{N_{#1}^{\ver}} 
\newcommand{\Nhor}[1][X]{N_{#1}^{\hor}} 
\newcommand{\SLtwoR}{\SL_2(\reals)}
\newcommand{\GLtwoRplus}{\GL_2^+(\reals)}
\DeclareDocumentCommand{\teich}{ O{\Sigma} O{\bfs}}{{\mathcal T}_{(#1,#2)}}
\newcommand{\oldteich}[1][g,n]{{\mathcal T}_{#1}}
\DeclareDocumentCommand{\Pteich}{ O{\Sigma} O{\bfs} O{\mu}}{\mathcal{T}_{(#1,#2)}(#3)}
\DeclareDocumentCommand{\bigPteich}{ O{\Sigma} O{\bfs} O{\mu}}{\mathcal{T}^+_{(#1,#2)}(#3)}
\DeclareDocumentCommand{\Oteich}{ O{\Sigma} O{\bfs} O{\mu}}{\Omega \mathcal{T}_{(#1,#2)}(#3)}
\DeclareDocumentCommand{\bigOteich}{ O{\Sigma} O{\bfs} O{\mu}}{\Omega\mathcal{T}^+_{(#1,#2)}(#3)}
\DeclareDocumentCommand{\augteich}{ O{\Sigma} O{\bfs}}{{\overline{\mathcal T}}_{(#1,#2)}}
\newcommand{\oldaugteich}[1][g,n]{{\overline{\mathcal T}}_{#1}}
\newcommand{\famaugteich}[1][g,n]{\overline{\calX}_{#1}}
\DeclareDocumentCommand{\BSteich}{ O{\Sigma} O{\bfs}}{{\overline{\mathcal T}}^{\rm BS}_{(#1,#2)}}
\DeclareDocumentCommand{\Bteich}{ O{\Lambda} }{{{\mathcal T}}_{#1}}
\DeclareDocumentCommand{\BBSteich}{ O{\Lambda} }{{{\mathcal T}}^{\rm BS}_{#1}}
\DeclareDocumentCommand{\BBBSteich}{ O{\Lambda} }{{\widehat{{\mathcal T}}}^{\rm BS}_{#1}}
\DeclareDocumentCommand{\BPteich}{ O{\Lambda} O{\mu}}{\mathcal{T}_{#1}(#2)}
\DeclareDocumentCommand{\OBnoGRC}{ O{\Lambda} O{\mu}}{\Omega^{no} {{\mathcal T}}_{#1}(#2)}
\DeclareDocumentCommand{\OBteich}{ O{\Lambda} O{\mu}}{\Omega {{\mathcal T}}_{#1}(#2)}
\DeclareDocumentCommand{\OBBSteich}{ O{\Lambda} O{\mu}}{\Omega{{\mathcal T}}^{\rm BS}_{#1}(#2)}
\newcommand{\Mod}[1][g,n]{\operatorname{Mod}_{#1}}
\DeclareDocumentCommand{\flatMod}{ O{\Sigma} O{\bfs} O{\mu}}{\operatorname{Mod}_{(#1,#2)}(\mu)}
\DeclareDocumentCommand{\TNP}{ O{\Sigma} O{\bfs} O{\mu}}{\operatorname{TNP}_{(#1,#2)}(\mu)}
\newcommand{\aG}{\Gamma}                  
\newcommand{\dG}{\Gamma_{X}}                  
\newcommand{\lG}{\Gamma}     
\newcommand{\lGp}{{\overline{\Gamma}}}     
\newcommand{\eG}{\Gamma}       
\newcommand{\eGp}{\Gamma^{+}}     
\newcommand{\oL}{{\overline{\Lambda}}}              
\newcommand{\eL}{\Lambda}                   
\newcommand{\eLp}{\Lambda^{+}}                   
\newcommand{\ltop}[1][e]{\ell(#1^{+})}                   
\newcommand{\lbot}[1][e]{\ell(#1^{-})}                  
\newcommand{\lqtop}[1][q]{\ell(#1^{+})}                   
\newcommand{\lqbot}[1][q]{\ell(#1^{-})}                  
\newcommand{\ltopbot}[1][e]{\ell(#1^{\pm})}                  
\newcommand{\lqtopbot}[1][q]{\ell(#1^{\pm})}                   
\newcommand{\topvert}[1][e]{#1^{+}}   
\newcommand{\botvert}[1][e]{#1^{-}}  
\newcommand{\horedge}[1][\lG]{E(#1)^{\hor}}   
\newcommand{\vertedge}[1][\lG]{E(#1)^{\ver}}  
\newcommand{\prodt}[1][\lceil i \rceil]{\bft^\bfa_{#1}}
\newcommand{\prodttop}[1][\lceil \ltop\rceil]{\bft^\bfa_{#1}}
\newcommand{\prodtbot}[1][\lceil \lbot\rceil]{\bft^\bfa_{#1}}
\newcommand{\prodthor}[1][\lceil \ell(h)\rceil]{\bft^\bfa_{#1}}
\DeclareDocumentCommand{\pin}{ O{z} O{\omega}}{P^{\rm in}_{#1}}
\DeclareDocumentCommand{\pout}{ O{z} O{\omega}}{P^{\rm out}_{#1}}
\newcommand*{\Prot}[1][\eG]{{P_{#1}}}
\newcommand*{\Tprong}[1][\eG]{T_{#1}}
\newcommand*{\Textd}[1][\Lambda]{T^{\bullet}_{#1}}
\newcommand*{\Tsimp}[1][\Lambda]{T^s_{#1}}
\newcommand*{\Tsextd}[1][\Lambda]{T^{s,\bullet}_{#1}}
\newcommand*{\Tnorm}[1][\Lambda]{\overline{T}_{#1}^{n}}
\newcommand*{\Tw}[1][\Lambda]{\mathrm{Tw}_{#1}}  
\newcommand*{\sTw}[1][\Lambda]{\mathrm{Tw}_{#1}^s}  
\DeclareDocumentCommand{\sTwi}{ O{\Lambda} O{i}}{\mathrm{Tw}_{#1,#2}^{sv}}  
\newcommand*{\vTw}[1][\Lambda]{\mathrm{Tw}_{#1}^v}  
\newcommand*{\svTw}[1][\Lambda]{\mathrm{Tw}_{#1}^{sv}}  
\newcommand*{\hTw}[1][\Lambda]{\mathrm{Tw}_{#1}^h}  
\newcommand*{\oldtwist}[1][\Lambda]{\mathrm{Tw}_{#1}^{\rm full}}
\newcommand*{\rottwist}[1][\Lambda]{\mathrm{Tw}_{#1}^{\rm rot}}
\newcommand{\twd}{twisted differential\xspace}
\newcommand{\twds}{twisted\ differentials\xspace}
\newcommand*{\ptwd}{prong-matched twisted differential\xspace}
\newcommand*{\ptwds}{prong-matched twisted differentials\xspace}
\newcommand*{\msd}{multi-scale differential\xspace}
\newcommand*{\tnp}{turning-number-preserving\xspace}
\newcommand*{\prma}{prong-matching\xspace}
\newcommand*{\prmas}{prong-matchings\xspace}
\newcommand*{\Prmas}{Prong-matchings\xspace}
\newcommand*{\msds}{multi-scale differentials\xspace}
\newcommand*{\Msds}{Multi-scale differentials\xspace}
\newcommand*{\auxds}{model differentials\xspace}
\newcommand*{\lw}{level-wise\xspace}
\DeclareDocumentCommand{\kmoduli}{ O{\mu} O{g} O{n} }{\mathcal{M}_{#2,#3}(#1)}
\DeclareDocumentCommand{\barkmoduli}{ O{\mu} O{g} O{n} }{\mathcal{\overline{M}}_{#2,#3}(#1)}
\DeclareDocumentCommand{\obarkmoduli}{ O{\mu} O{g} O{n} }{\Omega\mathcal{\overline{M}}_{#2,#3}(#1)}
\DeclareDocumentCommand{\LMS}{ O{\mu} O{g,n}} {\Xi\overline{\mathcal{M}}_{#2}(#1)}
\DeclareDocumentCommand{\RBLMS}{ O{\mu} O{g,n}}
{\Xi\widehat{\mathcal{M}}_{#2}(#1)}
\newcommand*{\msT}[1][\Lambda]{{\Omega\calB_{#1}}}
\newcommand*{\pmsT}[1][\Lambda]{\proj \Omega \calB_{#1}}
\newcommand*{\MD}[1][\Lambda]{{\mathcal MD}_{#1}} 
\newcommand*{\barMD}[1][\Lambda]{\overline{{\mathcal MD}}_{#1}} 
\DeclareDocumentCommand{\MDstratum}{ O{\Lambda} O{\Lambda'}}{\mathcal{MD}_{#1}^{#2}}
\DeclareDocumentCommand{\OMDstratum}{  O{\Lambda} O{\Lambda'}}{\Omega\mathcal{MD}_{#1}^{#2}}
\newcommand*{\MDs}[1][\Lambda]{{\mathcal MD}^s_{#1}}
\newcommand*{\barMDs}[1][\Lambda]{\overline{{\mathcal MD}}^s_{#1}} 
\newcommand*{\barOMDs}[1][\Lambda]{\Omega\overline{{\mathcal MD}}^s_{#1}} 
\DeclareDocumentCommand{\MDstratums}{ O{\Lambda} O{\Lambda'}}{\mathcal{MD}_{#1}^{s,#2}}
\DeclareDocumentCommand{\PDstratum}{ O{\Lambda} O{\Lambda'}}{\mathcal{PD}_{#1}^{#2}}
\DeclareDocumentCommand{\ptwT}{ O{\Lambda} O{\mu}}{{\Omega\mathcal{T}^{pm}_{#1}(#2)}}
\DeclareDocumentCommand{\ptwTm}{ O{\Lambda} O{\mu}}{{\Omega\mathcal{T}^{pm}_{#1}(#2)^-}}
\DeclareDocumentCommand{\kptwT}{ O{\Lambda} O{\mu}}{{\Omega^{k}\mathcal{T}^{pm}_{#1}(#2)}}
\DeclareDocumentCommand{\ocpM}{ O{g} O{\mu}}{\Omega{\overline{\mathcal{M}}_{#1}({#2})}}
\DeclareDocumentCommand{\cpM}{ O{g} O{\mu}}{{\overline{\mathcal{M}}_{#1}({#2})}}  
\DeclareDocumentCommand{\Oaugteich}{ O{\Sigma} O{\bfs} O{\mu}}{\Omega \overline{\mathcal{T}}_{(#1,#2)}(#3)}
\DeclareDocumentCommand{\Okaugteich}{ O{\Sigma} O{\bfs} O{\mu}}{\Omega^{k} \overline{\mathcal{T}}_{(#1,#2)}(#3)}
\DeclareDocumentCommand{\Paugteich}{ O{\Sigma} O{\bfs} O{\mu}}{\PP \Omega \overline{\mathcal{T}}_{(#1,#2)}(#3)} 
\DeclareDocumentCommand{\Dstratum}{ O{\Lambda} O{\Lambda'}}{{\PP\Xi\mathcal D}_#1^{#2}}
\DeclareDocumentCommand{\Dstratums}{ O{\Lambda} O{\Lambda'}}{{\PP\Xi\mathcal D}_#1^{#2,s}}
\DeclareDocumentCommand{\ODstratum}{ O{\Lambda} O{\Lambda'}}{{\Xi\mathcal D}_#1^{#2}}
\DeclareDocumentCommand{\ODstratums}{ O{\Lambda} O{\Lambda'}}{{\Xi\mathcal D}_#1^{#2,s}}
\DeclareDocumentCommand{\ODstratumso}{ O{\Lambda} O{\Lambda'}}{{\Xi\mathcal D}_{#1,\circ}^{#2,s}}
\DeclareDocumentCommand{\ODstratumo}{ O{\Lambda} O{\Lambda'}}{{\Xi\mathcal D}_{#1,\circ}^{#2}}
\newcommand{\oldDehn}[1][\Lambda]{\mathcal{D}_{#1}}
\newcommand{\ooldDehn}[1][\Lambda]{\Omega\mathcal{D}_{#1}}
\newcommand*{\oldDehnfam}[1][\Lambda]{\calX_{#1}}
\newcommand*{\unifam}{{\calX}}
\newcommand*{\barunifam}{{\ol{\calX}}}
\DeclareDocumentCommand{\sMSfmark}{ O{\mu} O{\eL}}{\mathbf{MS}^s_{(#1,#2)}}
\DeclareDocumentCommand{\MSfmark}{ O{\mu} O{\eL}}{\mathbf{MS}_{(#1,#2)}}
\DeclareDocumentCommand{\sMSfun}{ O{\mu} }{\mathbf{MS}^s_{#1}}
\DeclareDocumentCommand{\MSfun}{ O{\mu} }{\mathbf{MS}_{#1}}
\DeclareDocumentCommand{\MSgrp}{ O{\mu} }{\mathcal{MS}_{#1}}
\DeclareDocumentCommand{\sMDfmark}{ O{\mu} O{\eL}}{\mathbf{MD}^s_{(#1,#2)}}
\DeclareDocumentCommand{\MDfmark}{ O{\mu} O{\eL}}{\mathbf{MD}_{(#1,#2)}}
\DeclareDocumentCommand{\MDfun}{ O{\mu} O{\eG}}{\mathbf{MD}_{(#1,#2)}}
\newcommand*{\diffeogen}{almost-diffeomorphism\xspace}
\newcommand*{\diffeogens}{almost-diffeomorphisms\xspace}
\newcommand{\whg}{\widehat{g}}
\newcommand{\wh}[1]{{\widehat{#1}}}
\DeclareDocumentCommand{\Thick}{ O{(X,\bfz)} O{\epsilon}}{#1_{#2}}
\DeclareDocumentCommand{\LMSk}{ O{\mu} O{g}} {\Xi^{k}\overline\calM_{#2}(#1)}
\DeclareDocumentCommand{\LMSc}{ O{\wh\mu} O{\whg}} {\Xi\overline\calM_{#2}(#1)}
\DeclareDocumentCommand{\LMSkc}{ O{\wh\mu} O{\whg}} {\Xi^{\sim}\overline\calM_{#2}(#1)}
\newcommand*{\Adoublesharp}{\mathcal{A}^{\musDoubleSharp}}
\newcommand*{\Adoubleflat}{\mathcal{A}^{\musDoubleFlat}}
\newcommand*{\Anatural}{\mathcal{A}^{\musNatural}}
\newcommand*{\AdoublesharpJ}{\mathcal{A}^{\musDoubleSharp,J}}
\newcommand*{\AsharpJ}{\mathcal{A}^{\musSharp,J}}
\newcommand*{\AdoubleflatJ}{\mathcal{A}^{\musDoubleFlat,J}}
\newcommand*{\AflatJ}{\mathcal{A}^{\musFlat,J}}
\newcommand*{\AnaturalJ}{\mathcal{A}^{\musNatural,J}}
\newcommand*{\AsharpJp}{\mathcal{A}^{\musSharp,J'}}
\newcommand*{\AflatJp}{\mathcal{A}^{\musFlat,J'}}
\newcommand*{\AnaturalJp}{\mathcal{A}^{\musNatural,J'}}
\newcommand{\TWGRC}[1][\oG]{TWGRC}
\newcommand{\TW}[1][\oG]{TW}
\newcommand{\pderiv}[1]{\frac{\partial}{\partial #1}}
\title[The moduli space of multi-scale differentials]{The moduli space of multi-scale differentials}
\author[Bainbridge]{Matt Bainbridge}
\address{Department of Mathematics, Indiana University, Bloomington, IN 47405, USA}
\email{mabainbr@indiana.edu}
\thanks{Research of the first author is supported in part by the Simons Foundation grant \#713192.}
\author[Chen]{Dawei Chen}
\address{Department of Mathematics, Boston College, Chestnut Hill, MA 02467, USA}
\email{dawei.chen@bc.edu}
\thanks{Research of the second author was supported in part by National Science Foundation grants DMS-23-01030, DMS-20-01040, the CAREER award DMS-13-50396, the Simons Foundation Travel Support for Mathematicians, a von Neumann Fellowship, and a Simons Fellowship.}
\author[Gendron]{Quentin Gendron}
\address{Centro de Ciencias Matem\'aticas-UNAM, Antigua Car. a P\'atzcuaro 8701,
Col. Ex Hacienda San Jos\'e de la Huerta,
Morelia, Mich., M\'exico}
\curraddr{Instituto de Matem\'{a}ticas de la UNAM
Ciudad Universitaria, CDMX, 04510,
M\'{e}xico}
\email{quentin.gendron@im.unam.mx}
\thanks{Research of the third author was supported in part by the project CONACyT A1-S-9029 ``Moduli de curvas y Curvatura en $A_{g}$" of Abel Castorena.}
\author[Grushevsky]{Samuel Grushevsky}
\address{Mathematics Department, and Simons Center for Geometry and Physics, Stony Brook University,
Stony Brook, NY 11794-3651, USA}
\email{sam@math.stonybrook.edu}
\thanks{Research of the fourth author was supported in part by the National Science Foundation under the grant DMS-21-01631}
\author[M\"oller]{Martin M\"oller}
\address{Institut f\"ur Mathematik, Goethe-Universit\"at Frankfurt, Robert-Mayer-Str. 6-8,
60325 Frankfurt am Main, Germany}
\email{moeller@math.uni-frankfurt.de}
\thanks{Research of the fifth author is partially supported
  by the DFG-project MO 1884/2-1 and by the LOEWE-Schwerpunkt
  ``Uniformisierte Strukturen in Arithmetik und Geometrie''}
\begin{document}

\begin{abstract}
We construct a compactification $\PP\LMS$ of the moduli spaces of
abelian differentials on Riemann surfaces with prescribed zeroes and poles.
This compactification, called the moduli space of \msds,  is a complex orbifold
with normal crossing boundary. Locally, $\PP\LMS$ can be described as the normalization of an explicit blowup of the incidence variety compactification, which was defined in~\cite{strata} as the closure of the stratum of abelian differentials in the closure of the Hodge bundle. We also
define families of projectivized \msds, which gives a proper smooth
Deligne-Mumford stack, and $\PP\LMS$ is the orbifold corresponding to it. Moreover, we perform a real oriented blowup of the unprojectivized space~$\LMS$ such that the $\GLtwoRplus$-action in the interior of the moduli space extends
continuously to the boundary.

A \msd on a pointed stable curve is the data of an enhanced level structure on the dual graph, prescribing the orders of poles and zeroes at the nodes, together with a collection of meromorphic differentials on the irreducible components satisfying certain conditions. Additionally, the \msd encodes the data of a prong-matching at the nodes, matching the incoming and outgoing horizontal trajectories in the flat structure. The construction of $\PP\LMS$ furthermore requires defining families of
\msds, where the underlying curve can degenerate, and understanding the notion of equivalence of \msds under various rescalings.

Our construction of the compactification proceeds via first constructing an augmented Teichm\"uller space of flat surfaces, and then taking its suitable quotient. Along the way, we give a complete proof of the fact that the conformal and quasiconformal topologies on the (usual) augmented Teichm\"uller space agree.
\end{abstract}

\date{\today}

\maketitle
\newpage
\tableofcontents


\section{Introduction} \label{sec:intro}

The goal of this paper is to construct a compactification of the (projectivized)
moduli spaces of abelian differentials $\PP\Omega\calM_{g,n}(\mu)$ of type
$\mu=(m_1,\dots,m_n)$ with zeros and poles of order~$m_i$ at the marked points.
Our compactification shares almost all of the useful properties of the Deligne-Mumford
compactification~$\barmoduli$ of the moduli space of curves~$\moduli$. These properties
include a normal crossing boundary divisor, natural coordinates near the boundary, and
representing a natural moduli functor. Applications of the compactification
include justification for intersection theory computations, a notion of the tautological
ring, an algorithm to compute Euler characteristics of $\PP\Omega\calM_{g,n}(\mu)$,
and potentially contributions to the classification of $\SL_2(\RR)$-orbit closures.
Throughout this paper the zeroes and poles are labeled. The reader may
quotient by a symmetric group action as discussed in Section~\ref{sec:notation} to obtain
the (unmarked) strata of abelian differentials.
\par
The description of our compactification as a moduli space of what we call
\msds should be compared with the objects characterizing the naive compactification, the
{\em incidence variety compactification (IVC)} we studied in \cite{strata}. The IVC
is defined as the closure of the moduli space $\Omega\calM_{g,n}(\mu)$ in the extension
of the Hodge bundle $\obarmoduli[g,n]$ over $\overline\calM_{g,n}$ in the
holomorphic case, and as the closure in a suitable twist in the meromorphic case. The IVC
can have bad singularities near the boundary, e.g.\ they can fail to be $\QQ$-factorial
(see Section~\ref{subsec:blowup} and Example~\ref{ex:q-factorial}), and we are not
aware of a good coordinate system near the boundary. Points in the IVC can be described
by twisted differentials, whose definition we now briefly recall. \
\par
The {\em dual graph} of a stable curve~$X$ has vertices~$v \in V(\Gamma)$
corresponding to irreducible components~$X_v$ of the stable curve, and
edges~$e \in E(\Gamma)$ corresponding to
nodes~$q_e$.  A {\em level graph} endows $\Gamma$ with a level function
$\ell\colon V(\Gamma)\to\RR$, and we may assume that its image, called the set of levels
$L^\bullet(\Gamma)$, is the set $\lbrace 0,-1,\dots,-N\rbrace$ for some~$N \in \ZZ_{\ge 0}$.
We write $X_{(i)}$ for the union of all irreducible components of~$X$ that
are at level~$i$. A {\em twisted differential
of type~$\mu$} compatible with a level graph is a collection $(\eta_{(i)})_{i \in
L^\bullet(\Gamma)}$ of non-zero meromorphic differentials on the subcurves~$X_{(i)}$,
having order prescribed by~$\mu$
at the marked points and satisfying the matching order  condition, the matching residue
condition, and the global residue condition (GRC), which we restate in detail in
Section~\ref{sec:deftwd}.
\par
The top level~$X_{(0)}$ is the subcurve on which, in a one-parameter family
over a complex disc with parameter~$t$, the limit of differentials~$\omega_t$
is a non-zero differential~$\eta_{(0)}$, while this limit is zero on all
lower levels. By rescaling with appropriate powers of~$t$, we obtain
the non-zero limits on the lower levels. The order of the levels here reflects the exponents of~$t$.
Note that a point in the IVC determines a twisted differential
only up to rescaling individually on each irreducible component of the limiting curve.
\par
\medskip
The notion of a \msd refines the notion of a twisted differential in three ways.
First, the equivalence relation is a rescaling level-by-level, by the level rotation torus
(defined below, see also Section~\ref{sec:levrottori}) instead of component-by-component.
Second, the graph records besides the level structure an enhancement prescribing the vanishing order at the nodes,
see Section~\ref{sec:enh}. Third, we additionally record in a prong-matching (defined
below, see also Section~\ref{sec:prmatch}) a finite amount of extra data at every node,
a matching of horizontal directions for the flat structure at the two preimages of
the node. \changed{Here we give a brief explanation of
  these new concepts and an informal definition.  We give a more
  precise definition in Section~\ref{sec:AugTeich} after the relevant concepts
  have all been introduced.}  
\par
\begin{df}
  \label{def:one_msd}
A {\em \msd of type $\mu$} on a stable pointed curve $(X,\bfz)$ consists of
\begin{itemize}
  \item[(i)] an enhanced level structure on the dual graph~$\Gamma$ of~$(X,\bfz)$,
  \item[(ii)] a twisted differential of type~$\mu$ compatible with the
enhanced level structure,
\item[(iii)]  and a \prma for each node of~$X$ joining components of non-equal level.
\end{itemize}
Two \msds are considered equivalent if they differ by the action of the level rotation torus.
\end{df}
\par
The notion of a family of \msds requires to deal with the subtleties of
the enhanced level graph varying, with vanishing rescaling parameters, and also
with the presence of nilpotent functions on the base space. The complete definition
of a family of \msds, the corresponding functor $\MSfun$ on the category
of complex spaces, and the groupoid $\MSgrp$ will be given in Section~\ref{sec:famnew}.
They come with projectivized versions, denoted by $\PP\MSfun$ and $\PP\MSgrp$.
\par
\begin{thm}[Main theorem] \label{intro:main}
There is a complex orbifold $\LMS$, the {\em moduli space of \msds}, with the following
properties:
\begin{enumerate}
\item The moduli space $\Omega\calM_{g,n}(\mu)$ is open and dense within $\LMS$.
\item The boundary $\LMS \setminus \Omega\calM_{g,n}(\mu)$ is a normal crossing divisor.
\item $\LMS$ admits a $\CC^*$-action, and the projectivization $\PP\LMS$ is compact.
\item The complex space underlying $\LMS$ is a coarse moduli space for $\MSfun$.
\item The complex space underlying $\LMS$ admits a forgetful map to the normalization
  of the IVC.
\end{enumerate}
\end{thm}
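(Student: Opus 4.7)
The plan is to construct $\LMS$ as a quotient of a suitable augmented Teichm\"uller space of flat surfaces by the pure mapping class group, in direct analogy with how $\barmoduli$ arises as the quotient $\oldaugteich/\Mod$. First I would fix a topological reference surface $(\Sigma,\bfs)$ of genus $g$ with $n$ marked points and introduce the stratum $\Oteich$ of flat structures of type $\mu$ on $(\Sigma,\bfs)$; its quotient by the mapping class group recovers $\Omega\calM_{g,n}(\mu)$. The augmented space $\Oaugteich$ would then be obtained by formally adjoining, for every enhanced level graph $\eL$ on $(\Sigma,\bfs)$, the space of marked prong-matched twisted differentials on the corresponding stable curves, taken modulo the level rotation torus as in Definition~\ref{def:one_msd}.

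Second, I would equip $\Oaugteich$ with a complex structure near each boundary point by writing explicit local charts. These combine period coordinates on each level subcurve with level-smoothing parameters $t_i$ (one per level passage) and node-smoothing parameters for horizontal nodes; the $t_i$ simultaneously encode the rescaling ratio between adjacent levels and, via their arguments, the interaction with prong-matching rotations. A plumbing/opening-of-nodes construction, carefully indexed by enhancements and prong-matchings, should yield a complex manifold with normal crossing boundary at each stratum, and one must check that these charts glue consistently across degenerations that refine the level graph. Combined with the agreement of conformal and quasiconformal topologies on $\oldaugteich$ promised in the abstract, this gives a Hausdorff complex structure on $\Oaugteich$ compatible with the underlying topological quotient.

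Third, I would verify that $\Mod$ acts holomorphically, properly, and with finite stabilizers on $\Oaugteich$, and then take the quotient to produce $\LMS$ as a complex orbifold. Properties (1) and (2) then follow directly: the open stratum consists of undegenerated flat structures, and the boundary is normal crossing because each local chart exhibits the boundary strata as a coordinate hyperplane arrangement. The $\CC^*$-action of (3) descends from scaling the differential, while compactness of $\PP\LMS$ reduces to compactness of $\barmoduli$ combined with finiteness of enhanced level graphs and of prong-matching equivalence classes modulo the level rotation torus. For (6), the forgetful map to the normalization of the IVC is the map that remembers only the underlying twisted differential up to per-component rescaling, which is well defined since the level rotation torus action refines per-component rescaling; its target factors through the normalization because $\LMS$ is normal as an orbifold. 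Algebraicity in (4) would then follow via GAGA applied through this forgetful morphism, using that the IVC is algebraic and the additional prong-matching and level data are finite/algebraic extensions. Finally, (5) is reduced to checking that the universal family over $\Oaugteich$ descends to $\LMS$ and represents $\MSfun$ up to the coarse moduli equivalence, which amounts to matching the pointwise equivalence of Definition~\ref{def:one_msd} with the family-level equivalence in $\MSfun$.

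The main obstacle I anticipate is the construction and verification of the local complex charts on $\Oaugteich$: the prong-matching data is discrete, while the level-rescaling parameters are continuous, and the two must be packaged into a single holomorphic coordinate $t_i$ whose behavior under refinements of the level graph is compatible. A closely related difficulty is showing $\MSfun$ is represented by the resulting orbifold over non-reduced bases, since nilpotent parameters interact subtly with the level rotation torus quotient in item~(ii) of Definition~\ref{def:one_msd}. Secondary obstacles include establishing properness of the $\Mod$-action near the boundary (which relies on the conformal/quasiconformal topology agreement that the paper promises to prove in its own right) and verifying that the forgetful map of (6) is in fact a morphism of complex spaces rather than only a set-theoretic map.
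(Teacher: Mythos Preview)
Your overall architecture matches the paper's, but there is a genuine gap in the second step that forces a change in the third. You propose to put a complex structure directly on $\Oaugteich$ via plumbing charts. This cannot work: just as for the classical augmented \Teichmuller space, $\Oaugteich$ is not even locally compact at the boundary, because the full Dehn twist group about the pinched multicurve is infinite and fixes each boundary stratum pointwise. Correspondingly, the $\Mod$-action on $\Oaugteich$ is \emph{not} properly discontinuous; the stabilizer of a boundary point contains the infinite twist group $\Tw$. The paper's remedy is to introduce an intermediate object, the Dehn space $\ODehn = \bigl(\coprod_{\Lambda'\rightsquigarrow\Lambda}\msT[\Lambda']\bigr)/\Tw$, and its smooth cover $\ODehns$ by the simple twist group. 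The plumbing charts land in $\ODehns$, not in $\Oaugteich$; the complex structure lives on the Dehn spaces, and $\LMS$ inherits its orbifold structure from the covering by images of Dehn spaces. The key finiteness statement replacing proper discontinuity is that the elements of $\Mod$ moving a small neighborhood into itself form finitely many cosets of $\Tw$ (Lemma~\ref{lm:finitecoset}).

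Your algebraicity argument in (4) is also too optimistic. The forgetful map to the IVC is not finite: it contracts positive-dimensional loci (e.g.\ cherry divisors, see Example~\ref{ex:cherry}), so GAGA through this map together with ``finite/algebraic extensions'' does not suffice. The paper instead identifies $\MSgrp$ with the normalization of an explicit algebraic blowup (the \emph{orderly blowup}) of $\obarmoduli[g,n]^{\rm{ninc}}(\mu)$, which is manifestly algebraic; algebraicity of $\LMS$ then follows. This blowup description is also what makes (6) a morphism rather than just a set map.
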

\par
In fact, the codimension of a boundary stratum of \msds compatible with an
enhanced level graph~$\Gamma$ is equal to the number of levels below zero
plus the number of horizontal nodes, that is, nodes joining components on the
same level.
\par
Our proof of algebraicity requires us to recast this theorem in the language
of stacks. For the next theorem note that we may view an orbifold such as
$\PP\LMS$ as a smooth stack glued from quotient stacks.
\par
\begin{thm}[Functorial viewpoint] \label{intro:funct}
The groupoid $\PP\MSgrp$ of projectivized \msds is a proper
Deligne-Mumford stack. Moreover, there is a morphism of proper algebraic
Deligne-Mumford stacks $\PP\LMS \to \PP\MSgrp$,
which is an isomorphism over the open substack~$\PP\omoduli[g,n](\mu)$.
\end{thm}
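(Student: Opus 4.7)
The plan is to bootstrap from the construction of $\PP\LMS$ as an orbifold, using parts (4) and (5) of Theorem~\ref{intro:main} which tell us that $\PP\LMS$ is an algebraic variety and a coarse moduli space for $\PP\MSfun$. By construction, $\PP\LMS$ is built from orbifold charts on which a universal prong-matched twisted differential, together with the action of the level rotation torus, is tautological. These charts assemble into an \'etale groupoid presentation of $\PP\LMS$ as a smooth DM stack, and the universal family on the atlas is, by definition, an object of $\PP\MSgrp$. This furnishes the desired morphism of stacks $\PP\LMS \to \PP\MSgrp$.

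Next I would verify that $\PP\MSgrp$ is Deligne-Mumford. The automorphism group of a projectivized \msd on $(X,\bfz)$ consists of those automorphisms of the pointed stable curve that preserve the enhanced level graph, the twisted differential, and the \prma, all taken modulo the level rotation torus. Since $\Aut(X,\bfz)$ is already finite on stable curves and the remaining conditions only further restrict this group, every isotropy group is finite. Representability and unramifiedness of the diagonal follow from the analogous statement for isomorphisms of \emph{families}: these are cut out as a closed subscheme of $\Isom$ between the underlying pointed stable curves by the conditions of preserving the combinatorial and differential data up to level rotation. Combined with the \'etale atlas $\PP\LMS \to \PP\MSgrp$, this gives the Deligne-Mumford property.

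For properness I would appeal to part (3) of the main theorem, which asserts that $\PP\LMS$ is compact, together with surjectivity of the atlas morphism on the underlying spaces. Separatedness reduces to the uniqueness clause of the valuative criterion, which in turn reduces to the explicit plumbing coordinates developed to construct $\PP\LMS$: a one-parameter family of \msds over a punctured disc has, up to the level-rotation equivalence, at most one extension across the puncture, because the plumbing parameters and the prong-matching are determined by the family. Existence of limits then follows from the compactness of $\PP\LMS$ and the surjectivity of the atlas. The isomorphism on the open locus is straightforward: over $\PP\omoduli[g,n](\mu)$ the enhanced level graph is trivial, there are no \prmas, and both $\PP\LMS$ and $\PP\MSgrp$ restrict to the classical projectivized stratum quotient stack.

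The main obstacle will be matching the local analytic model used to build $\PP\LMS$ — plumbing of a \ptwd modulo the level rotation torus and, on the real-oriented blowup side, a compatible system of rescalings — with an arbitrary functorial family in $\MSgrp$ defined as in Section~\ref{sec:famnew}, including over bases with nilpotents. Concretely, one must show that every family of \msds is \'etale-locally of the plumbing form used to construct the orbifold charts of $\PP\LMS$. This requires a descent argument for families, together with careful bookkeeping of the vanishing rescaling parameters and of the nilpotent directions of the base on which they act, to guarantee that the map $\PP\LMS \to \PP\MSgrp$ is not merely a bijection on points and an isomorphism on the open part, but is genuinely an \'etale presentation of the stack everywhere.
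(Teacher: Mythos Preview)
Your approach diverges from the paper's in a substantive way, and there are two concrete issues.

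\textbf{The paper's route to properness (and algebraicity) is different.} The paper does not argue via compactness of $\PP\LMS$ together with the valuative criterion. Instead it proves Theorem~\ref{thm:LMS-blowup}, identifying $\MSgrp$ with the normalization of the orderly blowup of the normalized IVC. Properness and algebraicity then carry over from $\barmoduli[g,n]$ through the chain $\PP\obarmoduli[g,n] \supset \PP\obarmoduli[g,n]^{\rm inc}(\mu) \leftarrow \PP\obarmoduli[g,n]^{\rm ninc}(\mu) \leftarrow \wt{\PP\Omega\calM}^{\rm n}_{g,n}(\mu)$: closed substacks, normalizations, and blowups all preserve properness. In particular, your invocation of part~(4) of Theorem~\ref{intro:main} is circular in the paper's logic, since algebraicity of $\LMS$ is deduced \emph{from} Theorem~\ref{thm:LMS-blowup}, not the other way round.

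\textbf{The map $\PP\LMS \to \PP\MSgrp$ is not an \'etale atlas.} At a boundary point with nontrivial $K_\Gamma = \Tw[\Gamma]/\sTw[\Gamma]$, the orbifold $\PP\LMS$ is locally $[U/K_\Gamma]$, whereas the stack $\PP\MSgrp$ is locally the \emph{singular space} $U/K_\Gamma$ (see the Remark following the proof of Theorems~\ref{intro:main} and~\ref{intro:funct}). The map $[U/K_\Gamma] \to U/K_\Gamma$ has $K_\Gamma$ in the kernel of the map on isotropy and is not \'etale. The \'etale atlas the paper actually uses is the union of Dehn spaces $\ODehn$ (which are fine moduli spaces for \emph{marked} \msds, Theorem~\ref{thm:univDehn}); see the Lemma immediately after Theorem~\ref{thm:coarseMS}. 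Your argument for the DM property via finiteness of automorphisms and representability of $\operatorname{Isom}$ is the same as the paper's Lemma, but with the wrong atlas plugged in.

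With these corrections your outline for properness via compactness plus the valuative criterion could in principle be made to work analytically (the uniqueness of limits is indeed the unplumbing of Section~\ref{sec:unplumb}), but it would not by itself yield algebraicity; the orderly-blowup identification is what buys both at once.
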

\par
The groupoid $\PP\MSgrp$ is thus a hybrid object, smooth with
non-trivial isomorphism groups (due to automorphisms) at some places,
and with finite quotient singularities \changed{(due to a non-trivial
  quotient of twist groups, see Section~\ref{sec:twrot})} at other
places. The description as orderly blowup $\PP\MSgrp$ in
Theorem~\ref{intro:orderly} makes this functorial viewpoint even more
natural.  In fact the map in Theorem~\ref{intro:funct} is an
isomorphism over the substack where the local groups~$K_\Lambda$
introduced in Section~\ref{sec:coverGH} are trivial. With a similar
construction one can obtain a compactification of the space of
$k$-differentials for all $k \geq 1$ with the same good properties,
see \cite{CoMoZa} for details.
\par
\medskip
\paragraph{\bf Other compactifications} We briefly mention the relation
with other compactifications in the literature. The space constructed in \cite{fapa}
can have extra components, hence in general it is a reducible space that
contains the IVC only as one of its components. As emphasized in that paper,
the moduli spaces of meromorphic $k$-differentials can be viewed as
generalizations of the double ramification cycles. There are several
(partial) compactifications of the ($k$-twisted version of the) double
ramification cycle, see for example  \cite{HoKaPa} and \cite{HolmesSchmitt},
mostly with focus on extending the Abel-Jacobi maps.
\par
Mirzakhani-Wright~\cite{mwinv} considered the compactification of holomorphic strata
that simply forgets all irreducible components of the stable curve
on which the limit differential is identically zero.
This is called the WYSIWYG (``what you see is what you get") compactification.
Since this compactification reflects much of the tangent space of an $\SL_2(\RR)$-orbit
closure, it has proven useful to their classification. This compactification is
however not even a complex analytic space, see \cite{CW}.
\par
\medskip
\paragraph{\bf Applications} Many applications of our compactification are
based on the normal crossing boundary divisor and a good coordinate system,
given by the perturbed period coordinates (see Section~\ref{sec:perturbed}) near
the boundary. The first application in \cite{CoMoZa} shows that the area form
is a good enough metric on the tautological bundle. This is required in
\cite{SauvagetMinimal, CMSPrincQuad, SauvagetFlat} for direct computations
of Masur-Veech volumes, and in \cite{CMSZ} to justify the
volume formula for the spin components.
\par
A second application in \cite{CoMoZaEU} is the construction of an analogue of
the Euler
sequence for projective spaces on $\LMS$. This allows to recursively compute
all Chern classes of the (logarithmic) cotangent bundle to $\LMS$. In particular
this gives a recursive way to compute the orbifold Euler characteristic of the
moduli spaces $\PP\omoduli[g,n](\mu)$. Moreover, it gives a formula for the
canonical bundle. As in the case of the moduli space of curves, this opens the
gate towards determining the Kodaira dimension of $\PP\omoduli[g,n](\mu)$.
Previously the Kodaira dimension was known only for some series of
special cases, see~\cite{farkasverraoddspin, gendron, barros}. \changed{Building on the compactification developed in this paper, the recent work of \cite{CCMKod} determined the Kodaira dimension for a large class of $\PP\omoduli[g,n](\mu)$.} 
\par
A third application is towards the classification of connected components of
the strata of $k$-differentials in~\cite{ChenGendronComponents}. Crucially,
the viewpoint in that paper relies on the smoothness of a $k$-differential
version of $\LMS$ developed later in~\cite{CoMoZa}. In particular,
the concept of \prmas helps to construct certain multi-scale
$k$-differentials in the boundary that can provide information for generalized
spin and hyperelliptic structures after smoothing into the interior of the strata.
\par
A fourth large circle of applications concerns the dynamics of the action
of $\GLtwoRplus$ on $\Omega\calM_{g,n}(\mu)$, in particular in the case when the
type $\mu$ corresponds to holomorphic differentials. In this case the results of
Eskin-Mirzakhani-Mohammadi~\cite{eskinmirzakhani, esmimo} and Filip~\cite{filip}
show that the closure of every orbit is an algebraic variety defined by linear
equations in period coordinates \changed{that have real algebraic numbers as coefficients}. The classification
of these orbit closures is an important goal towards which significant progress
has been made recently, see for example constraints found by Eskin-Filip-Wright~\cite{esfiwr}
and the constructions of special orbit closures by Eskin-McMullen-Mukamel-Wright
in~\cite{emmw}. Using our moduli space $\LMS$ as a black box, in~\cite{CW} the Mirzakhani-Wright
formula for the tangent space to the boundary of an orbit closure in the WYSIWYG
space is generalized to the case of multi-component surfaces. Using the details
of the construction of $\LMS$, it is further shown by Benirschke~\cite{benirschke}
that the boundary of any orbit closure in $\LMS$ is again given by linear equations
in generalized period coordinates of the boundary. Besides $\SLtwoR$-orbits, taking the closures of other moduli spaces
in $\LMS$ such as certain Hurwitz spaces (e.g., the double ramification loci) can provide nice boundary structures, see~\cite{bdg, BeniDRC}.
\par
Moreover, it is also important yet challenging to explore dynamical invariants
associated to orbit closures, such as saddle connections and related counting problems.
The space $\LMS$ is also used as a key tool by Dozier~\cite{DozierSaddle} to prove
regularity for the $\SLtwoR$-invariant measure of the set of translation surfaces
with multiple short saddle connections in the strata; his results for example
immediately imply the finiteness of volume of strata of $k$-differentials first
proven by Nguyen~\cite{Nguyen}.
\par
As a further evidence of potential applications towards the study of orbit closures,
we show that our compactification provides a natural bordification of
$\Omega\calM_{g,n}(\mu)$ on which the action of $\GLtwoRplus$ extends.
\par
\begin{thm}
There exists an orbifold with corners ${\RBLMS}$ containing $\omoduli[g,n](\mu)$
as open and dense subspace with the following properties.
\begin{enumerate}
\item There is a continuous map ${\RBLMS} \to \LMS$ whose fiber over a \msd with
$N$~levels below zero is isomorphic to the real torus $(S^1)^{N}$.
\item ${\RBLMS}$ admits an $\RR_{>0}$-action, and the quotient
${\RBLMS}/\RR_{>0}$ is compact.
\item The action of $\GLtwoRplus$ extends continuously to ${\RBLMS}$.
\item Points in ${\RBLMS}$ are in bijection with {\em real \msds}.
\end{enumerate}
\end{thm}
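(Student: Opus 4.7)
The plan is to construct $\RBLMS$ locally via real oriented blowup in the perturbed period coordinates near boundary strata of $\LMS$, and then to glue these local models together using the functorial properties of the compactification. Near a boundary point corresponding to a \msd with $N$ levels below zero, $\LMS$ has local coordinates that include complex rescaling parameters $t_{-1},\dots,t_{-N}$ for the level rotation action together with transverse flat-geometric directions. The real oriented blowup replaces each $t_i$ by polar coordinates $(r_i,\theta_i)\in\RR_{\geq 0}\times S^1$, keeping the angular variable $\theta_i$ well-defined even on the exceptional locus $r_i=0$. This immediately realises property~(1): the fiber over a boundary stratum with $N$ levels below zero is exactly $(S^1)^N$.

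First I would verify that these local real oriented blowups patch together globally. The transition functions between perturbed period coordinate charts on $\LMS$, up to the transverse directions, are given by the level rotation torus action, which rescales each $t_i$ by some $\alpha_i\in\CC^*$. In polar form this decomposes cleanly into a scaling of $r_i$ by $|\alpha_i|$ and a translation of $\theta_i$ by $\arg(\alpha_i)$, both of which extend continuously across $r_i=0$. Hence the local blowups glue into a global orbifold with corners $\RBLMS$ admitting the continuous projection to $\LMS$ demanded by~(1).

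For~(2), the $\RR_{>0}$-action is the restriction of the $\CC^*$-action on $\LMS$ to its positive real subgroup; the blowup is equivariant because the $\CC^*$-action on each $t_i$ becomes scaling of $r_i$ combined with an $S^1$-rotation of $\theta_i$, and the $\RR_{>0}$-subgroup acts trivially on $\theta_i$. Consequently $\RBLMS/\RR_{>0}$ fibers over $\PP\LMS$, which is compact by Theorem~\ref{intro:main}, with compact torus fibers, yielding compactness.

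Property~(3) is the hard part. The $\SLtwoR$-action on $\omoduli[g,n](\mu)$ does not descend to $\LMS$ near the boundary since rescaling mixes levels, but it preserves the flat-geometric data level by level and rotates horizontal directions uniformly; on the boundary this rotation is precisely what the $S^1$-factors of the blowup parametrise, namely the arguments of the prong-matchings. The main technical work is to prove continuity at boundary points: this requires analysing the perturbed period coordinates to show that when $r_i\to 0$ the $\SLtwoR$-rotation of a nearby smooth differential converges to the level-wise rotation composed with the induced rotation of the prong-matchings, which is precisely the action on the $\theta_i$ variables. Once this is established, property~(4) follows by defining a real \msd to consist of a \msd together with a choice of argument on each level below zero (a section of the $(S^1)^N$-fiber), and observing that the local charts of $\RBLMS$ parametrise exactly such data modulo the level rotation torus action.
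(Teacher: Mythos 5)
Your proposal is correct and follows essentially the same route as the paper: the space is the level-wise real oriented blowup of the rescaling parameters (Section~\ref{sec:rob} applied to $\LMS$), the torus fibers and the identification of points with real \msds come from the argument-image of the level rotation torus, and the $\SLtwoR$-action is defined level-wise together with the induced rotation of prong-matchings. The only substantive difference is that you defer the continuity verification in~(3) to an analysis of perturbed period coordinates, whereas the paper settles it more directly by conjugating the diffeomorphisms $g_n$ exhibiting convergence, i.e.\ using $A\cdot g_n\cdot A^{-1}$ and extending across the seams via the $\SLtwoR$-action on $S^1$ — you would need to carry out one of these arguments to complete the proof.
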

\par
These real \msds are similar to \msds, with a coarser equivalence relation,
see Definition~\ref{def:one_rsd}. This bordification will be constructed in Section~\ref{sec:SL2R}
as a special case of our construction of \lw real blowup. This blowup is an instance of the
classical real oriented blowup construction,
where the blowup is triggered by the level structure underlying a family of \msds,
see Section~\ref{sec:rob}.
\par
We hope that the study of orbit closures in ${\RBLMS}$ will provide new
insights on the classification problem.
\par
\medskip
\paragraph{\bf New notions and techniques}  We next give intuitive explanations of
the new objects and techniques used to construct the moduli space of \msds.
\smallskip
\par
{\em \Prmas.} This is simply a choice of how to match the horizontal directions at the pole of the
differential at one preimage of a node to the horizontal directions at the zero of the differential
at the other preimage of the same node. To motivate that recording this data is necessary to
construct a space dominating the normalization of the IVC, consider two differentials in normal
form, locally given by $\eta_1=u^\kappa (du/u)$ and $\eta_2=C\cdot v^{-\kappa}(dv/v)$ in local
coordinates $u,v$ around two preimages of a node given by~$uv=0$ of~$X$, where $C\in\CC^*$
is some constant. Then plumbing these differentials on the plumbing fixture~$uv=t$ is possible
if and only if $\eta_1 =\eta_2$ after the change of coordinates $v=t/u$, which is equivalent
to $t^\kappa=-C$.  Thus for a given $C$ the different choices of $t$ differ by multiplication
by $\kappa$-th~roots of unity, and the \prma is used to record this ambiguity in the limit
of a degenerating family. The notion of a \prma  will be introduced formally in Section~\ref{sec:BSProng}.
\par
As the above motivation already indicates, this requires locally choosing coordinates such
that the differential has the normal form in these coordinates. Pointwise, this is a classical
result of Strebel. These normal forms for a family of differentials are a technical underpinning
of much of the current paper. The relevant analytic results are proven in Section~\ref{sec:NF},
by solving the suitable differential equations and applying the Implicit Function Theorem in the
suitable Banach space.
\smallskip
\par
{\em Level rotation torus.} This algebraic torus has one~$\CC^*$
factor for each level below zero.
Its action makes the intuition of rescaling level by level precise.
As indicated above, differentials on lower level in degenerating families are obtained
by rescaling by a power of~$t$.  As such a family can also be reparameterized by
multiplying~$t$ by a constant, such a scaled limit on a given component is only
well-defined up to multiplication by a non-zero complex number. Suppose now that while
keeping \changed{the} differential at one side of the node fixed, we start multiplying the differential
on the other side by~$e^{i\theta}$. If we start
with a given prong-matching, which is just some fixed choice of $(-C)^{1/\kappa}$,
this choice of the root is then being multiplied by $e^{i\theta/\kappa}$.
Consequently, varying $\theta$ from~$0$ to~$2\pi$ ends up with the same differential,
but with a different prong-matching.
\par
Thus the equivalence relation among \msds that we consider records simultaneously
all possible rescalings of the differentials on the levels of the stable curve and
the action on the \prmas. This leads to the notion of the level rotation
torus~$\Tprong[\Gamma]$, which will be defined as a finite cover of $(\cx^*)^N$
in Section~\ref{sec:twrot}. See in particular~\eqref{eq:actionLRT} for its action on \msds.
\smallskip
\par
{\em Twist groups and the singularities at the boundary.}   The twist group
$\Tw[\Gamma]$ can be considered as the subgroup of $\Tprong[\Gamma]$ fixing
all prongs under the rotation action. The rank of this group equals the number~$N$
of levels below zero, but the decomposition into levels does in general not
induce an isomorphism of the twist group with $\ZZ^N$. Instead, there is a
subgroup $\sTw[\Gamma]\subset\Tw[\Gamma]$ of finite index that is generated by rotations of
one level at a time. We comment below in connection with the model domain why
this subgroup naturally appears from the toroidal aspects of our compactification.
The quotient group $K_\Gamma = \Tw[\Gamma]/\sTw[\Gamma]$ is responsible for
the orbifold structure at the boundary of our compactification. These groups
are of course always abelian. Our running example in Section~\ref{sec:runningex},
a triangle graph, provides a simple instance where this group~$K_\Gamma$ is
non-trivial (see Section~\ref{sec:coverGH}).
\par
\medskip
\paragraph{\bf From the \Teichmuller space down to the moduli space}
Even though the result of our construction is an algebraic moduli space, our construction
of $\LMS$ starts via \Teichmuller theory and produces intermediate results relevant
for the geometry of moduli spaces of marked meromorphic differentials.
\par
To give the context, recall that recently Hubbard-Koch~\cite{HubKoch} completed a program
of Bers to provide the quotient of Abikoff's augmented \Teichmuller space~$\oldaugteich$
by the mapping class group with a complex structure such that this quotient is
isomorphic to the Deligne-Mumford compactification $\overline\calM_{g,n}$. As an
intermediate step they also provided, for any multicurve~$\Lambda$, the
classical Dehn space $D_\Lambda$ (which Bers in~\cite{bers} called ``deformation space"),
the quotient of $\oldaugteich$ by Dehn twists along~$\Lambda$, with a complex structure.
Our proof proceeds along similar lines, taking care at each step of the extra challenges
due to the degenerating differential.
\par
As a first step, recall that there are several natural topologies on $\barmoduli[g,n]$.
One can define the {\em conformal topology} where roughly a sequence~$X_n$ of pointed
curves converges to~$X$ if there exist diffeomorphisms~$g_n \colon X \to X_n$ that
are conformal on compact subsets that exhaust the complement of nodes and punctures. In the
{\em quasiconformal topology} one relaxes form conformal
to quasiconformal, but requires that the quasiconformal dilatation tends to zero.
Conformal maps are convenient, since they pull back holomorphic differentials to holomorphic
differentials. On the other hand, quasiconformal maps are easier to glue when a surface is
constructed from several subsurfaces. We therefore need both topologies, see
Section~\ref{sec:classaugteich} for precise definitions. The following is an abridged
version of Theorem~\ref{thm:topologiesarethesame}, which was announced
in \cite{MardenString} and \cite{earlemarden}.
\par
\begin{thm}
If $n\geq 1$, then the conformal and quasiconformal topologies on the augmented
\Teichmuller space $\oldaugteich$ are equivalent.
\end{thm}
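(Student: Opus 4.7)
The inclusion conformal $\supseteq$ quasiconformal is immediate: a conformal map is $1$-quasiconformal, so conformal convergence is automatically quasiconformal convergence, and hence the conformal topology is at least as fine as the quasiconformal one. The substantive reverse inclusion is what needs a proof, and the plan is to use the Measurable Riemann Mapping Theorem (MRMT) to convert quasiconformal almost-identifications into conformal ones.

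Concretely, suppose $X_m \to X_0$ in the quasiconformal topology, witnessed by almost-diffeomorphisms $g_m \colon X_0 \to X_m$ that collapse the vanishing curves of $X_0$ to its nodes and whose dilatation on every compact $K \subset X_0^{\mathrm{sm}}$ tends to $1$. Let $\mu_m$ denote the Beltrami coefficient of $g_m$ on $X_0^{\mathrm{sm}}$, so that $\|\mu_m\|_{L^\infty(K)} \to 0$ for each such $K$. Truncate $\mu_m$ smoothly to a coefficient $\tilde\mu_m$ that vanishes in shrinking annular collars about each node of $X_0$ and agrees with $\mu_m$ on compacta $K_m$ exhausting $X_0^{\mathrm{sm}}$; one still has $\|\tilde\mu_m\|_\infty \to 0$. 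Solve the Beltrami equation $\bar\partial \phi_m = \tilde\mu_m\, \partial \phi_m$ on each irreducible component of $X_0$, normalized by fixing the image of a chosen marked point (possible thanks to $n \geq 1$), to obtain a quasiconformal homeomorphism of marked noded Riemann surfaces $\phi_m \colon X_0 \to Y_m$. Since $\phi_m$ and $g_m$ share the same Beltrami coefficient on the support of $\tilde\mu_m$, the composition $h_m := g_m \circ \phi_m^{-1} \colon Y_m \to X_m$ is conformal on a set exhausting $Y_m^{\mathrm{sm}}$. Because $\tilde\mu_m \to 0$ in $L^\infty$ and the solution of the Beltrami equation depends continuously on its coefficient (under the marked-point normalization), the Teichm\"uller and plumbing coordinates of $Y_m$ tend to those of $X_0$, so $Y_m \to X_0$ in the conformal topology; composing with the almost-conformal identifications $h_m$ then yields conformal convergence $X_m \to X_0$, which is what was required.

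The main technical obstacle is the control of the truncation in the nodal collars: the cutoff must remove $\mu_m$ exactly where $g_m$ maps collars of $X_0$ into the pinching necks of $X_m$, while keeping $\|\tilde\mu_m\|_\infty \to 0$ and ensuring that $h_m$ respects the nodal structure across these collars so that $Y_m$ is a bona fide point of $\overline{\mathcal{T}}_{g,n}$ and $h_m$ is an almost-diffeomorphism of noded surfaces. This requires plumbing-type coordinates near each node and quantitative estimates on the dilatation of $g_m$ in these coordinates. The hypothesis $n \geq 1$ is used precisely to normalize the solution of the Beltrami equation: it eliminates the nontrivial continuous conformal automorphism groups of unstable components of $X_0$ (spheres with fewer than three special points), so that $\phi_m \to \mathrm{id}$ can be asserted rather than holding only up to a possibly wild automorphism that would spoil the convergence.
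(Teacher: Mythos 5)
Your proposal has a genuine gap in each direction.

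The direction you dismiss as ``immediate'' is in fact the harder half of the argument. In the quasiconformal topology the witnessing maps $g_m$ are required to be defined on an exhaustion of $X^s$ (marked points included) and to satisfy $g_m\circ\bfz=\bfz_m$, whereas in the conformal topology the $K_m$ exhaust $X^s\setminus\bfz$, so the conformal maps are not even defined at the marked points and cannot respect them. A conformal witnessing sequence is therefore \emph{not} automatically a quasiconformal one: you must extend each $g_m$ across the complementary disk around every $z_i$ to a quasiconformal map hitting $z_{i,m}$ exactly, with dilatation still tending to $0$. This is where the paper's proof does most of its work (normalized Fuchsian uniformizations, Montel's theorem and Carath\'eodory convergence of the complementary disks, a Douady--Earle extension corrected by a small quasiconformal map recentering the marked point), and it is also where $n\ge 1$ genuinely matters --- not, as you claim, to normalize away automorphisms of unstable components (stability of the pointed curve already excludes those), but because the two notions of convergence literally impose different conditions at the marked points.

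In the direction you do work out, the truncation of $\mu_m$ to $\tilde\mu_m$ followed by the Measurable Riemann Mapping Theorem produces a surface $Y_m$ that is in general \emph{not} $X$: killing the coefficient near the nodes changes the target complex structure. Your closing step ``$Y_m\to X$ in the conformal topology'' is precisely an instance of the implication being proved, since all you know is that $\phi_m\colon X\to Y_m$ is quasiconformal with dilatation tending to $1$; as written the argument is circular, and composing $h_m\colon Y_m\to X_m$ with anything still fails to produce conformal maps defined on compacta of $X$ itself. The missing ingredient is the paper's Lemma~\ref{lm:supercool}: by applying the Implicit Function Theorem to $\Phi\colon\calM^1(K)\oplus\calM^1(X\setminus K)\to\oldteich$ one corrects the truncated coefficient by a term supported on a compact set $K$ inside a neighborhood $U$ of the nodes and marked points, so that the resulting quasiconformal map $k_m$ is a self-map of $X$ fixing the marked points; then $h_m=g_m\circ k_m^{-1}$ is conformal outside $U$ and no auxiliary surfaces need to be compared. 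If you want to salvage your route, you must replace the bare cutoff by such a corrected coefficient whose associated surface is again $(X,\bfz)$.
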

\par
We upgrade this result in Section~\ref{sec:Space1} to provide also the universal bundle
of one-forms with conformal and quasiconformal topologies that coincides with the usual vector bundle
topology.
\par
\medskip
An outline for the construction of $\LMS$ is then the following. We start with a construction
of the \emph{augmented \Teichmuller space} $\Oaugteich$ of {\em flat surfaces of
type~$\mu$}. As a set, this is the union over all multicurves~$\Lambda$ of the moduli
spaces $\ptwT$ of marked \ptwds as defined in Section~\ref{sec:ptwds}. This mimics the
classical case,
with marked \ptwds taking the role of $\Lambda$-marked  stable curves. We then provide
$\Oaugteich$ with a topology that makes it a Hausdorff topological space
in Theorem~\ref{thm:PaugHausdorff}. For each
multicurve~$\Lambda$, the subspace of~$\Oaugteich$ of strata less degenerate than~$\Lambda$
admits an action of the twist group $\Tw[\Lambda]$ and the quotient is the
\emph{Dehn space~$\ODehn$}. Providing~$\ODehn$ with a complex structure is the goal
of the lengthy plumbing construction in Section~\ref{sec:Dehn}. As a topological space,
$\LMS$ is the quotient of the augmented \Teichmuller space $\Oaugteich$ by the action of
the mapping class group, and its structure as complex orbifold stems from its covering by
the images of Dehn spaces for all $\Lambda$.
\par
We next elaborate on two technical concepts in this construction.
\smallskip
\par
{\em Welded surfaces and asymptotically \tnp maps.}
In order to provide the augmented \Teichmuller space~$\Oaugteich$ with a topology,
we roughly declare a sequence $(X_n,\omega_{n,(i)})$ to be convergent if the curves
converge in the conformal
topology, there exist rescaling parameters $c_{n,(i)}$ such that the rescaled differentials
pull back to nearly the limit differential, and such that the rescaling parameters reflect
the relative sizes determined by the level graph of the limit nodal curve.
\par
To get a Hausdorff topological space one has to rule out unbounded twisting of the
diffeomorphism near the developing node in a degenerating family. The literature contains
formulations in the conformal topology that are not convincing and notions based
on Fenchel-Nielsen coordinates (see e.g.\ \cite[Section 15.8]{acgh}) that do not work
well conformally. Our solution is the following.
\par
Take a nodal curve with a \twd and perform a real blowup of the nodes, i.e.\ replacing each
preimage of each node with an~$S^1$. A prong-matching uniquely determines a way to identify
the boundary circles at the two preimages of each node to form a seam, thereby obtaining
a smooth {\em welded surface}. On such a surface we have a notion of turning number
of arcs, which needs to be
done with care, to consider only a finite collection of arcs; the details are given in Section~\ref{sec:turning}.
\smallskip
\par
{\em Level-wise real oriented blowups.} Usually, in the definition of the classical Dehn
space~$D_\Lambda $, markings are considered isomorphic if they agree up to twists along~$\Lambda$.
Such a definition however loses their interaction with the marking. We are forced to mark
the welded surfaces instead. This, in turn, is not possible over the base~$B$ of a family,
since the welded surface depends on the choice of the \twd in its $T_\Gamma$-orbit. As a
consequence we define a functorial construction of a \lw real oriented blowup
$\widehat{B} \to B$ and define markings using the pullback of the family to~$\widehat{B}$,
see Section~\ref{sec:rob}. Our construction is similar in spirit to several blowup
constructions in the literature, e.g.\ the Kato-Nakayama blowup~\cite{KatoNakayama}.
\par
\medskip
\paragraph{\bf The model domain and toroidal aspects of the compactification}
The augmented \Teichmuller space of flat surfaces parameterizes (marked) \msds,
and in particular admits families in which the underlying Riemann surfaces can
degenerate. In contrast, the model domain only parameterizes equisingular families,
where the topology of the underlying (nodal) Riemann surfaces remains constant, and only
the scaling of the differential on the components varies, while remaining non-zero.
Families of such objects are called model differentials, which serve as auxiliary
objects for our construction. The open model domain~$\MD$ is a finite cover
of a suitable product of (quotients of) \Teichmuller spaces and thus automatically comes
with a complex structure and a universal family.
\par
We define a toroidal compactification $\barMD$ of $\MD$ roughly by allowing the rescalings to
attain the value zero. The actual definition given in Section~\ref{sec:modeldomain} is
not as simple as locally embedding $(\Delta^*)^N \hookrightarrow \Delta^N$, but rather
a quotient of this embedding by the group~$K_\Gamma$ defined above.
As a result, $\barMD$ is a smooth orbifold and the underlying singular space
is a fine moduli space for families of model differentials, called the model domain.
\par
\medskip
\paragraph{\bf The plumbing construction and perturbed period coordinates}
\changed{Recall that the Dehn space $\ODehn$ is the quotient of the subspace of $\Oaugteich$ of strata less degenerate than~$\Lambda$
by the action of the twist group $\Tw[\Lambda]$.} 
We use the model domain to induce a complex structure on~$\ODehn$.
In order to do this, we define a plumbing construction that starts with a family of model
differentials and construct a family of \msds.
The point of this construction is that starting with an equisingular family of curves
with variable scales for differentials, which may in particular be zero, the plumbing
constructs a family of curves of variable topology with a family of non-zero differentials
on the smooth fibers. Whenever the scale is non-zero, the plumbing ``plumbs'' the node,
i.e.~smoothes it in a controlled way. The goal of our elaborate plumbing construction is
to establish the local homeomorphism of the Dehn space with the model domain. As in~\cite{strata},
to be able to plumb one needs to match the residues of the differentials at the two preimages
of every node, and thus in particular one needs to add a small modification differential. We
then argue that the resulting map will still be a homeomorphism of moduli spaces, and to
this end we use the perturbed period coordinates introduced in Section~\ref{sec:modif}.
\par
{\em Perturbed period coordinates} are coordinates at the boundary of our compactification.
They consist of periods of the twisted differential, parameters for the level-wise rescaling,
and a classical additional plumbing parameter for each node joining components on the
same level. These periods are close, but not actually equal, to the periods of the
plumbed differential, whence the name. See~\eqref{eq:defPPer} for the precise amount
of perturbation.
\par
Finally, in Section~\ref{sec:Dehn} we complete this setup and define the plumbing map in
full generality and prove in Theorem~\ref{thm:plumbing} that plumbing is a local
diffeomorphism. This is used in Theorem~\ref{thm:DehnIsOrbi} to show that~$\ODehn$ is a
complex orbifold.
\par
\medskip
\paragraph{\bf Families and the universal properties}
The functorial viewpoint and the proof of Theorem~\ref{intro:main}~(4) rely
on showing first in Theorem~\ref{thm:univnonsimpleMMS} that the Dehn space~$\ODehn$
is a fine moduli space for a functor of marked multi-scale differentials. In order to
provide families of multi-scale differentials with a Teichm\"uller marking, we need
to deal with the problem that the equivalence relation in Definition~\ref{def:one_msd}
will twist the marking around the vertical nodes. To counterbalance
this, the marking will not be defined on the original family, but on the family
of welded surfaces over a real oriented blowup of the base, where the blowup
structure is triggered by the level structure. The existence of the appropriate
\emph{level-wise real blowup} is proven in Theorem~\ref{thm:ORBL}.
\par
The proof of the universal property of~$\ODehn$ uses the (obvious) universal property
of the model domain. To make use of it,  we introduce an unplumbing construction
that is roughly an inverse of the plumbing construction. This  unplumbing construction
is based on the normal form Theorem~\ref{thm:NF} for families of differentials on
plumbing fixtures to pinch off the node and get an equisingular family, after
subtracting differentials that play the inverse role of the modification differentials
above. Finally, Theorem~\ref{thm:coarseMS} completes the program by taking
a family of multi-scale differentials, providing it locally with a marking,
and gluing the moduli maps that universal property of~$\ODehn$ gives.
\par
\medskip
\paragraph{\bf Algebraicity, families, and the orderly blowup construction}
To prove the algebraicity in the main theorem and to prove the precise relation
of $\LMS$ to the IVC we need to encounter some of the details of families of \msds.
First, since $\LMS$ is normal, the forgetful map factors through the normalization
of the IVC. This corresponds to memorizing the extra datum of enhancement of the
dual graph and the prong-matching.
Second, a family of \msds admits level-by-level rescaling, while twisted differentials
a priori do not. While for twisted differentials there exists a rescaling
parameter for each irreducible component, they might be mutually incomparable
or, as we say, disorderly. We thus design in Section~\ref{subsec:blowup} locally a blowup,
the {\em orderly blowup} of the base of a family such that the rescaling parameters
can be put in order, i.e.\ a divisibility relation according to the level structure.
However, the resulting blowup is in general not even normal. The third step is
thus geometrically the normalization of the resulting space. In families of \msds
this is reflected by including the notion of a {\em rescaling ensemble} given in
Definition~\ref{def:RescEns}. It ultimately reflects the normality of the
toroidal compactifications by $\Delta^N/K_\Gamma$ used above.
This procedure, culminating in Theorem~\ref{thm:LMS-blowup}, is summarized as follows.
\par
\begin{thm} \label{intro:orderly}
  The moduli stack of projectivized \msds  $\PP\MSgrp$ is the normalization
of the orderly blowup of the normalization of the IVC.
\end{thm}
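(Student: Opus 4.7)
The plan is to establish the identification by constructing a comparison map between the two stacks and showing it is an isomorphism, using the universal properties of blowups and normalization together with the local results already in hand.

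First, I would produce a map $\PP\MSgrp \to$ (normalization of the orderly blowup of $\mathrm{IVC}^\nu$), where $\mathrm{IVC}^\nu$ denotes the normalization of the IVC. The starting point is the forgetful morphism $\PP\MSgrp \to \mathrm{IVC}$ of Theorem~\ref{intro:main}(6), which remembers only the underlying twisted differential while discarding the enhancement of the dual graph and the prong-matching. Since $\PP\MSgrp$ is a normal Deligne–Mumford stack (by Theorem~\ref{intro:funct}, combined with $\PP\LMS$ being an orbifold), this map factors uniquely through $\mathrm{IVC}^\nu$. The crucial additional datum carried by a family of \msds is the rescaling ensemble of Definition~\ref{def:RescEns}: it supplies, on the base $B$, the level-by-level rescaling parameters satisfying exactly the divisibility relations dictated by the level graph. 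This is precisely the universal property of the orderly blowup of $\mathrm{IVC}^\nu$ constructed in Section~\ref{subsec:blowup}, so the map lifts to the orderly blowup; normality of $\PP\MSgrp$ then forces a further unique factorization through its normalization.

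Next, I would construct the inverse map. The content here is that the normalization of the orderly blowup carries a tautological family of \msds. Locally on a boundary stratum corresponding to an enhanced level graph $\Gamma$ with $N$ levels below zero, the orderly blowup is by construction modeled on a toric chart along whose boundary the rescaling parameters are divisibility-ordered; its normalization then gives the toric model $\Delta^N/K_\Gamma$ which is exactly the local model of $\barMD$ studied in Section~\ref{sec:modeldomain}. This chart carries a universal model differential, to which the plumbing construction of Section~\ref{sec:Dehn} associates a family of \msds, and Theorem~\ref{thm:plumbing} asserts that plumbing is a local diffeomorphism to the Dehn space. Assembling these local families along the toric boundary gives a global family of \msds over the normalization of the orderly blowup, and hence the desired morphism to $\PP\MSgrp$ by the universal property of the moduli stack.

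The main obstacle, and the step demanding most care, is reconciling the two equivalence relations in play: the toric quotient $\Delta^N/K_\Gamma$ produced by normalization on the blowup side and the level-rotation-torus equivalence $\Tprong[\Gamma]$ built into the definition of $\PP\MSgrp$. The reconciliation relies on the factorization indicated in the introduction: the subgroup $\sTw[\Gamma] \subset \Tw[\Gamma]$ of simple level-wise rotations accounts for the toric boundary parameters $t_i$, while the quotient $K_\Gamma = \Tw[\Gamma]/\sTw[\Gamma]$ is exactly the finite group by which one must quotient to obtain the normalization. Matching the $K_\Gamma$-action on $\Delta^N$ with the residual action on prong-matchings at the boundary is what makes the two constructions coincide.

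Finally, I would check that the two morphisms are mutually inverse. In one direction the composition is forced to be the identity by the universal properties, since both factorizations through normalizations and through the orderly blowup are unique. In the other direction the composition agrees with the identity at every closed point by the explicit description of both families via plumbing, and Theorem~\ref{thm:DehnIsOrbi} upgrades this pointwise identification to an isomorphism of complex analytic stacks; the algebraicity stated in Theorem~\ref{intro:main}(4) then promotes this to an isomorphism of Deligne–Mumford stacks.
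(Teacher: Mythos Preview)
Your construction of the map $\PP\MSgrp \to (\text{normalization of the orderly blowup of }\mathrm{IVC}^\nu)$ is essentially the paper's argument: a family of \msds is orderly by definition (the rescaling parameters $s_i$ from the rescaling ensemble serve as adjusting parameters with the required divisibility), so Proposition~\ref{prop:blowup} gives the factorization through the orderly blowup, and normality of $\PP\MSgrp$ pushes it through the normalization.

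The gap is in your inverse map. You propose to identify the normalized orderly blowup locally with $\Delta^N/K_\Gamma$, recognize this as the model domain $\barMD$, and then apply plumbing. But this does not work as stated. First, the normalized orderly blowup already carries a family of \emph{degenerating} stable curves pulled back from the IVC; it does not carry an equisingular family, so there is no model differential to plumb. Second, the assertion that the normalized orderly blowup is locally $\Delta^N/K_\Gamma$ is not established independently in the paper---it is essentially a \emph{consequence} of the theorem you are trying to prove, so invoking it here is circular.

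The paper's route for the inverse map is much more direct and avoids plumbing entirely: since $\mathrm{IVC}^\nu$ is normal, Proposition~\ref{prop:normal-rescalable} makes the universal family over it adjustable; the orderly blowup makes it orderly; and then Proposition~\ref{prop:orderly-msd} shows that an orderly family over a normal base automatically determines an enhanced level graph, a collection of rescaled differentials, prong-matchings, and a compatible rescaling ensemble---that is, a family of \msds. This gives the map from the normalized orderly blowup to $\MSgrp$ by the universal property of the latter. The key technical input you are missing is Proposition~\ref{prop:orderly-msd}, which packages the passage from ``orderly family over normal base'' to ``family of \msds'' without any reference to plumbing or to the toric local models.
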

\par
Algebraicity and the remaining properties of the main theorems above follow from this
result. The zoo of notations is summarized in a table at the end of the paper.

\subsection*{Acknowledgments}

We are very grateful to the American Institute of Mathematics (AIM)
for supporting our research as SQuaRE meetings in 2017--2019, where we
made much progress on this project. We are also grateful to Institut
f\"ur Algebraische Geometrie of the Leibniz Universit\"at Hannover,
Mathematisches Forschungsinstitut Oberwolfach (MFO),  Max Planck
Institut f\"ur Mathematik (MPIM, Bonn), Casa Matem\'atica Oaxaca
(CMO), and the Mathematical Sciences Research Institute (MSRI,
Berkeley) as well as the organizers of various workshops there, where
various subsets of us met and collaborated on this project. We
thank Sebastian Casalaina-Martin, Qile Chen, Matteo Costantini, David Holmes,
John Smillie, Jakob Stix, Michael Temkin, Ilya Tyomkin, Scott Wolpert, Alex
Wright, Jonathan Zachhuber, and Anton Zorich for inspiring discussions
on differentials, log structures, moduli, algebraicity, and stacks. \changed{Finally we thank the anonymous referee for very carefully reading the paper and copious insightful comments and corrections.}

\section{Notation and background} \label{sec:notation}

The purpose of this section is to recall notation and the main result
from \cite{strata}. Along the way we introduce the notion of
{\em enhanced level graphs} that records the extra data of orders of
zeros and poles that compatible twisted differentials should have.
\par
\medskip
\subsection{Flat surfaces with marked points and their strata.}
\label{sec:flatsurfstrata}
A {\em type} of a (possibly meromorphic) abelian differential
on a Riemann surface is a tuple of integers $\mu=(m_1,\dots,m_n)\in\ZZ^n$
with $m_j\ge m_{j+1}$, such that $\sum_{j=1}^n m_j=2g-2$. We assume that there are
$r$ positive $m$'s, $s$ zeroes, and $l$ negative $m$'s, with $r+s+l=n$,
i.e.,~that we have
$m_1\ge \dots\ge m_r>m_{r+1}=\dots=m_{r+s} = 0>m_{r+s+1}\ge\dots \ge m_n$.
Note that $m_j=0$ is allowed, representing an ordinary marked point.
We use the abbreviation $\on = \{1,\dots,n\}$.
\index[other]{$\on=\{1,\dots,n\}$}
\par
A {\em (pointed) flat surface} or equivalently a {\em (pointed) abelian
differential} is a triple $(X,  \bfz, \omega)$, where~$X$ is
a (smooth and connected) compact  genus~$g$ Riemann surface,~$\omega$ is a non-zero
meromorphic one-form on~$X$, and $\bfz\colon \on \hookrightarrow X$ is an
injective function such that $\ord_{\bfz(j)}\omega = m_j$ for each $j$, and moreover every
zero or pole of $\omega$ is marked by some $\bfz(j)$. We also
denote by $z_j$ the marked point $\bfz(j)$.
\par
The rank $g$ Hodge bundle of holomorphic (stable) differentials on~$n$-pointed
stable genus $g$ curves, denoted by $\obarmoduli[g,n]\to\barmoduli[g,n]$, is the total
space of the relative dualizing sheaf $\pi_* \omega_{\calX/\barmoduli[g,n]}$, where
$\pi\colon \calX \to \barmoduli[g,n]$ is the universal curve. We denote the polar part
of $\mu$ by $\tilde\mu=(m_{r+s+1},\dots,m_n)$. We then define the {\em (pointed) Hodge
bundle twisted by $\tilde{\mu}$} to be the bundle
$$K\barmoduli[g,n](\tilde{\mu}) \= \pi_* \Bigl(\omega_{\calX/\barmoduli[g,n]}
\Bigl(-\sum_{j=r+s+1}^n m_j \calZ_j\Bigr)\Bigr)$$
over $\barmoduli[g,n]$, where we have denoted by $\calZ_j$ the image of the
section~$z_j$ of the universal family $\pi$ given by the~$j$-th marked point.
\index[family]{b002@$\calZ_j$!  Image of the section~$z_j$}
The formal sums
\begin{equation}\label{eq:hordivZ}
\calZ^0 \= \sum_{j=1}^{r+s} m_j \calZ_j  \quad \text{and} \quad \calZ^\infty
\= \sum_{j=r+s+1}^n m_j \calZ_j
\end{equation}
are called the {\em (prescribed) horizontal zero divisor} and {\em (prescribed)
horizontal polar divisor} respectively.
\index[family]{b004@$\calZ^0$! Horizontal zero divisor}
\index[family]{b008@$\calZ^\infty$! Horizontal polar divisor}
\par
The {\em moduli space of abelian differentials of type $\mu$} is denoted (still)
by $\omodulin(\mu)\subseteq K\barmoduli[g,n](\tilde\mu)$, and consists
of those pointed flat surfaces where the divisor of~$\omega$ is
equal to $\sum_{j=1}^n m_j z_j$. We denote by adding $\PP$ to the Hodge bundle
(resp.\ to the strata) the projectivization, i.e., when we want to parameterize differentials
up to scale. The {\em (ordered) incidence variety compactification}
(IVC for short) is then defined to be the closure
$\PP\obarmoduliinc{\mu}$ inside $\PP K\barmoduli[g,n](\tilde{\mu})$ of the
(projectivized) moduli space of abelian differentials of type $\mu$. A point
$(X,\bfz,\omega)\in\PP\obarmodulin(\tilde\mu)$ is called a
{\em pointed stable differential.} The main result of \cite{strata}
is to precisely describe this closure, as we recall below.
\par

\subsection{Removing the labeling by the ${\rm Sym}(\mu)$-action}
\label{sec:symnu}
We emphasize again that throughout the paper and in particular in the moduli space~$\LMS$
in our main theorem the points are labeled. We let ${\rm Sym}(\mu) \subset S_n$ be the
subgroup of permutations that permutes only points with the same prescribed order~$m_i$.
This group acts on the moduli space $\omoduli[g,n](\mu)$ with quotient
the moduli space $\omoduli[g](\mu)$, which gives the usual strata of the Hodge
bundle if $\mu$ is strictly positive. The reader is invited to
check along the whole paper that ${\rm Sym}(\mu)$ acts everywhere and in
particular on $\LMS$. The quotient $\LMS/{\rm Sym}(\mu)$ is a compactification
of $\omoduli[g](\mu)$.
\par

\subsection{Graphs, level graphs and ordered stable curves.}
\label{sec:graphetc}
Throughout this paper $\aG$ will be a graph, which is allowed to
have loops as well as half edges, and connected unless
explicitly stated otherwise. We denote the set of vertices by $V(\aG)$, the set of edges by
$E(\aG)$, and the set of half-edges by $H(\aG)$.
\index[graph]{b013@$V(\aG)$ ! Vertices of $\aG$}
\index[graph]{b014@$E(\aG)$ ! Edges of $\aG$}~We denote by $\val(v)$ the {\em valence} of a vertex $v \in V$,
\index[graph]{b016@$\val(v)$ ! Valence of the vertex $v$} the number of ordinary edges incident to $v$ (a self-loop is counted twice).
\par
A {\em (weak) full order} $\succcurlyeq$ on the graph $\aG$ is an order $\succcurlyeq$
on the set of vertices $V(\aG)$ that is reflexive, transitive,
and such that for any $v_1, v_2 \in V$ at least one of the
statements $v_1 \succcurlyeq v_2$ or $v_2 \succcurlyeq v_1$ holds. The pair $\lGp=(\aG,\succcurlyeq)$ is called a {\em level graph}.
\index[graph]{b010@$\lGp=(\Gamma,\succcurlyeq)$, $\lG$!Level graph with full order $\succcurlyeq$}
In what follows it will be convenient to assume that the full order on $\lGp$ is induced by a {\em level function} $\ell\colon V(\aG)\to\ZZ_{\le 0}$ such that the vertices of top level are elements of the set $\ell^{-1}(0)\ne\emptyset$, and the comparison between vertices is by comparing their $\ell$-values. Any full order can be induced by a level function, but not by a unique one. We thus use the words level graph and a full order on a graph interchangeably. We let
$L^\bullet(\lGp) = \lbrace a\in\ZZ:\ell^{-1}(a)\ne\emptyset\rbrace$ be the
set of levels and let $L(\lGp)$ be the set of all but the top level.
\index[other]{$\uN \= \{0,-1,\dots,-N\}$}
\index[graph]{b020@$L^\bullet(\lGp) $!Set of levels of the level graph $\lGp$}
\index[graph]{b030@$L(\lGp) $!Set of all but the top level of the level graph $\lGp$}
We usually use the {\em normalized level function}
\index[graph]{b045@$\ell \colon \aG \to \uN$!Normalized level function}
\begin{equation}\label{eq:normlev}
\ell\colon \aG \to \uN \= \{0,-1,\dots,-N\}\,,
\end{equation}
where $N = |L^\bullet(\lGp)|-1=|L(\lGp)|\in\ZZ_{\ge 0}$ is the number of levels strictly below $0$.
\index[graph]{b040@$N $! Number of levels strictly below $0$}
\par
For a given level~$i$ we call the subgraph of $\lGp$ that consists of all
vertices $v$ with $\ell(v) > i$, along with edges between them, the
{\em graph above level~$i$} of $\lGp$, and denote it by~$\lGp_{>i}$. We similarly define the graph $\lGp_{\geq i}$
{\em above or at level~$i$}, and use $\lGp_{(i)}$
\index[graph]{b050@$\lGp_{(i)}$, $\lGp_{>i}$!Subgraph at (resp. above) level~$i$ of $\lGp$}
to denote the graph {\em at level~$i$}.  Note that these graphs are usually disconnected.
\par
If $\dG$ is the dual graph of a stable curve $X$ with pointed differential of
type~$\mu$, \changed{we write $X_v$ for the irreducible component of~$X$ associated to a vertex~$v$. 
The dual graph $\dG$ of a pointed stable curve~$(X,\bfz)$ has half-edges, where a half-edge at a vertex $v$ corresponds to a marked point $z_i\in X_v$. We denote by $\mu_v$ for $v \in V(\Gamma)$ the vector recording the orders $m_i$'s at the marked points on the component~$X_v$}.
We also let  $n_v = \val(v) + |\mu_v|$ be the total number of special points (marked points and nodes) of
a component~$X_v$ of a stable curve.
\par
\begin{df}\label{def:graphs}
An edge $e\in E(\lGp)$ of a level graph $\lGp$ is called {\em horizontal} if
it connects two vertices of the same level, and is called {\em vertical}
otherwise. Given a vertical edge~$e$, we denote by $\topvert$ (resp.~$\botvert$)
the vertex that is its endpoint of higher (resp.~lower) order.
\end{df}
\par
We denote the sets of vertical and horizontal edges by~$\vertedge[\lGp]$
and by~$\horedge[\lGp]$ respectively.
\index[graph]{b015@$\vertedge$, $\horedge$! Set of vertical, resp. horizontal, edges of $\lG$}
Implicit in this terminology is our convention
that we draw level graphs so that the map~$\ell$ is given by the projection
to the vertical axis.
\par
We call a stable curve~$X$ equipped with a full order on its dual graph $\dG$
an \emph{ordered stable curve}. We will write 
$X_{(i)}$ for the (possibly disconnected) union of the irreducible
components~$X_v$ such that $\ell(v)=i$.
\index[surf]{b070@$X_{(i)}$!Components of~$X$ at level~$i$}
We write~$q_e$ for the node associated to
an edge~$e$.  We call such a node \emph{vertical} or \emph{horizontal} accordingly.
The set of nodes of~$X$ is denoted by $\N$, the set of vertical nodes by $\Nver$ and
the set of horizontal nodes by~$\Nhor$.
\index[surf]{b025@$\N$!Set of nodes of~$X$}
\index[surf]{b027@$\Nver$!Set of vertical nodes of~$X$}
\index[surf]{b028@$\Nhor$!Set of horizontal nodes of~$X$}
\par
For a vertical node $q_{e}$ of~$X$ corresponding to an edge $e$ we write $q_{e}^+\in X_{(\ltop)}$ and $q_{e}^-\in X_{(\lbot)}$ for the
two points lying above~$q_{e}$ in the normalization, and for the irreducible components
in which they lie, ordered so that~$X_{(\lbot)} \prec X_{(\ltop)}$. Moreover we denote the levels of $q_{e}^{\pm}$ by  $\ltopbot$, respectively.
\index[graph]{b060@$\lqtopbot$, $\ltopbot$!Bottom and top levels of the ends of a node}
We use the same notation for horizontal nodes, making an arbitrary choice
of label $\pm$.
\par
\medskip
\subsection{Twisted differentials and the IVC}
\label{sec:deftwd}
Recall from  \cite{strata} that a {\em \twd~$\eta$ of type $\mu$}
on a stable~$n$-pointed curve $(X,\bfz)$ is a collection of
\index[surf]{b020@$(X,\bfz)$!Pointed stable curve of genus $g$}
\index[surf]{b025@$X_v$!Irreducible component of~$X$}
(possibly meromorphic) differentials $\eta_v$ on the irreducible components~$X_v$ of~$X$,
such that no $\eta_v$ is identically zero, with the following properties:
\begin{itemize}
\item[(0)] {\bf (Vanishing as prescribed)} Each differential $\eta_v$ is
holomorphic and non-zero outside of the nodes and marked points of~$X_v$.
Moreover, if a marked point $z_j$ lies on~$X_v$, then $\ord_{z_j} \eta_v=m_j$.
\item[(1)] {\bf (Matching orders)} For any node of~$X$ that identifies
$q_1 \in X_{v_1}$ with $q_2 \in X_{v_2}$,
$$\ord_{q_1} \eta_{v_1}+\ord_{q_2} \eta_{v_2}\=-2\,. $$
\item[(2)] {\bf (Matching residues at simple poles)}  If at a node of~$X$
that identifies $q_1 \in X_{v_1}$ with $q_2 \in X_{v_2}$ the condition $\ord_{q_1}\eta_{v_1}=
\ord_{q_2} \eta_{v_2}=-1$ holds, then $\Res_{q_1}\eta_{v_1}+\Res_{q_2}\eta_{v_2}=0$.
\end{itemize}
Let $\lGp=(\dG,\succcurlyeq)$ be a level graph where $\dG$ is the dual graph of~$X$.
A \twd~$\eta$ of type $\mu$ on~$X$
is called {\em compatible with~$\lGp$} if in addition it also
satisfies the following two conditions:
\begin{itemize}
\item[(3)]{\bf (Partial order)} If  a node of~$X$  identifies
$q_1 \in X_{v_1}$ with $q_2 \in X_{v_2}$, then $v_1\succcurlyeq  v_2$ if and
only if $\ord_{q_1} \eta_{v_1}\ge -1$. Moreover,  $v_1\asymp v_2$ if and only if
$\ord_{q_1} \eta_{v_1} = -1$.
\end{itemize}
We remark that this condition only uses the partial order induced by
$\lGp$ on the vertices that are connected by an edge, while the most subtle
condition, which uses the full order, is the following.
\begin{itemize}
\item[(4)] {\bf (Global residue condition)} For every level~$i$
and every connected component~$Y$ of $X_{>i}$ that does not
contain a marked point with a prescribed pole (i.e., there is no $z_i\in Y$
with $m_i<0$) the following condition holds.
\index[surf]{b080@$X_{>i}$!Components of~$X$ at level $>i$}
Let $q_1,\dots,q_b$ denote the set of all nodes where~$Y$ intersects $X_{(i)}$. Then
$$ \sum_{j=1}^b\Res_{q_j^-}\eta \=0\,,$$
where by definition $q_j^-\in X_{(i)}$.
\end{itemize}
\par
For brevity, we write GRC for the global residue condition. We denote
a \twd compatible with a level $\cleq$ by $(X, \bfz, \eta, \cleq)$. Moreover, we will usually
group the restrictions of the twisted differential~$\eta$ according to the
levels of~$\ell$. We will denote the restriction of~$\eta$ to the subsurface
$X_{(i)}$ by~$\eta_{(i)}$.
\index[surf]{b075@$\eta_{(i)}$!Restriction of the twisted differential~$\eta$ on $X_{(i)}$}
\par
We have shown in \cite[Theorem~1.5]{strata}:
\begin{thm} A pointed stable differential $(X,\omega, \bfz)$ is contained in the
incidence variety compactification of $\proj\omodulin(\mu)$ if and only if the
following conditions hold:
\begin{itemize}
\item[(i)] There exists an order $\succcurlyeq$ on the dual graph $\Gamma_X$ of~$X$ such that its maxima are
the irreducible components $X_v$ of~$X$ on which~$\omega$ is not identically zero.
\item[(ii)] There exists a twisted differential~$\eta$ of type $\mu$ on~$X$,
compatible with the level graph $\lGp=(\Gamma_X,\succcurlyeq)$.
\item[(iii)] On every irreducible component $X_v$ where~$\omega$ is not
identically zero, $\eta_{v} = \omega|_{X_v}$.
\end{itemize}
\end{thm}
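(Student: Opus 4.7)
The proof has two directions. For necessity, let $(X,\omega,\bfz)$ lie in the IVC and realize it as the limit of a holomorphic one-parameter family $(\calX_t, \omega_t, \bfz(t))_{t \in \Delta}$ with $(\calX_t, \omega_t, \bfz(t)) \in \omodulin(\mu)$ for $t \neq 0$. For each irreducible component $X_v \subset X$, let $a_v \geq 0$ be the largest integer such that $t^{-a_v}\omega_t$ extends to a non-zero meromorphic differential on a neighborhood of the smooth locus of $X_v$ inside $\calX$. Define $\ell(v) := -a_v$; this is a level function whose maxima are precisely the components where $\omega \not\equiv 0$, and set $\eta_v := \lim_{t \to 0} t^{-a_v}\omega_t|_{X_v}$. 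Matching of vanishing orders in plumbing coordinates at each node gives conditions~(0) and~(1), (3) is immediate from the definition of~$\ell$, and (2) follows from the classical residue theorem on the smooth fibers.

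The heart of the argument is the global residue condition~(4). Let $Y$ be a connected component of $X_{>i}$ containing no prescribed pole and let $q_1, \ldots, q_b$ be the nodes where $Y$ meets $X_{(i)}$. On smooth fibers, $Y$ thickens to a subsurface $Y_t \subset \calX_t$ bounded by loops $\gamma_{j,t}$ around the vanishing cycles. Since $\omega_t$ is holomorphic on $Y_t$, Stokes' theorem yields $\sum_{j=1}^b \oint_{\gamma_{j,t}} \omega_t = 0$. Expanding $\omega_t$ in adapted coordinates at each node and extracting the lowest non-vanishing order in~$t$ produces exactly the relation $\sum_{j=1}^b \Res_{q_j^-} \eta = 0$: the upper-side contributions cancel by Stokes applied on each irreducible piece of $Y$ individually at a milder order in $t$, and the next contributing order comes from the residues of the rescaled lower differential $\eta_{(i)}$.

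For sufficiency, given a compatible twisted differential~$\eta$, one builds a smoothing family by iterated plumbing ordered by level from top to bottom. At a vertical node between levels $i > j$, the two sides carry $\eta_{(i)}$ and the rescaled $t^{i-j}\eta_{(j)}$, and matching across a plumbing annulus $uv = t$ requires equality of residues; condition~(1) ensures matching orders and condition~(2) handles the simple-pole case. When a pole of order~$\geq 2$ on the lower side meets a zero on the upper side, the residue mismatch is cancelled by perturbing $\eta_{(>i)}$ by a meromorphic one-form with prescribed residues at the upper preimages of $q_1,\ldots,q_b$. Such a perturbation exists on each connected component of $X_{>i}$ containing no prescribed pole exactly when the prescribed residues sum to zero there, which is precisely the GRC. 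The main obstacle is to choose these corrections consistently across all levels and verify that the plumbed family extends holomorphically to $t=0$ with the required central fiber; this is carried out inductively via the implicit function theorem in a suitable Banach space of differentials, the required corrections tending to zero as $t \to 0$.
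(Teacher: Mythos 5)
First, a point of reference: this paper does not prove the statement — it is recalled verbatim from \cite[Theorem~1.5]{strata}, so there is no in-paper proof to compare against. Your proposal follows the strategy of that reference: necessity via a one-parameter degeneration and the vanishing orders $a_v$ of $\omega_t$ along the components of the central fiber, with the global residue condition extracted from a period/Stokes identity on a thickened subsurface; sufficiency via plumbing, with modifying differentials whose existence on the components of $X_{>i}$ free of prescribed poles is governed by the residue theorem and hence is exactly the GRC. That is the right architecture.

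One step, as written, is incorrect. A connected component $Y$ of $X_{>i}$ need not meet $X_{\le i}$ only along $X_{(i)}$: it may be joined by nodes directly to components of level strictly below $i$. The thickened subsurface $Y_t$ is therefore bounded by loops around \emph{all} nodes joining $Y$ to $X_{\le i}$, not only around $q_1,\dots,q_b$, and Stokes' theorem gives the vanishing of the sum of periods over the full boundary; your identity $\sum_{j=1}^b\oint_{\gamma_{j,t}}\omega_t=0$ is false in general. The repair is standard: divide the full relation by the scaling parameter of level $i$ and let $t\to 0$; the terms coming from nodes descending to levels $<i$ tend to zero because their residues are controlled by strictly smaller scales, and only the level-$i$ terms survive, giving the GRC. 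Relatedly, the sentence about ``upper-side contributions cancel by Stokes applied on each irreducible piece of $Y$'' has no content in the correct argument — the upper preimages $q_j^+$ carry zeros of $\eta$ and contribute no residue; what must be controlled are precisely the deeper boundary nodes you omitted. The remaining ingredients you gesture at (curve selection to produce the one-parameter family, normality of the total space so that the $a_v$ and the limits $t^{-a_v}\omega_t|_{X_v}$ are well defined, the local normal-form analysis at the nodes yielding conditions (1)--(3), and the level-by-level consistency of the modifying differentials) are genuinely needed and are supplied in \cite{strata} along the lines you indicate.
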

\par
\subsection{Enhanced level graphs} \label{sec:enh}
Note that a boundary point of the IVC does not necessarily determine a \twd uniquely,
see~\cite[Examples~3.4 and~3.5]{strata}.
The full combinatorics of a \twd is encoded by the following notion.
\par
An \emph{enhanced level graph}~$\eGp$ of type $\mu = (m_1, \dots, m_n)$
\index[graph]{b070@$\eGp$, $\eG$!Enhanced level graph}
is a level graph $\lGp$ together with a numbering of the  half-edges by $\on$ and
with an assignment of a positive number $\kappa_e \in \nats$ for each
vertical edge $e\in\vertedge[\lGp]$. The  \emph{degree} of a vertex $v$ in $\eGp$ is defined to be
  \begin{equation*}
    \deg(v) \= \sum_{j\mapsto v} m_j + \sum_{e \in E^+(v)}  (\kappa_e-1)
- \sum_{e \in E^-(v)} (\kappa_e+1) \,,
  \end{equation*}
where the first sum is over all half-edges incident to~$v$, and
the remaining sums are over the edges~$E^+(v)$ and $E^-(v)$
incident to~$v$ that are going  from $v$ to a respectively lower and upper vertex.
In terms of the notation in Definition~\ref{def:graphs}, the set $E^+(v)$ is the set of edges $\{e\in E(\lGp):\topvert=v\}$.
We require that
  \begin{itemize}
    \item[(i)] {\bf (Admissible degrees)} the degree of each vertex is even
and at least $-2$, and
  \item[(ii)] {\bf (Stability)} the valence of each vertex of degree $-2$ is at least three.
  \end{itemize}
  \par
Our notion of enhancement is equivalent to the notion of twist used e.g.\
in \cite{fapa} or \cite{CMSZ}. The main example is the enhanced level graph~$\eGp_X$ of a
\twd $(X, \bfz, \eta)$, obtained by assigning to each vertical node $q$ the
weight
\begin{equation}\label{eq:kappa}
  \kappa_q = \ord_{q^+} \eta +1\,.
\end{equation}
\index[twist]{b005@$\kappa_q$!Number of prongs, equal to $\ord_{q^+} \eta +1$}In these terms, the above stability condition is equivalent to stability
of $(X, \bfz)$.  The degree of
a vertex~$v$ is the degree of~$\eta_v$. The admissible degrees condition ensures that such
a $\eGp$ can be realized as the enhanced level graph of some \twd. We
also say that a \twd $(X,\eta)$ is compatible with $\eGp$ if it is
compatible with the underlying level graph $\lGp$ and if the markings of $\eGp$
are the weights of~$\eta$ just defined.
\par
In order to keep notation concise, we will denote by $\eG$ the dual graph $\dG$ of
a curve~$X$, a level graph $\lGp$ and write for an enhanced graph $\eGp$,
or simply $\eG$, as appropriate.
\par
\subsection{The running example} \label{sec:runningex}

In order to illustrate the notions that were introduced, we will describe an example.
This example will be used throughout the text to exemplify the different notions
that we will introduce. We will refer to it as the {\em running example}.
\par
The example is for the moduli space $\Omega\calM_{5,4}(4,4,2,-2)$. We fix the curve whose
dual graph is a triangle, with the level function taking three different values $0,-1,-2$
on it, so that the level graph is fixed. The irreducible components are of genus~$3$
(at top level), genus~$1$ (at the intermediate level) and genus~$0$ (at the bottom level).
This level graph admits two different enhanced structures, which we denote~$\eG_{1}$
and~$\eG_{2}$, as pictured in Figure~\ref{cap:running}.
\begin{figure}[ht]
\begin{tikzpicture}[scale=1.9,decoration={
    markings,
    mark=at position 0.5 with {\arrow[very thick]{>}}}]
\fill (0,0) coordinate (x0) circle (1pt); \node [above] at (x0) {$g(X_{(0)})=3$};
\fill (1,-1) coordinate (x1) circle (1pt); \node [below right] at (x1) {$g(X_{(-1)})=1$};
\fill (-1,-2) coordinate (x2) circle (1pt); \node [below right] at (x2) {$g(X_{(-2)})=0$};

 \draw (x0) -- node[right]{$3$} node[left]{$e_{1}$} (x1);
 \draw (x0) --node[left]{$1$} node[right]{$e_{3}$} (x2);
\draw (x1) -- node[below]{$3$} node[above]{$e_{2}$} (x2);

\draw (x0) -- ++(-20:.1) node[right] {$2$};
\draw (x1) -- ++(30:.1) node[right] {$4$};
\draw (x1) -- ++(90:.1) node[above] {$-2$};
\draw (x2) -- ++(180:.1) node[left] {$4$};

\fill (4,0) coordinate (y0) circle (1pt); \node [above] at (y0) {$g(X_{(0)})=3$};
\fill (5,-1) coordinate (y1) circle (1pt); \node [below  right] at (y1) {$g(X_{(-1)})=1$};
\fill (3,-2) coordinate (y2) circle (1pt); \node [below right] at (y2) {$g(X_{(-2)})=0$};

 \draw (y0) --node[right]{$1$} node[left]{$e_{1}$}  (y1);
 \draw (y0) -- node[left]{$3$} node[right]{$e_{3}$}(y2);
\draw  (y1) --node[below]{$1$} node[above]{$e_{2}$} (y2);

\draw (y0) -- ++(-20:.1) node[right] {$2$};
\draw (y1) -- ++(30:.1) node[right] {$4$};
\draw (y1) -- ++(90:.1) node[above] {$-2$};
\draw (y2) -- ++(180:.1) node[left] {$4$};
\end{tikzpicture}
\caption{Two different enhanced orders $\eG_1$ and $\eG_2$ on $\lG$.}
\label{cap:running}
\end{figure}
\par
We denote the twisted differentials compatible with the level
graphs~$\eG_i$ by $(X,\bfz,\eta^{i})$. The enhanced structure
$\Gamma_1$ tells us that the differential $\eta_{(-2)}^{1}$ is in
$\omoduli[0,3](4,-2,-4)$, the differential $\eta_{(-1)}^{1}$ is in
$\omoduli[1,4](4,2,-2,-4)$ and $\eta_{(0)}^{1}$ is in
$\omoduli[3,3](2,2,0)$. Similarly $\eta_{(-2)}^{2}$ is in
$\omoduli[0,3](4,-2,-4)$, $\eta_{(-1)}^{2}$ is in
$\omoduli[1,4](4,0,-2,-2)$ and $\eta_{(0)}^{2}$ is in
$\omoduli[3,3](2,2,0)$. The global residue condition in both cases
says that the differential $\eta_{(-1)}^{i}$ has no residue at its
pole at the point~$q_{e_{1}}^{-}$. Note that it follows from
\cite[Theorem 1.1]{geta} that this locus is not empty.

\section{The topology on (classical) augmented \Teichmuller space}
\label{sec:classaugteich}

The classical augmented \Teichmuller space contains the \Teichmuller space
as a dense subset such that the action of the mapping class group extends
continuously and such that the quotient by the mapping class group is the
Deligne-Mumford compactification of the moduli space of curves.  In this section
we compare various topologies on the augmented \Teichmuller space, and on
the related spaces of one-forms.

\subsection{Augmented \Teichmuller space}
\label{sec:augm-teichm-space}

To give the precise definition of the augmented \Teichmuller space, we fix a ``base''
compact~$n$-pointed oriented differentiable surface $(\Sigma, \bfs)$ of genus~$g$.
\index[surf]{b010@$(\Sigma, \bfs)$!``Base'' compact~$n$-pointed oriented differentiable surface}
We regard $\bfs$ as a set of $n\geq 0$ distinct
labeled points $\{s_1, \dots, s_n\}\subset \Sigma$, or alternatively as an injective function $\bfs\colon
\on\hookrightarrow \Sigma$.  Let $\oldteich = \teich[\Sigma][\bfs]$ be the \Teichmuller space of $(\Sigma, \bfs)$.
\index[teich]{b010@$\oldteich = \teich[\Sigma][\bfs]$!\Teichmuller space}
Next, recall that a \emph{multicurve}~$\Lambda$ on~$\Sigma\setminus \bfs$ is a collection of
disjoint simple closed curves such that no two curves are isotopic on
$\Sigma\setminus \bfs$  \changed{and no curve in $\Lambda$ bounds a
  disk with at most one puncture}. Two
multicurves are equivalent if the curves they consist of are pairwise
isotopic, \changed{more precisely, there exists an orientation-preserving
   diffeomorphism which is isotopic to the identity,  preserves
  the marked points, and maps one multicurve bijectively to the
  other.} To ease notation, we will speak of curves of a multicurve~$\Lambda$ both when we mean the actual curves or their isotopy equivalence classes, as should be clear from the context.
\par
\begin{df} \label{def:marking}
A \emph{marked pointed stable curve $(X, \bfz, f)$} is a pointed stable
curve $(X, \bfz)$ together with a marking $f\colon(\Sigma, \bfs)\to (X, \bfz)$,
where a \emph{marking} of a pointed stable curve is a continuous map
\index[surf]{b030@$f\colon  \Sigma \to X$!Marking}
$f\colon  \Sigma \to X$ such that
\begin{itemize}
\item[(i)] the inverse image of every node $q\in X$ is a simple closed curve
on $\Sigma\setminus \bfs$,
\item[(ii)] if we denote by $\Lambda\subset\Sigma$ \changed{the preimage} of the set of nodes $\N$ of~$X$,
which is a multicurve on $\Sigma$ that we call the \emph{pinched multicurve}, then the restriction of $f$ to
$\Sigma\setminus \Lambda$ is an orientation-preserving
diffeomorphism $\Sigma\setminus \Lambda \to X \setminus \N$,
\item[(iii)] the map $f$ preserves the marked points, that is, $f\circ \bfs = \bfz$.
\end{itemize}
\changed{Two marked pointed stable curves $(X,\bfz, f)$ and
  $(X',\bfz', f')$ are isomorphic if there exist an isomorphism of
  pointed stable curves $\phi\colon (X, \bfz) \to (X', \bfz')$ and an
  orientation-preserving diffeomorphism
  $\varphi\colon \Sigma \to \Sigma$ which is isotopic to the identity
  (rel $\bfs$) and such that $f' \circ \varphi = \phi \circ f$.}
\end{df}
\par
\changed{In particular two marked pointed stable curves that differ by the
  action of a Dehn twist around a pinched curve are equivalent.}
\par
\medskip
The \emph{augmented
\Teichmuller space $\oldaugteich[g,n]=\augteich[\Sigma][\bfs]$}
\index[teich]{b030@$\oldaugteich[g,n]=\augteich[\Sigma][\bfs]$!
Augmented \Teichmuller space}
is the set of all equivalence classes of pointed stable curves marked
by $(\Sigma, \bfs)$.  We caution the reader that $\oldaugteich$ is not a manifold, and is not even
locally compact at the boundary, in the topology which we define below.
\par
The {\em mapping class group}
$\Mod$ acts properly discontinuously on the classical \Teichmuller space~$\oldteich$
and this action (by pre-composition of the marking) extends to a continuous\index[teich]{b020@$\Mod$!Classical mapping class group}
action on the augmented \Teichmuller space (whose topology is defined below).
\par
The augmented \Teichmuller space is stratified according to the pinched multicurve.  Given a
multicurve $\Lambda\subset \Sigma \setminus \bfs$, we define
$\Bteich[\Lambda] \subset \oldaugteich[g,n]$ to be the stratum consisting of stable curves
where exactly the curves in~$\Lambda$ have been pinched to nodes.
In particular, the empty multicurve recovers the interior $\Bteich[\emptyset] =\oldteich$.
Each $\Bteich[\Lambda]$ is itself a finite unramified cover of the product
of the \Teichmuller spaces of the components of $(\Sigma, \bfs) \setminus \Lambda$ that
takes into account the identification of the branches of the nodes. In
particular each~$\Bteich[\Lambda]$ is smooth.
\par
The topology on the augmented \Teichmuller space can be described in several ways.
For us, the conformal topology (introduced by \cite{MardenString}, see also
Earle-Marden~\cite{earlemarden})   will be most useful.
Abikoff~\cite{Abikoff} described several equivalent topologies on the augmented \Teichmuller
space.  We recall the definition of his \emph{quasiconformal topology} below
(somewhat confusingly, he called this the conformal topology). The equivalence of
the two topologies is claimed in \cite[Theorem~6.1]{earlemarden}.   We include a complete
proof of this equivalence here. We mention \cite{Mon09} for several other viewpoints of the topology,
mainly based on hyperbolic length functions.
\par
We define an \emph{exhaustion} of a (possibly open) Riemann
surface~$X$ to be an increasing sequence of compact subsurfaces with boundary,
$K_m\subset X$, such that each $K_m$ is a deformation retract of~$X$,
and such that the union $\cup_{m=1}^\infty K_m$ is all of~$X$. An
important example of an exhaustion that is used throughout this
article is the following. \changed{Considering $X\setminus \bfz$ with
  its complete hyperbolic metric}, for any sequence~$\epsilon_{m}$ of
positive numbers (smaller than the Margulis constant) converging to
zero, the $\epsilon_{m}$-thick parts of $X\setminus \bfz$, denoted by
$\Thick[(X \setminus \bfz)][\epsilon_{m}]$, form an exhaustion of
$X\setminus \bfz$.  Note that the fact that the~$\epsilon_{m}$ are
smaller than the Margulis constant ensures that the thin part is a
union of annular neighborhoods of short geodesics or cusps.
\index[surf]{b090@ $\Thick[X][\epsilon]$, $\Thick$! $\epsilon$-thick
  part of~$X$, resp. $X\setminus\bfz$}
\par
Let $(X, \bfz,f)$ be a marked pointed stable curve in
$\augteich[\Sigma][\bfs]$, and let $X^s=X\setminus N_X$ denote the
smooth part of~$X$, \index[surf]{b026@$X^{s}=X\setminus N_X$!The
  smooth part of~$X$} that is, the complement of its nodes. A sequence
of marked pointed stable curves $(X_m, \bfz_m, f_m)$ in
$\oldaugteich[g,n]$ \emph{converges quasiconformally} to
$(X, \bfz, f)$ if for some exhaustion~$\{K_m\}$ of~$X^s$, there exists
a sequence of quasiconformal maps $g_m\colon K_m \to X_m$ such that
for each $m$ the maps $f_m\circ f^{-1}$ and $g_m$ are homotopic on
$K_m$, the map~$g_m$ respects the marked points (i.e.\
$g_m\circ \bfz = \bfz_m$), and the quasiconformal dilatations
$\|\delbar g_m/ \del g_m \|_\infty$ tend to $0$ as $m\to\infty$.  The
sequence \emph{converges conformally} if the $K_m$ are instead an
exhaustion of $X^s \setminus \bfz$ and the~$g_m$ can be taken to be
conformal.  We call the topologies on $\augteich[\Sigma][\bfs]$
induced by these notions of convergence the \emph{quasiconformal
  topology} and the \emph{conformal topology}, respectively.
\par
Note that for conformal convergence it no longer makes sense to require that the~$g_m$ respect the
marked points, since they are not in the domain. However, each marked point of~$X$ is contained in a
unique connected component of $X^s \setminus K_m$, and the hypothesis that $f_m\circ f^{-1} \simeq
g_m$ forces $g_m$ to respect these complementary components.
\par
We will sometimes need the conformal maps $g_m$ to respect the marked points.  We say that $(X_m,
\bfz_m, f_m)$ converges \emph{strongly conformally} to $(X, \bfz, f)$ if the conformal maps~$g_m$ can be
defined on an exhaustion $\{K_m\}$ of $X^s$ and the $g_m$ respect the marked points (i.e.\ $g_m\circ \bfz
= \bfz_m$).
\par
The idea of the proof of the equivalence of these topologies is that given a quasiconformal map on
$X$ with small dilatation and an open set~$U$ (generally a neighborhood of a node or marked point),
one can find a nearby quasiconformal map which pushes all of the ``quasiconformality'' into~$U$.  Since strong conformal convergence requires the maps to be conformal near the marked
points, we see that it should only be equivalent to the other types of convergence in the
presence of nodes, as we will need~$U$ to be a neighborhood of the nodes in this case.

\begin{thm}\label{thm:topologiesarethesame}
  If $n\geq 1$, then the conformal and quasiconformal topologies on $\oldaugteich$ are equivalent.
  For any~$n$, if $X\in\oldaugteich$ has any nodes, then quasiconformal, conformal, and strong conformal
  convergence of a sequence to~$X$ are all equivalent.
\end{thm}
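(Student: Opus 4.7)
The proof will establish the cycle of implications strong conformal $\Rightarrow$ conformal $\Rightarrow$ quasiconformal $\Rightarrow$ strong conformal, where each arrow holds under the appropriate hypothesis on $n$ and on the presence of nodes in $X$. The first two are elementary, and the third contains the main content.

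For strong conformal $\Rightarrow$ conformal, I simply restrict an exhaustion $\{K_m\}$ of $X^s$ to the smaller exhaustion $\{K_m \setminus \bfz\}$ of $X^s \setminus \bfz$. For conformal $\Rightarrow$ quasiconformal, suppose conformal maps $g_m\colon K_m \to X_m$ are given on an exhaustion of $X^s \setminus \bfz$. Near each marked point $z_i \in X$, the intersection of $K_m$ with a coordinate disk is an annulus whose inner radius shrinks as $m$ grows, and its conformal image under $g_m$ lies in the complementary component of $g_m(K_m)$ containing $z_i^m$. By Riemann's removable singularity theorem, $g_m$ extends holomorphically across $z_i$ with $g_m(z_i) = z_i^m$. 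The enlarged exhaustion $K_m' = K_m \cup \bigcup_i D_m(z_i)$, obtained by filling in small disks, exhausts $X^s$; the extended $g_m$ are conformal on $K_m'$, respect marked points, and have zero quasiconformal dilatation, witnessing quasiconformal convergence.

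The substantial direction is quasiconformal $\Rightarrow$ conformal (and strongly conformal, when $X$ has nodes). Given quasiconformal $g_m\colon K_m \to X_m$ with Beltrami coefficients $\mu_m = \delbar g_m/\del g_m$ of sup-norm tending to zero, the plan is to apply the Measurable Riemann Mapping Theorem to replace $g_m$ by conformal maps after a controlled perturbation. I extend $\mu_m$ by zero to all of $X$ and solve $\delbar \phi_m = \mu_m \del \phi_m$ on each irreducible component of $X$, obtaining quasiconformal self-maps $\phi_m\colon X \to X^{\mu_m}$ to a Riemann surface $X^{\mu_m}$ carrying the new complex structure. By uniqueness in the Measurable Riemann Mapping Theorem, the composition $g_m \circ \phi_m^{-1}$ is conformal on $\phi_m(K_m)$; unfortunately, its domain lies in $X^{\mu_m}$ rather than in $X$.

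The main obstacle is to transfer these conformal maps back to exhaustions of $X$ itself. The strategy is to replace $\mu_m$ by a Beltrami coefficient $\tilde{\mu}_m$ in the same Teichm\"uller class but supported in a shrinking neighborhood $U_m$ of the marked points (for conformal convergence, when $n \geq 1$) or of the nodes (for strong conformal convergence, when $X$ has nodes). The corresponding solution $\tilde{\phi}_m$ then agrees with the identity outside $U_m$, so $g_m \circ \tilde{\phi}_m^{-1}$ is conformal on the growing region $X \setminus U_m$ and furnishes the required conformal maps on an exhaustion of $X^s \setminus \bfz$ or of $X^s$. The delicate technical step is constructing $\tilde{\mu}_m$ while preserving the isotopy class of the marking: it exploits that annular neighborhoods of nodes have arbitrarily large conformal modulus, providing room to absorb small Beltrami differentials, and that marked points impose pointwise constraints that rigidify the modification up to isotopy. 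The absence of both nodes and marked points, that is $n=0$ together with $X$ smooth, is precisely what obstructs such absorption, reflecting the rigidity of conformal automorphisms of a smooth compact Riemann surface.
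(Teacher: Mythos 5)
Your overall architecture (a cycle of implications, with the quasiconformal-to-conformal step handled by pushing the Beltrami coefficient into a shrinking neighborhood of the nodes and marked points) matches the paper's strategy for that direction; the paper implements the ``absorption'' step you describe via an Implicit Function Theorem argument on \Teichmuller space (its Lemma~\ref{lm:supercool}), producing a quasiconformal self-map $k_m$ of $X$ whose Beltrami differential agrees with that of $g_m$ off a neighborhood $U$ of the nodes/marked points, so that $g_m\circ k_m^{-1}$ is conformal outside~$U$. You leave this key step unproved, but the idea is sound and essentially the paper's.

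The genuine gap is in your conformal $\Rightarrow$ quasiconformal step. The conformal maps $g_m$ exhibiting conformal convergence are defined on an exhaustion $K_m$ of $X^s\setminus\bfz$, so near a marked point $z_i$ the domain of $g_m$ is only an \emph{annulus} $\{r_m\le|z|\le\delta\}$, not a punctured disk: there is an actual hole in the domain, and Riemann's removable singularity theorem does not apply. A conformal map of an annulus need not extend even continuously over the bounded complementary disk, and there is certainly no reason an extension would send $z_i$ to $z_{i,m}$. Consequently your claim that the extended maps ``have zero quasiconformal dilatation'' cannot be right; the best one can hope for is a quasiconformal filling of the disk, respecting the marked point and isotopic to the marking, whose dilatation tends to zero. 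Producing such a filling is the main technical content of the paper's proof of this direction: it lifts $g_m$ to Fuchsian uniformizations normalized at three boundary points, shows via Montel's theorem that the lifts converge to the identity and the groups converge algebraically, deduces \Caratheodory (in fact uniform) convergence of the image disks $U_m$ bounding $z_{1,m}$, and then fills the hole using the Douady--Earle extension of the boundary map composed with a small correction moving $\alpha_m(0)$ back to the marked point. None of this machinery is present in your proposal, so the step as written fails.
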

\par
Given a measurable subset $E$ of a Riemann surface~$X$, we denote by  $\calM(E)$ the Banach
space of measurable $L^\infty$-Beltrami differentials supported on $E$, and we
denote by $\calM^r(E)\subset\calM(E)$  the open ball of radius $r$.
\par
The proof of Theorem~\ref{thm:topologiesarethesame} is based on the following Lemma.
\par
\begin{lm}
  \label{lm:supercool}
  Let $(X,\bfz)$ be a compact pointed Riemann surface and $\bfz\subset K\subset U\subset X$ subsets such that $K$
  is compact with positive Lebesgue measure and~$U$ is open.  Then there is a constant $0< k < 1$
  such that for every Beltrami differential~$\nu$ on $X \setminus K$ with $\|\nu\|_\infty < k$,
  there exists  a quasiconformal homeomorphism $f_\nu\colon X\to X$, preserving the marked points, such that
  the Beltrami differential of $f_\nu$ restricted to $X\setminus K$  agrees with~$\nu$, and $f(K)\subset
  U$.

  Moreover, the collection of such maps $f_\nu$ may be regarded as a holomorphic map
  $\calM^k(X \setminus K) \to \QC^0(X)$, to the space of quasiconformal homeomorphisms
  of~$X$ isotopic to the identity, equipped with the compact-open topology.
\end{lm}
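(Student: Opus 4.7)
The plan is to reduce the construction of $f_\nu$ to an implicit function theorem argument for a map into \Teichmuller space. For each candidate extension $\tilde\nu \in \calM^k(X)$ of~$\nu$ to a Beltrami differential on all of~$X$, the measurable Riemann mapping theorem yields a quasiconformal homeomorphism $f^{\tilde\nu}\colon X \to X^{\tilde\nu}$ onto a pointed Riemann surface, and this assembles into a holomorphic map $\Phi\colon \calM^k(X) \to \oldteich$ to the \Teichmuller space of $(X,\bfz)$, sending $\tilde\nu$ to the class $[(X^{\tilde\nu}, f^{\tilde\nu})]$. I would show that for each $\nu \in \calM^k(X\setminus K)$ of sufficiently small norm there is a $\mu(\nu) \in \calM^k(K)$ such that $\Phi(\nu + \mu(\nu))$ equals the base point~$[X]$. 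Triviality in \Teichmuller space then yields a holomorphic isomorphism $h\colon X^{\nu+\mu(\nu)} \to X$ with $h\circ f^{\nu+\mu(\nu)}$ isotopic to~$\mathrm{id}_X$ rel~$\bfz$; setting $f_\nu := h\circ f^{\nu+\mu(\nu)}$ yields a self-homeomorphism of~$X$ isotopic to the identity, and conformality of~$h$ forces its Beltrami differential to agree with $\nu$ on $X\setminus K$.

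The main step is to prove that the restriction of the derivative $D\Phi_0\colon \calM(X) \to T_{[X]}\oldteich$ to the subspace $\calM(K)$ is surjective onto the finite-dimensional tangent space. Under the standard Ahlfors--Bers duality, the cotangent space $T^{*}_{[X]}\oldteich$ is the space of integrable meromorphic quadratic differentials on~$X$ that are holomorphic off~$\bfz$ and have at most simple poles at~$\bfz$, paired with Beltrami differentials via $\langle\mu,q\rangle = \int_X \mu q$. The cokernel of $D\Phi_0|_{\calM(K)}$ is therefore dual to the subspace of such~$q$ that vanish identically on~$K$. Since any such~$q$ is holomorphic on the connected open Riemann surface $X\setminus\bfz$ and $K$ has positive Lebesgue measure, the identity principle forces $q\equiv 0$. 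This is the heart of the argument, and the step where the positive-measure hypothesis on~$K$ is used decisively; without it, the would-be obstruction need not vanish.

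With surjectivity in hand, pick a finite-dimensional subspace $V\subset \calM(K)$ on which $D\Phi_0$ restricts to an isomorphism onto $T_{[X]}\oldteich$, and apply the holomorphic implicit function theorem to the map $(\nu, v) \mapsto \Phi(\nu + v)$ on $\calM^{k'}(X\setminus K)\times V$ near $(0,0)$. This produces a holomorphic map $\nu \mapsto \mu(\nu)\in V$ on a ball $\calM^{k}(X\setminus K)$ of sufficiently small radius, with $\Phi(\nu+\mu(\nu))=[X]$. The resulting assignment $\nu \mapsto f_\nu$ is holomorphic because the solution operator for the Beltrami equation depends holomorphically on its parameter, as does the isotopy-normalized choice of the conformal correction~$h$, which for small~$\tilde\nu$ is the unique one making $h\circ f^{\tilde\nu}$ close to~$\mathrm{id}_X$.

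It remains to verify the inclusion $f_\nu(K)\subset U$. At $\nu=0$ we have $\mu(0)=0$ by uniqueness of the implicit function, hence $f_0=\mathrm{id}_X$ and $f_0(K)=K\subset U$. Since $\nu\mapsto f_\nu$ is continuous in the compact-open topology on~$X$, $K$ is compact, and~$U$ is an open neighborhood of~$K$, a uniform continuity argument yields a constant $k>0$ such that $f_\nu(K)\subset U$ for every $\nu\in\calM^{k}(X\setminus K)$. After shrinking~$k$ further if necessary so that $\mu(\nu)$ remains in the domain of the implicit function, the construction delivers the desired holomorphic map $\calM^{k}(X\setminus K)\to \QC^{0}(X)$.
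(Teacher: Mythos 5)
Your proof is correct and follows essentially the same route as the paper: the holomorphic map $\Phi$ into \Teichmuller space, surjectivity of $D\Phi$ restricted to $\calM(K)$ via duality with integrable quadratic differentials (the paper exhibits the explicit test differential $\bar q/|q|$ on $K$ where you invoke the identity principle, but both hinge on $K$ having positive measure), the Implicit Function Theorem, holomorphic dependence on the Beltrami coefficient, and the compact-open continuity argument for $f_\nu(K)\subset U$. The only cosmetic difference is that you solve the IFT on a finite-dimensional complement $V\subset\calM(K)$ rather than using the split surjection directly.
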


\begin{proof}
A Beltrami differential $\nu\in\calM^1(X)$ induces a conformal structure on~$X$ which we denote by $X_\nu$.  This defines a holomorphic map
  \begin{equation*}
    \Phi\colon \calM^1(X) \=
    \calM^1(K)\oplus\calM^1(X\setminus K)
    \to\oldteich\,.
  \end{equation*}
  Consider the derivative operator of $\Phi$, 
  \begin{equation*}
    D = D_1 \Phi_{(0,0)}\colon \calM(K) \to
    T_{(X, \bfz)}\oldteich,
  \end{equation*}
  restricted to the tangent space of the first factor of the
  splitting.  We claim that~$D$ is surjective.  This is equivalent to
  show that the dual operator
  $D^* \colon T^*_{(X, \bfz)}\oldteich\to \calM(K)^*$ is injective.
  Under the usual identification of the cotangent space to
  \Teichmuller space at $(X, \bfz)$ with $Q(X,\bfz)$, the space of
  quadratic differentials~$q$ on~$X$ with at worst simple poles
  contained in $\bfz$, the dual $D^*$ is given explicitly by the
  pairing
  \begin{equation*}
    D^*(q)(\nu) \= \int_{K} q\nu\,.
  \end{equation*}
  Taking $\nu_q$ to be the restriction of $\bar{q}/|q|$ to $K$, we obtain
  \begin{equation*}
     D^*(q) (\nu_q) = \int_K |q| >0\,,
   \end{equation*}
   so injectivity follows.

   \changed{Since $T_{(X, \bfz)}\oldteich$ is finite-dimensional, the kernel of $D$ is closed
   and of finite codimension, so it has a complementary closed
   subspace $E$.  This means we have a direct sum decomposition $\calM(K)= E
   \oplus \ker(D)$ such that $D|_E\colon E\to T_{(X, \bfz)}\oldteich$
   is an isomorphism.}

 \changed{By the Implicit Function Theorem, there is for some
   $0< k < 1$ and for some open ball $B\subset \ker(D)$ centered at $0$ a
   holomorphic map $\psi\colon B\oplus\calM^k(X\setminus K)\to E$ such
   that $\Phi(\psi(\nu_1,\nu_2),\nu_1, \nu_2) = (X, \bfz)$ for each
   $(\nu_1, \nu_2)\in B\oplus\calM^k(X\setminus K)$.  It follows that
   for each $\nu\in \calM^k(X\setminus K)$, there is a quasiconformal
   map $f_\nu\colon X\to X$ preserving the marked points~$\bfz$, with Beltrami differential given by
   $\psi(0,\nu) + \nu$.}

   The map $\nu\mapsto f_\nu$ can be regarded as a map $\Psi\colon\calM^k(X\setminus K)\to
   \QC^0(X)$, \changed{such that furthermore the maps in the image preserve the marked points~$\bfz$}.  By holomorphic dependence of solutions to the Beltrami equation on
   parameters (see e.g.\ \cite{HubbardBook}), this map $\Psi$ is holomorphic, and in particular
   continuous, as desired.  Therefore, by the definition of the compact-open topology, by possibly
   decreasing the constant $k$, we can ensure that $f_\nu(K)\subset U$.
\end{proof}
\par
\begin{proof}[Proof of Theorem~\ref{thm:topologiesarethesame}]
  We first show that quasiconformal convergence implies conformal convergence and, if nodes are
  present, also strong conformal convergence.  Suppose a sequence of marked pointed curves
  $(X_m, \bfz_m, f_m)$ converges to $(X, \bfz, f)$ in the quasiconformal topology, so that there is
  an exhaustion of~$X^s$ by compact sets $K_m$ and quasiconformal maps $g_m\colon K_m\to X_m$
  isotopic to $f_m$, whose dilatation tends to~$0$.  Let $U\subset X$ be an (arbitrarily small) open
  neighborhood of the nodes and the marked points.  To show convergence in the conformal topology,
  we must produce, for $m$ sufficiently large, a conformal map $h_m\colon X\setminus U \to X_m$
  isotopic to $f_m$.
\par
  Let $K\subset U$ be compact with positive Lebesgue measure.  By Lemma~\ref{lm:supercool}, for $m$
  sufficiently large, there is a quasiconformal map $k_m\colon X\to X$ sending~$K$ into~$U$ and
  whose Beltrami differential restricted to $X\setminus U$ agrees with the Beltrami differential of~$g_m$.
  The composition $h_m=g_m\circ k_m^{-1}$ is then conformal outside~$U$ as desired.

  If~$X$ has nodes, this argument works just as well to get strong conformal convergence by taking
~$U$ to be a neighborhood of the nodes only.
\smallskip
\par
We now show that conformal convergence implies quasiconformal convergence.   We choose  an
exhaustion $\{K_m\}$ of $X^s\setminus \bfz$ so that the inclusion $K_m\hookrightarrow
X^s \setminus\bfz$ is a homotopy equivalence, and let $g_m\colon K_m\to X_m$ be the conformal
maps which exhibit conformal convergence.  Let $\{K_m^f\}$ be the exhaustion of~$X^s$ obtained
by filling in the disks
  containing the marked points~$z_i$ (in this proof, the superscript $f$ will always mean that we
  fill in the disks around the marked points).  We must show that we can replace the~$g_m$ with
  quasiconformal maps $g_m^f$ on $K_m^f$ in the same isotopy class which respect the marked points
  and whose dilatation tends to $0$.  For concreteness, we fill in the disk containing~$z_1$.
\par
\changed{Let $Y$ be the component of $X^s \setminus \bfz$ whose closure contains $z_1$,}
and let $J_m = K_m\cap Y$ and note that $J_m \hookrightarrow Y$ is a
homotopy equivalence. We first represent~$Y$ as $\half/\Gamma$ for
some Fuchsian group~$\Gamma$.  \changed{The fundamental group of the
  subsurface $L_m^f = g_m(J_m)^f\subset X_m^s$ is a subgroup of the
  fundamental group of the component $Z_m$ of~$X_m^s$ containing
  $L_m^f$, so it determines a cover of $Z_m$, which we represent as
  $\half / \Gamma_m$ for some Fuchsian group $\Gamma_m$. Let
$\tilde{J}_m\subset\tilde{J}_m^f\subset\half$ and
$\tilde{L}_m\subset\tilde{L}_m^f\subset \half$ be the unique connected subsurfaces,
invariant under the Fuchsian groups $\Gamma$ and $\Gamma_m$, whose
quotients are the corresponding subsets of $Y$ and $Z_m$.
The conformal map $g_m$ then lifts to a conformal map
$\tilde{g}_m\colon \tilde{J}_m \to \tilde{L}_m$ which is
equivariant in the sense that
\begin{equation}
  \label{eq:compatible_with_markings}
\rho_m(\gamma)\cdot\tilde{g}_m (z) \= \tilde{g}_m(\gamma\cdot z)
  \end{equation}
for some isomorphism $\rho_m\colon\Gamma\to\Gamma_m$ and for each
$\gamma\in \Gamma$ and $z\in J_m$.}
Note that the Fuchsian groups are really only defined up to conjugacy.  We normalize
the $\Gamma, \Gamma_m$ and all related objects by requiring that $0,1, \infty$ belong to
the limit set of~$\Gamma$ and the extension of $\tilde{g}_m$ to this limit set fixes these
three points.
\par
We claim now that $\tilde{g}_m$ converges uniformly on compact sets to the identity and that the
  Fuchsian groups $\Gamma_m$ converge to~$\Gamma$ algebraically (meaning that for each
  $\gamma\in\Gamma$, the limit of $\rho_m(\gamma)$ is $\gamma$).  By Montel's Theorem,
  any subsequence of
  $\tilde{g}_{m}$ has a further subsequence which converges uniformly on compact sets to
  some $G\colon \half\to \overline{\half}$.  Since each $\tilde{g}_m$ is conformal and
  fixes three points on the boundary of $\half$, in fact $G$ must be the identity map.  Since every
  subsequence of $\tilde{g}_m$ converges to the identity, we see that~$g_m$ converges uniformly on
  compact sets to the identity.  Algebraic convergence of~$\Gamma_m$ to~$\Gamma$ then follows
  immediately from~\eqref{eq:compatible_with_markings}.

  \changed{
Now choose a conformal map $p\colon\Delta\to \tilde{J}_m^f$ whose image is an open disk~$U$ which
  covers a  complementary disk containing some choice of $\tilde{z}_1$
  lying over $z_1$, which sends~$0$ to $\tilde{z}_1$, and which maps
  \index[other]{$\Delta_{r} = \lbrace z \in \CC :\arrowvert  z\arrowvert<r\rbrace  $}
  $\bdry\Delta$ onto a smooth curve $\gamma$ which is eventually contained in~$\tilde{J}_m$.  The
  composition $\tilde{g}_m\circ p$ sends the boundary circle to a smooth curve
  $\gamma_m\subset \tilde{L}_m^f$ which bounds a disk $U_m$ containing
  a point $\tilde{z}_{1,m}$ lying over the marked point $z_{1,m}$.
  Choose two points $a_1,a_2\in\bdry\Delta$, and let $p_m\colon\Delta\to \tilde{L}_m^f$ be the Riemann
  mapping of $\Delta$ onto $U_m$ which is normalized so that $p_m(a_i)
  = \tilde{g}_m\circ p(a_i)$ for each $i$ and $p_m(0) = \tilde{z}_{1,m}$.  Since $\tilde{g}_m$ converges to the identity uniformly on compact sets,
  the sets~$U_m$ converge to~$U$ in the \Caratheodory topology on disks (see \cite[Section~5.1]{mcmullen_renormalization}).
  In fact, they converge uniformly on $\overline{\Delta}$, since the closed sets
  $\half \setminus U_m$ are uniformly locally connected (see \cite[Corollary~2.4]{pommerenke}).  Let
  $\alpha_m\colon \Delta\to \Delta$ be the Douady-Earle extension of
  $p_m^{-1}\circ \tilde{g}_m\circ p|_{\bdry \Delta}$.   The boundary map is uniformly close to
  the identity, so $\alpha_m(0)$ is close to $0$, and we may construct a quasiconformal map
  $\beta_m\colon\Delta\to\Delta$ which is the identity on the boundary, sends $\alpha_m(0)$ back
  to $0$, and has small quasiconformal dilatation.   We define our extension $\tilde{g}_m^f$
  of $\tilde{g}_m$  as before on the complement of~$U$, and we define it to be $p_m\circ
  \beta_m\circ \alpha_m \circ p^{-1}$ on $\overline{U}$.  This is a quasiconformal
  extension of $\tilde{g}_m$ sending~$\tilde{z}_1$
  to~$\tilde{z}_{1,m}$ which lies over the desired quasiconformal
  extension of $g_m$.
  }
\end{proof}

Another reformulation of the same idea allows to assume, for~$X$ smooth and with at least one marked
point, $g_m$ to be conformal on an exhaustion $K_m$ of~$X$ minus a single marked point.

These ideas allow a similar definition of the ``universal curve'' over the augmented \Teichmuller space.
While we are not interested directly in this object as it is not an honest flat family of curves, it
is useful for defining universal curves over other spaces.
\par
Convergence of sequences in the universal curve is defined analogously to
convergence in $\oldaugteich$. Given $(X, \bfz, p, f)$ such that $p$ is not a node, we say that a sequence $(X_m, \bfz_m, p_m, f_m)$ converges to  $(X, \bfz, p, f)$ if
$(X_m, \bfz_m, f_m) \to (X, \bfz, f)$ as pointed stable curves; and moreover, if $g_m\colon K_m \to X_m$
are the (conformal or quasiconformal) maps which exhibit this convergence, then $g_m^{-1}(p_m)$ converges to~$p$.
\par
This definition does not work in the case when $p$ is a node, as then $p\in X\setminus K_m$, and
thus the map~$g_m$ is never defined at~$p$. Instead we require, for $m$ sufficiently large, the
point~$p_m$ to lie in the end of $X_m\setminus g_m(K_m)$ that corresponds
to the end of $X\setminus K_m$ containing $p$. This is well-defined, since $g_m$ eventually
induces a bijection of the components of $X\setminus K_m$ and $X_m\setminus g_m(K_m)$, as
remarked above.
\par
\subsection{The Dehn space and the Deligne-Mumford compactification}
\label{sec:dehn-space-deligne}

We briefly recall the construction of the Deligne-Mumford compactification $\barmoduli[g,n]$
of $\moduli[g,n]$ as well as the closely related Dehn spaces~$\oldDehn$, which give simple
models for $\barmoduli[g,n]$ near its boundary. For more details and proofs of all of these
statements, we refer the reader to \cite{HubKoch}, see also \cite{acgh2} for some of the
statements.
\par
Given a  multicurve $\Lambda \subset \Sigma \setminus \bfs$, the
\emph{full~$\Lambda$-twist group} $\oldtwist\subset \Mod$ is the free
abelian subgroup generated by Dehn twists around the curves of~$\Lambda$.
\index[twist]{d005@$\oldtwist$!Classical~$\Lambda$-twist group}
The
\emph{Dehn space}~$\oldDehn$ is the space obtained by adjoining to $\oldteich$
the stable curves where $f(\Lambda')$ for some subset~$\Lambda'$ of~$\Lambda$ has
been pinched , and then taking the quotient by $\oldtwist$.
\index[teich]{b040@$\oldDehn$!Classical Dehn space}
That is,
\begin{equation*}
  \oldDehn \= \bigcup_{\Lambda' \subset \Lambda} \Bteich[\Lambda'] / \oldtwist\,.
\end{equation*}
(Bers \cite{bers} called this space the ``deformation space''.)  Each $\oldDehn$ is a
contractible complex manifold.  It has a unique complex structure which agrees with the
complex structure induced by~$\oldteich$ in the interior, and such that the boundary is
a normal crossing divisor.
\par
The universal curve $\pi\colon\oldDehnfam\to\oldDehn$ is the quotient
\begin{equation*}
  \oldDehnfam \= \bigcup_{\Lambda' \subset \Lambda} \famaugteich|_{\Bteich[\Lambda'] }/ \oldtwist\,,
\end{equation*}
where \changed{$\famaugteich|_{\Bteich[\Lambda']}$} is the universal family over $\Bteich[\Lambda']$ and where the full
twist group acts \changed{by sending each fiber isomorphicly to the fiber
  which represents the marked stable curve given by composing the marking
  with the given product of twists.}
\changed{This universal curve was constructed in \cite{HubKoch} using a
plumbing construction and shown to be a flat family of stable curves
over $\oldDehn$.}
\par
The \emph{Deligne-Mumford compactification} $\barmoduli[g,n]$ of $\moduli[g,n]$ is the quotient
$\oldaugteich / \Mod[g,n]$.  For each multicurve~$\Lambda$, the natural map $\oldDehn \to
\barmoduli[g,n]$ is a local homeomorphism. The image of $\oldDehn$ is the complement of the
locus of stable curves with a node not arising from pinching~$\Lambda$ \changed{up to a homeomorphism of the reference surface $\Sigma$}. 
These local homeomorphisms provide an atlas of charts for $\barmoduli[g,n]$
which give it the structure of a compact complex orbifold such that the boundary is a normal
crossing divisor.

One may also compactify $\moduli[g,n]$ as a projective variety $\barmoduli[g,n]^{\rm alg}$
(see \cite{demu} or \cite{acgh2}).  Hubbard-Koch \cite{HubKoch} showed
that $\barmoduli[g,n]^{\rm alg}
\isom \barmoduli[g,n]$ as complex orbifolds, so the natural topology of the algebraic variety
$\barmoduli[g,n]^{\rm alg}$ gives yet another equivalent topology on $\barmoduli[g,n]$.

\subsection{Spaces of one-forms} \label{sec:Space1}

We now consider topologies on various spaces of surfaces with holomorphic one-forms.  For surfaces
with one-forms, the conformal topology is sometimes more convenient than the quasiconformal
topology, as pullbacks of holomorphic one-forms by quasiconformal maps are in
general only locally $L^2$ as quasiconformal maps have locally $L^2$ derivatives, but we
will use these topologies both. On the
other hand, these spaces already have topologies coming from algebraic geometry, and we will show
that these topologies coincide.
\par
Consider the universal curve over the Dehn space $\pi\colon \oldDehnfam \to \oldDehn$, with its
relative cotangent sheaf $\omega_{\oldDehnfam / \oldDehn}$.  The pushforward $\pi_*
\omega_{\oldDehnfam / \oldDehn}$ is the sheaf of sections of the \emph{Hodge bundle}
$\ooldDehn\to\oldDehn$, a rank $g$ vector bundle whose fiber over a point~$X$ is the space
$\Omega(X)$ of stable forms on~$X$.  \changed{Since $\oldDehn$ is a
  contractible domain of holomorphy (see \cite{bers81}), $\ooldDehn$
  is holomorphically trivial.}  
\index[teich]{b045@$\ooldDehn$!Hodge bundle over the Dehn space}
As $\ooldDehn$ is a vector bundle, it comes with a natural
topology, which we call the \emph{vector bundle topology}.

On the other hand, the conformal topology on $\oldDehn$ gives a second natural topology on
$\ooldDehn$.  A sequence $(X_m, \bfz_m, \omega_m, f_m)$ of marked pointed stable forms converges to
$(X, \bfz, \omega, f)$ \emph{in the conformal topology} if for some exhaustion~$K_m$
of~$X^s\setminus\bfz$, there is a sequence of conformal maps $g_m\colon K_m \to X_m$ such that
$f_m \simeq g_m \circ f$ and $g_m^* \omega_m$ converges to~$\omega$ uniformly on compact
sets. Again, we say that such a sequence converges \emph{strongly conformally} if the $g_m$ are
moreover defined on an exhaustion $K_m$ of $X^s$ and respect the
marked points.

We will occasionally want to allow the maps $f_m$ to be only
quasiconformal.  We say a sequence $(X_m, \bfz_m, \omega_m, f_m)$
converges to $(X, \bfz, \omega, f)$ \emph{in the quasiconformal
  topology} if for some exhaustion~$K_m$ of~$X^s$, there is a
sequence of $L_m$-conformal maps $g_m\colon K_m \to X_m$ which exhibit
convergence in the quasiconformal topology on $\oldDehn$ and such that
$g_m^* \omega_m$ converges to~$\omega$ in the topology of weak locally
$L^2$ convergence.

We show below that these topologies agree.
\par
\begin{lm} \label{lm:converge_to_identity}
  Suppose $3g-3+n>0$. Let $(X_m, \bfz_m, f_m)$ be a sequence in
  $\oldDehn$ converging to $(X, \bfz, f)$ in the quasiconformal topology,
  and let $g_m\colon K_m\to X_m$ be a sequence of $L_m$-quasiconformal maps exhibiting this convergence, where
  \changed{$K_m$ is an exhaustion of~$X^s$}.  Then the maps $g_m$ (regarded as maps into the universal curve
  $\oldDehnfam$) converge uniformly on compact sets to the identity map on~$X$.
\end{lm}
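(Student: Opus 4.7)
The plan is a normal-families argument combined with the rigidity of holomorphic self-maps of hyperbolic Riemann surfaces of finite type. It suffices to show that every subsequence of $(g_m)$ has a further subsequence converging uniformly on compact subsets of $X^s \setminus \bfz$ to the identity, viewed inside $\oldDehnfam$.

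\textbf{Extracting a limit.} Fix a compact $K \subset X^s \setminus \bfz$. For $m$ large enough, $K \subset K_m$ and $g_m|_K$ is a holomorphic map into the fiber $X_m \subset \oldDehnfam$. The convergence $(X_m, \bfz_m, f_m) \to (X, \bfz, f)$ in $\oldDehn$ forces the fibers $X_m$ to accumulate near $X$ inside $\oldDehnfam$, so the images $g_m(K)$ eventually lie in a fixed compact neighborhood of $K$. Since $\oldDehnfam$ is a complex manifold along $X^s \setminus \bfz$, covering $K$ by finitely many local holomorphic charts and applying Montel's theorem produces a subsequence of $g_m|_K$ converging uniformly on $K$. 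Diagonal extraction over an exhaustion yields a subsequential limit $G \colon X^s \setminus \bfz \to \oldDehnfam$, and continuity of the projection $\oldDehnfam \to \oldDehn$ forces $G(X^s \setminus \bfz) \subset X$.

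\textbf{Identifying the limit.} Let $Y$ be an irreducible component of $X$ with smooth locus $Y^\circ$. By connectedness, $G(Y^\circ)$ lies in a single irreducible component of $X$. The marking condition $f_m \simeq g_m \circ f$ on $K_m$, combined with the fact that $\oldtwist$ acts trivially on homotopy classes of loops $\gamma \subset Y^\circ$ (such loops being disjoint from the pinching multicurve), yields that $G(\gamma)$ is freely homotopic to $\gamma$ in $X$ for every such $\gamma$. Hence $G(Y^\circ) \subset Y$ and $G|_{Y^\circ}$ is a holomorphic self-map of $Y^\circ$ homotopic to the identity.

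\textbf{Rigidity and conclusion.} Stability of $(X, \bfz)$ gives $2g_Y - 2 + n_Y > 0$ for every component, so each $Y^\circ$ is a hyperbolic Riemann surface of finite area; the hypothesis $3g - 3 + n > 0$ rules out the exceptional case of a torus with no marked points, where continuous automorphism groups would obstruct rigidity. A holomorphic self-map of such a surface that is homotopic to the identity must be the identity: lifting to the universal cover $\HH$ and applying the Schwarz-Pick lemma forces the lift to be a Möbius transformation that commutes with the deck group and fixes its limit set pointwise, hence is the identity. Thus $G = \id_X$ on $X^s \setminus \bfz$, and the lemma follows. The main obstacle is the middle step: converting the Teichm\"uller-level marking condition into a pointwise homotopy statement for the limit $G$, which requires careful handling of the quotient by $\oldtwist$ and of the behavior of loops near the nodes where the surfaces $X_m$ and $X$ have different topology.
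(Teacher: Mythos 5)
Your overall architecture (normal families to extract a subsequential limit, then rigidity of holomorphic self-maps of finite-type hyperbolic surfaces homotopic to the identity) is the same as the paper's, but there is a genuine gap at the normal-families step. You assert that because the fibers $X_m$ accumulate on $X$ in $\oldDehnfam$, "the images $g_m(K)$ eventually lie in a fixed compact neighborhood of $K$." This does not follow: accumulation of the fibers controls where the surfaces $X_m$ sit in $\oldDehnfam$, not where the maps $g_m$ send a given compact set. A priori $g_m$ could push $K$ arbitrarily deep into a thin annulus around a developing node or into a cusp neighborhood of a marked point of $X_m$; then any compact neighborhood of $K$ inside $\oldDehnfam^s$ (i.e.\ avoiding nodes and marked points) would fail to contain $g_m(K)$, while a compact set large enough to contain the full fibers would include the nodes, so Montel would only yield a possibly degenerate limit (constant, or mapping into a node or a point of $\bfz$). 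Your subsequent steps — the free-homotopy identification $G(\gamma)\simeq\gamma$ and the Schwarz--Pick rigidity — all presuppose that $G$ is a non-degenerate self-map of $X^s\setminus\bfz$, so they cannot be used after the fact to repair this.

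This non-degeneracy is exactly what the bulk of the paper's proof establishes. The argument there: equip $\oldDehnfam^s$ with a metric restricting to the fiberwise hyperbolic metric, so the Schwarz lemma makes $(g_m)$ equicontinuous; by Mumford compactness it suffices to show $g_m$ maps the thick part $K=\Thick[(X,\bfz)][\epsilon_0]$ into a \emph{uniformly} thick part of $X_m$. If $\|g_m'\|$ were small at one point of $K$, the Koebe-type distortion bound (\cite[Corollary~2.29]{mcmullen_renormalization}) forces it to be small on all of $K$, so $g_m(K)$ would lie in the thin part, which by the Margulis lemma is a union of annuli; but $g_m$ is $\pi_1$-injective on $K$ (this is where compatibility with the markings is used quantitatively, not just in the limit) and $K$ carries the full fundamental group of a non-annular hyperbolic surface — here the hypothesis $3g-3+n>0$ enters — giving a contradiction. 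Some argument of this kind (or an equivalent length/area estimate) is indispensable; without it your proof proves only that \emph{if} a non-degenerate limit exists, it is the identity.
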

\par
\begin{proof}
First, we claim that there is a subsequence which converges
uniformly on compact sets.    We show via the
Arzel\'a-Ascoli Theorem that a subsequence converges uniformly on
\changed{the thick part}
$K = \Thick[(X^s \setminus \bfz)][\epsilon_0]$, and convergence on compact sets
follows from the usual diagonal trick
We fix also $K' = \Thick[(X^s \setminus \bfz)][\epsilon_1]$
for some $\epsilon_1<\epsilon_0$
and assume $m$ is large enough that $g_m$ is defined on $K'$.
\par
Choose a Riemannian metric $\rho'$ on $\oldDehnfam^s$, the complement of
the nodes and marked
points in $\oldDehnfam$, whose restriction to the fibers is the
vertical hyperbolic metric~$\rho$.  The functions $g_m$ have a
uniform modulus of continuity on $K$ by \cite[\S~3.3.3]{lehto} and so
are uniformly equicontinuous.

To apply Arzel\'a-Ascoli, we just need that the $g_m$ are contained in a
compact subset of~$\oldDehnfam^s$. By Mumford's compactness criterion,
the $\epsilon$-thick part in the vertical hyperbolic metric
$\Thick[(\oldDehnfam^s)][\epsilon]$ is compact, so it suffices to show
that~$g_m$ maps~$K$ into $\Thick[(X_m^s \setminus \bfz_m)][\epsilon]$ for
some uniform~$\epsilon$. Again, since the $g_m$ have a uniform modulus
of continuity, there are small constants $\epsilon <\epsilon'$ so that if
$g_m(K)$ intersects the $\epsilon$-thin part, it must be contained in
the $\epsilon'$-thin part which is a union of annuli by the Margulis
lemma.  Since~$g_m$ is compatible with the markings, $g_m$ is $\pi_1$-injective
on~$K$, but $K$ is not an annulus as $X^s\setminus \bfz$
is finite type and hyperbolic, so $g_m(K)$ cannot be contained in the
$\epsilon'$-thin part.  It follows immediately that $g_m(K)$ is contained
in $\Thick[(X_{m}^s \setminus \bfz_{m})][\epsilon]$ for some uniform~$\epsilon$.

We now show convergence to the identity. The preceding argument in fact shows that every
subsequence of~$g_m$ has a further uniformly convergent
  subsequence.  As the~$g_m$ are $L_m$-quasiconformal, with $L_m\to 1$, any subsequential limit is a conformal automorphism of
  $X\setminus \bfz$.  As the~$g_m$ are compatible with the markings, this map is homotopic to
  the identity, so must in fact be the identity, since $X\setminus \bfz$ is finite type and hyperbolic.
  Thus any subsequence of $g_m$ has a further subsequence which converges to the identity map, and it
  follows that $g_m$ converges to the identity.
\end{proof}

\begin{prop}
  \label{prop:topologies_are_the_same}
  The vector bundle, conformal, and quasiconformal topologies on $\ooldDehn$ coincide.
\end{prop}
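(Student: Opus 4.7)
The plan is to work locally over a point $(X, \bfz, f) \in \oldDehn$ and compare the two topologies against a local holomorphic trivialization of the Hodge bundle by a basis $\omega_1, \ldots, \omega_g$ of sections defined on a neighborhood $U \subset \oldDehn$. Any nearby $(X', \omega') \in \ooldDehn|_U$ has a unique expression $\omega' = \sum_i c_i \omega_i(X')$, and vector-bundle convergence amounts to convergence in the base $\oldDehn$ together with convergence of the coefficients $c_i^{(m)} \to c_i$. Both topologies project onto the natural topology on $\oldDehn$, so the comparison reduces to analyzing the behavior of the basis sections $\omega_i$ under conformal maps exhibiting convergence in $\oldDehn$.

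The heart of the argument will be the claim that whenever conformal maps $g_m \colon K_m \to X_m$ exhibit convergence $(X_m, \bfz_m, f_m) \to (X, \bfz, f)$ in $\oldDehn$ on an exhaustion $K_m$ of $X^s \setminus \bfz$, one has
\[
g_m^* \omega_i(X_m) \,\longrightarrow\, \omega_i(X) \qquad \text{uniformly on compact subsets of } X^s \setminus \bfz.
\]
This is where Lemma~\ref{lm:converge_to_identity} enters decisively: regarded as maps into the universal curve $\oldDehnfam$, the $g_m$ converge uniformly on compact sets to the inclusion of $X$. Since each $\omega_i$ is a holomorphic section of the relative dualizing sheaf on a neighborhood of $X$ in $\oldDehnfam$, evaluation along maps converging uniformly to the identity yields uniform convergence of the pullbacks as tensors. (Uniform convergence of the holomorphic $g_m$ to the identity implies uniform convergence of $g_m'$ to $1$ on slightly smaller compact sets by Cauchy's formula, so both the coefficient function and the $dz$-factor of the pullback are controlled.)

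With this continuity established, the forward direction is immediate: if $c_i^{(m)} \to c_i$, then
\[
g_m^* \omega_m \= \sum_i c_i^{(m)} g_m^* \omega_i(X_m) \,\longrightarrow\, \sum_i c_i \omega_i(X) \= \omega
\]
uniformly on compact sets, which is exactly conformal convergence. For the converse, suppose $g_m^* \omega_m \to \omega$ uniformly on compact sets, and write $\omega_m = \sum_i c_i^{(m)} \omega_i(X_m)$. I would choose loops $\gamma_1, \ldots, \gamma_g$ in $X^s \setminus \bfz$ on which the period matrix $B_{ki} = \int_{\gamma_k} \omega_i(X)$ is invertible, which is possible since $\omega_1(X), \ldots, \omega_g(X)$ is a basis of the space of stable forms on $X$; each $\gamma_k$ lies in $K_m$ for $m$ large. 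Integrating then yields the linear system
\[
\int_{\gamma_k} g_m^* \omega_m \= \sum_i c_i^{(m)} \int_{\gamma_k} g_m^* \omega_i(X_m),
\]
whose left-hand side converges to $\int_{\gamma_k} \omega$ and whose coefficient matrix converges to $B$ by the continuity claim, so the system is eventually invertible and yields $c_i^{(m)} \to c_i$. The main technical obstacle will be the continuity claim for the basis sections in the conformal topology; once that is in place via Lemma~\ref{lm:converge_to_identity}, both implications reduce to straightforward linear algebra via the period pairing.
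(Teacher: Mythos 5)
Your proof is correct and follows essentially the same route as the paper: both directions rest on Lemma~\ref{lm:converge_to_identity} (the $g_m$ and their derivatives converge to the identity, hence the frame sections pull back continuously), and coefficients are recovered from periods of $g_m^*\omega_m$. The only difference is cosmetic: the paper normalizes its frame $\eta_i$ by $\alpha$-periods so that $\int_{\alpha_i}\eta_j=\delta_{ij}$ and the coefficients are read off directly, whereas you use a general frame and invert an (eventually invertible) period matrix.
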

\par
\begin{proof}
  On the base surface $(\Sigma, \bfs)$, choose $g$ disjoint
  homologically independent ``$\alpha$-curves''
  $\alpha_1, \dots, \alpha_g$, such that each $\alpha_i$ is either
  part of the multicurve ~$\Lambda$ or disjoint from each curve
  in~$\Lambda$.
  Let $\eta_1, \dots, \eta_g$ be relative holomorphic $1$-forms on the
  universal curve~$\oldDehnfam$ over $\oldDehn$ such that in each
  fiber,
  \begin{equation*}
    \int_{\alpha_i} \eta_j \= \delta_{ij}.
  \end{equation*}
  Now suppose a sequence $(X_m, \bfz_m, \omega_m, f_m)$ converges to
  $(X, \bfz, \omega, f)$ in the quasiconformal topology, and let
  $g_m\colon K_m \subset X\to X_m$ be the sequence of quasiconformal
  maps exhibiting this convergence.  \changed{Let
    $\alpha_1^*, \ldots, \alpha_g^*$ be smooth and compactly supported
    $1$-forms on $X^s$ which are dual to the curves $f(\alpha_i)$.}
  We may write each $\omega_m$ and~$\omega$ as a linear combination of
  the $\eta_i$:
  \begin{equation}
    \label{eq:omega_in_basis}
    \omega_m \= \sum_{i=1}^g c_{mi}\, \eta_i|_{X_m} \quad\text{and}\quad
    \omega \= \sum_{i=1}^g\, c_i \eta_i|_{X}\,.
  \end{equation}
  Convergence in the vector bundle topology is then equivalent to convergence of each~$c_{mi}$ to
  $c_i$.  Since each $c_{mi}$ can be recovered as the integral of
  $g_m^*\omega_m \wedge \alpha_i^*$, this follows from the weak
  convergence of $g_m^*\omega_m$ to~$\omega$.
\par
  Now suppose the sequence converges in the vector bundle topology.  Writing
  the form~$\omega_m$ in
  the basis $\eta_i$ as in \eqref{eq:omega_in_basis}, this means that $c_{mi}$ converge to $c_i$ for each~$i$.
  Then $(X_m, \bfz_m, f_m)$ converge to $(X, \bfz, f)$ as marked pointed surfaces, and by
  Theorem~\ref{thm:topologiesarethesame} there is a sequence of maps $g_m\colon K_m \to X_m$,
  defined on an exhaustion of~$X$, which exhibit convergence in the conformal topology.  By
  Lemma~\ref{lm:converge_to_identity}, these $g_m$ converge uniformly on compact sets to the
  identity, and so do their derivatives.  It follows that $g_m^* \eta_i \to \eta_i|_X$ uniformly on
  compact sets, so $g_m^* \omega_m \to \omega$ as well, and so the
  sequence converges in the conformal topology.

  Since conformal convergence obviously implies quasiconformal
  convergence, it follows that the three topologies are equivalent.
\end{proof}

These notions of convergence will appear in several similar contexts.  We will often need convergence of
one-forms on part of~$X$ only.  A sequence of stable differentials $(X_m, \bfz_m, \omega_n)$ converges to $(X, \bfz, \omega)$
on an irreducible component $Y\subset X$ if there are conformal maps $g_m\colon K_m\to X_m$ so that
$g_m^* \omega_m $ converge to $ \omega$ uniformly on compact sets, where~$K_m$ is an exhaustion of~$Y$.  In
another direction, one may allow the $\omega_m$ to have poles of prescribed order at the marked
points.  These notions of convergence may be formalized similarly to the vector bundle topology
described above by twisting the relative cotangent bundle, giving a notion of convergence
equivalent to conformal convergence.

\subsection{Strengthening conformal convergence}

We have defined conformal convergence of one-forms as uniform convergence of the
pullbacks $g_m^*\omega_{m}$ to $\omega$ on compact sets.  A natural strengthening is to require
the pullbacks to be equal to~$\omega$.  This is not always possible: if
$\omega_m$ and~$\omega$ have different relative periods, then they cannot be
identified by any conformal map.  It turns out that relative periods
are the only obstruction.
\par
\begin{thm} \label{thm:conformal_convergence}
Let~$X$ be a closed Riemann
surface, containing open subsurfaces~$U$ and~$W$ with
$\overline{U}\subset W \subsetneq X$,
and let $Z, P \subset U$ be disjoint discrete sets. Suppose moreover that
the boundaries of~$U$ and~$W$ are smooth and that~$U$ is a deformation
retract of~$W$. Let $\nu_m$ and $\eta_m$
be two sequences of meromorphic one-forms on~$W$ converging uniformly
on compact sets to a single non-zero meromorphic form~$\omega$.
Suppose moreover that
\begin{enumerate}[(i)]
\item all of the forms $\nu_m$, $\eta_m$ and~$\omega$ have the same set of poles~$P$ and the same set of zeroes~$Z$, and moreover
\item the orders $\ord_z \nu_m $, $ \ord_z\eta_m $ and $ \ord_z\omega$ coincide
for every~$m$ and $z\in U$, and
\item for each~$m$, the classes $[\nu_m]$ and $[\eta_m]$ in $H^1(U\setminus P, Z; \CC)$ are equal.
\end{enumerate}
Then for~$m$ sufficiently large, there exists a conformal map $h_n\colon U\to W$ fixing
each point of $Z\cup P$, and such that $h_m^* (\nu_m) =  \eta_m$. Moreover one can
choose~$h_m$ to converge uniformly to the identity as $m\to\infty$.
\end{thm}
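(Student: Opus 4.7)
The plan is to construct $h_m$ from the period coordinates of $\eta_m$ and $\nu_m$, working first on the regular locus $U\setminus(Z\cup P)$ and then extending across the zeros and the poles. Fix a basepoint $z_0 \in U \setminus (Z\cup P)$, pass to the universal cover $\pi \colon \widetilde{U\setminus P}\to U\setminus P$, and consider the single-valued primitives $\widetilde\Phi_m(\tilde z) = \int_{\tilde z_0}^{\tilde z} \pi^*\eta_m$ and $\widetilde\Psi_m(\tilde z) = \int_{\tilde z_0}^{\tilde z}\pi^*\nu_m$, together with $\widetilde\Omega(\tilde z) = \int_{\tilde z_0}^{\tilde z}\pi^*\omega$. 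Hypothesis (i)--(ii) and uniform convergence on compacta of $\eta_m,\nu_m$ to $\omega$ imply the analogous uniform convergence of $\widetilde\Phi_m$ and $\widetilde\Psi_m$ to $\widetilde\Omega$. Since $\omega$ is nonvanishing on $W\setminus(Z\cup P)$, the map $\widetilde\Omega$ is locally biholomorphic there, and so are $\widetilde\Phi_m$ and $\widetilde\Psi_m$ for $m$ large. Set $\widetilde h_m = \widetilde\Psi_m^{-1}\circ \widetilde\Phi_m$: this is a locally defined holomorphic map, satisfies $\widetilde h_m^*(\pi^*\nu_m) = \pi^*\eta_m$, and converges uniformly on compact subsets of $\widetilde{U\setminus (Z\cup P)}$ to the identity.

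The next step is to descend $\widetilde h_m$ to a map on $U\setminus(Z\cup P)$. A deck transformation $\gamma\in\pi_1(U\setminus P)$ acts on $\widetilde\Phi_m$ (resp.\ $\widetilde\Psi_m$) by translation by the period $\int_\gamma \eta_m$ (resp.\ $\int_\gamma \nu_m$); hypothesis (iii) forces these two translations to coincide, so $\widetilde\Psi_m^{-1}\circ\widetilde\Phi_m$ is deck-equivariant and descends to a single-valued holomorphic map $h_m\colon U\setminus(Z\cup P)\to W\setminus(Z\cup P)$ with $h_m^*\nu_m = \eta_m$ and $h_m\to \mathrm{id}$ uniformly on compact sets.

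To extend $h_m$ across a zero $z_*\in Z$ of order $k$, I choose a local coordinate $w$ at $z_*$ in which $\omega = w^k\,dw$. An implicit-function argument of the kind used in Section~\ref{sec:NF} (or an explicit power-series inversion, using that $\eta_m$ has a zero of order exactly $k$ at $z_*$ by hypothesis (ii)) produces, for $m$ large, holomorphic coordinates $u_m$ and $v_m$ vanishing at $z_*$ with $\eta_m = u_m^k\,du_m$ and $\nu_m = v_m^k\,dv_m$, with $u_m,v_m\to w$ uniformly on a neighbourhood. In these normal forms the map $h_m$ takes $u_m$ to $\zeta_m v_m$ for some $(k{+}1)$-th root of unity $\zeta_m$; continuity and $h_m\to\mathrm{id}$ force $\zeta_m=1$, giving a holomorphic extension across $z_*$ that fixes $z_*$. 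A parallel argument at a pole $p_*\in P$, using the local normal form $\omega = w^{-k}\,dw + r\,dw/w$ determined by the order and the residue $r$, produces the extension across $p_*$; here the crucial point is that the residue of $\eta_m$ (resp.\ $\nu_m$) at $p_*$ equals the period of the form around a small loop, and hypothesis (iii) guarantees these residues agree for the two sequences, so the locally constructed normal-form coordinates are mutually compatible with the global $h_m$ built above.

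The main obstacle is this last step, the extension across points of $P$: one has to verify that the purely local normalising coordinates match, on punctured neighbourhoods, the globally defined period-coordinate map $h_m$, and for that matching the equality of residues built into hypothesis (iii) is indispensable. Once the extensions are in place, the resulting $h_m$ is a holomorphic map $\overline U\to W$ fixing $Z\cup P$ pointwise with $h_m^*\nu_m=\eta_m$; uniform convergence $h_m\to\mathrm{id}$ on $\overline U$ follows from convergence on the regular part together with a normal-family argument at the finitely many points of $Z\cup P$, in the spirit of Lemma~\ref{lm:converge_to_identity}, using Montel's theorem and the pointwise fixing of the special points.
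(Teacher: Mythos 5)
Your route is genuinely different from the paper's. The paper sets this up as an Implicit Function Theorem problem on a Banach manifold of holomorphic maps: it writes $\nu_m=(1+f_m)\omega$, $\eta_m=(1+f_m+g_m)\omega$ with $g_m\omega$ of trivial periods, defines $\Psi(\phi,f,g)=\big(\phi^*((1+f+g)\omega)-(1+f)\omega\big)/\omega$, and shows that $D_1\Psi$ at the identity (which is $v\mapsto\calL_v\omega/\omega$) is a split surjection via the explicit right inverse $f\mapsto\frac{1}{\omega}\int_{z_0}f\omega$. You instead compare the two flat structures directly: develop both forms to primitives on a cover, set $\widetilde h_m=\widetilde\Psi_m^{-1}\circ\widetilde\Phi_m$, descend using equality of periods, and extend over $Z\cup P$ by normal forms. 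Your approach is more elementary and geometric; the paper's buys holomorphic dependence on the data and fits the Banach-space machinery it already uses in Section~\ref{sec:NF}. In principle your strategy can be made to work, but as written it has one genuine gap.

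The gap is the choice of basepoint. You fix $z_0\in U\setminus(Z\cup P)$ and normalize $\widetilde\Phi_m(\tilde z_0)=\widetilde\Psi_m(\tilde z_0)=0$. Near a zero $z_*\in Z$ of order $k$, your construction gives, in the normal-form coordinates, the relation $v_m(h_m(z))^{k+1}=u_m(z)^{k+1}+(k+1)c_m$ with $c_m=\int_{z_0}^{z_*}(\eta_m-\nu_m)$. Hypothesis (iii) controls absolute periods and periods of relative cycles with endpoints in $Z$; it says nothing about a path from a generic $z_0$ to $z_*$, so $c_m$ need not vanish. When $c_m\neq 0$ the map is not of the form $u_m\mapsto\zeta_m v_m$ at all: the inverse has branch points at the $(k+1)$ roots of $u_m^{k+1}=-(k+1)c_m$, so $h_m$ is not even single-valued on a punctured neighborhood of $z_*$, and it certainly does not fix $z_*$. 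The repair is exactly the normalization the paper makes in defining its right inverse $\Upsilon$: take the basepoint $z_0$ to be a point of $Z$ when $Z\neq\emptyset$ (and arbitrary only when $Z=\emptyset$). Then (iii) gives $\int_{z_0}^{z_*}(\eta_m-\nu_m)=0$ for every $z_*\in Z$, so $c_m=0$ at each zero, and your root-of-unity argument applies; one must then also define the initial germ of $\widetilde h_m$ at the (now non-generic) basepoint by choosing the branch of $\widetilde\Psi_m^{-1}$ close to the identity, which exists since $u_m,v_m\to w$. At the poles your argument is fine: the only obstruction to single-valuedness is the residue, which is an absolute period and hence matched by (iii), and no relative matching is needed there because the extension fixes $p_*$ automatically (poles of $\nu_m$ lie in the fixed discrete set $P$ and $h_m\to\mathrm{id}$). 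I would also ask you to spell out why $\widetilde\Psi_m^{-1}\circ\widetilde\Phi_m$ is globally well defined away from $Z\cup P$ — $\widetilde\Psi_m$ is only a local biholomorphism — via a uniform injectivity radius on compacta; this is standard but not automatic.
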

\par
The proof will follow from applying the Implicit Function Theorem to a suitable holomorphic map on
an open subset of $\mathcal{H}\times E$, where $\mathcal{H}$ is a space of holomorphic maps
$U\to X$, and $E$ is a Banach space parameterizing one-forms on $W$.  In the next Lemma, we give
$\mathcal{H}$ the structure of a Banach manifold modeled on a space of vector fields on~$U$.

Given an open set~$V$ in some Banach space, we denote by $\calO_V$ the Banach space of bounded
holomorphic functions on~$V$ equipped with the sup norm.  More generally, if $E$ is a normed vector
space, $\calO_V(E)$ will denote the Banach space of bounded holomorphic functions $V\to E$, equipped
with the sup norm.
We use the following notation for derivatives of maps between Banach spaces. We
denote by $D^n_iF_z$ the~$n$-th partial derivative with respect to the~$i$-th variable at $z$ and we
let $D^nF_z$ denote the derivative of~$F$ at $z$.  We use several times that standard results from
calculus and complex analysis hold in the context of holomorphic maps on Banach spaces.  See
\cite{Mujica_complex_analysis,Nachbin} for details.
\par
\begin{lm}
  Let $Y\subset \cx^3$ be a smooth analytic curve, $U\subset Y$ a relatively compact open set, and
  $S\subset U$ a finite subset.
  In the space $\mathcal{O}_U(\cx^3)^S$ of bounded holomorphic functions $g\colon U\to \cx^3$ which fix $S$
  pointwise, let $\mathcal{H}$ be the locus of those functions sending~$U$ into~$Y$, and let
  $\mathcal{B}_\epsilon$ be the $\epsilon$-ball centered at the identity map $\id$.  Then for some
  $\epsilon>0$ the intersection $\mathcal{H}_\epsilon = \mathcal{B}_\epsilon \cap \mathcal{H}$ has
  the structure of a Banach manifold isomorphic to an open ball in $\mathcal{V}(U)^S$, the space of
  bounded holomorphic tangent vector fields to $U$ which vanish at each point of $S$.
\end{lm}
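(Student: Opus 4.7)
The plan is to build an explicit holomorphic chart from an open ball in $\mathcal{V}(U)^S$ onto $\mathcal{H}_\epsilon$ using a holomorphic retraction $\mathbb{C}^3 \supset N \to Y$, and to verify that it is a biholomorphism via the Banach-space inverse function theorem. The first ingredient is the retraction itself: since $Y \subset \mathbb{C}^3$ is a smooth complex submanifold, near each point of $\overline{U}$ there exist holomorphic coordinates on $\mathbb{C}^3$ in which $Y$ is a linear subspace, producing a local holomorphic retraction, and compactness of $\overline{U}$ lets one patch these into a global holomorphic retraction $r\colon N \to Y$ with $r|_{N \cap Y} = \operatorname{id}$ on some open neighborhood $N$ of $\overline{U}$.

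With $r$ in hand, I would define
\[
\Psi \colon \mathcal{V}(U)^S \supset B \longrightarrow \mathcal{O}_U(\mathbb{C}^3)^S, \qquad \Psi(v)(u) \= r\bigl(u + v(u)\bigr)
\]
on a small open ball $B$ around $0$. For $v \in B$ of sufficiently small sup norm, $u + v(u)$ lies in $N$ for every $u \in U$, so $\Psi(v)$ is a bounded holomorphic map $U \to Y$; moreover $\Psi(v)(s) = r(s + 0) = s$ for each $s \in S$, so $\Psi(v) \in \mathcal{H}$. Holomorphy of $\Psi$ as a map of Banach spaces is standard: composition with the fixed holomorphic map $r$ and pointwise addition of the fixed inclusion $U \hookrightarrow \mathbb{C}^3$ are holomorphic operations on the relevant sup-norm spaces of bounded holomorphic functions.

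To conclude that, after shrinking $B$, the map $\Psi$ is a biholomorphism onto some $\mathcal{H}_\epsilon$, I would enlarge the source and apply the inverse function theorem. Since $U$ is an open subset of the smooth complex curve $Y$ it is Stein, so by Cartan's Theorem~B the holomorphic short exact sequence $0 \to TY|_U \to \mathbb{C}^3|_U \to N_Y|_U \to 0$ splits; this produces a holomorphic decomposition $\mathbb{C}^3|_U = TY|_U \oplus N_Y|_U$ and hence a Banach-space decomposition $\mathcal{O}_U^{0}(\mathbb{C}^3)^S = \mathcal{V}(U)^S \oplus \mathcal{N}(U)^S$ of the tangent space to $\mathcal{O}_U(\mathbb{C}^3)^S$ at $\operatorname{id}$, where the superscript $0$ indicates vanishing on $S$ and $\mathcal{N}(U)^S$ denotes the normal summand. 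Defining
\[
\widetilde{\Psi}(v,n)(u) \= r\bigl(u+v(u)\bigr) + n(u)
\]
and using $Dr_u|_{T_u Y} = \operatorname{id}$, one computes $D\widetilde{\Psi}_{(0,0)}(v,n) = v+n$, which is the continuous isomorphism $\mathcal{V}(U)^S \oplus \mathcal{N}(U)^S \xrightarrow{\sim} \mathcal{O}_U^{0}(\mathbb{C}^3)^S$. The Banach-space inverse function theorem then gives that $\widetilde{\Psi}$ is a biholomorphism between a neighborhood of $(0,0)$ and a neighborhood of $\operatorname{id}$; shrinking the target to $\mathcal{B}_\epsilon$, the locus where the image of $\widetilde\Psi$ lies in $Y$ pointwise corresponds, by construction of the tubular neighborhood, exactly to $\{n = 0\}$. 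Hence $\Psi$ itself restricts to a biholomorphism between an open ball in $\mathcal{V}(U)^S$ and $\mathcal{H}_\epsilon$, as claimed.

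The main obstacle is assembling the pointwise linear-algebraic data—the holomorphic tubular neighborhood, the holomorphic splitting of the normal bundle sequence over the Stein open $U$, and the identification of tangent and normal summands inside $\mathbb{C}^3$—into genuinely holomorphic operations on the sup-norm Banach spaces of bounded holomorphic maps, so that the Banach-space inverse function theorem can be invoked cleanly and produces the claimed biholomorphism.
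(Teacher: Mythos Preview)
Your approach via a holomorphic retraction is sound in outline but genuinely different from the paper's, and two steps need repair.

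First, local holomorphic retractions cannot be ``patched'' using compactness of $\overline{U}$: there are no holomorphic partitions of unity. The quickest fix uses the very splitting you already obtain from Cartan~B, taken over a Stein open $V \supset \overline{U}$ in $Y$ rather than just over $U$. Define $\theta \colon N_Y|_V \to \cx^3$ by $\theta(y, n) = y + n$, viewing $n$ in $\cx^3$ via the splitting; the derivative of $\theta$ along the zero section is the identity, and a standard compactness argument shows $\theta$ is a biholomorphism from a neighborhood of the zero section onto a neighborhood $N$ of $\overline{U}$ in $\cx^3$. Then $r = \pi_Y \circ \theta^{-1}$ is the retraction you want, and working over $V$ also makes the projections in your Banach-space decomposition bounded. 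Second, with your definition $\widetilde{\Psi}(v,n)(u) = r(u+v(u)) + n(u)$, the vector $n(u)$ lies in the normal space at $u$, not at $y = r(u+v(u))$, so $\widetilde{\Psi}^{-1}(\mathcal{H}) = \{n = 0\}$ is not ``by construction''. It is still true, but you must argue that for small $v$ the plane $(N_Y)_u \subset \cx^3$ meets the translated curve $Y - y$ only at $0$, uniformly over $\overline{U}$; this follows from transversality of $(N_Y)_u$ to $T_yY \approx T_uY$. A cleaner route is to redefine $\widetilde{\Psi}(v,n)(u) = \theta\bigl(r(u+v(u)), n(u)\bigr)$ after trivializing $N_Y|_V$; then the claim is genuinely by construction and the derivative computation is unchanged.

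For comparison, the paper proceeds in the opposite direction: it writes $Y$ globally as an ideal-theoretic complete intersection $F_1 = F_2 = 0$ (B\u{a}nic\u{a}--Forster), defines $\Phi(g) = F \circ (\id + g)$ so that $\mathcal{H} = \Phi^{-1}(0)$, and applies the implicit function theorem after constructing a bounded holomorphic right-inverse to $DF$ on $U$ from the $2\times 2$ minors of $DF$ and Forster's analytic Nullstellensatz. The payoff is that the characterization of $\mathcal{H}$ as a level set is immediate, so no separate surjectivity or transversality step is needed; the cost is two substantial external inputs in place of your single Cartan~B splitting.
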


\begin{proof}
By \cite[Corollary~1.5]{BFComplInt}, every analytic curve $Y$ in $\cx^3$ is
an ideal-theoretic complete intersection, meaning there are holomorphic
functions $F_1, F_2\colon \cx^3\to \cx$ so
that~$Y$ is defined by the equations $F_1 = F_2 = 0$ and the derivative
$DF \colon \cx^3\to \cx^2$ (where $F= (F_1, F_2)$) is surjective at each point of $Y$.
\par
Now $\mathcal{O}_U(\cx^3)^S\subset \mathcal{O}_U(\cx^3)$ is a finite-codimension
affine subspace, which may be identified with the Banach space $\mathcal{O}_U(\cx^3)_S$
of functions which vanish on~$S$.  We define $\Phi\colon \mathcal{O}_U(\cx^3)_S \to
\mathcal{O}_U(\cx^2)_S$ by $\Phi(g) = F\circ (\id + g) - \id$, a holomorphic map
with derivative $D\Phi_{\id}(g) = DF \cdot g$.  The space $\mathcal{H}$ is then the
fiber of~$\Phi$ over~$0$.  If we could show that $D\Phi_{\id}$ is a split surjection
by constructing a right-inverse to $DF$, it would then follow that
$\mathcal{H}_\epsilon$ is a Banach manifold  modeled on the kernel of $DF$,
which is clearly $\mathcal{V}(U)^S$, as claimed.
\par
The derivative $DF$ is explicitly the $2\times 3$ matrix whose $ij$th entry is the
entire function~$\frac{\partial F_i}{\partial z_j}$.  Let $M_i$ be the $3\times 2$
matrix obtained by replacing
the $i$th row of $DF^T$ by zeros, and let $\mu_i$ be the $i$th minor of $DF$, so that
  \begin{equation}
    \label{eq:right_inverse}
    DF\cdot M_i \= \mu_i I.
  \end{equation}
  Since $DF$ is surjective, the minors $\mu_i$ have no common zero on $Y$.  In other words the
  functions $F_1, F_2, \mu_1, \mu_2, \mu_3$ have no common zero in $\cx^3$.  Let $\mathfrak{a}$ be
  the ideal generated by these functions in the ring $\mathcal{O}_{\cx^3}$ of entire
  holomorphic functions.  By a version of Forster's analytic Nullstellensatz (see \cite{ABF}), the
  radical ideal $\sqrt{\mathfrak{a}}$ is dense in $\mathcal{O}_{\cx^3}$ (in the topology of normal
  convergence).  There are then entire functions $\alpha_{k}, \beta_{k}, h$ and
  an integer $n$ so that
  \begin{equation*}
    h^{n} \= \alpha_{1} \mu_1 + \alpha_{2}\mu_2 + \alpha_{3} \mu_3 + \beta_{1}F_1 +
    \beta_{2} F_2\,,
  \end{equation*}
  with $h$ nonzero on $U$.  Using \eqref{eq:right_inverse}, we then have that $h^{-n} \sum_k \alpha_{k} M_k$ is the desired
  right-inverse to $DF$ on $U$.
\end{proof}

\begin{rem}
When this Lemma is applied below, $Y$ is an algebraic curve.  In this case,
by the Ferrand-Szpiro Theorem $Y$ is a set-theoretic complete intersection
(see \cite{Szpiro}) which may not be an ideal-theoretic complete intersection.
So even when~$Y$ is algebraic, we are forced to use analytic equations defining~$Y$.
\end{rem}

\begin{proof}[Proof of Theorem~\ref{thm:conformal_convergence}]
  Choose $Q \in X \setminus W$ and fix an embedding of $Y = X \setminus Q$ in $\cx^3$ as an affine
  space curve.  By the previous Lemma, the space of holomorphic maps $U\to Y$ which fix the subset $S$ and are
  sufficiently close to the identity may be identified with a $\delta$-ball $\mathcal{V}(U)^S_\delta$.

  Let $\calO_W^0$ be the closed subspace of $\calO_W$ consisting of those $f$ such that $f\omega$
  has trivial periods in $W\setminus P$.  We can thus write $\nu_m = (1+f_m)\omega$ and
  $\omega_m =(1+f_m+g_m)\omega$ with $f_m \in \calO_W$ and $g_m \in \calO_W^0$ both converging to
  zero as $m \to \infty$.
  \par
  Let $\calB_\epsilon$ denote the $\epsilon$-ball in $\mathcal{V}(U)^S\times \calO_{W} \times \calO_{W}^0$
  centered at $(id, 0, 0)$. Consider the map $\Psi\colon \calB_\epsilon \to \calO_{U}^0$ defined by
 \begin{equation*}
   \Psi(\phi, f, g) \= \frac{\phi^*\big( (1+f+g)\omega \big) - (1+f)\omega}{\omega}\,.
\end{equation*}
Once we have shown that $\Psi$ is well-defined, holomorphic and
that the tangent map $D_1\Psi_{(id, 0, 0)}$ is a split surjection,
the Implicit Function Theorem allows to construct a family of
maps $\phi(f,g)$, parameterized by $f$ and $g$ in some $\epsilon$-ball
\changed{(and some neighborhood of zero of the kernel of $D_1\Psi_{(id, 0, 0)}$;
see the proof of Lemma~\ref{lm:supercool})},
so that $\phi(0,0)$ is the identity map and $\Psi(\phi(f,g), f, g) = 0$.
We can then set $h_m = \phi(f_m,g_m)$.
\par
To check that $\Psi$ is well-defined, that is, the image is contained in $\mathcal{O}_U^0$, note
first that the numerator and denominator have the same zeros and poles, since they are fixed by~$\phi$.  Moreover, the right hand is bounded on~$U$ for $\epsilon$ sufficiently small, as it extends
to a holomorphic function on a neighborhood of $\overline{U}$, so it does indeed belong to
$\calO_U^0$.  Note that for $\epsilon$ sufficiently small, the maps $\phi$ are sufficiently close to the
identity map~$U$ to~$W$, so that the pullback in the definition of $\Psi$ is defined.
\par
We claim that $\Psi$ is holomorphic. We write~$\calX$ for $X\times
\calB_\epsilon$, and similarly~$\calU$ and~$\calW$ for the trivial
families of subsets of~$X$ over $\mathcal{B}_\epsilon$. We have a universal holomorphic map
$\Phi\colon\calU\to\calX$ whose fiber over a point in
$\mathcal{V}(U)^S$ is the map which that point represents. Similarly,
there are universal bounded holomorphic functions $F, G\colon \calW\to
\CC$ associated to the factors~$\calO_W$ and $\calO_W^0$ of
$\calB_\epsilon$.  The form~$\omega$ can be regarded as a relative
one-form~$\Omega$ on~$\calW$.  The function
\begin{equation*}
  H \= \frac{\Phi^*\big((1+F+G)\Omega\big) - (1+F)\Omega}{\Omega}
\end{equation*}
is holomorphic on $\calU$ and uniformly bounded on $\bdry \calU$.  Here we use
the Cauchy Integral formula to bound the ``vertical'' derivatives of~$\Phi$
on $\bdry \calU$.  By Lemma~\ref{lem:banach_universal_property}, this induces
a holomorphic map into $\calO_U^0$ which is none other than $\Psi$, and
moreover $D\Psi$ can be computed with \eqref{eq:univ_derivative}.
\par
The derivative operator $D_1\Psi_{(id, 0, 0)}\colon \calV(U)^S
\to \calO_U^0$  is
  \begin{equation*}
    D_1\Psi_{(id, 0, 0)}(v) \= {\calL_v \omega} / {\omega}\,,
  \end{equation*}
where $\calL_v$ is the Lie derivative. We now show that this map is a split
surjection by constructing a right inverse~$\Upsilon$ to~$D_{1}\Psi$.
We define
\begin{equation*}
\Upsilon\colon \calO_U^0 \to \calV(U)^S\,,
\qquad \Upsilon(f) \=  \frac{1}{\omega} \int_{z_0} f\omega\,
\end{equation*}
and argue now that this is well-defined.  The integral is over any path
starting at $z_o$ which is either an arbitrary choice of basepoint  in~$Z$,
or an arbitrary basepoint if~$Z$ is empty.  The integral depends only on the
endpoints of the path, since $f\omega$ has trivial absolute periods, and
moreover since it has trivial relative periods, it vanishes at each point in $Z$
to order one larger than~$\omega$.  It follows that $\Upsilon(f)$ is
a holomorphic vector field on~$U$ which vanishes at $Z\cup P$.
\par
This defines an operator $V\colon \calO_U^0\to \calV(U)^S$ which
is evidently bounded. It is a left inverse to $D_{1}\Psi$ by Cartan's equality
$\calL_v\omega = d (\omega(v))$ (for closed~$\omega$). This completes the
verification of the hypothesis of the Implicit Function Theorem.
\end{proof}
\par
To complete the proof of Theorem~\ref{thm:conformal_convergence} it remains
to verify the following universal property.
\begin{lm}
  \label{lem:banach_universal_property}
  Let $E$ and $F$ be complex Banach spaces containing open sets~$U$ and $V$
  respectively, and let
  $f\colon U\times V\to \CC$ a bounded holomorphic function.  Then the map
$F \colon U \to \calO_V$ defined by $F(z)(w) = f(z,w)$ is a
holomorphic function with
\begin{equation}
  \label{eq:univ_derivative}
  DF_z (w)\= D_1f_{(z,w)}\,.
\end{equation}
\end{lm}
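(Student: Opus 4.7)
The plan is to verify by direct estimates that $F$ lands in $\calO_V$, that $F$ is complex Fréchet differentiable at every point of $U$ with derivative given by the candidate formula, and then to invoke the standard fact that complex Fréchet differentiability at every point of an open subset of a complex Banach space is equivalent to holomorphy.

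First I would verify that $F(z) \in \calO_V$ for every $z \in U$: holomorphy in $w$ of $F(z)$ is immediate from the fact that the partial map $f(z,\cdot)\colon V \to \CC$ is holomorphic (as a restriction of a holomorphic function to a Banach affine subspace), and boundedness by $\|f\|_\infty$ gives $\|F(z)\|_{\calO_V}\le \|f\|_\infty$ uniformly in $z$.

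The main step is the derivative computation. Fix $z_0\in U$ and choose $r>0$ so that the ball $B(z_0,2r)\subset U$. For $v\in E$ with $\|v\|<r$ and any fixed $w\in V$, apply the one-variable Cauchy theory to $\varphi_w(\zeta) := f(z_0+\zeta v, w)$, which is holomorphic on the disc $|\zeta|<R$ for $R=r/\|v\|$ and bounded there by $M := \|f\|_\infty$. Writing $\varphi_w(\zeta) = \sum_{n\ge 0} a_n(w)\zeta^n$, the Cauchy estimates give $|a_n(w)|\le M/R^n$ uniformly in $w$, and the identifications $a_0(w) = f(z_0,w)$ and $a_1(w) = D_1 f_{(z_0,w)}(v)$ yield
\begin{equation*}
\bigl| f(z_0+v,w) - f(z_0,w) - D_1 f_{(z_0,w)}(v)\bigr| \;=\; \Bigl|\sum_{n\ge 2} a_n(w)\Bigr| \;\le\; \frac{M\|v\|^2}{r^2}\cdot\frac{1}{1-\|v\|/r}.
\end{equation*}
Because the right-hand side is independent of $w$, taking the sup over $w\in V$ establishes
\begin{equation*}
\bigl\| F(z_0+v) - F(z_0) - L_{z_0}(v)\bigr\|_{\calO_V} \;=\; O(\|v\|^2),
\end{equation*}
where $L_{z_0}\colon E\to\calO_V$ is the linear map $v\mapsto \bigl(w\mapsto D_1 f_{(z_0,w)}(v)\bigr)$; the same Cauchy estimate with $n=1$ shows $\|L_{z_0}\|\le M/r$, so $L_{z_0}$ is bounded. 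This establishes Fréchet differentiability at $z_0$ with $DF_{z_0} = L_{z_0}$, which is exactly the formula~\eqref{eq:univ_derivative}.

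Finally, complex Fréchet differentiability at every point of an open subset of a complex Banach space implies holomorphy of the Banach-valued map (see \cite{Mujica_complex_analysis,Nachbin}; the local boundedness hypothesis needed in the weak characterizations is automatic here from $\|F(z)\|_{\calO_V}\le M$). The only subtle point in the argument is ensuring that the Cauchy-type remainder bound is uniform in $w$, but this is immediate from the global boundedness of $f$ on $U\times V$.
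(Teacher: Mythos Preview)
Your proposal is correct and follows essentially the same approach as the paper: both arguments establish Fr\'echet differentiability by using Cauchy estimates to obtain a remainder bound that is uniform in $w$, then conclude holomorphy from the complex linearity of the derivative. The only cosmetic difference is that you sum the full Taylor tail $\sum_{n\ge 2}a_n(w)$ while the paper bounds the second derivative and invokes Taylor's theorem; the resulting estimates are equivalent.
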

\par
\begin{proof}
Given $(z,w)\in U\times V$, suppose $B_R(z)$ is contained in~$U$.
By the Cauchy integral formula, we then have
  \begin{equation}
    \label{eq:cauchy_inequality}
    \|D_1^p f_{(z,w)}\| \,\leq\, p!\, \frac{M}{R^p}\,,
  \end{equation}
where $M$ is a uniform bound for $|f|$ on $U \times V$. Given $z\in U$,
let $D_z\colon E \to \calO_V$ be the bounded operator $D_z(h)(w) = D_1 f_{(z,w)}(h).$
We claim that $F$ is differentiable at $z$ with first derivative $D_z$.
Since $D_z$ is complex linear, it implies that $F$ is holomorphic.
This follows immediately from the bound
  \begin{align*}
    |F(z+h) - F(z) - D_z(h)| &\= \sup_{w\in V} |f(z+h, w) - f(z,w) - D_1f_{(z,w)}(h)|\\
    & \,\leq\, \frac{M}{(R - |h|)^2} |h|^2,
  \end{align*}
where the last inequality follows from the bound \eqref{eq:cauchy_inequality}
for the second derivative and Taylor's Theorem.
\end{proof}
\par

\subsection{Compactness for meromorphic differentials}
\label{sec:compactdiff}

In this subsection, we study convergence for sequences of curves equipped with a
meromorphic differential, establishing a compactness result which we later use
in Section~\ref{sec:msdastopo} to obtain compactness of the moduli space of \msds.
For an alternative approach to these compactness results, see the appendix
to \cite{mcmullen_amenability}.
\par
Given a pointed stable curve $(X, \bfz)$ we denote the punctured surface $X^s\setminus\bfz$ by~$X'$,
which will always be equipped with its \Poincare hyperbolic metric $\rho$.
\par
Consider a degenerating sequence of pointed meromorphic differentials $(X_m, \bfz_m, \omega_m)$ in
$\omodulin(\mu)$ such that the underlying pointed curves converge to some pointed stable curve $(X, \bfz)$.
It may happen that on some components of the thick part of $X_m'$ the flat metric~$|\omega_m|$ is much smaller
than on other components.  As a result the limit of~$\omega_m$ may be non-zero on some components of $X'_m$, and
vanish identically on others.  In order to get non-zero limits everywhere, we allow ourselves to rescale the differential on
different components at different rates.  These rescaling parameters arise from a notion of size for
the thick parts of the $X_m'$ which we now define.

Given a meromorphic differential $(X, \omega)\in \omodulin(\mu)$, for any $p\in X'$, let $|\omega|_p$ be
its norm at $p$ with respect to the hyperbolic metric.  If $Y$ is a component of the thick part~$\Thick[X'][\epsilon]$, we define the \emph{size} of $Y$ by
\begin{equation}\label{eq:size}
  \lambda(Y) \= \sup_{p\in Y} |\omega|_p\,.
\end{equation}
A similar notion of size is defined in \cite{RafiThTh}.

\begin{thm}
  \label{thm:compactness_for_differentials}
  Suppose $(X_m, \bfz_m, \omega_m)$ is a sequence of
  meromorphic differentials in $\omodulin(\mu)$ such that $(X_m, \bfz_m)$ converges to some
  $(X, \bfz)$ as a sequence of pointed stable curves.  Let $Y\subset X$ be a component
  and choose $\epsilon$ small enough so that \changed{the $\epsilon$-thick part} $\Thick[Y][\epsilon]$ is connected.  For large~$m$, choose
  $\Thick[(Y_m)][\epsilon]$ to be the sequence of components of $\Thick[(X_m)][\epsilon]$ such that
  $\Thick[(Y_m)][\epsilon]$ converges to $\Thick[Y][\epsilon]$.  Let $\lambda_m =\lambda(\Thick[(Y_m)][\epsilon])$.  Then we may pass to a  subsequence so that the sequence of
  rescaled differentials $\omega_m / \lambda_m$ has a non-zero limit on~$Y$.
\end{thm}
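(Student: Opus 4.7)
The plan is to use the normalization $\tilde\omega_m := \omega_m/\lambda_m$: by construction $|\tilde\omega_m|_\rho \leq 1$ on $\Thick[(Y_m)][\epsilon]$, with supremum exactly $1$ attained at some point $p_m$ of this compact set. The upper bound drives a normal-families argument for extracting a convergent subsequence, while the sup point witnesses non-vanishing of the limit.

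By Theorem~\ref{thm:topologiesarethesame}, the convergence of $(X_m,\bfz_m) \to (X,\bfz)$ provides conformal maps $g_m\colon K_m \to X_m^s$ defined on an exhaustion of $X^s \setminus \bfz$; we restrict throughout to $K_m \cap Y$. By Lemma~\ref{lm:converge_to_identity}, $g_m$ converges uniformly on compact subsets of $Y^s \setminus \bfz$ to the identity, and then by Cauchy's integral formula $g_m'$ does as well. Set $\eta_m := g_m^* \tilde\omega_m$. For any fixed compact subset $K$ of the interior of $\Thick[Y][\epsilon]$, and $m$ large, one has $g_m(K) \subset \Thick[(Y_m)][\epsilon']$ for any $\epsilon' < \epsilon$ (since $g_m$ is conformal and asymptotically carries the hyperbolic metric $\rho_Y$ to $\rho_{Y_m}$), so the bound $|\tilde\omega_m|_\rho \leq 1$ transports to $|\eta_m|_{\rho_Y} \leq 1 + o(1)$ on $K$. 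Since $\rho_Y$ is bounded on $K$, this gives a uniform $L^\infty$ bound on the coefficient functions of $\eta_m$ in any local chart, and Montel's theorem extracts a subsequence converging uniformly on compact subsets of the interior of $\Thick[Y][\epsilon]$ to a holomorphic one-form~$\eta$.

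To extend the convergence through the punctures of $Y^s\setminus\bfz$ (the marked points lying on $Y$ and the preimages on $Y$ of nodes of $X$), in a local coordinate $u$ vanishing at such a puncture I write $\tilde\omega_m = u^k h_m(u)\,du$ with $h_m$ holomorphic and nonvanishing at $u=0$, where $k$ is the prescribed order (from $\mu$ at a marked point, from the matching-orders condition at a node). On a small circle $\{|u| = r\}$ contained in the thick part, the hyperbolic bound combined with the cusp asymptotic $\rho \asymp 1/(|u|\log(1/|u|))$ yields a uniform upper bound on $|h_m|$, which by the maximum modulus principle propagates to the disk $\{|u| \leq r\}$. Montel applied to the $h_m$, together with patching, produces a meromorphic one-form $\eta$ on all of $Y$ with the prescribed orders at $\bfz \cap Y$. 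To see $\eta \not\equiv 0$, pass to a further subsequence so that $q_m := g_m^{-1}(p_m)$ converges in the compact set $\Thick[Y][\epsilon]$ to some $q_\infty$; a local-coordinate computation using $g_m \to \id$, $g_m' \to 1$, and $\rho_{Y_m} \to \rho_Y$ on compact subsets of $Y^s$ gives $|\eta_m|_{\rho_Y}(q_m) \to |\tilde\omega_m|_\rho(p_m) = 1$, whence $|\eta|_{\rho_Y}(q_\infty) = 1$ by continuity.

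The main obstacle is the extension of convergence through the punctures of $Y$: on the thin parts around them there is no direct hyperbolic bound, and one must convert a bound on the boundary of the thick part into a bound on a coordinate disk, using both the prescribed order of $\tilde\omega_m$ and the explicit asymptotics of the hyperbolic density near a cusp. A secondary subtlety is the identification of the thick parts $\Thick[Y][\epsilon]$ and $\Thick[(Y_m)][\epsilon]$ across the degeneration: because $g_m$ is conformal and converges to the identity, the pullback hyperbolic metrics $g_m^*\rho_{Y_m}$ converge locally uniformly to $\rho_Y$, which is exactly what is needed for both the transport of the upper bound and the final non-vanishing computation.
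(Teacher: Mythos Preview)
Your argument has a genuine gap at the punctures of $Y'$ coming from nodes of $X$. At such a puncture there is no ``prescribed order $k$'': the surfaces $X_m$ are smooth, so the developing node corresponds on $X_m$ to a long thin annulus on which $\omega_m$ has neither zeros nor poles, and the conformal maps $g_m$ cannot be extended across the puncture (there is nothing on $X_m$ for $u=0$ to land on). Thus $g_m^*\tilde\omega_m$ is only defined on annuli $\{r_m<|u|<c\}$ with $r_m\to 0$, and writing it as $u^k h_m(u)\,du$ with $h_m$ holomorphic at $0$ makes no sense. The matching-orders condition you invoke is a property of the limiting twisted differential, which is exactly what you are trying to construct; the integer $\kappa_e$ at a node is emergent from the degeneration, not prescribed in advance. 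On an annulus a bound on only the outer boundary circle does not propagate inward, so your maximum-modulus step fails here and you have no control on $|\eta_m|$ as you enter the thin part near a node.

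This is precisely where the paper's extra input enters. The paper proves a Harnack-type inequality (Lemma~\ref{lm:distortion_bound} and Corollary~\ref{cor:stupid_bound}): there is a constant $L=L(\mu,\epsilon')$, depending only on the type $\mu$ and the threshold $\epsilon'$ but \emph{not} on the surface, such that the ratio $|\omega|_p/|\omega|_q$ is bounded by $L$ for any two points $p,q$ in the $\epsilon'$-thick part of any $(X,\bfz,\omega)\in\omodulin(\mu)$. Since $|\tilde\omega_m|_\rho\le 1$ somewhere in $\Thick[(Y_m)][\epsilon]\subset\Thick[(X_m)][\epsilon']$, this gives $|\tilde\omega_m|_\rho\le L(\mu,\epsilon')$ on all of $\Thick[(X_m)][\epsilon']$, hence on $\Thick[Y][\epsilon']$ after pulling back, for every $\epsilon'<\epsilon$. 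Montel and a diagonal argument then give convergence on compact subsets of $Y'$. The proof of this uniform bound uses a continuity argument over the normalized IVC, so it is not a purely local estimate and is not something you can recover from your setup. Your treatment of marked-point punctures and of non-vanishing is fine; the missing ingredient is exactly this uniform control independent of $m$ near the nodes.
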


Note that if we only wanted a limiting differential defined on $\Thick[Y][\epsilon]$, since
$|\omega_m/\lambda_m|$ is bounded on $\Thick[(Y_m)][\epsilon]$, this would be a trivial
consequence of Montel's
Theorem.  To get convergence on all of $Y$, we establish \emph{a priori} bounds (depending only on
$\epsilon$ and $\mu$) for the size of any component of the $\epsilon$-thick part of $X'$,
in terms of the norm $|\omega|_p$ at any point of~$Y$.
\par
To this end, we introduce the \emph{\Poincare distortion function} of a pointed meromorphic
differential $(X, \bfz, \omega)$ as the function $\daleth\colon X'\to \reals$ defined by
\begin{equation*}
  \daleth(p) \= |\beta|_p \quad\text{where}\quad \beta \=d \log|\omega/\rho|\, .
\end{equation*}
This function measures how quickly the flat metric $|\omega|$ varies with respect
to the hyperbolic metric~$\rho$.  Note that $\daleth$ is independent of the scale of~$\omega$, so can be
regarded as a function on the punctured universal curve over~$\pomodulin(\mu)$.

\begin{lm}
  \label{lm:distortion_bound}
  There is a constant $C$ depending only on $\mu$ and $\epsilon$ so that
  for any $(X, \bfz, \omega)\in\omodulin(\mu)$, the distortion function $\daleth$
  is bounded by $C$ on the $\epsilon$-thick part of~$X'$.
\end{lm}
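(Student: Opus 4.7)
The plan is to argue by contradiction using compactness of the Deligne--Mumford compactification together with a normal families argument, exploiting the fact that $\daleth$ is invariant under the rescaling $\omega\mapsto c\omega$ for $c\in\CC^*$ (since $d\log|c\omega/\rho|=d\log|\omega/\rho|$). Suppose the bound fails: there exist pointed differentials $(X_m,\bfz_m,\omega_m)\in \omodulin(\mu)$ and points $p_m\in \Thick[(X_m,\bfz_m)][\epsilon]$ with $\daleth(p_m)\to\infty$. After choosing markings and passing to a subsequence, compactness of $\barmoduli[g,n]$ together with Theorem~\ref{thm:topologiesarethesame} allows us to assume that $(X_m,\bfz_m)\to (X,\bfz)$ in $\oldaugteich$. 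Since the $\epsilon$-thick part is contained in a Mumford-compact portion of $X^s\setminus\bfz$, we may further arrange $p_m\to p$ with $p$ lying in the $\epsilon$-thick part of an irreducible component $Y\subset X$, in particular away from the nodes and marked points of~$X$.

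By conformal convergence, there exist conformal maps $g_m\colon K_m\to X_m$ defined on an exhaustion $\{K_m\}$ of $X^s\setminus \bfz$ realizing the convergence, and we set $q_m := g_m^{-1}(p_m)\to p$. Fix a small closed disc~$U$ around~$p$ whose closure avoids all nodes and marked points of~$X$, so $U\subset K_m$ for all large~$m$; since $\omega_m$ is holomorphic and nowhere zero off~$\bfz_m$, the pullback $g_m^*\omega_m$ is holomorphic and nowhere zero on~$U$. Using scale-invariance of $\daleth$, rescale: set $\eta_m := c_m\, g_m^*\omega_m$ with $c_m\in\CC^*$ chosen so that $|\eta_m|_{g_m^*\rho_m}(q_m)=1$, where $\rho_m$ denotes the hyperbolic metric on $X_m^s\setminus\bfz_m$. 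Standard continuity of the hyperbolic metric on the thick part of a converging sequence of pointed stable curves gives $g_m^*\rho_m\to\rho$ smoothly on~$U$. Combined with the pointwise normalization and the harmonicity of $\log|\eta_m|$ on~$U$, Harnack's inequality yields a uniform two-sided bound on $|\eta_m|$ on a slightly smaller disc $U'\subset U$. By Montel's theorem, a subsequence of the $\eta_m$ converges in $C^\infty$ on~$U'$ to a holomorphic nowhere-vanishing one-form $\eta_\infty$ with $|\eta_\infty|_\rho(p)=1$.

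Finally, the distortion is continuous in its data: smooth convergence of $\eta_m$ to $\eta_\infty$ and of $g_m^*\rho_m$ to $\rho$ on $U'$ implies that $\beta_m := d\log|\eta_m/g_m^*\rho_m|$ converges uniformly on $U'$ to $\beta_\infty := d\log|\eta_\infty/\rho|$, with their hyperbolic norms converging pointwise. Hence
\begin{equation*}
\daleth(p_m) \= |\beta_m|_{g_m^*\rho_m}(q_m) \;\longrightarrow\; |\beta_\infty|_\rho(p) \,<\, \infty,
\end{equation*}
contradicting $\daleth(p_m)\to\infty$. The main obstacle in this plan is the middle step: producing a non-degenerate subsequential limit $\eta_\infty$ near~$p$ simultaneously with the correct comparison of hyperbolic metrics. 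The smooth convergence of hyperbolic metrics on the thick part and the Harnack-type two-sided bound on $|\eta_m|$ are the two essential ingredients that make the thick-part hypothesis enter in a decisive way; once these are established, the continuity of the distortion functional and the resulting contradiction are routine.
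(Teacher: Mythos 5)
There is a genuine gap at the normal-families step, and it sits exactly where the real content of the lemma lies. You rescale so that $|\eta_m|_{g_m^*\rho_m}(q_m)=1$ and then claim that ``harmonicity of $\log|\eta_m|$ on $U$'' plus Harnack gives a uniform two-sided bound on $|\eta_m|$ on a smaller disc. Writing $\eta_m=f_m(z)\,dz$, the function $\log|f_m|$ is indeed harmonic, but it has no sign, and Harnack's inequality applies only to positive harmonic functions; a harmonic function normalized at a single interior point can be arbitrarily large and arbitrarily negative elsewhere on any smaller disc. The family $\omega_m=e^{mz}\,dz$ on a fixed disc, normalized at the center, shows that no purely local argument of this kind can work: these forms are nowhere vanishing, satisfy your normalization, have no convergent subsequence, and their distortion at the center blows up. What rules out such behavior for the actual $\omega_m$ is global: $\omega_m$ is a meromorphic form of fixed type $\mu$ on a closed surface, and the structure theory of limits in the incidence variety compactification (the main theorem of \cite{strata}) guarantees that, after rescaling component by component, a subsequence converges to a \emph{not identically zero} meromorphic limit on the component $Y$. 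Without importing that input, the middle step of your argument is unproved; with it, your contradiction argument would go through, but note you cannot instead quote Theorem~\ref{thm:compactness_for_differentials}, since its proof uses Corollary~\ref{cor:stupid_bound} and hence this very lemma — that would be circular.

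For comparison, the paper sidesteps the sequential argument entirely: it works over the normalization of the IVC, uses Proposition~\ref{prop:normal-rescalable} to produce, near each vertical zero divisor of the universal form, an adjusting parameter $f$ (unique up to units) such that $\omega/f$ is regular and not identically zero on the degenerate fiber, and observes that $\tilde\beta=d\log|\omega/\rho f|$ is then a continuous, scale-independent extension of $\beta$ to the punctured universal curve (using Wolpert's $C^1$ regularity of the fiberwise hyperbolic metric). Boundedness on the $\epsilon$-thick part is then immediate from compactness. Your smooth-convergence claim for $g_m^*\rho_m$ is also stronger than what is available or needed; $C^1$ convergence on the thick part suffices and is what the paper cites.
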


\begin{proof}
We wish to define a compactification of $\pomodulin(\mu)$ so that $\daleth$ extends
continuously to the universal curve over the compactification.  To this end, let
$\PP\obarmoduli[g,n]^{\rm{ninc}}(\mu)$ be the normalization of the Incidence Variety
Compactification, with the universal curve $\pi\colon \tilde{\calX} \to \PP
\obarmoduli[g,n]^{\rm{ninc}}(\mu)$. The universal curve is equipped with a family of
one-forms~$\omega$, defined up to scale. Its divisor consists of horizontal
components (whose $\pi$-image is $\PP\obarmoduli[g,n]^{\rm{ninc}}(\mu)$) along
the marked zeros and poles, and also some vertical components (whose $\pi$-image
is a boundary divisor of $\PP\obarmoduli[g,n]^{\rm{ninc}}(\mu)$).
\par
Suppose $D\subset \tilde{\calX}$ is an irreducible vertical component of the zero divisor
of~$\omega$ \changed{ and let $D'$ be the image of~$D$ in the base $\PP\obarmoduli[g,n]^{\rm{ninc}}(\mu)$}.
Since the base is normal, by Proposition~\ref{prop:normal-rescalable} below near any point
$p\in D'$ there is a regular function~$f$ defined near~$p$ so that $\omega/f$ is regular
on~$D$ near the fiber over~$p$, and moreover such an $f$ is unique up to multiplication
by the pullback of a regular function which does not vanish at~$p$. \changed{Let $\tilde{\calX}'$ be the punctured universal curve by removing the marked zeros and poles from $\tilde{\calX}$.} The family of one-forms
  \begin{equation*}
    \tilde{\beta} \= d \log|\omega/\rho f|
\end{equation*}
is then a continuous extension of~$\beta$ which is defined in a neighborhood of the fiber
over~$p$ in the punctured universal curve $\tilde{\calX}'$ and depends neither on the choice
of~$f$ nor on the scale of~$\omega$.  Here we are using the fact that the vertical hyperbolic metric
is $C^1$ on $\tilde{\calX}'$
by \cite{WolHyp}.  Since we may extend $\beta$ on a neighborhood of any such vertical zero
divisor, this gives a continuous extension of $\tilde{\beta}$ over all of $\tilde{\calX}'$.  The
function $\tilde{\daleth}(p) = |\tilde{\beta}|_p$ is then the desired continuous extension
of $\daleth$ to $\tilde{\calX}'$.  Since the $\epsilon$-thick part of $\tilde{\calX}$ is
compact, we see that $\daleth$ is bounded on the $\epsilon$-thick part.
\end{proof}
\par
\begin{cor}
  \label{cor:stupid_bound}
  There exists a constant $L$ that depends only on $\mu$ and~$\epsilon$, such that for any pointed meromorphic differential $(X, \bfz, \omega) \in \omodulin(\mu)$, for any points
  $p$ and $q$ in the $\epsilon$-thick part of~$X$, we have
  \begin{equation*}
    |\omega|_q \,\leq\, L |\omega|_q\,.
  \end{equation*}
\end{cor}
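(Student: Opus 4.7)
The key tool is Lemma~\ref{lm:distortion_bound}, which gives a uniform pointwise bound $\daleth(p) = |\beta|_p \leq C$ on the $\epsilon$-thick part of $X'$, where $\beta = d\log|\omega/\rho|$. My plan is to integrate $\beta$ along a hyperbolic geodesic from $q$ to $p$ and bound the result using this pointwise estimate together with a uniform bound on the diameter of the $\epsilon$-thick part.

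First, I would translate the norm $|\omega|_p$ into a form convenient for integration: by definition $|\omega|_p$ is the pointwise norm of $\omega$ relative to the hyperbolic metric $\rho$, so $\log|\omega|_p = \log|\omega/\rho|(p)$. Then for any smooth path $\gamma$ in the $\epsilon$-thick part from $q$ to $p$, the fundamental theorem of calculus gives
\begin{equation*}
\log|\omega|_p - \log|\omega|_q \= \int_\gamma \beta\,,
\end{equation*}
so by Lemma~\ref{lm:distortion_bound},
\begin{equation*}
\bigl|\log|\omega|_p - \log|\omega|_q\bigr| \,\leq\, \int_\gamma |\beta|_\rho\, d\ell_\rho \,\leq\, C\cdot \ell_\rho(\gamma)\,.
\end{equation*}

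Thus what remains is a uniform upper bound on the hyperbolic length of the shortest such $\gamma$ inside the $\epsilon$-thick part, depending only on $g$, $n$, and $\epsilon$. This is a standard fact about hyperbolic surfaces of finite type: a connected component of the $\epsilon$-thick part of a hyperbolic surface of genus $g$ with $n$ punctures has hyperbolic diameter bounded by a constant $D(g,n,\epsilon)$ (the Margulis lemma controls the topology of the thin part, and the thick part of each topological piece is compact with uniformly bounded diameter). For points $p,q$ in the same connected component we can therefore choose $\gamma$ with $\ell_\rho(\gamma)\leq D$, and setting $L = \exp(CD)$ yields the claim.

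The only subtlety I would need to address carefully is the (apparently implicit) hypothesis that $p$ and $q$ lie in the same connected component of the $\epsilon$-thick part---this matches the way the corollary is invoked in the proof of Theorem~\ref{thm:compactness_for_differentials}, where $\epsilon$ is chosen precisely so that the relevant thick part $\Thick[Y][\epsilon]$ is connected. The main (and essentially only) conceptual input is Lemma~\ref{lm:distortion_bound}; once that is in hand, the corollary is a one-line integration combined with the standard hyperbolic-geometry diameter bound, so I do not anticipate any real obstacle beyond making these two ingredients precise.
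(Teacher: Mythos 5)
Your proof is correct and follows essentially the same route as the paper's: Lemma~\ref{lm:distortion_bound} makes $\log|\omega/\rho|$ uniformly Lipschitz on the thick part, and a uniform diameter bound for the thick part then gives the multiplicative constant (your $L=\exp(CD)$ is in fact the correct form of the constant, which the paper records slightly loosely as $Ce^E$). Your remark about $p$ and $q$ needing to lie in the same connected component, and the observation that the statement as printed should read $|\omega|_p \leq L|\omega|_q$, are both apt.
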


\begin{proof}
  By Lemma~\ref{lm:distortion_bound}, $\log|\omega/\rho|$ is $C$-Lipschitz on $\Thick[X][\epsilon]$ for a
  uniform constant $C$.  The diameter of $\Thick[X][\epsilon]$ is bounded by a uniform constant $E$, so we can
  take $L = C e^E$.
\end{proof}
\par
\begin{proof}[Proof of Theorem~\ref{thm:compactness_for_differentials}]
  Let $f_m\colon K_m \to X_m$ be conformal maps on an exhaustion $\lbrace K_m\rbrace$ of $Y'$ that exhibit the
  convergence of the $(X_m, \bfz_m)$.  By Corollary~\ref{cor:stupid_bound}, and convergence of the
  \Poincare metrics of $X_m'$ to that of $Y'$, the differentials $f_m^*(\omega_m/\lambda_m)$ are
  uniformly bounded on the $(1/k)$-thick part of $Y$ for every~$k$.  By Montel's Theorem, there
  is a subsequence which converges uniformly on $\Thick[Y][1/k]$.  The diagonal trick gives a sequence
  converging uniformly on compact subsets of~$Y$.
\end{proof}

\section{Normal forms for differentials}
\label{sec:NF}

This section provides auxiliary statements for the normal forms for
differentials on Riemann surfaces, and for \changed{families of differentials whose underlying Riemann surfaces
 degenerate to a nodal Riemann surface}. There are two types of statements. The first
is for a fixed Riemann surface, in fact a disk or an annulus. If moreover
the differential is fixed, this goes back to Strebel. For a varying
differential we proved such a normal form statement
in~\cite[Section~4.2]{strata} and we give a slight generalization below.
The second type of normal form theorem is for differentials on a family of
surfaces whose topology changes. This statement has two subcases
corresponding to the local situation at
vertical nodes and horizontal nodes, respectively.
\par
We first recall Strebel's standard local coordinates for meromorphic
differentials in the complex plane. A meromorphic differential~$\omega$ defined
on a neighborhood of $0$ in~$\CC$ has two local conformal invariants,
its order of vanishing $k = \ord_0\omega$ and its residue $r = \Res_0\omega$.
Strebel constructed a normal form for~$\omega$, which depends
only on~$k$ and~$r$.
\par
\begin{thm}[{\bf Normal form on a disk}, \cite{Strebel}]
  \label{thm:standard_coordinates}
  Consider a meromorphic differential~$\omega$ on the $\delta$-disk $\Delta_\delta\subset\CC$ with $k=
  \ord_0\omega$ and $r = \Res_0\omega$.  Then for some $\epsilon>0$, there exists a \changed{biholomorphic coordinate change map}
  $\phi\colon(\Delta_\epsilon, 0) \to (\Delta_\delta,0)$ such that
  \begin{equation}\label{eq:standard_coordinates}
    \phi^*\omega \=
    \begin{cases}
      z^k\, dz &\text{if $k\geq 0$,}\\
      r\frac{dz}{z} &\text{if $k = -1$,}\\
      \left(z^{k+1} +r\right)\frac{dz}{z} &\text{if $k < -1$.}
    \end{cases}
  \end{equation}
The germ of $\phi$ is unique up to multiplication by a $(k+1)$-st root of unity
when $k \geq 0$, and up to multiplication by a non-zero constant if $k=-1$.
For $k<-1$ the map~$\phi$ is uniquely determined
by~\eqref{eq:standard_coordinates} and the specification of the image
of some point~$p$ in $\Delta_\ve \setminus \{0\}$. Moreover, if $\phi$
satisfies~\eqref{eq:standard_coordinates} and $\phi(p) = q$, then
there exists a neighborhood~$U$ of~$q$ such that for every~$\wt{q} \in U$
there exists a map $\wt{\phi}$ satisfying~\eqref{eq:standard_coordinates}
and with $\wt{\phi}(p) = \wt{q}$.
\end{thm}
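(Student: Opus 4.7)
The plan is to treat each case by producing an appropriate local primitive of $\omega$ (in some cases multi-valued) and inverting it via the holomorphic Inverse/Implicit Function Theorem. The three cases are governed by whether $\omega$ has a holomorphic primitive, a simple pole (logarithmic primitive), or a higher-order pole (mixed meromorphic–logarithmic primitive), and the last case is the most delicate.

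For $k \geq 0$, the form is holomorphic and the primitive $F(w) = \int_0^w \omega$ vanishes to order $k+1$ at the origin, so $F(w) = w^{k+1} g(w)$ with $g(0) \neq 0$. I would extract a $(k+1)$-st root to form a local biholomorphism $\psi(w) = w \cdot ((k+1)g(w))^{1/(k+1)}$, and then take $\phi$ to be its local inverse; a direct calculation shows $\phi^*\omega = z^k\,dz$. The $(k+1)$-fold ambiguity in the choice of the root produces exactly the uniqueness statement up to a $(k+1)$-st root of unity.

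For $k = -1$, I would split $\omega = r\,dw/w + \eta$ with $\eta$ holomorphic and write $\eta = dG$ for a local primitive $G$. Then $\Psi(w) := w\exp(G(w)/r)$ is a local biholomorphism at $0$, and Cartan's formula gives $\Psi^*(r\,dz/z) = r\,d\log(\Psi) = r\,dw/w + dG = \omega$. Setting $\phi = \Psi^{-1}$ and noting that $r\,dz/z$ is invariant under any rescaling $z \mapsto \lambda z$ with $\lambda\in\CC^*$ accounts for the stated uniqueness up to a nonzero multiplicative constant.

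The main obstacle is $k < -1$. Write $n = -k-1 \geq 1$ and, after a preliminary linear rescaling of $w$, normalize the leading coefficient of $\omega$ to $1$. Both $\omega$ and the target $\omega_0 = (z^{-n-1} + rz^{-1})\,dz$ can be written as $dU + r\,dw/w$, where $U$ is a meromorphic function with a pole of exact order $n$ at $0$ and vanishing residue, and correspondingly $U_0(z) = -z^{-n}/n$. Using the ansatz $\phi(z) = z\psi(z)$ with $\psi$ holomorphic and $\psi(0)\neq 0$, the condition $\phi^*\omega = \omega_0$ becomes the single scalar equation
\begin{equation*}
  U(z\psi(z)) - U_0(z) + r\log\psi(z) \= c\,,
\end{equation*}
in which the logarithm is well defined near $\psi(0) = 1$. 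Matching the $z^{-n}$ coefficient forces $\psi(0)^n = 1$, and I would then solve the remaining equation for $\psi$ by the holomorphic Implicit Function Theorem; the key point is that the linearization at $\psi \equiv 1$ is (up to a nonvanishing factor) the operator governing the Lie derivative of $\omega_0$ along a vector field vanishing to order $n+1$ at $0$, which is invertible on the appropriate space of truncated Laurent data. The additional freedom needed for the \emph{moreover} statement is obtained by adding the base point condition $\phi(p) = q$ as a parameter in the Implicit Function Theorem setup: varying $q$ in a neighborhood of $\phi(p)$ yields a holomorphic family $\phi_{\tilde q}$ of solutions realizing $\phi_{\tilde q}(p) = \tilde q$, which gives the local deformability claim at the end.
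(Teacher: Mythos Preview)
The paper does not supply a proof of this theorem; it is cited from Strebel's book without argument, so there is no ``paper's own proof'' to compare against.

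On its own merits: your treatment of $k\geq 0$ and $k=-1$ is correct and standard.

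For $k<-1$, the overall strategy (reduce to a functional equation and apply the Implicit Function Theorem) is sound, but your description of the linearized operator is garbled. The infinitesimal deformations of $\phi(z)=z\psi(z)$ with $\psi(0)=1$ correspond to vector fields vanishing to order~$2$ at the origin, not order~$n+1$, and the resulting operator $v\mapsto\mathcal L_v\omega_0$ is \emph{not} invertible: it has a one-dimensional kernel, spanned by $z^{n+1}(1+rz^n)^{-1}\partial_z$, which is exactly the infinitesimal generator of the one-parameter automorphism group of~$\omega_0$. (The matching cokernel condition is the vanishing of the $z^{-1}$-coefficient, which holds automatically since residues are conformally invariant.) To apply the IFT you must kill this kernel by building the base-point constraint $\phi(p)=q$ into the setup from the outset rather than appending it afterward for the ``moreover'' clause; once you do so, the augmented linearization is an isomorphism and existence, uniqueness, and holomorphic dependence on~$\tilde q$ all follow at once. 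You should also name the Banach spaces explicitly (e.g.\ bounded holomorphic functions with a $C^1$-type norm, as in the paper's proof of Theorem~\ref{thm:NF}); the phrase ``truncated Laurent data'' is misleading, since the equation is genuinely infinite-dimensional.
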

\par
This statement also holds for families of differentials~$\omega_\bft$ on
families of disks, as long as the order $\ord_0 \omega_\bft = k$ is the same
for all $\bft$. For families of differentials such that the order
$\ord_0 \omega_\bft$ is not constant, the situation is more complicated.
This has essentially been dealt with
in \cite[Section~4.2]{strata}, and we implement here two minor generalizations.
First, the differential is given a priori only over an annulus, and second,
the locus where the differential is assumed to have the normal form is an arbitrary
closed subvariety of some open ball $U \subset \CC^N$. Let $A_{\delta_1,\delta_2}
\coloneqq \{z:\delta_1 < |z| < \delta_2\} \subset\Delta_{\delta_2}$ be an annulus
\index[other]{$\bfe(z) \= \exp(2\pi \sqrt{-1} z)$}
and let $\zeta_j\coloneqq \bfe(j/(k+1))$ be a $(k+1)$-st root of unity (where we denote $\bfe(z) \= \exp(2\pi \sqrt{-1} z)$).
\par
\begin{thm}[{\bf Normal form of a deformation on an annulus}]
 \label{thm:deformed_standard_coordinates}
Let~$\omega_{\bft}$ be a holomorphic family of nowhere vanishing holomorphic
differentials on $ U\times A_{\delta_1,\delta_2}$ such that its restriction
over a closed complex subspace $Y \subset U$ is in normal
form~\eqref{eq:standard_coordinates}.
\par
\changed{Given a basepoint $p\in A_{\delta_1,\delta_2}$ and a holomorphic
map $\varsigma\colon U\to A_{\delta_1,\delta_2}$ such that $\varsigma(Y) =
\zeta_j p$,}  there exists a neighborhood $U_{\bfzero}\subset U$ of $Y$,
together with $\delta_1 < \ve_1 < \ve_2 <\delta_{2}$ and a holomorphic
map $\phi\colon U_{\bfzero} \times A_{\ve_1,\ve_2}\to A_{\delta_1,\delta_2}$
such that $\phi_{\bft}^* (\omega_\bft)$ has normal
form given by~\eqref{eq:standard_coordinates}, and such that $\phi|_{Y \times A_{\ve_1,\ve_2}}$
is the inclusion of annuli composed with multiplication by $\zeta_j$, and
such that $\phi_\bft(p) = \varsigma(\bft)$ for all $\bft\in U_{\bfzero}$.
\end{thm}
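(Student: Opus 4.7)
The plan is to find $\phi_\bft$ as a small perturbation of the linear map $z \mapsto \zeta_j z$ via the Implicit Function Theorem in a suitable Banach space of holomorphic functions on the annulus, extending Strebel's pointwise normal form (Theorem~\ref{thm:standard_coordinates}) to vary holomorphically with $\bft$.

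Fix intermediate radii $\delta_1 < \tilde{\ve}_1 < \ve_1 < \ve_2 < \tilde{\ve}_2 < \delta_2$ chosen so that $p \in A_{\ve_1, \ve_2}$. Let $\calE$ denote the Banach space of bounded holomorphic functions on $A_{\tilde{\ve}_1, \tilde{\ve}_2}$ with the sup norm, and let $\calF$ denote the Banach space of bounded holomorphic differentials on $A_{\ve_1, \ve_2}$ whose integral around the core loop vanishes. Write candidate maps as $\phi(z) = \zeta_j z + g(z)$ with $g \in \calE$, and let $\omega^{st}_\bft$ denote the standard form~\eqref{eq:standard_coordinates} whose parameter $r$ (when present) is set to the integral of $\omega_\bft/(2\pi i)$ around the core loop---a holomorphic function of $\bft$ which agrees with the standard-form parameter over $Y$. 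On the open $\delta$-ball $\calB_\delta \subset \calE$ (with $\delta$ small enough that $\phi$ maps $A_{\ve_1, \ve_2}$ into $A_{\delta_1, \delta_2}$), define
\[
\Psi\colon U \times \calB_\delta \to \calF \times \CC,\qquad \Psi(\bft, g) \= \bigl((\zeta_j z + g)^* \omega_\bft - \omega^{st}_\bft,\ g(p) + \zeta_j p - \varsigma(\bft)\bigr).
\]
Because $\phi$ is isotopic to $\zeta_j\cdot$ on the annulus, the first component has vanishing loop-integral and thus lies in $\calF$. The hypothesis on $Y$, combined with the $\zeta_j$-symmetry of each standard form, shows $\Psi|_{Y \times \{0\}} \equiv 0$.

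A direct computation of the derivative of the pullback yields, for $\bft \in Y$,
\[
D_2 \Psi_{(\bft, 0)}(h) \= \bigl((\zeta_j^{-1} f^{st}_\bft(z)\, h(z))'\,dz,\ h(p)\bigr),
\]
where $f^{st}_\bft \= \omega^{st}_\bft / dz$. Given $(G\,dz, a) \in \calF \times \CC$, the single-valued primitive $F$ of $G$ on $A_{\ve_1, \ve_2}$ exists because the period of $G\,dz$ vanishes, and setting
\[
h(z) \= \frac{\zeta_j (F(z) - F(p))}{f^{st}_\bft(z)} + \frac{a\, f^{st}_\bft(p)}{f^{st}_\bft(z)}
\]
gives a bounded right inverse, as $f^{st}_\bft$ is bounded above and bounded away from zero on $A_{\ve_1, \ve_2}$. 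Thus $D_2\Psi_{(\bft, 0)}$ is a split surjection for every $\bft \in Y$. The Banach-space Implicit Function Theorem then produces a neighborhood $U_{\bfzero}$ of $Y$ in $U$ and a holomorphic map $g\colon U_{\bfzero} \to \calE$ with $g|_Y \equiv 0$ and $\Psi(\bft, g(\bft)) \equiv 0$; the desired map is $\phi_\bft(z) = \zeta_j z + g(\bft)(z)$.

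The main technical obstacle is verifying that $\Psi$ is genuinely holomorphic as a map between the Banach spaces, not merely separately holomorphic, so that the Implicit Function Theorem applies uniformly in $\bft$. This is why $\calE$ consists of functions on the strictly larger annulus $A_{\tilde{\ve}_1, \tilde{\ve}_2}$: the Cauchy integral formula on $A_{\tilde{\ve}_1, \tilde{\ve}_2}$ converts a sup-norm bound on $g$ into uniform bounds on both $g$ and $g'$ on $A_{\ve_1, \ve_2}$, and joint holomorphicity of $\Psi$ follows by the same techniques that establish Lemma~\ref{lem:banach_universal_property}. A secondary subtlety is that for $k \geq 0$ the standard form $z^k\,dz$ has vanishing residue, so the hypothesis implicitly forces the loop-integral of $\omega_\bft$ to vanish for every $\bft$; this is automatic in the intended applications, where $\omega_\bft$ extends to a meromorphic differential on the disk with a zero of order $k$ at the origin.
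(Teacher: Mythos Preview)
The paper does not supply its own proof of this theorem; the paragraph preceding the statement says the result ``has essentially been dealt with in \cite[\S\,4.2]{strata}'' and only records the two minor generalizations needed here. Your approach via the Banach Implicit Function Theorem is indeed the method used there and in the analogous normal-form proofs of this paper (Theorems~\ref{thm:conformal_convergence} and~\ref{thm:NF}).

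There is, however, a genuine gap in your Banach-space setup. You take $\calE$ to consist of bounded holomorphic functions on the strictly larger annulus $A_{\tilde\ve_1,\tilde\ve_2}$ and $\calF$ to consist of differentials on the smaller annulus $A_{\ve_1,\ve_2}$. This does make $\Psi$ well-defined and holomorphic, since Cauchy's estimate controls $g'$ on the inner annulus by $\|g\|_\calE$. But it destroys your ``right inverse'': given $(G\,dz,a)\in\calF\times\CC$, your formula produces a function $h$ that is holomorphic and bounded only on $A_{\ve_1,\ve_2}$, and a generic such $h$ does \emph{not} extend to a bounded holomorphic function on the larger annulus $A_{\tilde\ve_1,\tilde\ve_2}$. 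Hence the displayed $h$ is not an element of $\calE$ at all. With your choices $D_2\Psi_{(\bft,0)}$ is injective but not surjective, and the Implicit Function Theorem does not apply.

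The fix, which is exactly what the paper does in the proof of Theorem~\ref{thm:NF}, is to work on a \emph{single} annulus but grade the norms: let $\calE$ carry the $C^1$-norm (both $g$ and $g'$ uniformly bounded) and $\calF$ the $C^0$-norm. Then $h\mapsto(\zeta_j^{-1}f^{st}_\bft\,h)'$ is bounded $C^1\to C^0$, and your integration formula is a genuine bounded inverse $C^0\to C^1$, since the primitive $F$ of a bounded $G$ on an annulus is bounded with $F'=G$. With this modification $D_2\Psi_{(\bft,0)}$ becomes a bounded linear isomorphism, and the remainder of your argument goes through unchanged.
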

\par
\medskip
We now pass to families where the topology of the underlying Riemann
surfaces changes.  We establish below the existence of a normal form in a neighborhood
of a node, which will be used in the unplumbing construction of Proposition~\ref{prop:unplumbconst}.
Fix some arbitrary complex (base) space $B$, possibly singular and possibly non-reduced,
with a base point $p \in B$. Any family of Riemann surfaces  over~$B$ with at worst nodal
singularities can be  locally embedded in $\widetilde V=\widetilde V_\delta =
\Delta^2_\delta \times B$, for some radius~$\delta$, where the family is given by
$V(f,\delta)=\lbrace uv=f\rbrace$, where $f$ is a holomorphic function on~$B$ and where~$u$ and~$v$ are the two
coordinates on the disk (see \cite[Proposition~X.2.2.1]{acgh2}). For simplicity we sometimes write $V(f)$ or $V$ for $V(f, \delta)$ when there is no confusion.
\index[family]{b010@$uv=f$!Local equation of a nodal family}We denote  the ``upper'' component of the nodal fibers by $X^+ = \{f=0, v=0\}$,
and the ``lower" component by  $X^- = \{f=0, u=0\}$ respectively. The next statement gives a local normal form
for a family of differentials on~$V$ near the nodal locus $X^+\cap X^-$. 
\par
\begin{thm}[{\bf Normal form near vertical nodes}] \label{thm:NF}
Let~$\omega$ be a family of holomorphic differentials on~$V$,
not identically zero on every irreducible component of~$V$,
which does not vanish at a generic point of $X^{+}$ and \changed{restricted to each nodal fiber vanishes
to order exactly~$k = \kappa-1 \geq 0$ at the node in the nodal locus $X^+\cap X^-$}. Suppose that $f^\kappa$ is not identically zero and that there exists an adjusting function~$h$ on~$B$ such that
$\omega = h \eta$ for some family of
meromorphic differentials~$\eta$ on~$V$, which is holomorphic away from~$X^+$ and
nowhere zero.
\par
Then for some~$\ve >0$, after restricting~$B$
to a sufficiently small neighborhood of~$p$,  there exists an $r\in\calO_{B,p}$ divisible by $f^\kappa$, and a change of coordinates
$\phi\colon V(f,\ve) \to V(f,\delta)$, which lifts the identity map of $B$ to itself, such that
\begin{equation}\label{eq:ordkNF}
\phi^*\omega \= (u^\kappa +r) \frac{d u} u\,.
\end{equation}
Moreover, given a
section $\varsigma_0\colon B \to V$ and an (initial) section $\varsigma$ that both
map to $X^-$ along $f=0$ and with $\varsigma$ sufficiently close to $\varsigma_0$,
there exists a unique change of coordinates~$\phi$ as above
that further satisfies $\phi \circ \varsigma = \varsigma_0$.
\end{thm}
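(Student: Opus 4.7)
The plan is to combine a pointwise application of Strebel's Theorem~\ref{thm:standard_coordinates} on the central fiber with an Implicit Function Theorem in Banach spaces, following the pattern used in Lemma~\ref{lm:supercool} and in the proof of Theorem~\ref{thm:conformal_convergence}, and extending Theorem~\ref{thm:deformed_standard_coordinates} across the node. The new feature compared to the annular case is that the topology of the fibers of $V \to B$ changes, so one must simultaneously solve for the change of coordinates $\phi$ and for the residue parameter $r \in f^\kappa \calO_{B,p}$.

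First I would construct a basepoint $\phi_0$ over the central fiber. On $X^+$, Theorem~\ref{thm:standard_coordinates} produces a biholomorphism $\psi_+$ of a disk around $0$ with $\psi_+^* (\omega|_{X^+}) = u^\kappa\, du/u$, where the hypothesis that $\omega$ does not vanish at a generic point of $X^+$ and vanishes to order $\kappa-1$ at the node fixes the order used in Strebel's theorem. On $X^-$, the relation $\omega = h\eta$ together with the fact that $\eta$ has a pole of order~$\kappa+1$ along $X^+$ forces $h$ to vanish on $\{f=0\}$ to a definite order, so that $\omega|_{X^-}$ vanishes identically on the central fiber; thus I am free to pick $\psi_-$ subject only to $\psi_- \circ \varsigma|_{\{f=0\}} = \varsigma_0|_{\{f=0\}}$. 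Gluing $\psi_\pm$ along the node gives a map $\phi_0 : V(0,\ve) \to V(0,\delta)$ that respects $uv = 0$ and the section condition.

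Next I would put a Banach structure on the deformation problem. Let $\calH$ be a small neighborhood of $\phi_0$ in the space of bounded holomorphic maps $V(f,\ve) \to V(f,\delta)$ that lift $\id_B$, respect the nodal equation $uv = f$, and satisfy $\phi \circ \varsigma = \varsigma_0$. By a complete-intersection argument analogous to the one used in the proof of Theorem~\ref{thm:conformal_convergence} (applied here to the hypersurface $uv = f$ inside $\Delta_\delta^2 \times B$), $\calH$ is modeled on a Banach ball of bounded holomorphic vector fields tangent to $V$ and vanishing on $\varsigma$. Let $\calR$ be the Banach space of $r \in f^\kappa \calO_{B,p}$ with the sup norm on a fixed polydisk around $p$. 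Define
\[
\Psi\colon \calH \times \calR \,\longrightarrow\, \calE, \qquad \Psi(\phi, r) \,=\, \phi^* \omega \,-\, (u^\kappa + r)\,\tfrac{du}{u},
\]
where $\calE$ is a suitable Banach space of meromorphic differentials on $V(f,\ve)$ that are holomorphic away from $X^+$ and have prescribed pole order along $X^+$. The decomposition $\omega = h \eta$ together with the construction of $\phi_0$ guarantees that $\Psi$ is well-defined, holomorphic, and that $\Psi(\phi_0, 0)$ vanishes on the central fiber.

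The key computation is that $D\Psi_{(\phi_0, 0)}$ is a split surjection. In the $\calH$-direction it is $v \mapsto \mathcal{L}_v \omega = d(\iota_v \omega)$ by Cartan's formula, and in the $\calR$-direction it is $r \mapsto -r\, du/u$. A right inverse is constructed in the spirit of the operator $\Upsilon$ from the proof of Theorem~\ref{thm:conformal_convergence}: given a target deviation in $\calE$, extract the residue at $u = 0$ on each fiber and project it onto the $\calR$-factor, and recover the vector field by integrating the remainder against a local primitive of $1/\omega$. The Implicit Function Theorem then produces the unique pair $(\phi, r)$ with $\Psi(\phi, r) = 0$ in a neighborhood of $(\phi_0, 0)$, and the holomorphic dependence on the parameter $\varsigma_0 - \varsigma$ built into the setup yields the uniqueness statement for sections close to the initial one. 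The main obstacle will be the careful identification of the target Banach space $\calE$ so that the $\calR$-factor of the splitting is naturally $f^\kappa \calO_{B,p}$ rather than merely $f\,\calO_{B,p}$: the residue of $\phi^*\omega$ around $u = 0$ on a smooth fiber must be absorbed by $r$, and the assumption that $\eta$ is nowhere zero off $X^+$ together with the exact vanishing order $\kappa-1$ of $\omega$ at the node is what forces this residue to be divisible by~$f^\kappa$.
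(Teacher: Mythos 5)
Your overall strategy (a normal form on the central fiber plus an Implicit Function Theorem in Banach spaces) is the same as the paper's, but the proposal has a genuine gap precisely at the node, on the lower branch. You claim that since $\omega|_{X^-}$ vanishes identically over $\{f=0\}$ you are ``free to pick $\psi_-$'' arbitrarily. This is not so: writing the target form in the coordinate $v=f/u$ gives $(u^\kappa+r)\,du/u = -(f^\kappa v^{-\kappa}+r)\,dv/v$, so after dividing by $h=f^\kappa$ the coordinate on $X^-$ must put the \emph{adjusted} differential $\eta$ (which has a pole of order $\kappa+1$ there and is nowhere zero) into Strebel standard form. More seriously, because $\omega$ vanishes identically on $X^-$ over the central fiber, your linearization $v\mapsto\calL_v\omega$ kills every vector field supported on the lower branch at the base point, and your proposed right inverse ``integrate the remainder against a local primitive of $1/\omega$'' is a division by a form that vanishes to order $f^\kappa$ there. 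The IFT as you have set it up therefore fails in exactly the directions the theorem is about; one must first normalize by $f^\kappa$ on the lower branch. The paper handles this by an explicit Ansatz $\phi_{(X,Y)}(u,v)=(ue^{X(u)+Y(v)}, ve^{-X(u)-Y(v)})$, which automatically preserves $uv=f$ and leads to two decoupled ODEs for $X$ and $Y$ that are solved by integrating factors, yielding a genuine bounded inverse. Relatedly, your appeal to the complete-intersection lemma to give $\calH$ a Banach manifold structure does not apply: that lemma requires the target curve to be smooth, whereas $\{uv=f\}$ is singular along the nodal locus, which is another reason the explicit parametrization is needed.

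A second gap is that the two divisibility statements you defer — that $h$ equals $f^\kappa$ up to a unit and that $f^\kappa\mid r$ — are not formalities; they are where the hypotheses on $\eta$ enter. The paper proves them first (its Lemma on the first reduction) by expanding $\omega=\bigl(\sum c_iu^i+\sum c_{-i}v^i\bigr)du/u$ and $\eta$ in power series, comparing coefficients in $\omega=h\eta$, and using a winding-number argument to see that the leading coefficient of $\eta$ at the pole does not vanish; this yields $h\mid f^\kappa$ and $f^\kappa\mid h$ simultaneously, and then $r=b_{-\kappa}f^\kappa$. Without this step you cannot even write down the correct target normal form on the lower branch, and the ``residue landing in $f^\kappa\calO_{B,p}$'' that you flag as the main obstacle does not come out of the IFT — it must be established beforehand by this coefficient comparison. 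Finally, note that the paper's uniqueness is global (two coordinate changes with the same pullback agreeing along a section agree identically, by analytic continuation over $\{f\neq0\}$), which is stronger than the local uniqueness the IFT gives you.
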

The notion of adjusting function will be formally defined and used later,
see Definition~\ref{def:rescalable}. We split the proof in several steps.
\par
\begin{lm}
 \label{lm:first_reduction}
Under the assumption of Theorem~\ref{thm:NF}, the following statements hold:
\begin{enumerate}[(i)]
\item There exists a holomorphic function~$g$ on $V$ such that we can write $\omega = u^\kappa g(u,v) \tfrac{du}u$. Moreover, $g$ can be taken with constant term $1$
after rescaling $u$ by a unit.
\item Up to multiplying $\eta$ by a unit, we can assume that $h=f^\kappa$.
\item We have $f^\kappa\ |\ r$.
\end{enumerate}
\end{lm}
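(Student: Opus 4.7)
The plan is to exploit the rigid structure of nowhere-vanishing meromorphic sections of the relative dualizing sheaf $\omega_{V/B}$, which near the node is freely generated by $du/u = -dv/v$.

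For (i), since $\eta$ is nowhere zero with polar support contained in the irreducible divisor $X^+ = \{v=0\}$, I would write $\eta = c\, v^{-N}\,du/u$ for some integer $N\ge 0$ and unit $c\in\calO_V$. Substituting into $\omega = h\eta$ and demanding that $\omega$ be holomorphic on $V$ forces $f^N \mid h$ in $\calO_{B,p}$, via the divisor-theoretic identifications $(u) = X^-$, $(v) = X^+$ on $V$ together with the fact that $h$ is pulled back from~$B$, so its divisor is a multiple of $X^+ + X^-$. Writing $h = f^N h'$ and exploiting the identity $f^N = u^N v^N$, the expression collapses to $\omega = (h'c)\,u^N\,du/u$. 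I would then determine $N = \kappa$ by restricting to $X^+$ in the central fiber and reading off the prescribed vanishing order $\kappa - 1$ at the node, using that $g(0,0) = h'(p)\,c(0,0)$ is nonzero because $\omega$ does not vanish generically on $X^+$. Finally, the rescaling $u\mapsto \lambda u$, $v\mapsto v/\lambda$ with $\lambda^\kappa = g(0,0)^{-1}$ preserves both $uv = f$ and $du/u$ and normalizes the constant term of $g$ to be $1$; the required $\kappa$-th root exists locally because $g(0,0)$ is a unit in $\calO_{B,p}$.

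For (ii), the unit $h'\in\calO_{B,p}^{\times}$ from the previous step itself provides the desired rescaling: replacing $\eta$ by $\eta' := h'\eta$ (still meromorphic, holomorphic away from $X^+$, and nowhere zero) yields $\omega = h\eta = f^\kappa(h'\eta) = f^\kappa \eta'$, so the new adjusting function is exactly $f^\kappa$. For (iii), interpreting~$r$ as the residue at $u=0$ of $\omega|_{V_f}$ on a smooth fiber --- the quantity that plays this role in the normal form of Theorem~\ref{thm:NF} --- I would expand the holomorphic function $g$ from (i) as $g = \sum_{k\ge 0} g_k u^k + \sum_{k\ge 1} g_{-k} v^k$; then using the relations $u^\kappa v^k = f^k u^{\kappa-k}$ for $k\le \kappa$ and $u^\kappa v^k = f^\kappa v^{k-\kappa}$ for $k>\kappa$, substitution of $v=f/u$ shows that the coefficient of $du/u$ in the Laurent expansion around $u=0$ equals $f^\kappa g_{-\kappa}$, yielding the divisibility $f^\kappa \mid r$.

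The main obstacle is the divisor-theoretic bookkeeping in (i) in the presence of nilpotents in $\calO_{B,p}$: the hypothesis that $f^\kappa$ is not identically zero is what guarantees that the identifications $(u) = X^-$, $(v) = X^+$ are meaningful and that the pole-order conditions on $V$ translate into the honest divisibility $f^N \mid h$ in $\calO_{B,p}$, from which the whole argument cascades.
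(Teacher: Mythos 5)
There is a genuine gap, and it sits at the very first step of your argument for (i): the assertion that $\eta = c\,v^{-N}\,du/u$ with $c$ a \emph{unit} in $\calO_V$. What the hypothesis gives you is that $\eta$ is holomorphic and non-vanishing on $V\setminus X^+$; the node lies \emph{on} $X^+$, so "nowhere zero" says nothing directly about the leading polar coefficient of $\eta$ there. Writing $\eta = v^{-N}G\,du/u$ with $G$ holomorphic, the claim that $G$ is a unit at the node (equivalently, that $G(q)\neq 0$ at the nodal point over $p$) is exactly the non-trivial content of the lemma: in the paper's power-series expansion $\eta = (\sum_{i\ge 0} e_i v^{-i} + \sum_{i>0} e_{-i}v^i)\,du/u$ it is the statement $e_\kappa(p)\neq 0$, which the paper establishes by the winding-number argument of \cite[Theorem~1.3]{strata} (comparing the index of the nowhere-zero differential on nearby smooth annular fibers computed from the $X^+$ side and from the $X^-$ side), with an extra reduction step when $B$ is non-reduced. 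You cannot get it from a divisor-theoretic argument either, since the total space $V$ is not normal at the node and the base may be non-reduced, so "polar divisor supported on the irreducible divisor $X^+$" does not upgrade to "unit times $v^{-N}$". Note that everything downstream in your proof — the divisibility $f^N\mid h$, the identification $N=\kappa$, the unit-ness of $h/f^\kappa$ in (ii) — uses the unit-ness of $c$ in an essential way, so the gap is not cosmetic.

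Aside from this, your architecture is sound and close in spirit to the paper's: the paper works instead by comparing coefficients of the expansions of $\omega$ and $\eta$ in powers of $u$ and $v$ near $X^-$ (getting $c_\kappa f^\kappa = h e_\kappa$, hence $h\mid f^\kappa$ from $c_\kappa(p)\neq 0$ and $f^\kappa\mid h$ from $e_\kappa(p)\neq 0$), and your computation of the residue as $f^\kappa$ times the coefficient $g_{-\kappa}$ in (iii), as well as the final rescaling of $u$ by a $\kappa$-th root to normalize the constant term, match the paper. To repair the proof you would need to either import the winding-number argument or reproduce the paper's coefficient comparison to justify that the leading polar coefficient of $\eta$ is a unit.
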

\par
\begin{proof}
We will see that the second and third statements follow from the proof of the first one.
Using the defining equation of~$V$ and the fact that~$\omega$ is holomorphic, we
can expand~$\omega$ in series as
\begin{equation}\label{eq:defomega}
\omega \= \Bigl(\sum_{i \geq 0} c_i u^i + \sum_{i > 0} c_{-i}v^i \Bigr)
\frac{du}u
\end{equation}
for some local functions $c_i,c_{-i}$ on $B$. An arbitrary holomorphic function~$g$
on~$V$ can be uniquely written, possibly after shrinking the neighborhood to
guarantee convergence, as a series $g = \sum_{i \geq 0}a_i u^i + \sum_{i > 0} b_{-i}v^i$,
so that our goal is to write~$\omega$ as
\begin{equation}\label{eq:ugoal}
\omega \= u^\kappa g(u,v) \frac{du}u \= \Bigl(\sum_{i \geq \kappa} a_{i-\kappa}  u^i
+\sum_{0 \leq i < \kappa} b_{i-\kappa} f^{\kappa-i} u^i +  \sum_{i > 0} b_{-\kappa-i}f^\kappa v^i \Bigr) \frac{du}u\,.
\end{equation}
Since~$\eta$ is holomorphic outside the locus $v=0$, we can also expand it as
\begin{equation}
\eta \= \Bigl(\sum_{i \geq 0} e_i v^{-i} + \sum_{i > 0} e_{-i}v^i \Bigr) \frac{du}u\,.
\end{equation}
The hypothesis on the vanishing order of~$\omega$  implies that $c_i(p)=0$ for
$0 \leq i <\kappa$, but $c_\kappa(p) \neq 0$. We consider the equation $\omega =h\eta$
near $X^-$ and write $u^i = f^i v^{-i}$ in the defining power series~\eqref{eq:defomega}
of~$\omega$. Comparing the $v^{-\kappa}$ terms gives $c_\kappa f^\kappa = h e_\kappa$,
hence $h \mid f^\kappa$. On the other hand, the winding number argument as in the
proof of \cite[Theorem~1.3]{strata} implies that $e_\kappa(p) \neq 0$, so
that $f^\kappa \mid h$. (If~$B$ is topologically just a point, we can take any
lift of the family to a polydisk, run the argument there and the conclusion
persists after reduction.) Changing $\eta$ by a unit in~$\calO_{B,p}$, we can assume that $h=f^\kappa$ from now on, thus verifying (ii). Coefficient
comparison of the terms $v^i$ for $i>0$ in the equality $\omega =h\eta$ now implies that
$c_{-i} = f^\kappa e_{-i}$. It also implies that $f^i c_i = f^\kappa e_i$ for $i\geq 0 $.
Since~$f^i$ is a non-zero function on $B$ for those~$0 \leq i <\kappa$ by the non-vanishing
hypothesis of~$\omega$, this implies the remaining divisibility condition $f^{\kappa - i}
\mid c_i$ for $0 \leq i <\kappa$ needed for making~\eqref{eq:ugoal} equal
to~\eqref{eq:defomega}.
\par
The form of~$\omega$ we derived so far implies that the residue of~$\omega$ is equal
to $r=b_{-\kappa}f^\kappa$, which is in particular divisible by~$f^\kappa$, hence proving (iii).
\par
Finally we can multiply $u$ by a unit and $v$ by the inverse of the unit to make the constant term $a_0 = 1$ in $g$, thus completing the entire proof.
\end{proof}
\par
We write $r = r_0 f^\kappa$ from now on.
\par
\begin{lm}
We may assume that~$B$ is a polydisk.
\end{lm}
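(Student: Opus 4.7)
The plan is to locally embed $B$ into a polydisk, extend all of the given data, apply the theorem on the polydisk, and restrict the conclusion. Shrink $B$ around $p$ and realize it as a closed complex subspace of a polydisk $\widetilde B\subset\cx^n$; this is possible for any germ of a complex space. Since $\widetilde B$ is Stein, Cartan's extension theorem (a consequence of Theorem~B applied to the ideal sheaf of $B$) provides surjectivity of $\calO_{\widetilde B,p}\twoheadrightarrow\calO_{B,p}$, and we extend $f$ to $\widetilde f\in\calO_{\widetilde B,p}$. Then $\widetilde V(\widetilde f,\delta)=\{uv=\widetilde f\}\subset\Delta_\delta^2\times\widetilde B$ is a closed subspace of a Stein space, hence itself Stein, and it contains $V(f,\delta)$ as a closed subspace.

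The crucial step is to extend the differential while preserving all the hypotheses. Rather than extending $\omega$ and $\eta$ independently and hoping that the relation $\omega=h\eta$ survives, I would use Lemma~\ref{lm:first_reduction}(i)--(ii): after a unit change of $u$, write $\omega=u^\kappa g(u,v)\,du/u$ with $g$ holomorphic on $V(f,\delta)$ having constant term $1$ at $p$, and take $h=f^\kappa$, $\eta=\omega/h$. Apply Cartan extension on $\widetilde V(\widetilde f,\delta)$ to produce a holomorphic extension $\widetilde g$ of $g$ on a neighborhood of $V(f,\delta)$; after shrinking $\widetilde B$ around $p$ and replacing $\delta$ by some $0<\ve<\delta$, $\widetilde g$ is defined on $\widetilde V(\widetilde f,\ve)$ and $\widetilde g(0,0)$ remains nonzero on the base. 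Setting $\widetilde\omega=u^\kappa\widetilde g\,du/u$, $\widetilde h=\widetilde f^\kappa$, $\widetilde\eta=\widetilde\omega/\widetilde h$ yields a nodal family with an extended differential for which all hypotheses of Theorem~\ref{thm:NF} hold on $\widetilde B$: $\widetilde\omega$ vanishes to order exactly $\kappa-1$ at the nodal locus, does not vanish at a generic point of $\widetilde X^+$, $\widetilde f^\kappa$ is not identically zero, and $\widetilde\omega=\widetilde h\widetilde\eta$ by construction. The sections extend analogously: writing $\varsigma=(u_0,v_0)$ with $v_0$ a unit (since $\varsigma$ avoids the node), extend $v_0$ to a unit $\widetilde v_0$ and set $\widetilde u_0=\widetilde f/\widetilde v_0$, and similarly for $\varsigma_0$; shrinking $\widetilde B$ keeps $\widetilde\varsigma$ close to $\widetilde\varsigma_0$.

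Once the theorem is established on the polydisk $\widetilde B$, it produces $\widetilde r\in\calO_{\widetilde B,p}$ with $\widetilde f^\kappa\mid\widetilde r$ together with a change of coordinates $\widetilde\phi\colon\widetilde V(\widetilde f,\ve')\to\widetilde V(\widetilde f,\delta)$ satisfying~\eqref{eq:ordkNF} and $\widetilde\phi\circ\widetilde\varsigma=\widetilde\varsigma_0$. Restricting $\widetilde r$ and $\widetilde\phi$ to $B$ yields the required $r$ and $\phi$ on $V(f,\ve')$, and the divisibility $f^\kappa\mid r$ follows by restriction. The main subtlety I anticipate is the simultaneous preservation of the exact vanishing order of $\omega$ and the factorization $\omega=h\eta$ under extension; extending at the level of the single function $g$ and setting $\widetilde h=\widetilde f^\kappa$ sidesteps this cleanly, at the cost of invoking Cartan extension on the Stein space $\widetilde V(\widetilde f,\delta)$ rather than merely on $\widetilde B$.
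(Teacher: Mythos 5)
Your proposal is correct and follows the same route as the paper: embed $B$ locally as a closed subspace of a polydisk, lift the defining data, solve over the polydisk, and restrict the resulting coordinate change back to $V(f,\ve)$. The only difference is that you spell out the extension step (lifting $g$ via Cartan's Theorem~B on the Stein space $\widetilde V(\widetilde f,\delta)$ and setting $\widetilde h=\widetilde f^\kappa$ to preserve the factorization), which the paper leaves implicit by simply saying one replaces $f$ by any of its lifts.
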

\par
\begin{proof}
Any (possibly reducible and non-reduced) analytic space can be embedded
locally into a polydisk. We thus replace~$f$ by any of its lifts to such a
polydisk. To put~$g$ into the form~\eqref{eq:ordkNF} we may assume that $B$ is
a polydisk with coordinates~$\bfb$ and zero is the base point. The coordinate
change~$\phi$ that puts the differential in the normal form over the polydisk
then restricts to a coordinate change over~$B$ with the desired properties.
\end{proof}
\par
\begin{proof}[Proof of Theorem~\ref{thm:NF}]
  We look for a solution of the form
  \begin{equation}
    \label{eq:def_phi}
    \phi_{(X,Y)}(u,v) \= (u e^{X(u) + Y(v)}, v e^{-X(u) - Y(v)})\,,
  \end{equation}
where $X(u) = \sum_{i=1}^\infty c_i(b) u^i$ is a holomorphic function of~$u$
and $b$ with no constant term, and similarly for $Y(v)$.  (In the sequel,
for a holomorphic function of $u,v$, and~$b$, the dependence on~$b$ will be
left implicit.)
\par
We first remark that the uniqueness of~$\phi$ follows from the observation
that any two holomorphic maps with the same pullback of a differential and
that agree at a marked point in the regular locus of the differentials agree
everywhere. This marked point is given by the section~$\varsigma$ over
$f \neq 0$.  Consequently, if $\phi_1$ and $\phi_2$ both satisfy the
hypothesis of the theorem, then $\phi_1 \circ \phi_2^{-1}$ is identity on the
locus in the family where $f \neq 0$, and hence $\phi_1 = \phi_2$ everywhere.
\par
By Lemma~\ref{lm:first_reduction}, we may write the relative form $\omega$ as
  \begin{equation*}
    \omega \= u^\kappa(1 + r_0 v^\kappa + g_0(u) + h_0(v)) \frac{du}{u}\,,
  \end{equation*}
where $g_0$ is a function of $u$ and $b$ with no constant term, and $h_0$
is a function of~$v$ and~$b$ with no constant term or $v^\kappa$-term.
\par
We first make a preliminary change of coordinate $\psi$ so that
$\psi^*\omega = \omega_0$, where
  \begin{equation*}
    \omega_0 \= u^\kappa(1 + r_0 v^\kappa + f g(u) + f h(v)) \frac{du}{u}\,.
  \end{equation*}
  This may be done by taking functions $\alpha(u) = u e^{A(u)}$ and $\beta(v) = v e^{-B(v)}$ such
  that (possibly after shrinking $\epsilon$) on $\Delta_\epsilon\times B$,
  \begin{eqnarray*}
    \alpha^* u^\kappa(1+ g(u))\frac{du}{u} & = & u^\kappa \frac{du}{u}\,, \quad\text{and}\\
    \beta^* v^{-\kappa}(1 + r_0 f^\kappa + h(v) )\frac{dv}{v} & = & v^{-\kappa}(1 + r_0 f^\kappa )\frac{dv}{v}\,,
  \end{eqnarray*}
using Strebel's normal form, Theorem~\ref{thm:standard_coordinates}.
Then it is straightforward to check that $\phi_{(X,Y)}(u,v) = (u e^{X(u) + Y(v)},
v e^{-X(u) - Y(v)})$ is of the desired form.
\par
  We now wish to find functions $X(u)$ and  $Y(v)$  so that
  $\phi_{(X,Y)}^* u^\kappa (1 + r_0 v^\kappa)\frac{du}{u} = \omega_0$, and
  $\phi_{(X,Y)}\circ \varsigma_0 = \varsigma$.  Explicitly this means that on $\Delta^2_\epsilon\times B$, the
  functions $X$ and $Y$ satisfy the equations,
  \begin{eqnarray*}
       (e^{\kappa(X + Y)} + r_0 v^\kappa)\left(1+ u  \frac{\partial X}{\partial u} - v
         \frac{\partial Y}{\partial v}\right) - (1 +v^\kappa + f g + f h) + (uv -f)W & = & 0\,, \\
     \tau_0 e^{-X(f/\tau_0) - Y(\tau_0)} - \tau & = & 0\,,
  \end{eqnarray*}
  where $W(u,v)$ is a holomorphic function on $\Delta^2_\epsilon$, and where the sections $\varsigma$ and $\varsigma_0$ are written as
  \begin{equation*}
\varsigma \= (f/\tau, \tau) \quad\text{and}\quad \varsigma_0
\= (f/\tau_0, \tau_0)
\end{equation*}
for some nowhere zero functions $\tau$ and $\tau_0$ on $B$.  Our approach
to solving these  equations will be by perturbing the trivial solution
$X=Y=0$ when $g=h=0$ and $f=0$ via the Implicit Function Theorem.  To do this,
we introduce an auxiliary complex parameter~$s$ and the
  rescaling maps $\rho_s(b) = sb$ on $B$ and $\tilde{\rho}_s(u,v,b) = (u,v,sb)$, so that we have
  the commutative diagram:
  \[\begin{tikzpicture}
\matrix (m) [matrix of math nodes, row sep=2.10em, column sep=3.5em,
text height=1.5ex, text depth=0.25ex]
{V(f) &V(f) \\
V(f\circ \rho_{s}) & V(f\circ \rho_{s})   \\};

\path[->,font=\scriptsize]
(m-2-1) edge [below] node {$\varphi_{(X,Y)}$} (m-2-2)
(m-1-1) edge[left] node {$\tilde\rho_{s}$} (m-2-1)
(m-1-1) edge [above] node {$\varphi_{(X,Y)}$} (m-1-2)
(m-1-2) edge [left] node {$\tilde\rho_{s}$} (m-2-2);
\end{tikzpicture}\]
Solving the original equations is then equivalent to solving on the polydisk
  $\Delta^2_\epsilon\times B$ the equations
  \begin{align*}
    \Phi_1(W,X,Y,\tau,s) &\= (e^{\kappa(X(u) + Y(v))} + (r_0\circ\rho_s)  v^\kappa)\left(1+ u
                           \frac{\partial X}{\partial u} - v
                           \frac{\partial Y}{\partial v}\right)
    \\
                         & \hfill\,-\, (1 +v^\kappa + (fg)\circ \tilde{\rho}_s +
                           (fh)\circ\tilde{\rho}_s) + (uv - f\circ \rho_s)W  \= 0\,,\\
    \Phi_2(W, X, Y, \tau, s) &\= \tau_0 e^{-X(f/\tau_0) - Y(\tau_0)} - \tau=0
  \end{align*}
  for any nonzero $s$. (Note that only the first equation has
  been rescaled.)
\par
  We fix some notation for the Banach spaces we need. Let $\banach{M}{m}$ denote the Banach space of
  holomorphic functions on $M$ whose first $m$ derivatives are uniformly bounded, equipped with the
  $C^m$-norm $\|F\|_m \,\coloneqq\, \sum_{j=0}^m \sup_{z\in M} |F^{(j)}(z)|.$ We let
  $U_B = \Delta_\epsilon\times B$, $V_B= \Delta_\epsilon\times B$, and $\widetilde{V}=
  \Delta^2_\epsilon\times B$ be polydisks with
  coordinates~$(u,b), (v,b)$, and~$(u,v,b)$ respectively. An upper index ${\rm nc}$ will refer to functions
  without constant term (in~$u$ resp.\ in~$v$) and an upper index ${\rm nr}$ (``no residue'') will refer
  to functions without $v^\kappa$-term.
\par
  In this notation we can view $\Phi=(\Phi_1,\Phi_2)$ as a map
  \begin{equation*}
    \Phi\colon \banach{\widetilde{V}}{0} \oplus \banach{U_B}{1}^{\rm nc} \oplus \banach{V_B}{1}^{\rm nc}
  \oplus  \banach{B}{0} \oplus \CC \to \banach{\widetilde{V}}{0}^{\rm
    nc, nr}\oplus \banach{B}{0}\,,
  \end{equation*}
  where the domain summand parameterize $W, X, Y, \tau$, and~$s$ respectively. In order to apply
  the Implicit Function Theorem, we need to show that
  \begin{equation*}
    D_1 \Phi\colon \banach{\widetilde{V}}{0} \oplus \banach{U_B}{1}^{\rm nc} \oplus \banach{V_B}{1}^{\rm nc}\to
    \banach{\widetilde{V}}{0}^{\rm nc,nr} \oplus \banach{B}{0}
  \end{equation*}
  is an isomorphism.  Here $D_1\Phi$ refers to the derivative at $(0,0,0, \tau_0, 0)$ with
  respect to $W, X$, and $Y$.
  This derivative is given explicitly by
  \begin{align} \label{eq:DPhi}
    D_1 \Phi_1(W, X, Y) &= W \cdot uv + \left(\kappa X + u(1 + r_0(0)v^\kappa) \frac{\partial X}{\partial u}\right)+ \left(\kappa Y - v(1+ r_0(0) v^\kappa)\frac{\partial Y}{\partial v}\right),\\
    D_1\Phi_2(W,X,Y) &= -\tau_0 X(f/\tau_0) - \tau_0 Y(\tau_0)\,.  \nonumber
  \end{align}
\par
  We will show that $D_1\Phi$ is an isomorphism by constructing an explicit inverse,
  \begin{equation*}
    S \colon \banach{U_B}{0}^{\rm nc} \oplus \banach{V_B}{0}^{\rm nc,nr} \oplus \banach{\widetilde{V}}{0} \oplus \banach{B}{0} \to\banach{\widetilde{V}}{0} \oplus \banach{U_B}{1}^{\rm nc} \oplus \banach{V_B}{1}^{\rm nc}\,,
  \end{equation*}
  identifying $\banach{\widetilde{V}}{0}^{\rm nc,nr}$ with $\banach{U_B}{0}^{\rm nc} \oplus
  \banach{V_B}{0}^{\rm nc,nr} \oplus \banach{\widetilde{V}}{0}$ by decomposing any holomorphic function
  in $\banach{\widetilde{V}}{0}^{\rm nc, nr}$ uniquely as $\aleph(u) + \beth(v) + \daleth(u,v) uv$.
\par
  We define bounded operators
  $S_X\colon \banach{U_B}{0}^{\rm nc}\to \banach{U_B}{1}^{\rm nc}$ and
$S_Y\colon \banach{V_B}{0}^{\rm nc,nr}\to \banach{V_B}{1}^{\rm nc}$ to be the solutions
to the differential equations
  \begin{align}
    \label{eq:diffX}
    \kappa X + u \frac{\partial X}{\partial u} &\= \aleph\,,\\
    \kappa Y - v( 1+ r_0(0) v^\kappa)  \frac{\partial Y}{\partial v} &\= \beth \label{eq:diffY}\,,
  \end{align}
  obtained from the $X$- and $Y$-components of \eqref{eq:DPhi} by deleting terms containing $uv$.
  Solving these equations explicitly using the method of integrating factors (see \cite{euler1732}) yields
  \begin{align*}
    S_X(\aleph) &\= \frac{1}{u^\kappa}\int u^{\kappa-1} \aleph \, du\,,\\
    S_Y(\beth) &\= \frac{-v^\kappa}{1 + r_0(0) v^\kappa} \int \frac{\beth}{v^{\kappa+1}} \, dv\,,
  \end{align*}
  where each antiderivative is chosen to have no constant term.  The second antiderivative exists
  because $\beth$ was assumed to have no $v^\kappa$ term.  The differential operator,
  \begin{equation*}
    T(X) =  \kappa X + u (1 + r_0(0) v^\kappa)\frac{\partial X}{\partial u}\,,
  \end{equation*}
  which is the $X$-component of \eqref{eq:DPhi}, then satisfies
  \begin{equation*}
    TS_X(\aleph) \= r_0(0)uv^k \frac{\partial S_X(\aleph)}{\partial u}  = E(\aleph)\,,
  \end{equation*}
  where
  \begin{equation*}
    E(\aleph) \= -\kappa r_0(0) v^{\kappa} S_X(\aleph)\,,
  \end{equation*}
  which is divisible by $uv$.
  Finally, we define $S$ by
  \begin{equation*}
    S(\aleph, \beth, \daleth, \tau) = \left( \daleth - \frac{1}{uv}E(\aleph), \ S_X(\aleph),\ S_Y(\beth) + C(\aleph, \beth, \tau)\mu(v)\right)\,,
  \end{equation*}
  where
  \begin{equation*}
    \mu(v) \= \frac{v^\kappa}{1 + r_0(0) v^\kappa}
  \end{equation*}
  is the kernel of the left-hand side of \eqref{eq:diffY}, and
  \begin{equation}
    \label{eq:defC}
    C(\aleph, \beth, \tau) \= - \frac{\tau+\tau_0 S_X(\aleph)(f/\tau_0) + \tau_0 S_Y(\beth)(\tau_0)}{\tau_0\mu(\tau_0)}
  \end{equation}
  is chosen so that $D_1 \Phi_2 \circ S(\aleph, \beth,  \daleth, \tau) = \tau$.  Note that since
  $\tau_0(0)\neq 0$, we may
  assume that the denominator $\tau_0\mu(\tau_0)$ of \eqref{eq:defC} is nonzero by possibly shrinking $B$.
\par
We then know that $D_1\Phi$ is surjective, since it has a right inverse.  Injectivity
of $D_1\Phi$ is easily checked, using that the solutions to~\eqref{eq:diffX}
and~\eqref{eq:diffY} are
  unique up to the kernel of \eqref{eq:diffY}, which is of the form $C\mu(v)$, and once $X$ and $Y$
  are fixed, there is a unique function $C$ such that $D_1\Phi_2 = 0$.
\par
  We can now apply the Implicit Function Theorem in a neighborhood of~$(s, \tau) = (0, \tau_0)$ to
  obtain functions $X_{s \tau},Y_{s \tau},W_{s \tau}$ with
  $\Phi(X_{s\tau},Y_{s \tau},W_{s\tau},\tau,s) = 0$.  Since $\phi_0$ is the identity, thus
    mapping $V(f,\ve)$ into $V(f,\delta)$, this inclusion  still holds for $(s,\tau)$ sufficiently
  small.  Consequently the map $\phi$ we constructed maps into $V(f,\ve)$ as required.
\end{proof}
\par
\begin{rem}
The change of coordinates~$\phi$ may also be represented as an explicit formal power
series via the following Ansatz, as a function of the form
\begin{equation}\label{eq:phiAnsatz}
  \phi(u,v) \= (u(1+Z)e^{X(u)+Y(v)},v(1+Z)^{-1}e^{-X(u)-Y(v)})\,,
\end{equation}
where $X(u)$ and $Y(v)$ are holomorphic functions as before, with expressions
$X(u) = \sum_{i>0} c_i u^i$ and $Y(v) = \sum_{i>0} d_i v^i$ respectively.
Here the $c_i$, $d_i$, and~$Z$ are holomorphic functions on $B$.left-hand side
Equation~\eqref{eq:ordkNF} is then equivalent to
\begin{equation}
(u^{\kappa}(1+Z)^\kappa e^{\kappa(X(u) + Y(v))} + r) (1+ uX'(u) - vY'(v)) \= u^{\kappa}g \,.
\end{equation}
\par
A formal solution of this differential equation can be constructed recursively.
We begin with solving the equation mod~$f$. The $v^i$-terms and the $u^j$-terms
for $j \leq \kappa$ are zero mod~$f$ on both sides. The $u^{\kappa}$-term implies
$Z=0 \mod f$. The $u^{\kappa+j}$-term involves a linear equation for $c_j \mod f$
with leading coefficient~$\kappa+j$ for $j > 0$. Next we solve mod~$f^2$, where
the $u^{\kappa-1}$-term gives a linear equation for $b_1 \mod f$. The coefficient
$Z \mod f^2$ is linearly determined by the $u^{\kappa}$-term mod $f^2$ and
the $u^{\kappa+j}$-term mod~$f^2$ linearly determine $c_j \mod f$. In the third round,
considering terms mod~$f^3$, we start with the $u^{\kappa-2}$-term,
which determines $b_2 \mod f$, then consider the $u^{\kappa-1}$-term to determine
$b_1 \mod f^2$. The $u^{\kappa}$-term and higher terms to compute $Z \mod f^3$ and
then the $c_j \mod f^3$. This clearly determines an algorithm, starting at
the $u^{\kappa-n}$-term at the step~``$\mathrm{mod}\, f^n$'',
where the consideration of a term~$u^{-i}$ should be read as the $v^{i}$-term.
The $u^0$-term determines there residue, but imposes no condition on $b_\kappa$
(since it appears with coefficient~$\kappa-\kappa$).
Making an arbitrary choice for that coefficient, the algorithm can be continued as
indicated. This choice can be used to adjust the section~$\varsigma$.
\par
\end{rem}

\par
The corresponding statement for horizontal nodes is a direct
adaptation of \cite[Lemma~7.4]{BHM}. In fact, the proof given there
uses no geometry of the base, and the convergence of
the given formal solution follows from straightforward estimates.
\par
\begin{prop}[{\bf Normal form near horizontal nodes}] \label{thm:NFhoriz}
Let~$\omega$ be a family of holomorphic differentials on~$V$,
whose restriction to the components $X^+$ and $X^-$
of the central fiber both have a simple pole at the nodal locus $X^+\cap X^-$.
\par
Then for some~$\ve >0$ there exists, after restricting~$B$
to a sufficiently small neighborhood of~$p$, a change of coordinates
$\phi\colon V(f,\ve) \to V(f,\delta)$ such that it is the identity on~$B$ and such that
\begin{equation}\label{eq:ord1NF}
\phi^*\omega \= r \frac{d u} u\,
\end{equation}
\changed{where $r$ is the residue of $\omega$ (divided by $2\pi i$) restricted to the horizontal node.} 
Moreover, given a
section $\varsigma_0\colon B \to V$ and an (initial) section $\varsigma$
that both map to $X^-$ along $f=0$ and with $\varsigma$ sufficiently
close to $\varsigma_0$, there is a unique change of coordinates~$\phi$ as above
that further satisfies $\phi \circ \varsigma = \varsigma_0$.
\end{prop}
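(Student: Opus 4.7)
I would mirror the proof of Theorem~\ref{thm:NF}, exploiting that the horizontal case is the limiting situation $\kappa = 0$. The same Ansatz
\[
\phi_{(X,Y)}(u,v) \= \bigl(u e^{X(u)+Y(v)},\, v e^{-X(u)-Y(v)}\bigr)
\]
preserves $uv = f$ and therefore defines an automorphism of $V(f,\ve)$ over $B$ lifting the identity. Since $\omega$ has at worst simple poles at the node on the central fiber, we can write globally $\omega = g(u,v)\,du/u$ for some holomorphic function $g$ on $V$. Using $dv = -(v/u)\,du$ on each fiber, the equation $\phi^{*}\omega = r\,du/u$, after adding an auxiliary $W$ on $\widetilde{V}$ to kill the ideal of $V$, becomes
\[
g\bigl(u e^{X+Y},\, v e^{-X-Y}\bigr)\bigl(1 + u X'(u) - v Y'(v)\bigr) - r + (uv - f)\,W \= 0\,.
\]

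A preliminary reduction, analogous to Lemma~\ref{lm:first_reduction} and using Theorem~\ref{thm:standard_coordinates} fiberwise on each branch of the central fiber, lets me assume $\omega$ is in the reduced form $\omega_0 = (r + f g_0(u) + f h_0(v))\,du/u$, where $r\in\calO_{B,p}$ is the residue of $\omega$ at the node (a unit near $p$ by hypothesis) and $g_0, h_0$ are holomorphic with no constant term. Writing the sections as $\varsigma=(f/\tau,\tau)$ and $\varsigma_0=(f/\tau_0,\tau_0)$, and introducing the rescaling parameter $s$ exactly as in the proof of Theorem~\ref{thm:NF}, the problem becomes the vanishing of a holomorphic map
\[
\Phi\colon \banach{\widetilde{V}}{0}\oplus\banach{U_B}{1}\oplus\banach{V_B}{1}^{\rm nc}\oplus\banach{B}{0}\oplus\CC \;\longrightarrow\; \banach{\widetilde{V}}{0}^{\rm nc}\oplus\banach{B}{0}\,,
\]
whose first component is the displayed left-hand side and whose second is the section-matching equation $\tau_0 e^{-X(f/\tau_0)-Y(\tau_0)}-\tau$. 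In contrast to the vertical case, I allow a constant term in $X$; this reflects the one-parameter continuous automorphism group $(u,v)\mapsto(\lambda u, \lambda^{-1} v)$ of the standard form, which is absent when $\kappa\ge 1$.

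The crucial computation is that
\[
D_1\Phi_1(W,X,Y) \= uv\,W + r(p)\bigl(u X'(u) - v Y'(v)\bigr),
\]
together with $D_1\Phi_2(W,X,Y) = -\tau_0\bigl(X(f/\tau_0) + Y(\tau_0)\bigr)$, is a split isomorphism onto the target. Since $r(p)\neq 0$, any target element $\aleph(u)+\beth(v)+\daleth(u,v)\,uv$ is hit by $W = \daleth$, $X(u) = r(p)^{-1}\!\int_0^u \aleph(s)/s\,ds$, and $Y(v) = -r(p)^{-1}\!\int_0^v \beth(s)/s\,ds$, each antiderivative being well-defined and continuous since $\aleph$ and $\beth$ vanish at the origin. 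The kernel of $D_1\Phi_1$ consists of triples $(0, c, 0)$ with $c$ a function on $B$ viewed as a constant in $u$; on this kernel the map $D_1\Phi_2$ is multiplication by the nowhere vanishing function $-\tau_0$, hence an isomorphism onto the remaining factor $\banach{B}{0}$.

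The Implicit Function Theorem now produces, for $s$ near $0$ and $\tau$ near $\tau_0$, a unique holomorphic family $(W_{s,\tau}, X_{s,\tau}, Y_{s,\tau})$ solving $\Phi = 0$, and specializing at $s=1$ yields the desired $\phi$. Uniqueness follows, since any competing $\phi'$ would produce an automorphism $\phi'\circ\phi^{-1}$ of $V(f,\ve)$ over $B$ fixing both $r\,du/u$ and the section $\varsigma_0$, forced to be the identity by analytic continuation from the locus $f\neq 0$. The only ingredient not already present in Theorem~\ref{thm:NF} is the preliminary reduction to $\omega_0$, which I expect to be the main technical point; all remaining steps are direct transcriptions of the vertical argument with $\kappa = 0$.
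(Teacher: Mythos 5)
Your argument is correct, but it is not the route the paper takes: for this proposition the paper gives no analytic argument at all, instead citing \cite[Lemma~7.4]{BHM} and remarking that the formal power series solution constructed there uses no geometry of the base and converges by straightforward estimates. What you do instead is rerun the Banach-space Implicit Function Theorem proof of Theorem~\ref{thm:NF} at the degenerate value $\kappa=0$, and the one genuinely new point you isolate is the right one: the operator $\kappa X+uX'$ degenerates to $uX'$, whose kernel is the constants, and you compensate by enlarging the domain from $\banach{U_B}{1}^{\rm nc}$ to $\banach{U_B}{1}$, which is exactly the one-parameter automorphism group $(u,v)\mapsto(\lambda u,\lambda^{-1}v)$ of the normal form $r\,du/u$; injectivity is then restored by $\Phi_2$, since $D_1\Phi_2$ acts on that kernel by the unit $-\tau_0$. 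Your linearization $D_1\Phi_1=uvW+r(p)(uX'-vY')$ is correct (the correction term $E(\aleph)$ of the vertical case disappears because there is no $r_0(0)v^\kappa$ cross-term), and the simple-pole hypothesis is precisely what makes $r(p)$ invertible, so your explicit bounded inverse works. The preliminary reduction to $\omega_0=(r+fg_0(u)+fh_0(v))\,du/u$ also goes through as in Lemma~\ref{lm:first_reduction}: the Strebel normalizations of the two branches assemble into a fiber-preserving $\psi$ whose error vanishes on the central fiber over $\{f=0\}$ and is hence divisible by $f$ after lifting $B$ to a polydisk. The trade-off is that your route is self-contained and uniform with the vertical case, whereas the paper's is shorter but imports an external formal-solution-plus-estimates argument. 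One small slip: the Implicit Function Theorem yields solutions only for $s$ near $0$, so one does not specialize at $s=1$ but rather transports the small-$s$ solution back through the rescaling diagram, which is exactly the shrinking of~$B$ in the statement.
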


\section{Prong-matched differentials}
\label{sec:BSProng}

In this section we construct the \Teichmuller space $\ptwT$ of \ptwds
as a topological space. Subsequently the augmented \Teichmuller
space of flat surfaces will be constructed as a union of quotients of such
spaces $\ptwT$. Along the way, we introduce the key notions of degenerations
of multicurves, \prmas and weldings as well as several
auxiliary \Teichmuller spaces.
\par
To avoid overloading this section, we define in this section the points in
the moduli spaces by specifying the objects they represent. All these objects
have a natural notion of deformation that endows those spaces with a topology
that we address in Section~\ref{sec:AugTeich}, as well as modular
interpretations that we address in Section~\ref{sec:famnew}.

\subsection{Ordered and enhanced multicurves and their degenerations.}
\label{sec:order}

We continue to fix an $n$-pointed topological surface
$(\Sigma, \bfs)$, as in Section~\ref{sec:classaugteich}.  To every
multicurve $\Lambda \subset \Sigma\setminus\bfs$, we can associate the
dual graph $\Gamma(\Lambda)$ whose vertices correspond to connected
components of $\Sigma\setminus\Lambda$, whose edges correspond to
curves in~$\Lambda$, and whose half-edges correspond to the marked
points.  \index[graph]{b080@$\Lambda$!Multicurve in $\Sigma$} In the
setting of multicurves, we will generally imitate the standard
notation for level graphs from Section~\ref{sec:graphetc}.  We call
$\oL = (\Lambda,\ell)$ an \emph{ordered multicurve} and specify the
ordering relation between the components of $\Sigma\setminus\Lambda$
by~$\cleq$. The notions \emph{horizontal} and \emph{vertical} are
defined similarly. A multicurve is \emph{purely vertical}
(resp.~\emph{purely horizontal}) if all of its curves are vertical
(resp.~horizontal) edges of $\Gamma(\Lambda)$.
\par
An {\em enhanced multicurve}~$\eLp$ is a multicurve~$\eL$ such that
the associated graph~$\Gamma(\eL)$ has been provided with the extra
structure of an enhanced level graph.
\index[graph]{b090@$\eLp$, $\eL$! Enhanced multicurve}
In order to keep the notation simple, we will mostly denote an  enhanced
multicurve simply by~$\eL$.
\index[graph]{b100@$\eGp(\eLp)$, $\eL$! Enhanced graph associated to the enhanced multicurve $\eL$}
Moreover, by an abuse of notation, the enhanced level graph $\eGp(\eL)$
associated to~$\eL$ will be denoted by~$\eGp$, and often simply by~$\Gamma$.
\par
We adapt many notions for graphs to the context of multicurves. We denote by~$L^\bullet(\overline\Lambda)$
the set of all levels of the level graph associated to the multicurve, and call this set normalized
if $L^\bullet(\oL)=\lbrace 0,\dots,-N\rbrace$. We denote by $L(\oL)=L^\bullet(\oL)\setminus\lbrace 0\rbrace$
the set of all levels except the top one. We denote $\gamma_e$ the curve of~$\Lambda$ corresponding to an
edge~$e$ of $\Gamma(\Lambda)$, and for $i\in L^\bullet(\oL)$ call the union of the connected
components of $\Sigma\setminus\eL$ at level~$i$ the \emph{level~$i$
subsurface} $\Sigma_{(i)}\subset\Sigma$. Denote $\Sigma_v\subset \Sigma$ the subsurface corresponding to the vertex~$v$.
We write  $\Sigma_v^c$ and $\Sigma_{(i)}^c$ for the corresponding compact surfaces
where the boundary curves have been collapsed to points.
\par
\begin{df}
  \label{def:degeneration}
Suppose $(\eL_1, \ell_1)$ and $(\eL_2, \ell_2)$ are ordered multicurves
on a fixed pointed topological surface.  We say that $(\eL_1, \ell_1)$ is a
\emph{degeneration} of $(\eL_2, \ell_2)$ (or $\eL_2$ is an
\emph{undegeneration of $\eL_1$}), and we denote \changed{this binary relation} by $\degen\colon (\eL_2, \ell_2) \rightsquigarrow (\eL_1, \ell_1)$,
\index[graph]{b110@$\degen\colon \eL_2 \rightsquigarrow \eL_1$! Degeneration of the ordered multicurve $\eL_2$}
if the following conditions hold:
  \begin{itemize}
  \item  As a set of isotopy classes of curves $\eL_2 \subset \eL_1$.  Let then $\delta \colon \Gamma(\eL_1)\to \Gamma(\eL_2)$ be the simplicial homomorphism induced by the inclusion $\Sigma\setminus\eL_1\hookrightarrow \Sigma\setminus\eL_2$.
More concretely, the map $\delta$ is defined by collapsing every edge of $\Gamma(\eL_1)$ corresponding to a curve in $\eL_1 \setminus \eL_2$.
  \item The map $\delta$ is compatible with the orders $\ell_j$  in
    the sense that if  $v_1\cleq v_2$ then  $\delta(v_1) \cleq
    \delta(v_2)$.  It follows that  if $v_1 \asymp    v_2$ then
    $\delta(v_1) \asymp \delta (v_2)$, so $\delta$ induces a
    surjective, order non-decreasing map, still denoted by $\delta$,
    \index[graph]{b120@$\delta\colon \uN
      \twoheadrightarrow \uM$! Map defining a vertical undegeneration}
    on the (normalized) sets of levels $\delta\colon L^\bullet(\eL_1) \twoheadrightarrow
    L^\bullet(\eL_2)$.
    \item The map $\delta$ respects the labeling of the
      half-edges.
  \end{itemize}
The notion of degeneration of ordered multicurves extends to a notion
of \emph{degeneration of enhanced multicurves} by requiring that moreover
the map~$\degen$ preserves the weights~$\kappa_e$ of the edges $e$ that are not contracted.
\end{df}
As these constraints are all phrased only in terms of the dual graphs
of these multicurves, there is an analogous notion of a degeneration
of enhanced level graphs.
\par
We alert the reader that there are non-trivial degenerations that increase
the number of levels without changing the underlying multicurve, see Figure~\ref{fig:degen}.
  \begin{figure}[htb]
   \centering
\begin{tikzpicture}[scale=1]
\coordinate (a1) at (4,0);\fill (a1) circle (2pt);\node [] at (6.5,0) {$\ell_{1}=0$};
\coordinate (a2) at (3,-1);\fill (a2) circle (2pt);\node [] at (6.5,-1) {$\ell_{1}=-1$};
\coordinate (a3) at (4,-2);\fill (a3) circle (2pt);\node [] at (6.5,-2) {$\ell_{1}=-2$};
\coordinate (a4) at (5,-3);\fill (a4) circle (2pt);\node [] at (6.5,-3) {$\ell_{1}=-3$};
\draw (a1) -- (a2);
\draw (a1) -- (a3);
\draw (a1) -- (a4);

\draw[->,decorate,decoration={snake,amplitude=.4mm,segment length=2mm,post length=1mm}] (1,-.5) -- (2.8,-.5) coordinate[pos=.5] (b);
\node[above] at (b) {$\degen$};

\coordinate (a1) at (0,0);\fill (a1) circle (2pt);\node [] at (-2.5,0) {$\ell_{2}=0$};
\coordinate (a2) at (-1,-1);\fill (a2) circle (2pt);\node [] at (-2.5,-1) {$\ell_{2}=-1$};
\coordinate (a3) at (0,-1);\fill (a3) circle (2pt);
\coordinate (a4) at (1,-1);\fill (a4) circle (2pt);
\draw (a1) -- (a2);
\draw (a1) -- (a3);
\draw (a1) -- (a4);
\end{tikzpicture}
\caption{A degeneration that does not change the underlying multicurve.}
\label{fig:degen}
\end{figure}
\par
There are two kinds of undegenerations of $\eL_1$. First, for any subset $
D^\hor \subseteq \eL_1^\hor$ of the set of horizontal curves we can define a
 \index[graph]{b130@$D^\hor \subseteq \eL_1^\hor$! Subset of horizontal curves inducing a horizontal undegeneration}
{\em horizontal undegeneration} of $\Lambda_1$ by taking
$\eL_2\coloneq \eL_1 \setminus D^\hor$ and \changed{defining $\delta$ to contract all horizontal edges in $D^{\hor}$.}
Geometrically this
undegeneration smoothes out the horizontal nodes corresponding to $D^\hor$.
Second, suppose that $\eL_1$ has~$N+1$ levels. Then
any surjective, order non-decreasing map $\delta\colon \uN
\twoheadrightarrow \uM$ defines a {\em vertical undegeneration}
$\eL_2 \rightsquigarrow \eL_1$ of~$\Lambda_1$ as follows. We denote by \changed{$\Sigma_{(i)},\Sigma_{(j)}$ the level subsurfaces for $\eL_1$, and} let $\eL_2 \subseteq \eL_1$ be the multicurve obtained by deleting all curves
that lie in the boundaries of $\Sigma_{(i)}$ and $\Sigma_{(j)}$ for $i\ne j$ such that
$\delta(i) = \delta(j)$.
The level structure on $\eL_2$ is obtained by collapsing to a point
every edge joining levels~$i$ and~$j$ such that $\delta(i)=\delta(j)$.
Note that every ordered multicurve that is an undegeneration of~$\eL_1$ is obtained
uniquely as the composition of a vertical undegeneration and a horizontal
undegeneration.
Consequently, we refer to an undegeneration by the
 \index[graph]{b140@$(\delta,D^\hor)$, $\delta$! Undegeneration of an enhanced multicurve}
symbol $(\delta,D^\hor)$ or simply by~$\delta$.
\par
There is another way to encode vertical degenerations.  Consider \changed{$J = \{ j_{-1}, \dots, j_{-M}\}$ such that $0>j_{-1} > \dots > j_{-M} \geq -N$.}
We define
$j_0=0$ and $j_{-M-1} = -N-1$ (though they are not part of $J$).  The subset $J$ induces a
map $\delta_{J}\colon\uN\to\uM$ which maps integers (i.e.\ levels) in each
interval $(j_{k-1}, j_{k}]$ to $k$.  We denote the associated degeneration by
$\degen_{J}\colon\eL_{J}\rightsquigarrow \eL$. The two-level degenerations given
by $J=\left\{i\right\}$, and denoted by $\degen_i$, will be particularly
\index[graph]{b150@$\degen_{J}$, $\delta_{J}$! (Un)degenerations associated
  with the subset $J$}
useful (see Section~\ref{sec:levrot}). In the example of Figure~\ref{fig:degen}, we have $J=\lbrace -1 \rbrace$. The level $(j_{-1}, j_{0}]=(-1,0]$ is mapped to $0$ and the levels $(j_{-2}, j_{-1}]=(-4,-1]$ are mapped to $-1$.

\subsection{The \Teichmuller space of \twds.} \label{sec:TeichTwds}

For a reference surface $(\Sigma,\bfs)$ let $\Oteich$ be the
\Teichmuller space of $(\Sigma,\bfs)$-marked flat surfaces of type~$\mu$
and let  $\proj \Oteich = \Oteich/ \CC^*$ be its projectivization.
\index[teich]{d010@$\Oteich$!\Teichmuller space of marked flat surfaces of type~$\mu$}
We define the subsets $P_\bfs$ and $  Z_\bfs $ of $\bfs$ to be the marked points such that
their images under $f$ in~$X$ are respectively poles and zeros of~$\omega$.
\index[surf]{b040@$P_\bfs, Z_\bfs$!Subset of $\bfs$ mapped respectively to the poles and zeros of~$\omega$}
The complex structure on $\Oteich$ is induced by the
\emph{global period map}
\begin{equation*}
  {\rm Per}\colon \Oteich \to H^1(\Sigma \setminus P_\bfs, Z_\bfs; \CC )\,,
\end{equation*}
which is locally biholomorphic (see e.g\ \cite{Veech}, \cite{HubbardMasur},
\cite{kdiff}).
\par
The mapping class group~$\Mod[(\Sigma,\bfs)]$ of $(\Sigma,\bfs)$ acts properly
discontinuously on $\teich$ and on the twisted Hodge bundle over it, preserving the
submanifold~$\Oteich$.  The spaces $\Oteich$ are highly disconnected, and we do not address here the
question of classifying their connected components.  Moreover, we do not claim that $\proj \Oteich$ is
simply connected.
\par
We next define similarly strata of flat surfaces over the boundary
components of the augmented \Teichmuller space. We start with an
auxiliary object that will play no major role further on.
The upper index ``no'' indicates that no GRC and no matching residue condition
at the horizontal nodes is imposed here. This is mainly
introduced to contrast with the space defined later, where the residue conditions {\em are}
required. Moreover, recall that we denote an enhanced multicurve $\eLp$ simply by $\eL$.
\par
\begin{df}
The \emph{\Teichmuller space $\OBnoGRC$
of flat surfaces of type $(\mu,\eL)$} is the space of tuples
$(X,f, \bfz, \eta)$ where $(X,f,\bfz)$ is a marked (in the sense of
Definition~\ref{def:marking}) pointed stable
curve with enhanced pinched multicurve~$\eL$ and where
$\eta = \{\eta_v\}_{v \in V(\eL)}$ is a collection
of not identically zero meromorphic one-forms of type~$\mu$ that have
order $\pm \kappa_e - 1$ at~$e^+$ and $e^-$, respectively, for any edge $e \in \Gamma(\eL)$.
\index[teich]{d020@$\OBnoGRC$!\Teichmuller space of flat surfaces of type~$(\mu,\eL)$ without GRC}
\end{df}
\par
To construct $\OBnoGRC$ as an analytic space, \changed{we take the product of the twisted Hodge bundles over the \Teichmuller spaces for the components of $\Sigma \setminus \eL$ (with additional labeled marked points that arise from half-edges in $\Lambda$) and take a finite quotient to get rid of the symmetries of the graph $\Gamma (\Lambda)$ (eg two parallel edges with the same enhancement).}
Then the subset defined by the vanishing conditions of~$\eta$ along~$\bfz$
and pole orders at the nodes is the space~$\OBnoGRC$.
\par
The group $(\CC^*)^{V(\eL)}$
acts on $\OBnoGRC$ with quotient $\BPteich$, since the one-forms
$\eta_v$ are uniquely determined up to scale by the required
vanishing conditions encoded in an enhanced multicurve.
\par
\begin{df}
\label{df:teich}
The \emph{\Teichmuller space  $\OBteich$
of twisted differentials of type $(\mu,\eL)$} is the subset of $\OBnoGRC$
consisting of $(X,f, \bfz, \eta)$ where~$\eta$ is a twisted
differential compatible with $\Gamma(\eL)$.
\index[teich]{d030@$\OBteich$!\Teichmuller space of twisted differentials of type $(\mu, \Lambda)$}
\end{df}
Said differently, $\OBteich$ is the subset of $\OBnoGRC$ cut out by the condition of matching
residues at the horizontal nodes and the global residue condition.
There is an action of $(\CC^*)^{L(\eL)}$ on $\OBteich$ preserving
the fibers of the map to $\BPteich$, but the full group  $(\CC^*)^{V(\eL)}$
no longer acts on $\OBteich$ because it does not necessarily preserve the matching  residues or the GRC.
\par
\medskip
We recall that as a consequence of Proposition~\ref{prop:topologies_are_the_same},
two natural topologies on $\OBteich$ agree. The first topology is the one used above
to define the complex structure, as a subset of a finite cover of the product
of the twisted
Hodge bundles over a product of \Teichmuller spaces. The second topology is the
product of the conformal topologies on the components of $X \setminus f(\eL)$.
By definition, this topology is the same as the conformal topology on $\OBteich$,
where a sequence $(X_n, f_n, \bfz_n, \eta_n)$ of marked pointed
\twds converges to $(X, f, \bfz, \eta)$ if for some exhaustion~$K_n$ of~$X$,
there is a sequence of conformal maps $g_n\colon K_n \to X_n$ such that
$f_n \simeq g_n \circ f$ and $g_n^* \eta_n$ converges to~$\eta$ uniformly
on compact sets.

\subsection{Welded surfaces.}
\label{sec:weld}

\Teichmuller markings of nodal surfaces are by definition insensitive
to the precomposition by Dehn twists around the vanishing cycles. Here
we introduce the concept of a welded surface to define a refined concept
of markings.
\par
Let~$X$ be a stable nodal curve with dual graph~$\Gamma$,
and let $\pi\colon X^\ast\to X$ be the normalization.
Given a node $q$ of~$X$, with preimage $\pi^{-1}(q) = \{x, y\}$, a \emph{welding of~$X$ at
  $q$} is a \changed{$\mathbb{C}$-antilinear} isomorphism $\sigma_q\colon T_x X^\ast\to T_y X^\ast$, modulo scaling by
positive real numbers. We alternatively think of the welding as an orientation-reversing metric
isomorphism $\sigma_q \colon S_x X^* \to S_y X^*$, where $S_pX^* = (T_pX^*\setminus\{0\})/\RR_{>0}$
denotes the real tangent circle to~$X^*$ at~$p$.  As we will explain below, this viewpoint is natural from the perspective
of real oriented blowups, which will be discussed in full generality in Section~\ref{sec:rob}.  The ordering of the fiber over $q$ is not part of the
structure, and we consider $\sigma_q^{-1}\colon T_y X^\ast\to T_x X^\ast$ to be the same welding as
$\sigma_q$.  The space of all weldings of a given node $q$ is a circle~$S^1$.
\par
A welding can otherwise be described in terms of a real blowup of~$X$ that we
now recall, see e.g.~\cite[Section~X.9 and~XV.8]{acgh2} and Section~\ref{sec:rob}.  Given the unit disk $\Delta\subset \CC$,
the real oriented  blowup $p\colon \Bl_0 \Delta \to \Delta$ is the locus
\begin{equation*}
  \Bl_0 \Delta = \{ (z, \tau) \in \Delta\times S^1 : z = |z| \tau\}\,,
\end{equation*}
with the projection $p$ given by $p(z, \tau) = z$.  It is a real manifold with
a single boundary circle $\{0\} \times S^1$.  The projection~$p$ collapses the
boundary circle to the origin and is otherwise a diffeomorphism.
\par
More generally, if~$X$ is a Riemann surface and $D\subset X$ is a finite set of points,
performing the above construction at each point $q\in D$ yields
the \emph{real oriented blowup} $p\colon \Bl_DX \to X$, which is a real manifold
such that its boundary maps to $D$, and consists of a circle over each point $q\in D$. Then $p$
restricts to a diffeomorphism $\operatorname{int}(\Bl_D X) \to X\setminus D$, and
for each $q\in D$ the boundary circle $\bdry_q \Bl_D X = p^{-1}(q)$ is naturally
identified with the real tangent circle $S_q X = (T_q X \setminus \{0\})
/ \reals_{>0}$ of~$X$ at $q$.  The conformal structure of~$X$ gives $\bdry_q \Bl_D X$
the structure of a metric circle of arc length~$2\pi$.
\par
Given a subset $D\subset \N$ of the set~$\N$ of nodes of~$X$, the \emph{real oriented blowup}
$p\colon \Bl_{D} X\to X$ is
the real oriented blowup of the partial normalization $X^*$ of~$X$ at $D$, at the set
of preimages of~$D$ on this partial normalization.  In other words, for each
node $q\in D$ the fiber $p^{-1}(q)$ is a pair of metric circles
$S_{q^+} \cup S_{q^-} \subset \bdry\Bl_{D} X$.  In these terms, a welding
of~$X$ at $D$ is a choice for each node $q\in D$ of an orientation-reversing isometry
$\sigma_q \colon S_{q^+} \to S_{q^-}$.
\par
\medskip
A {\em global welding~$\bfsigma$ of~$X$} is a choice of a welding at
each node of~$X$. If the dual graph is endowed with a level
structure~$\overline\Gamma$, then a {\em vertical welding~$\bfsigma$
  of~$X$} is a choice of a welding at each vertical node (horizontal
nodes will never be welded in this paper).
\par
Given a vertical welding $\bfsigma$ of~$X$, we define the
\emph{associated welded surface}~$\overline{X}_\bfsigma$ to be the surface
obtained by gluing the boundary components of $\Bl_{\Nver} X$ via~$\bfsigma$.
The associated welded surface has the following extra structures:
\begin{enumerate}
\item a multicurve $\eL^{\ver}$ on $\overline{X}_\bfsigma$, containing for each node
$q\in \Nver$ the simple closed curve that is the image
of $S_{q^+}\sim S_{q^-}$, called the (multicurve of) {\em seams} of $\overline{X}_\bfsigma$;
\item a conformal structure on $\overline{X}_\bfsigma \setminus \left( \eL^{\ver} \cup \Nhor \right)$; and
\item a metric on each component of $\eL^{\ver}$, of arc length $2\pi$.
\end{enumerate}
By a slight abuse of terminology, we call $\eL = \eL^\ver \cup \Nhor$ the
\emph{pinched multicurve} of $\overline{X}_\bfsigma$. Note that the
surface~$\overline{X}_\bfsigma$ can have horizontal nodes, and is smooth
elsewhere.
\par
These notions obviously extend locally to
\emph{equisingular families} $(\pi\colon \calX \to B, \bfz)$ of stable curves,
also called \emph{families of constant topological type}. These are
families where all the nodes are \emph{persistent}, i.e.\ for each
node~$q$ in each fiber of~$\pi$ there is a section of~$\pi$ passing though~$q$
and mapping to the nodal locus of~$\calX$. We briefly digress on these notions,
aiming for the definition of the topology in Section~\ref{sec:ptwds} and the
comparison in Proposition~\ref{prop:comptopo}.
We will return to these notions in detail in Section~\ref{sec:rob}.
\par
For an equisingular family, a \emph{family of weldings} $\bfsigma$ over an
open set~$U \subset B$ is a continuous choice of weldings for each fiber
over~$U$.
Here we use the fact that~$\pi$ is locally trivial in the $C^\infty$-category,
to compare the tangent spaces $T_q$ in nearby fibers. Equivalently,
we can perform the real oriented blowup in families over~$U$ (see e.g.\
\cite[Section~XV.9]{acgh2} and Section~\ref{sec:rob}), and then a family of
weldings is a continuous section of the $S^1$-bundle at each vertical node.
For each family of weldings $\bfsigma$ the {\em family of welded
surfaces~$\overline{\calX}_\bfsigma$} is obtained by identifying the
family of real oriented blowups of~$\calX$ along the identifications
provided by~$\bfsigma$. A \emph{marked family of welded surfaces}
is defined by requiring that the fiberwise markings vary continuously.
\par

\subsection{Prongs and \prmas.}
\label{sec:prmatch}

Any point $p$ of a meromorphic differential $(X, \omega)$ which is not a simple pole has a
set of horizontal directions which we call the \emph{prongs} of $(X, \omega)$ at $p$.  Intuitively speaking, the
prongs at $p$ are the directions in the unit circle $S_p X= (T_p X\setminus \{0\}) / \reals_{>0}$
which are
tangent to horizontal geodesics limiting to $p$ under the flat structure induced by~$\omega$.  In fact, the prongs can be naturally defined as
vectors rather than just directions. \changed{We denote by~$\zeta_n$ a primitive $n$'th root of unity.}
\begin{df} 
Suppose the meromorphic differential~$\omega$ on~$X$ has order $k\neq -1$ at some point~$p$ \changed{and~$\phi$ is a local coordinate centered at $p$ in which the differential has the normal form given in Theorem~\ref{thm:standard_coordinates}. A \emph{complex prong} $v\in T_pX$ of~$\omega$ at $p$ is one of the $2|k+1|$ vectors $\phi_*(\zeta_{2|k+1|)}^j\pderiv{z})$ for $j=1,\ldots ,2|k+1|$. We call a prong \emph{outgoing} if $j$ is even, and otherwise call it \emph{incoming}.} The $2|k+1|$ vectors in $S_pX$ obtained by projectivizing the complex prongs are the \emph{real prongs} of~$\omega$ at $p$.
\end{df}
\par
\changed{When $p$ is a pole of order $k<-1$ with a non-zero residue, while there are infinitely many choices of a local coordinate in which the differential has normal form, there are still only $2|k+1|$ prongs, as the prongs only depend on the first derivative of~$\phi$ at $p$}.
\par
Since complex and real prongs are in natural bijection, we will simply refer to them as prongs when we do not need to make the distinction.
\par
We denote the set of incoming prongs at $z$ by $\pin$
and the set of outgoing prongs by~$\pout$.  Each has cardinality $\kappa_{z}=|1+k|=|1 + \ord_z \omega|$.
Each set of prongs is equipped with the counterclockwise cyclic ordering when embedded in the complex
plane with coordinate~$z$.
\par
Now suppose $q$ is a vertical node of a \twd $(X, \eta)$.
The matching orders condition~(1) of a \twd  equivalently says that the
zero at~$q^+$ and the pole at~$q^-$ have the same number of prongs (equal to $\kappa_q$).
\par
\begin{df}
  \label{def:pm1}
A \emph{local \prma} of $(X, \eta)$
at $q$ is a cyclic-order-reversing bijection $\sigma_q\colon \pin[q^-][\eta] \to \pout[q^+][\eta]$.
\par
A \emph{(global) \prma $\bfsigma$}
\index[twist]{b008@$\bfsigma$!Global \prma for~$X$}
for a \twd is a choice of a local \prma $\sigma_q$ at each vertical node~$q$ of~$X$.
\end{df}
\par
Note that \prmas at horizontal nodes are  not defined.  The following
equivalent definition of a local \prma will be useful for studying families
in Section~\ref{sec:famnew}. \changed{Let~$X^*$ denote the normalization of~$X$.}  
\begin{df} \label{def:pm2}
A \emph{local \prma} of $(X, \eta)$ at a node $q$ is an element~$\sigma_q $ of
$T^*_{q^+}X^*\otimes T^*_{q^-}X^*$ such that the equality $\sigma_{q}(v_+ \otimes v_-)^{\kappa_q} = 1$ holds for any pair $(v_+, v_-)$ of an outgoing and an incoming prong.
\end{df}
To see the equivalence of these definitions, note that any such $\sigma_{q}$ corresponds to an order-preserving
bijection $\pin[q^-][\eta] \to \pout[q^+][\eta]$ by assigning to an incoming prong $v_-$ the unique
outgoing prong~$v_+$ such that $\sigma_{q}(v_-\otimes v_+) = 1$.
\par
\changed{For example, if $\eta$ has normal form $u^\kappa \tfrac{du}{u}$ (for $\kappa\geq0$) in a local coordinate $u$ centered at
  $q^+$, and normal form $-t^\kappa (v^{-\kappa}+r)\tfrac{dv}{v}$ in a
  local coordinate $v$ centered at $q^-$, then $t^{-1} \, du\otimes dv$ is a local \prma. This will be the standard example in the plumbing construction in Section~\ref{sec:defPlmap}.}
\par
A \prma $\sigma_q$ at the node~$q$ determines a welding of~$X$ at~$q$
by identifying a prong $v \in S_{q^-} X$ with
the prong $\sigma_q(v) \in S_{q^+}X$, and extending this to an
orientation-reversing isometry of these tangent circles. We denote by $\ol{X}_{\bfsigma}$ or simply
by~$\ol{X}$ the associated welded surface constructed using the welding defined by the
\prma~$\bfsigma$.
\index[surf]{b065@$\overline{X}_\bfsigma$!Welded surface associated to the \prma $\bfsigma$}

\begin{df} \label{def:ptwd}
A \emph{\ptwd of type $\mu$ compatible with $\Gamma$}, or just
  {\em \ptwd} for short, is the datum
$(X, \bfz, \eta, \bfsigma)$ consisting of a \twd
$(X, \bfz, \eta)$ of type $\mu$, compatible with $\Gamma$, and a global \prma~$\bfsigma$.
\end{df}
\par
An isomorphism between \ptwds is an isomorphism of stable curves which
identifies the forms on each component and is additionally required to
commute with all of the local \prmas.
\par
\begin{df} \label{df:nearsm}
Given two \ptwds $X_1$ and $X_2$ with associated welded surfaces
$\overline{X}_1$ and $\overline{X}_2$, an \emph{almost-diffeomorphism}
$f\colon \overline{X}_1\to \overline{X}_2$ is a continuous map which satisfies:
  \begin{enumerate}
  \item The preimage of each horizontal node is either a horizontal
node or a simple closed curve disjoint from the nodes of $\overline{X}_1$,
and the restriction of $f$ to each component of
$\overline{X}_1\setminus f^{-1}(N_{X_2}^h)$
is a diffeomorphism onto a component of $\overline{X_2} \setminus N_{X_2}^h$.
  \item The map $\Gamma(X_1) \to \Gamma(X_2)$ induces a degeneration
    of enhanced level graphs.
  \end{enumerate}
If $f$ contracts no simple closed curves, we call it a \emph{diffeomorphism}.
\par
\changed{An \emph{almost-homeomorphism} $f\colon \overline{X}_1\to \overline{X}_2$
is defined the same way, replacing ``diffeomorphism'' with ``homeomorphism.''}
\end{df}
\par 
\changed{For a family of twisted differentials  $(\pi\colon \calX \to B,
  \bfcalZ, \eta)$ on an equisingular family of curves} 
  we define \emph{a family of \prmas} to be a family of global
weldings  that is a \prma in each fiber of~$\pi$.
\par
The \emph{prong rotation group} associated with an enhanced level graph $\eG$
is the finite group
\begin{equation}\label{eq:Prot}
  \Prot \= \prod_{e \in \eL^\ver} \ZZ / \kappa_e \ZZ\,.
\end{equation}
\index[twist]{d005@$\Prot$!Prong rotation group}The number of \prmas for a
given twisted differential is then equal to $|\Prot|$. Moreover, for any given
twisted differential $(X,\eta)$, the prong rotation group acts on the set
of \prmas as follows. An element $(j_e)_{e\in \eL^\ver} \in P_\Gamma$ acts by
composing the local \prma at the node $q = q_e$ with the bijection
$\pin[q^-][\eta]\to \pin[q^-][\eta]$ defined by turning counterclockwise~$j_e$
times. Here, and for other similar notions depending on graphs, we also write
$\Prot[\eL]$ or $P_\Gamma$ as shorthand for $\Prot[\eG(\eL)]$.
\par

\subsection{The \Teichmuller space of \ptwds.} \label{sec:ptwds}

We now define the notion of a marking of a \ptwd and construct the
\Teichmuller space $\ptwT$ of marked \ptwds of type $(\mu,\eL)$
as a complex manifold.  The notion of a marking is modeled on the
definition of a marked stable curve, except the target of the marking
is the associated welded surface.

\begin{df}
  \label{df:marking}
  A \emph{marking} of a \ptwd $(X, \bfz, \eta, \bfsigma)$ is a
  continuous map
  $f\colon (\Sigma, \bfs) \to (\overline{X}_\bfsigma, \bfz)$ which
  satisfies:
  \begin{enumerate}
  \item The preimage of every horizontal node is a simple
    closed curve on $\Sigma$.
  \item If we denote by $\Lambda^h\subset\Sigma$ the
    ``horizontal'' multicurve consisting of the preimage of the
    set of horizontal nodes $N^h_X$ of $X$, then the restriction of $f$ to
    $\Sigma\setminus \Lambda^h$ is an orientation-preserving
    diffeomorphism $\Sigma\setminus \Lambda^h \to \overline{X}_\bfsigma
    \setminus \N^h$.
  \item The map $f$ preserves the marked points, that is,
    $f\circ \bfs = \bfz$.
  \end{enumerate}
  We say that a \emph{marked \ptwd} $(X, f,\bfz, \eta, \bfsigma)$ is
  \emph{of type $\Lambda$} if~$\Lambda$ is the enhanced multicurve
  obtained by pulling back the seams of $\overline{X}_\bfsigma$.
\par
  Two marked \ptwds are equivalent if there is an
  isomorphism of \ptwds that identifies the marking of their
  associated welded surfaces up
  to isotopy rel~$\bfs$.
\end{df}
\changed{In particular two marked \ptwds that differ by the action of
a Dehn twist around a horizontal curve are equivalent.}  
\par
\begin{df}
The \emph{\Teichmuller space $\ptwT$ of marked \ptwds of type $(\mu,\eL)$}
is the set of isomorphism classes of marked \ptwds of type $\mu$ with
marking of type~$\Lambda$.
\index[teich]{d040@$\ptwT$!\Teichmuller space  of \ptwds}
\end{df}
\par
Given any contractible open set $U\subset\OBteich$ together with a
\prma~$\bfsigma$ and a marking $f\colon\Sigma\to\overline{X}_\bfsigma$
for some basepoint $(X, \eta)\in U$, we may uniquely extend~$\bfsigma$
to a continuous family of prong-matchings over $U$.  The corresponding
family of welded surfaces is then topologically trivial over $U$, so
the marking $f$ may be extended uniquely (up to isotopy) to a
continuous family of markings over the base $U$.  This defines a lift
$U\to \ptwT$.  We give $\ptwT$ the structure of a complex manifold
such that these lifts are holomorphic local homeomorphisms.  The
forgetful map $\ptwT\to\OBteich$ is then a holomorphic covering map. 
\changed{Note that in the domain space of the map the vertical nodes are welded, and the marking records Dehn
twists around the nodes, while in the target space such a Dehn twist
is isotopic to the identity map. Therefore, this map has infinite degree (unless $\Lambda$ has only horizontal edges, which are not welded).} 
\par
Once we define families of marked \ptwds, in Proposition~\ref{prop:univPTWT}
we will observe that $\ptwT$ is the fine moduli space for families
of marked \ptwds, i.e.~that every family of marked \ptwds can be obtained as
a pullback from the family $\calX\to\ptwT$ of \ptwds over $\ptwT$.

\subsection{Turning numbers on \ptwds} \label{sec:turning}
Recall that for any flat surface~$(X,\bfz,\omega)$ the \emph{Gauss map}
is defined on the tangent circle bundle of the punctured surface $S(X\setminus\bfz) = (T(X\setminus\bfz)\setminus \lbrace 0\rbrace)/ \reals_{>0}$ by 
\begin{equation*}
  G(v) \= \frac{\omega(v)}{|\omega(v)|}.
\end{equation*}
We define the \emph{turning number} $\tau(\gamma)$ of any smooth immersed
arc $\gamma:[a,b]\to X\setminus\bfz$ as $\tau(\gamma) = g(b) - g(a)$,
where $g\colon[a, b]
\to \reals$ is a continuous lift of $G\circ \tilde{\gamma}$, that is $e^{2 \pi i g}
= G\circ \tilde{\gamma}$, and where $\tilde{\gamma}$ is the natural lift
of~$\gamma$ to~$S(X\setminus\bfz)$.
\par
\changed{Turning numbers are invariant under regular isotopies (meaning
isotopies through immersed curves) preserving the endpoints
of~$\gamma$ as well as the tangent vectors at these endpoints, by
the continuity of the Gauss map.}
\par
\changed{Let now $(X, \bfz,\omega, \bfsigma)$ be a \ptwd and choose any
extension~$\bfsigma'$ of the vertical welding~$\bfsigma$ to a global welding.
We call the image of an immersed arc on $\overline{X}_{\bfsigma'}\setminus\bfz$ under the
projection to $\overline{X}_\bfsigma\setminus\bfz$ an immersed arc on the welded punctured surface
and define regular isotopies also by reference to~$\overline{X}_{\bfsigma'}\setminus\bfz$.}
\par
\begin{lm} \changed{
The Gauss map extends continuously to the tangent circle bundle
of the welded punctured surface~$\overline{X}_{\bfsigma'}\setminus\bfz$.}
\par
\changed{Consequently, turning numbers of immersed arcs on welded punctured
surfaces~$\overline{X}_\bfsigma\setminus\bfz$ are invariant under regular isotopies.}
\end{lm}
\par
\begin{proof}
\changed{The problem is local near the nodes. We deal with the case of a vertical node~$q$
and leave the straightforward horizontal case to the reader. We choose coordinates~$u$
on a neighborhood~$\Delta_u$ of $q^+$ and~$v$ on a neighborhood~$\Delta_v$ of~$q^-$.}
\changed{Consider the (real oriented) blowup
$$B \= \{(u, U) \in \Delta_u \times S^1\colon u = |u| \cdot U \}\,.$$
We parametrize~$B$ using polar coordinates $(r, \theta)$ where $u = r e^{i\theta}$
and $U = e^{i \theta}$ for $r \geq 0$ and $\theta \in [0, 2\pi)$.
The tangent bundle with the basis $\partial / \partial r$ and $\partial / \partial \theta$ extends to all of~$B$, including at the boundary $r=0$. The differential form on $\Delta_u$ is
$\omega = u^\kappa du/u = r^\kappa e^{i \kappa \theta} (dr/r + i d\theta)$, and
the Gauss map at the point $(r,\theta)\in B$ is given by 
$$G\big(a (\partial / \partial r)  + b (\partial / \partial \theta)\big) = \frac{e^{i\kappa \theta} (a+i br)}{\sqrt{a^2 + b^2 r^2}}.$$ }
\par
\changed{Next, we carry out the same calculation in coordinate $v$ on $\Delta_v$, using polar
coordinates $v = r_v e^{i\theta_v}$. We obtain similarly 
\begin{eqnarray*}
G\big(a (\partial / \partial r_v)  + b (\partial / \partial \theta_v)\big)
& = & - \frac{ (r_v^{-\kappa} e^{-i\kappa \theta_v} + s) (a+ibr_v) }
{|r_v^{-\kappa} e^{-i\kappa \theta_v} + s| \cdot \sqrt{a^2 + b^2 r_v^2}} \\
& = & - \frac{ ( e^{-i\kappa \theta_v} + s r_v^{\kappa} ) (a+ibr_v) }
{|e^{-i\kappa \theta_v} + sr_v^{\kappa} | \cdot \sqrt{a^2 + b^2 r_v^2}}
 \end{eqnarray*}
where $s$ denotes the residue of $\bfomega$ at the point $q^-$, i.e. at $v=0\in \Delta_v$.}
\par
\changed{To weld the two half-planes to a plane we have to identify $r_v
\mapsto -r$ and
$\theta_v \mapsto \theta$, hence $\partial / \partial r_v \mapsto - \partial /
\partial r$ and $\partial / \partial\theta_v \mapsto -\partial / \partial\theta$.
Since $\kappa>0$ the $r_v^\kappa$ eliminates the residue term at $r_v=0$ and the
two definitions of the Gauss map glue continuously at $r=0$.}
\end{proof}
\par
\changed{In what follows, we always consider Gauss maps and turning numbers on the punctured surfaces only for arcs not passing through any points in~$\bfz$. To keep notation simpler, we will suppress `punctured' and $\setminus \bfz$ from now on.}
\par
\begin{df}\label{df:tnp}
  Consider a \ptwd $X$, a sequence of \ptwds $\{X_m\}_{m\in \NN}$, and a sequence
  of almost-diffeomorphisms $h_m\colon X_m \to X$. We say that the
  sequence~$\{h_m\}$ is {\em asymptotically \tnp}
  if for any immersed arc $\gamma$ on $\overline{X}_\sigma$, we have
  $\tau(h^{-1}_m(\gamma)) \to \tau(\gamma)$ as $m\to\infty$.
\end{df}
\par
\begin{prop} \label{prop:jacuzzi} Consider a twisted differential $X$
  with a prong-matching $\bfsigma$ and a sequence of prong-matchings
  $\bfsigma_n$.  Let $h_n$ be an asymptotically \tnp sequence of diffeomorphisms of \ptwds
  $h_n\colon\overline{X}_{\bfsigma_n} \to \overline{X}_\bfsigma$.
  Suppose that the diffeomorphisms $h_n^{-1}$ converge $C^1$-uniformly on compact sets of
  $X^s \setminus \bfz$ to the identity map.  Then for $n$
  sufficiently large, $\bfsigma = \bfsigma_n$, and moreover
  $h_n$ is isotopic to the identity map on
  $\overline{X}_\bfsigma$.
\end{prop}
\par
\begin{proof}
Fix a compact subsurface $K$ that is a deformation retract of
  $X^s \setminus \bfz$, and let $\gamma$ be a curve joining two
  boundary components and crossing a single seam corresponding to a
  node $p$.  For $n$ large, $h_n^{-1}$ is $C^1$-close to the identity
  on $K$, so the endpoints of~$\gamma$ and $h_n^{-1}(\gamma)$ are close as
  are their corresponding tangent vectors.  Since the turning
  number of a curve is determined (mod $\zed$) by its endpoints and
  tangent vectors, we have
  $\tau(h_n^{-1}(\gamma))-\tau(\gamma) \sim \theta_n \pmod \zed$,
  where the prong-matchings at $p$ are related by
  $e^{2\pi i \theta_n}\sigma = \sigma_n$.  Since the $h_n$ are
  asymptotically \tnp, we have $\theta_n\to 0$,
  so $\sigma_n = \sigma$ eventually, as the set of prong-matchings is a fixed
  discrete set.

  Now take $n$ large enough so that $\bfsigma=\bfsigma_n$ and, moreover,
  $h_n^{-1}$ moves each point $q$ of~$K$ by a distance smaller than the
  injectivity radius of $X^s \setminus \bfz$ at $q$.  We may then on~$K$
  take the nearest-point isotopy from $h_m^{-1}$ to the identity, and
  extend it to an isotopy on $\overline{X}_\bfsigma$ from $h_m^{-1}$ to a
  map $k$ that is the identity on $K$.  Each seam is contained
  in an annular component $A$ of $\overline{X}_\bfsigma\setminus
  K$, and to show that $k$ is isotopic to the identity on $A$
  (rel $\bdry A$) it suffices to show that $k(\gamma)$ is isotopic
  to $\gamma$ (rel $\bdry A$), with $\gamma$ as before joining the two
  boundary components.  But the isotopy class of $\gamma$ (through
  immersed curves with the endpoints and vectors fixed) is determined
  by the turning number of~$\gamma$, since the core curve of $A$ has
  nonzero turning number.  As $\tau(h_n^{-1}(\gamma)) -
  \tau(\gamma)\to 0$, and this difference of turning numbers is
  integral, the difference is eventually zero, so these curves are isotopic.
\end{proof}
\par
Using these notions, we can now give an alternative definition for the
topology on $\ptwT$, closer to what we will use later for the augmented
\Teichmuller\ space of flat surfaces.
\par
\begin{df} \label{df:coftop} We say that a sequence
$X_m = (X_m, \bfz_m, \eta_m, \cleq, \bfsigma_m, f_m)$
of marked \ptwds in   $\ptwT$  converges in the \emph{conformal topology}
to $X = (X, \bfz, \eta, \cleq, \bfsigma, f)$ if and only if for any sufficiently
large~$m$  there exists a diffeomorphism $g_m \colon \overline{X}_m\to\overline{X}$
and a sequence of positive numbers $\epsilon_m$ converging
to~$0$, such that the following conditions hold:
\begin{enumerate}[(i)]
  \item The function  $g_m$ is compatible with the markings in the sense
that $f$ is isotopic to $g_m \circ f_m$ rel marked points.
  \item The function $g_m^{-1}$ is conformal on the $\epsilon_m$-thick
part  $\Thick[(X, \bfz)][\epsilon_m]$.
\item The differentials $(g_m)_*\eta_m$ converge to~$\eta$ uniformly
on compact sets of the $\epsilon_m$-thick part of~$X$.
\item The functions $g_m$ are asymptotically \tnp. \qedhere
\end{enumerate}
\end{df}
\par
Here (and in the sequel) we use the pushforward notation
$g_* = (g^{-1})^*$ for the action on differentials.
\par
\begin{rem} \label{rem:coftop}
In order to verify that the $g_m$ are asymptotically \tnp,
it suffices to choose a collection of arcs that contains, for every
seam, an arc that crosses only this seam and no others, and does so
exactly once. Indeed, if turning numbers converge for these arcs, then
together with (ii) this forces the convergence of the turning
numbers of all other arcs as well.
\end{rem}
\par
\begin{prop} \label{prop:comptopo}
The conformal topology on $\ptwT$ and the topology defined in
Section~\ref{sec:ptwds} as the covering space of $\OBteich$ agree.
\end{prop}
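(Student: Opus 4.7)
The plan is to verify both directions of the equivalence separately, relying on Proposition~\ref{prop:topologies_are_the_same} (equivalence of conformal and quasiconformal topologies on the classical augmented \Teichmuller space), Proposition~\ref{prop:qcandconf} (which upgrades to $C^1$ maps), and the rigidity statement Proposition~\ref{prop:jacuzzi} together with Remark~\ref{rem:jacuzzi}.

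\textbf{Covering topology implies conformal topology.} Suppose $X_m \to X$ in the covering space topology. Then the underlying marked \twds converge in $\OBteich$, and the \prma $\bfsigma_m$ is the parallel transport of $\bfsigma$ along the sequence for large $m$. Since convergence in $\OBteich$ is by definition conformal convergence of twisted differentials on each component, we may, after passing to subsequences on each component and gluing, produce an \diffeogen $g_m\colon \overline{X}\to\overline{X}_m$ realizing conformal convergence on an exhaustion of the $\epsilon_m$-thick part, with markings compatible up to isotopy. Items (i)--(iii) of the conformal topology definition are then immediate. For item~(iv), fix a good arc $\gamma$; away from the seams the $g_m$ are conformal and the $\eta_m$ converge uniformly on compact sets, so the turning number contribution from $\gamma\setminus$(seams) converges. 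The contributions from the neighborhoods of the seams are controlled because the parallel transport of $\bfsigma$ through the family matches $\bfsigma_m$, i.e.\ the welding data agree under $g_m$ up to a diffeomorphism nearly isotopic to the identity. Hence the total turning numbers converge.

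\textbf{Conformal topology implies covering topology.} Conversely, assume we are given \diffeogens $g_m$ and $\epsilon_m\to 0$ exhibiting conformal convergence. Conditions (ii) and (iii) imply that the underlying twisted differentials converge in $\OBteich$ (via Proposition~\ref{prop:topologies_are_the_same} applied componentwise together with Proposition~\ref{prop:topologies_are_the_same} extended to the one-form setting as in Proposition~\ref{prop:topologies_are_the_same} and the observation of Section~\ref{sec:TeichTwds}). Let $\bfsigma_m'$ denote the parallel transport of $\bfsigma$ to $X_m$ in the unique local lift to $\ptwT$. The map $g_m$ then compares the two \prmas $\bfsigma_m$ and $\bfsigma_m'$ on $X_m$ through the identification of welded surfaces. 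Around each vertical node~$q$, restrict $g_m$ to a small bidisk neighborhood; in the limit the restriction presents exactly the setup of Proposition~\ref{prop:jacuzzi} applied to the two weldings determined by $\bfsigma_m$ and $\bfsigma_m'$ transported back to~$\overline{X}$. The near-turning-number-preserving hypothesis~(iv), together with Remark~\ref{rem:jacuzzi}, forces the two weldings (equivalently the two \prmas) to coincide for all sufficiently large~$m$. Thus $\bfsigma_m=\bfsigma_m'$ eventually, which means $X_m\to X$ in the covering space topology.

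\textbf{Main obstacle.} The subtle point is item~(iv) in the reverse direction: even once $g_m$ is conformal on the thick part with $g_m^*\eta_m\to\eta$, one needs to rule out that $\bfsigma_m$ differs from $\bfsigma_m'$ by a nontrivial element of the prong rotation group $\Prot$, which would be invisible to any $C^0$- or $C^1$-comparison on compact subsets of the thick part. This is exactly the role of the near-turning-number-preserving condition, and invoking Proposition~\ref{prop:jacuzzi} cleanly rules out such discrete ambiguity. The forward direction is essentially routine once parallel transport is unpacked; the bulk of the technical work lives in the welded-surface rigidity already established.
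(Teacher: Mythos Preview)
Your proposal is correct and follows essentially the same approach as the paper. The paper's proof is terser: for the reverse direction it simply observes that any two \prmas differ in turning number by an integer, so convergence of turning numbers forces the \prmas to agree; you reach the same conclusion by invoking Proposition~\ref{prop:jacuzzi} and Remark~\ref{rem:jacuzzi}, which package that integer-separation argument. Your reference to Proposition~\ref{prop:qcandconf} is not actually needed here.
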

\par
\begin{proof}
  Consider a sequence $X_m\to X$ as in the above definition.  Let
$U\subset\OBteich$ be a contractible neighborhood of the point corresponding
to~$X$, and let
  $\calX\to U$ be the restriction of the universal curve over $\OBteich$
  to $U$.  The prong-matching $\bfsigma$ of $X$ extends uniquely to a
  continuous family of prong-matchings over $U$.  Choose a smooth (in
  the sense of Definition~\ref{df:nearsm}) trivialization $h\colon
  X\times U \to \calX$ of this family whose restriction to the fiber
  over $X$ is the identity.  This defines a lift
  $U\to\ptwT$, and our goal is to show that $X_m$ is eventually in
  $U$.

  Restricting the marking maps to the complement of the seams defines
  a projection $\pi\colon\ptwT\to\OBteich$.  Items (i)--(iii) above, together
  with Proposition~\ref{prop:topologies_are_the_same} ensure that
  this projection is continuous, so that $\pi(X_m)\to \pi(X)$ in
  $\OBteich$.  The trivialization~$h$ then defines smooth maps
  $\overline{h}_m\colon \overline{X}_\bfsigma \to
  \overline{(X_m)}_{\bfsigma_m}$.  The maps $\overline{h}_m^{-1} \circ g_m$
  then satisfy the hypotheses of Proposition~\ref{prop:jacuzzi}, which
  ensures $g_m$ eventually lies in the lift of $U$.
\par
  Conversely, consider a sequence $X_m$ such that $\pi(X_m)\to \pi(X)$ in
$\OBteich$, with the points corresponding to~$X_m$ eventually in the lift
of~$U$ constructed
  above.  The desired maps~$g_m$ can then be constructed by modifying (as in
  the proof of Theorem~\ref{thm:topologiesarethesame})
  the maps~$h_m$ constructed above so that they are conformal on an exhaustion.
\end{proof}
\par
Using a continuous trivialization is obviously impossible
for a degenerating family of stable curves with varying topological types. For this reason,
the conformal topology on the augmented \Teichmuller space of flat surfaces defined
in Section~\ref{sec:AugTeich} will be more involved.

\subsection{\changed{Turning numbers of Jordan arcs}}
\label{sec:turning_Jordan}

At times we will want to relax the requirement from
Definition~\ref{df:coftop} that the maps~$g_m$ are smooth. To then
make sense of item~(iv), we should have a notion of turning number for
arcs which are not necessarily smooth.  While this seems impossible
for arbitrary continuous arcs, one can define a notion of turning
number for Jordan (that is, embedded and continuous) arcs.
\par
Consider Jordan arcs  $\gamma, \delta \colon[a,b]\to X$ in a translation
surface $(X, \omega)$ which are differentiable at their endpoints.
We will say that a Jordan arc~$\delta$ is \emph{regular}, if it is smooth
and immersed. We will say that two arcs~$\gamma,\delta$ have \emph{common
end-vectors}, if they have the same endpoints, are differentiable at those
endpoints, and the tangent vectors at those endpoints agree.  
\par
We define the turning number $\tau(\gamma)$ of any Jordan arc to be the turning number~$\tau(\delta)$
of a regular Jordan arc~$\delta$ that is isotopic to $\gamma$, via an isotopy through
Jordan arcs (as always: on $X\setminus \bfz$) with common end-vectors. The following proposition shows that
this is well-defined.
%
%
\par
\begin{prop}
  \label{prop:turning_Jordan}
  Let $\delta_1, \delta_2\colon[a,b]\to X$ be regular Jordan arcs in a
  translation surface $(X, \omega)$ which are isotopic through (not
  necessarily regular) Jordan arcs with common end-vectors.  Then
  $\delta_1$ and $\delta_2$ have the same turning number.
\end{prop}

\begin{proof}
  Let $\overline{X}\to X$ be the real blowup of $X$ at the endpoints
  of the $\delta_i$.  Since the~$\delta_i$ have common end-vectors,
  they lift to Jordan arcs $\tilde{\delta}_i\colon[a,b]\to\overline{X}$ joining
  the same points on the boundary of~$\overline{X}$. The isotopy lifts to a continuous
  isotopy between $\tilde\delta_1$ and $\tilde\delta_2$, relative to the endpoints, which may
  be approximated by a smooth homotopy.  Smoothly homotopic arcs are
  smoothly isotopic, again relative to the endpoints (see
  \cite{primer}, Proposition~1.10 and Section~1.2.7).  Since such an isotopy
  leaves turning numbers constant, the desired conclusion follows.
\end{proof}



\section{Twist groups and level rotation tori} \label{sec:twrot}

The goal of this section is to define the twist group~$\Tw$ and the level rotation
torus~$T_\Lambda$ associated with an enhanced multicurve~$\eL$. The twist group is
generated by appropriate combinations of Dehn twists, such that the quotient of some
augmented \Teichmuller space by the twist group is the flat geometric counterpart of
the classical Dehn space introduced in Section~\ref{sec:dehn-space-deligne}.
This augmented \Teichmuller space of flat surfaces, to be defined in
Section~\ref{sec:AugTeich}, requires a \lw projectivization of the
space of prong-matched differentials, and we define here the appropriate
actions of multiplicative groups, the level rotation tori,  for this
projectivization. We will provide various viewpoints on the  level rotation torus
that will be used in the definition of families of model differentials
and \msds in the later sections.

\subsection{The action of $\CC^{L^\bullet(\eL)}$ on the space of
prong-matched differentials}
\label{sec:ConPMD}

Recall from Section~\ref{sec:TeichTwds} that $(\CC^*)^{L^\bullet(\eL)}$ acts
on~$\OBteich$ by simultaneously scaling forms at the same level and preserving
the fibers of the projection to $\BPteich$.  However, the group
$(\CC^*)^{L^\bullet(\eL)}$ does not act naturally on $\ptwT$, since a loop around the
origin in~$\CC^*$ in general returns to the same differential with a
different \prma and a different marking. To get a continuous action on $\ptwT$,
we have to pass to the universal cover $\CC^{L^\bullet(\eL)}$ of $(\CC^*)^{L^\bullet(\eL)}$,
which acts continuously on $\ptwT$ by \emph{level rotations}, as we now describe.
\begin{enumerate}
\item On the level of forms, the tuple $\bfd =  (d_i)_{i \in L^\bullet(\eL)}\in \CC^{L^\bullet(\eL)}$ acts through the
quotient $(\CC^*)^{L^\bullet(\eL)}$ by multiplying the form at level~$i$\index[twist]{b010@$\bfd = (d_i)_{i \in L^\bullet(\eL)}$!Tuple in $\CC^{L(\eL)}$ acting on prong-matched differentials}
\index[twist]{b020@$\cdot$!Action of $\CC^{L(\eL)}$ on \ptwds} by~$\bfe(d_i)$
(recall that we denote $\bfe(z)=\exp(2\pi \sqrt{-1}  z)$).
\item On a \prma~$\sigma$ we act by
shifting the angles by  the real parts of the~$d_i$, i.e.\
for a twisted differential $(X,\eta)$, a \prma~$\sigma$,
 and for $\bfd = (d_i)_{i \in L^\bullet(\eL)}$ we define
\begin{equation}\label{eq:ConPMD}
\bfd \cdot (X, \eta, \sigma) \= (X,
\{\bfe(d_i)\eta|_{X_{(i)}}\}_{i\in L^\bullet(\eL)}, \{\bfd \cdot \sigma\})\,,
\end{equation}
where for each vertical node~$q$ we let
\begin{equation}\label{eq:ConPMD2}
\bfd \cdot \sigma_{q}\colon
\pin[q^-][\eta] \to \pout[q^+][\eta]
\end{equation}
be the map $\sigma_q$ precomposed and post-composed with rotations by the
angle $-2\pi \Re(d_{\lqbot}/ \kappa_q)$ and $2\pi \Re(d_{\lqtop}/ \kappa_q)$,
so that $\bfd\cdot\sigma_q$ remains to be a \prma.

Alternatively, following Definition~\ref{def:pm2}, $\sigma_q$ can be regarded as an element of
$T^*_{q^+}X\otimes T^*_{q^-}X$ in which case $\bfd\cdot\sigma_q =
\bfe((d_{\lqtop} - d_{\lqbot})/ \kappa_q) \sigma_q$.
\item  On the marking $f$, the element $\bfd\in \CC^{L^\bullet(\eL)}$ acts by composition with
the map 
  \begin{equation}\label{eq:defFD}
    F_\bfd \colon \ol{X}_\sigma \to \ol{X}_{\bfd \cdot \sigma}
\end{equation}
which is the identity map outside a union of annular neighborhoods~$A_q$ of
the corresponding seams and a fractional Dehn twist in their interior.
\end{enumerate}
\par
\changed{Here a \emph{fractional Dehn twist}\index[twist]{b040@$F_\bfd$!Fractional
Dehn twist} is a diffeomorphism, which is isotopic to
the map $S^1 \times [0,1] \to S^1 \times [0,1]$ given by $(t,h) \mapsto (t + 2\pi qh,h)$
for some $q \in \RR$ in a neighborhood of the weldings. The case $q=1$ gives a usual
full Dehn twist.}

\subsection{Twist groups.} \label{sec:levrot}

The restriction of the action of~$\CC^{L^\bullet(\eL)}$ on $\ptwT$ to the
subgroup $\ZZ^{L^\bullet(\eL)}\subset\CC^{L^\bullet(\eL)}$ acts by modifying the \prmas and markings, while
preserving the underlying differentials. We call the group $\ZZ^{L^\bullet(\eL)}$  the {\em level rotation group}.
\index[twist]{d008@$\ZZ^{L^\bullet(\eL)}$!Level rotation group}
Considering only the action on prongs defines a homomorphism from the level
rotation group $\ZZ^{L^\bullet(\eL)}$ to the prong rotation group $P_{\eL}$ defined
in~\eqref{eq:Prot}:
\begin{equation}\label{eq:definephi}
\phi_\eL^\bullet\colon \ZZ^{L^\bullet(\eL)} \to P_\eL\,, \qquad \bfn \mapsto
\left(n_{\ltop} - n_{\lbot} \mod \kappa_e \right)_{e\in \eL^\ver}\,.
\end{equation}
This map allows us to introduce an important equivalence \index[twist]{d009@$\phi_\eL^\bullet$!Map from level rotation group to prong rotation group} relation.
\par
\begin{df}\label{def:eqPM}
Two prong-matchings are called \emph{equivalent} if there exists an element of the level rotation group that transforms one into the other.
\end{df}
\par
The homomorphism $\phi_\eL^\bullet$ fits into the following commutative diagram of group homomorphisms
\index[twist]{d010@$\vTw[\eL]$!Vertical twist group}
\[\begin{tikzpicture}
\matrix (m) [matrix of math nodes, row sep=2.10em, column sep=3.5em,
text height=1.5ex, text depth=0.25ex]
{\vTw[\eL]  & \ker(\phi_\eL^\bullet) & \ZZ^{L^\bullet(\eL)} & \\
\Mod[(\Sigma, \bfs)] & \ker(\overline{\psi})  & \ZZ^{\eL^\ver}  & P_\eL  \\};
\path[(->,right hook-latex,font=\scriptsize]
(m-2-2) edge [below] node {$\psi$} (m-2-1)
(m-2-2) edge (m-2-3)
(m-1-1) edge (m-2-1)
(m-1-2) edge  (m-1-3);
\path[->,font=\scriptsize]
(m-1-2) edge [above] node {$\tau_\eL^\bullet$} (m-1-1)
(m-1-3) edge [above] node {$\phi_\eL^\bullet$} (m-2-4)
(m-1-3) edge [left] node {$\wt{\phi}_\eL^\bullet$} (m-2-3)
(m-2-3) edge [below] node {$\overline{\psi}$} (m-2-4)
(m-1-2) edge [left] node {$\wt{\phi}_\eL^\bullet$} (m-2-2);
\end{tikzpicture}\]
that we now describe.
The group $\ZZ^{\eL^\ver}$ acts on the space $\ptwT$ via {\em edge rotations}
by the fractional Dehn twists, i.e. the tuple $ (n_e)_{e \in \eL^\ver}$
twists the \prma of the edge~$e$ by $\kappa_e \{n_e/\kappa_e \}$ (the remainder
of~$n_e$ mod $\kappa_e$)
and precomposes the marking by $\lfloor n_e/\kappa_e \rfloor$ left Dehn twists
around the curve corresponding to~$e$. Taking the quotient by the subgroup
of full Dehn twists at such an edge~$e$ gives a map $\ZZ\to\ZZ/\kappa_e\ZZ$,
and doing this for all vertical edges induces a map
$\overline{\psi}\colon \ZZ^{\eL^\ver} \to P_\eL$ onto the prong rotation group.
The kernel $\ker(\overline{\psi})$ is thus generated by (full) Dehn twists
around $\eL^{\ver}$ and is thus a subgroup of the mapping class group.
We denote by $\psi\colon\ker(\overline{\psi}) \hookrightarrow \Mod[(\Sigma, \bfs)]$
this inclusion.

There is a natural homomorphism $\widetilde{\phi}_\eL^\bullet\colon \ZZ^{L^\bullet(\eL)}
\to \ZZ^{\eL^\ver}$ defined by
\begin{equation}\label{eq:definetildephi}
  \widetilde{\phi}_\eL^\bullet(\bfn)
\= \left(n_{\ltop} - n_{\lbot}\right)_{e\in \eL^\ver}\,.
\end{equation}
The composition of  $\widetilde{\phi}_\eL^\bullet$  followed by $\overline{\psi}$
recovers the homomorphism $\phi_\eL^\bullet\colon \ZZ^{L^\bullet(\eL)}\to \Prot[\eL]$
defined in \eqref{eq:definephi}. The kernel
$\ker(\phi_\eL^\bullet)$ is in other words the subgroup of $\ZZ^{L^\bullet(\eL)}$
whose action on $\ptwT$ fixes the underlying \ptwds, only changing the markings.
This defines a homomorphism $\tau_\eL^\bullet = \psi \circ  \widetilde{\phi}_\eL^\bullet
\colon \ker(\phi_\eL^\bullet)\to \Mod[(\Sigma, \bfs)]$ sending $\bfn$ to the
product of Dehn twists,
\begin{equation}\tau_\eL^\bullet(\bfn) = \prod_{e\in \eL^\ver} \tw_{\gamma_e}^{m_e},
\quad\text{with $m_e$ defined by}\quad \widetilde{\phi}_\eL^\bullet(\bfn) = (m_e
  \kappa_e)_{e\in \eL^\ver}\,,
\end{equation}
where $\gamma_e$ is the seam corresponding to $e$, and $\tw_{\gamma_e}$ is the Dehn twist around it.
The image of $\tau_\eL^\bullet$ is called the \emph{vertical $\eL$-twist
group} $\vTw[\eL]\subset \Mod[(\Sigma, \bfs)]$. Tracking the above definitions, we conclude the following.
\par
\begin{prop} \label{prop:rank_of_twist_group} The vertical $\eL$-twist group $\vTw[\eL]$ is a free
  abelian group of rank~$N$. Moreover, $\ker(\tau_\eL^\bullet)\subset\ker(\phi_\eL^\bullet)$
  is isomorphic to $\ZZ$, generated by $\boldsymbol{1}=(1,\dots,1)$, and $\ker(\phi_\eL^\bullet) =  \vTw\oplus\ker(\tau_\eL^\bullet) \isom \vTw\oplus\ZZ$.
\end{prop}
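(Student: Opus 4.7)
The plan is to analyze the three homomorphisms $\widetilde{\phi}_\eL^\bullet$, $\phi_\eL^\bullet$, and $\tau_\eL^\bullet$ individually, using that the dual graph of $\eL$ is connected, and then to assemble the pieces by elementary rank counting and by splitting a short exact sequence of finitely generated abelian groups.

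The first step is to identify $\ker(\tau_\eL^\bullet)$. For any $\bfn \in \ker(\phi_\eL^\bullet)$ one has $\tau_\eL^\bullet(\bfn) = \psi(\widetilde{\phi}_\eL^\bullet(\bfn))$, and the injectivity of the inclusion $\psi$ reduces the computation to finding those $\bfn$ with $\widetilde{\phi}_\eL^\bullet(\bfn) = 0$. By the defining formula \eqref{eq:definetildephi}, this amounts to $n_{\ell(e^+)} = n_{\ell(e^-)}$ for every vertical edge $e\in\eL^\ver$. I would introduce the auxiliary ``level adjacency graph'' $L$ whose vertex set is $L^\bullet(\eL) = \{0,-1,\dots,-N\}$ and which has one edge per vertical edge of $\eG$ going between the levels of its endpoints. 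Since $\eG$ is connected (by the standing assumption of Section~\ref{sec:graphetc}) and each walk in $\eG$ projects to a walk in $L$ (horizontal steps contributing trivial steps), $L$ is connected. The constraint then forces all $n_i$ to agree, so $\ker(\tau_\eL^\bullet) = \ZZ\cdot\boldsymbol{1}\cong\ZZ$.

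Next I would do a rank count. The prong rotation group $\Prot[\eL] = \prod_{e\in\eL^\ver}\ZZ/\kappa_e\ZZ$ is finite, so $\ker(\phi_\eL^\bullet)\subset\ZZ^{L^\bullet(\eL)}\cong\ZZ^{N+1}$ has finite index and is in particular free abelian of rank $N+1$. Combining this with the previous step and the short exact sequence
\[
0 \longrightarrow \ker(\tau_\eL^\bullet) \longrightarrow \ker(\phi_\eL^\bullet) \xrightarrow{\;\tau_\eL^\bullet\;} \vTw[\eL] \longrightarrow 0
\]
shows that $\vTw[\eL]$ has rank $N$. To upgrade ``rank $N$'' to ``free abelian of rank $N$'', I would note that by construction $\vTw[\eL]$ is generated by products of Dehn twists around the curves $\gamma_e$, $e\in\eL^\ver$, which are pairwise disjoint and pairwise non-isotopic as components of the multicurve $\eL$. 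It is classical that such a system of Dehn twists freely generates a free abelian subgroup of the mapping class group, so $\vTw[\eL]$ is free abelian, and the rank count pins down its rank as exactly $N$.

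Finally, because $\vTw[\eL]$ is free abelian and hence projective, the displayed short exact sequence splits, yielding the asserted abstract isomorphism $\ker(\phi_\eL^\bullet)\cong\vTw[\eL]\oplus\ker(\tau_\eL^\bullet)\cong\vTw[\eL]\oplus\ZZ$. The main obstacle I anticipate is the connectivity argument in the first step, which hinges on passing from connectivity of $\eG$ to connectivity of $L$; once this is handled, the remainder is routine homological algebra of finitely generated abelian groups together with the classical fact about independence of Dehn twists along disjoint non-isotopic curves.
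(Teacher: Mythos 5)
Your proof is correct and follows essentially the same route as the paper: identify $\ker(\tau_\eL^\bullet)=\ZZ\cdot\boldsymbol{1}$ from connectivity of the dual graph, use that $\Prot[\eL]$ is torsion to see $\ker(\phi_\eL^\bullet)$ has rank $N+1$, and split the resulting short exact sequence. The only (minor) divergence is in the splitting step: the paper observes that $\boldsymbol{1}$ is primitive in $\ker(\phi_\eL^\bullet)$ and deduces freeness of the quotient $\vTw$ from the splitting, whereas you first establish that $\vTw$ is free abelian as a subgroup of the lattice $\ZZ^{\eL^\ver}$ of Dehn twists along the disjoint curves of $\eL^\ver$ and then split by projectivity — both are sound one-line justifications.
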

\par
\begin{proof}
Since $P_\eL$  is a torsion group, the rank of $\ker(\phi_\eL^\bullet)$ is equal to the
rank of the level rotation group~$\ZZ^{L^\bullet(\eL)}$ which is~$N+1$.  A tuple $\bfn$ lies in $\ker(\tau_\eL^\bullet)$
if and only if $n_{\ltop} = n_{\lbot}$ for every
vertical edge $e$.  Since the dual graph is connected, $\bfn$ is a multiple of
$\boldsymbol{1}$, so $\boldsymbol{1}$ generates $\ker(\tau_\eL^\bullet)$. The vector
$\boldsymbol{1}$ is primitive in the level rotation group, so it is also primitive in
$\ker(\phi_\eL^\bullet)$.  Hence there is a splitting of the short exact sequence
\begin{equation*}
    0\to \ZZ \to \ker(\phi_\eL^\bullet) \to \vTw  \to 0. \qedhere
  \end{equation*}
\end{proof}
\par
We define the \emph{horizontal $\eL$-twist group} to be the subgroup
$\hTw[\eL]\subset\Mod[(\Sigma, \bfs)]$ generated by Dehn twists around the horizontal
curves $\eL^h$.
\index[twist]{d020@$\hTw[\eL]$!Horizontal twist group}
We then define the \emph{$\eL$-twist group} to
be the direct sum
\index[twist]{d030@$\Tw$!Twist group}
\begin{equation*}
  \Tw \= \vTw \oplus \hTw\,.
\end{equation*}
\par
\medskip
Let $\left(\bff_i\right)_{i=0,\dots, -N}$ be the \emph{standard basis}
of~$\CC^{L^\bullet(\eL)}\cong \CC^{N+1}$, where $\bff_i = (0^{-i}, 1, 0^{N+i})$.
In order to describe the above groups in simpler terms, we will also use the
\emph{lower-triangular basis} $\left(\bfb_i\right)_{i=0,\dots, -N}$ defined as
$$\bfb_i = \sum_{k = -N}^{i} \bff_{k} = (0^{-i}, 1^{N+i+1})\,.$$
Then for $v_i \in \CC$, the element
$v_i \bfb_i$ acts by simultaneously multiplying the forms on all
levels $j\le i$ by $ \bfe(v_i)$. In particular, $v_0\bfb_0$ simultaneously
scales the form on every irreducible component of~$X$ by $\bfe(v_0)$.
\par
Recall from Section~\ref{sec:order} that for every level $i\in L(\eL)$ there
is a two-level undegeneration ${\rm dg}_i\colon\eL_i\rightsquigarrow\eL$ that contracts the
(vertical) edges of~$\Gamma(\Lambda)$ strictly above level~$i$ and the edges below or at level~$i$.
We denote by $\sTwi = ({\rm dg}_i)_* (\vTw[\eL_i])\subset \vTw[\eL]$
the corresponding subgroup of the vertical $\eL$-twist group.  Note that
$\vTw[\eL_i]$ is the cyclic group generated by the element $(0, a_i)$ with
$a_i = \lcm_e \kappa_e$ over all edges $e$ connecting the graph $\Gamma_{>i}$
to $\Gamma_{\le i}$. 
Moreover, $({\rm dg}_i)_{*}(0, a_i) = (0^{-i}, a_i^{N+i+1})
\in \ZZ^{L^\bullet(\eL)}$. \changed{Here we implicitly identify $\vTw[\eL]$ with $\ker(\phi_\eL^\bullet)\subset\ZZ^{L^\bullet(\eL)}$ modulo the diagonal subgroup $\ZZ$ as in Proposition~\ref{prop:rank_of_twist_group}.} It follows that
\index[twist]{d040@$\sTwi$!Simple vertical twist group of level~$i$}
$\sTwi$ is the cyclic subgroup of $\vTw[\eL]$ generated by
\index[twist]{b050@$a_i$, $m_{e,i}$! Defined by $a_i\= \lcm_e \kappa_e$ and $m_{e,i} \= a_i/\kappa_e$}
\begin{equation}
  \label{eq:aidef}
\tau_\eL^\bullet(a_i \bfb_i) \= \prod_e \tw_{\gamma_e}^{m_{e,i}},
\quad\text{with}\quad a_i\= \lcm_e \kappa_e  \quad\text{and}\quad
m_{e,i} \= a_i/\kappa_e\,,
\end{equation}
where the product and the $\lcm$ are taken for the set of vertical edges $e$
connecting $\Gamma_{>i}$ to $\Gamma_{\le i}$.   The collection of $\sTwi$ for
all $i\in L(\eL)$ generates a subgroup of the twist group, which
\index[twist]{d050@$\svTw$!Simple vertical twist group}
we call the \emph{simple vertical $\eL$-twist group $\svTw$}.
\par
\begin{lm}  \label{lm:defsvTw}
The simple vertical $\eL$-twist group is a finite index
subgroup of the vertical twist group that can be written as the direct sum
\begin{equation} \label{eq:defsvTw}
  \svTw \= \bigoplus_{i\in L(\eL)} \sTwi \,\subset\, \vTw[\eL]\,.
\end{equation}
\end{lm}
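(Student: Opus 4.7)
The plan is to observe that $\vTw$ is free abelian of rank $N = |L(\eL)|$ by Proposition~\ref{prop:rank_of_twist_group}, and to produce $N$ elements generating $\svTw$ that are $\ZZ$-linearly independent in $\vTw$. Linear independence then simultaneously yields the direct-sum decomposition and the finite index, since any subgroup of full rank in a free abelian group of finite rank has finite index.

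First I would lift the generator $\tau_\eL^\bullet(a_i\bfb_i)$ of each cyclic group $\sTwi$ to its canonical preimage $a_i\bfb_i \in \ker(\phi_\eL^\bullet) \subset \ZZ^{L^\bullet(\eL)}$, using the identification $\vTw \cong \ker(\phi_\eL^\bullet)/\ker(\tau_\eL^\bullet)$ coming from the commutative diagram preceding the lemma. By Proposition~\ref{prop:rank_of_twist_group}, $\ker(\tau_\eL^\bullet) = \ZZ\bfb_0$. The claim then reduces to showing that the $N$ classes $[a_i\bfb_i]$ for $i \in L(\eL)$ are $\ZZ$-linearly independent in $\ZZ^{L^\bullet(\eL)}/\ZZ\bfb_0$.

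The key calculation is immediate from the lower-triangular form of the vectors $\{\bfb_j\}_{j \in L^\bullet(\eL)}$: since these form a $\ZZ$-basis of $\ZZ^{L^\bullet(\eL)}$, any relation
\begin{equation*}
\sum_{i \in L(\eL)} c_i\, a_i\bfb_i \= k\,\bfb_0 \quad \text{in } \ZZ^{L^\bullet(\eL)}
\end{equation*}
forces $k = 0$ upon reading off the top-level coordinate (for $i < 0$, the vector $\bfb_i$ has zero entry at level $0$), and then the basis property together with $a_i \neq 0$ forces all $c_i = 0$. This independence of the generators immediately yields that the sum $\sum_{i \in L(\eL)} \sTwi$ is internal direct, establishing~\eqref{eq:defsvTw}, and that $\svTw$ has rank $N$ matching that of $\vTw$, whence the inclusion has finite index.

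The main (and essentially only) obstacle will be confirming that the lifts $a_i\bfb_i$ genuinely lie in $\ker(\phi_\eL^\bullet)$ and that the passage between the generator $\prod_e \tw_{\gamma_e}^{m_{e,i}}$ of $\sTwi$ inside $\Mod[(\Sigma,\bfs)]$ and its representative $a_i\bfb_i \in \ZZ^{L^\bullet(\eL)}$ is consistent with~\eqref{eq:aidef}. This is a matter of tracking the diagram: $\widetilde{\phi}_\eL^\bullet(a_i\bfb_i)$ has entry $a_i$ on precisely those vertical edges $e$ crossing level $i$ and zero elsewhere, and divisibility $\kappa_e \mid a_i$ then follows from $a_i = \lcm_e \kappa_e$, so that $a_i\bfb_i \in \ker(\phi_\eL^\bullet)$ and $\psi$ sends it to the stated product of Dehn twists. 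Once this bookkeeping is recorded, the argument is complete.
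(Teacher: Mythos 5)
Your proof is correct and follows essentially the same route as the paper: both arguments rest on the linear independence of the lower-triangular vectors $\bfb_i$ in $\ZZ^{L^\bullet(\eL)}$ together with the fact from Proposition~\ref{prop:rank_of_twist_group} that $\bfb_0$ generates $\ker(\tau_\eL^\bullet)$, which shows the $N$ elements $\tau_\eL^\bullet(a_i\bfb_i)$ generate a rank-$N$ (hence finite-index, internally direct) subgroup of $\vTw\isom\ZZ^N$. Your additional bookkeeping verifying $a_i\bfb_i\in\ker(\phi_\eL^\bullet)$ via $\kappa_e\mid a_i$ is consistent with the paper's setup in~\eqref{eq:aidef}.
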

\par
\begin{proof}
The vectors $\bfb_i$ are linearly independent in the level rotation
group~$\ZZ^{L^\bullet(\Lambda)}$, and moreover, $\bfb_0 = (1, \dots, 1)$ generates
$\ker(\tau_\Lambda^\bullet)$.  Combining with Proposition~\ref{prop:rank_of_twist_group},
it follows immediately that the $N$ simple twists $\tau_\eL^\bullet(a_i \bfb_i)$ for
$i<0$ generate a rank $N$ subgroup of the vertical twist group $\vTw\isom\ZZ^N$.
\end{proof}
\par
We denote by $K_\Lambda$ the finite quotient $K_\Lambda = \vTw [\eL] / \svTw$.
\index[twist]{d055@$K_\Lambda $! Finite group defined by $\vTw/\svTw$}
In general, the inclusion in~\eqref{eq:defsvTw} is strict; see
Example~\ref{ex:trianglegraph}. We will see that this phenomenon is
responsible for the quotient singularities of our moduli space at
the boundary; see also Section~\ref{sec:modeldomain}. Finally, we denote by
$\sTw = \svTw \oplus \hTw$ the
\index[twist]{d060@ $\sTw$!Simple twist group}
\emph{simple $\eL$-twist group}.
\par
The $\CC^{L^\bullet(\eL)}$-action restricts to the $\CC^{L(\eL)}$-action, which
acts by scaling all but the top level.  All of the above objects have
analogues using this restricted action, which we denote by dropping the
superscript~$\bullet$.
For example, the restriction of $\phi_\eL^\bullet$ to $\ZZ^{L(\eL)}$ is denoted by~$\phi_\eL$.
\par
The homomorphisms $\phi_\eL^\bullet$ and $\phi_\eL$ have the same image in the prong rotation
group~$\Prot[\eL]$.  Similarly $\tau_\eL^\bullet$ and $\tau_\eL$ have the same image
$\vTw$ in $\Mod[(\Sigma, \bfs)]$.  Intuitively, the actions both of  $\CC^{L(\eL)}$ and
$\CC^{L^\bullet(\eL)}$ yield the same subgroups of the prong rotation group and of the mapping class group,
because the top-level factor~$\CC$ of $\CC^{L^\bullet(\eL)}$ acts (in terms of the lower-triangular basis) by simultaneously
scaling the differentials at all levels by the same factor, which has no effect on the markings or \prmas.
\par
\medskip
We remark that there is another characterization of the twist group as
a subgroup of the full twist group $\oldtwist$. Recall that the full twist group
was defined in Section~\ref{sec:dehn-space-deligne} as the group generated
by Dehn twists around all curves of $\eL$, and is isomorphic
to~$\ZZ^{E(\eL)}$. To prove the following proposition one checks that the quotient
of $\oldtwist$ by $\Tw$ and by the group $\rottwist$ defined implicitly in the
proposition are torsion free, and that $\Tw$ and $\rottwist$ have the same rank.
\par
\begin{prop}\label{prop:TwRotInv}
Let $(X,\eta,\sigma,f) \in \ptwT$. The twist group $\Tw$ is the
subgroup of $\oldtwist$ that fixes the turning number of every immersed
arc in~$\ol{X}_\sigma$ that starts and ends at the same level.
\end{prop}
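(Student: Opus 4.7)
The plan is to compute how each Dehn twist generator of $\oldtwist$ changes the turning number of a good arc $\alpha$ in $\overline{X}_\sigma$, then read off which combinations preserve all same-level turning numbers.

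First I would establish the local turning-number formula: for each edge $e \in \Lambda$, one Dehn twist $\tw_{\gamma_e}$ changes the turning number of a good arc $\alpha$ by
\begin{equation*}
\tau(\tw_{\gamma_e}\cdot \alpha) - \tau(\alpha) \= c_e(\alpha)\,\kappa_e \quad\text{if $e$ is vertical,}\qquad = 0 \quad\text{if $e$ is horizontal,}
\end{equation*}
up to a fixed sign convention, where $c_e(\alpha)$ is the signed intersection of $\alpha$ with $\gamma_e$. The contribution per wrap equals the turning of a simple counterclockwise loop around $\gamma_e$ in $\overline{X}_\sigma$. Using Strebel's normal forms (Theorem~\ref{thm:standard_coordinates}), pushing this loop slightly into the upper disk produces a loop around the zero of order $\kappa_e-1$ of $z^{\kappa_e-1}\,dz$, whose turning is $+\kappa_e$; pushing into the lower disk produces a loop around the pole of order $\kappa_e+1$ of $(z^{-\kappa_e}+r)dz/z$, whose turning is $-\kappa_e$. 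These values agree on the welded surface because the welding is orientation-reversing, flipping the cyclic sense of the loop. For horizontal edges, both sides carry simple poles and the analogous computation gives turning $0$.

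For the forward inclusion, horizontal generators preserve turning numbers by Step~1 (even for arcs that pass through horizontal nodes, since a loop around a simple pole has turning $0$). For an element $\tau_\Lambda^\bullet(\bfn) = \prod_{e \in \Lambda^\ver} \tw_{\gamma_e}^{m_e} \in \vTw$ with $\bfn \in \ker(\phi_\Lambda^\bullet)$, the defining relation $m_e\kappa_e = n_{\ltop[e]} - n_{\lbot[e]}$ makes the total turning-number change of $\alpha$ equal to
\begin{equation*}
\sum_{e \in \Lambda^\ver} c_e(\alpha)\,m_e\,\kappa_e \=\sum_{e \in \Lambda^\ver} c_e(\alpha)\,\bigl(n_{\ltop[e]} - n_{\lbot[e]}\bigr)\,.
\end{equation*}
Reading $\bfn$ as a potential on $L^\bullet(\Lambda)$ and telescoping along the sequence of level changes of $\alpha$, the right-hand side equals $n_{\ell(\mathrm{end})} - n_{\ell(\mathrm{start})}$, which vanishes exactly when $\alpha$ starts and ends on the same level.

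Conversely, let $g = \prod_{e \in \Lambda} \tw_{\gamma_e}^{m_e} \in \oldtwist$ preserve turning numbers on all same-level good arcs. Its horizontal part already lies in $\hTw \subset \Tw$, so we may assume $g$ is purely vertical. Translating a good arc in $\overline{X}_\sigma$ (allowed to cross horizontal nodes) into its sequence of component transitions yields an edge-path in $\Gamma(\Lambda)$, and the turning-preservation hypothesis becomes $\sum_e c_e\,m_e\,\kappa_e = 0$ for every path in $\Gamma$ between two vertices of the same level. Specialising to closed loops shows that the $1$-cochain $e\mapsto m_e\,\kappa_e$ on vertical edges is a coboundary, so $m_e\kappa_e = n(e^+) - n(e^-)$ for some $n\colon V(\Gamma) \to \ZZ$. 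Specialising to non-closed same-level paths then forces $n(v)$ to depend only on $\ell(v)$, so $n$ factors through $\ell$ to produce $\bfn \in \ker(\phi_\Lambda^\bullet)$ with $g = \tau_\Lambda^\bullet(\bfn) \in \vTw$. The main technical point is the orientation bookkeeping in Step~1: one must verify that the $+\kappa_e$ contribution from the upper disk and the $-\kappa_e$ contribution from the lower disk genuinely cohere on the welded surface under the orientation-reversing welding. Once this is settled, the rest is a standard coboundary argument on the connected graph $\Gamma$.
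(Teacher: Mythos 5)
Your proof is correct, and while the forward inclusion $\Tw\subseteq\rottwist$ is the same telescoping argument as in the paper, your converse takes a genuinely different route. The paper argues indirectly: it shows that both quotients $\oldtwist/\Tw$ and $\oldtwist/\rottwist$ are torsion-free, computes the corank of $\rottwist$ in $\oldtwist\cong\ZZ^{E(\Gamma)}$ via an Euler-characteristic count ($h_1(\Gamma)$ conditions from cycles plus $\sum_i(|V(\Gamma_{(i)})|-1)$ conditions from same-level paths), and concludes equality from the coincidence of ranks. You instead construct the element of $\ker(\phi_\eL^\bullet)$ explicitly: vanishing on closed loops makes $e\mapsto m_e\kappa_e$ a coboundary $n(e^+)-n(e^-)$ on the connected graph $\Gamma$, and vanishing on open same-level paths forces the potential $n$ to factor through the level function, exhibiting $g=\tau_\eL^\bullet(\bfn)$ directly. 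Your argument is more constructive and avoids both the rank count and the torsion-freeness verification (which the paper needs precisely because a finite-index inclusion of equal-rank lattices need not be an equality); the paper's version, on the other hand, packages the same linear algebra into a dimension count without having to name the potential. The orientation bookkeeping you flag in Step~1 --- that the $+\kappa_e$ turning of a loop pushed into the upper disk and the $-\kappa_e$ turning of a loop pushed into the lower disk cohere under the orientation-reversing welding --- is indeed the one local computation both proofs silently rely on, and your statement of it is correct.
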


\subsection{Level rotation tori} \label{sec:levrottori}

We define the \emph{level rotation torus} $\Tprong[\eL]$ to be the
quotient \changed{
$\Tprong[\eL] = \CC^{L(\eL)} / \vTw[\eL] $.} 
Similarly, the \emph{simple level rotation torus} is
the quotient $\Tprong[\eL]^s = \CC^{L(\eL)} / \svTw[\eL]$.
\index[twist]{d070@$\Tprong[\eL]^s$, $\Tprong[\eL]$!(Simple) level
  rotation torus} They will play a prominent role in defining families
of \msds. The level rotation tori depend obviously only on the
enhanced level graph $\Gamma(\Lambda)$ rather than on the multicurves
and we will thus write~$\Tprong$ and $\Tprong[\eL]$ interchangeably.
\par
The following is an alternative characterization of the level rotation torus.
Similarly to the twist groups, the ambient $\CC^{L(\Lambda)}$ and also
$(\CC^*)^{L(\Lambda)}$ can be parameterized using the standard and the
triangular basis.
\par
\begin{prop} \label{prop:LRTchar}
The level rotation torus $\Tprong[\eL]$ is the identity component of the subgroup of
\begin{equation}\label{eq:torusproduct}
(\CC^*)^{L(\eL)} \times (\CC^*)^{E(\eL)}
\= \changed{\left( (r_i)_{i\in L(\eL)}, (\rho_e)_{e\in E(\eL)}\right)}
\end{equation}
cut out by the set of equations
\begin{equation}\label{eq:rrho}
  r_{\lbot} \dots  r_{\ltop-1} \= \rho_e^{\kappa_e}
\end{equation}
for all edges $e$, where the $r_i$ are the coordinates in the triangular
basis.
\par
There is an identification $T^s_\eL \cong (\CC^*)^{\changed{L(\eL)}}$
such that the quotient map $T^s_\eL \to T_\eL$ is given in coordinates by
\begin{equation}\label{eq:TsT}
(q_i) \,\mapsto\,
(r_i, \rho_e) \= \left(q_i^{a_i}, \prod_{i=\lbot}^{\ltop-1} q_i^{a_i/\kappa_e} \right)
\end{equation}
with the numbers $a_i$ defined in~\eqref{eq:aidef}.
\end{prop}
\par
\begin{proof}
Consider first the projection of the subgroup of $(\CC^*)^{L(\eL)} \times
(\CC^*)^{E(\eL)}$ cut out by Equations~\eqref{eq:rrho} (not just its identity
component) onto the $(\CC^*)^{L(\eL)}$ factor. Since each~$\rho_e$ is determined
by the~$r_i$'s up to roots of unity, this projection is an unramified (possibly
disconnected) cover with fiber equal to the prong rotation group $P_\Gamma =\prod_e
\ZZ/\kappa_e\ZZ$.
\par
Next we determine the connected component of the identity within this subgroup.
The fundamental group of $(\CC^*)^{L(\eL)}$ is equal to $\ZZ^{L(\eL)}$, and an
element $\bfn\in \ZZ^{L(\eL)}$ acts by multiplying each coordinate~$\rho_e$
by $\bfe((n_{\ltop}-n_{\lbot})/\kappa_e)$. Recalling Equation~\eqref{eq:definephi}
that defines $\phi_\eL$, we see that $\ker (\phi_\eL)$ is precisely the set of
elements $\bfn\in\ZZ^{L(\Lambda)}$ that act by trivial monodromy. Thus the connected
component of the identity is an unramified cover of $(\CC^*)^{L(\Lambda)}$ with deck
transformation group being the image of monodromy, i.e.~$\ZZ^{L(\eL)}
/\ker(\phi_\eL)$. As by definition the level rotation torus $\Tprong[\eL]$ is a
Galois cover of $(\CC^*)^{L(\eL)}$ with the same Galois group, it is equal to the
connected component of the identity. This shows the first statement of the claim,
from which~\eqref{eq:TsT} follows, since we exhibit a map of tori of the same
dimension and the right-hand side satisfies~\eqref{eq:rrho}.
\end{proof}
\par
These constructions can also be regarded as covariant functors on the
category of ordered enhanced multicurves on $(\Sigma, \bfs)$.
More precisely, a degeneration of enhanced multicurves $\degen\colon\eL_1
\rightsquigarrow \eL_2$ induces a monomorphism \changed{$\widehat{\degen_*}
\colon \CC^{L(\eL_1)} \to \CC^{L(\eL_2)}$}. Using Proposition~\ref{prop:rank_of_twist_group}
to think of twist groups as kernels of the map to the prong rotation group, up to a
$\ZZ$-summand, we obtain a monomorphism $\degen_*\colon \Tw[\eL_1]\inj \Tw[\eL_2]$.
\par
\begin{lm} \label{lm:monoLRT}
A degeneration of enhanced multicurves $\degen\colon\eL_1 \rightsquigarrow \eL_2$
induces an injective homomorphism $\degen_* \colon T_{\eL_1}\to T_{\eL_2}$.
In the coordinates~\eqref{eq:torusproduct} the image is cut out by equations $\rho_e=1$
for every edge~$e$ of~$\eL_2$ that is contracted in~$\eL_1$, and
respectively $r_i=1$ for every level~$i\in L(\eL_2)$ such that the images of~$i$
and $i+1$ are the same in~$L(\eL_1)$. \changed{Moreover, if $\degen'\colon\eL_1
\rightsquigarrow \eL_2$ and $\degen''\colon\eL_2
\rightsquigarrow \eL_3$ are two degenerations, and if $\degen\colon\eL_1
\rightsquigarrow \eL_3$ denotes their composition, then $\degen_{*} = \degen_{*}'' \circ \degen_{*}'$. }
\end{lm}
\par
\begin{proof} The description of the image is obvious. For injectivity
we have to show that an element in $\Tw[\eL_2]$  in the image of~$\widehat{\degen_*}$
already belongs to $\Tw[\eL_1]$. This is obvious from the description
of the twist group in Proposition~\ref{prop:rank_of_twist_group}. \changed{These descriptions also directly imply the last part of the claim.}
\end{proof}
\par
We will also  need the rank~$N+1$ \emph{extended level rotation torus}
$\Textd[\eL] = \CC^{L^\bullet(\eL)} / (\vTw[\eL] \oplus \ZZ)$, as
well as its simple variant $\Tsextd[\eL] = \CC^{L^\bullet(\eL)} / (\svTw[\eL] \oplus \ZZ)$.
\par
\medskip
The level rotation torus $\Tprong[\eL]$ acts on a \ptwd $(X,\eta,\sigma)$,
where~$\sigma$ is a \prma, via
\index[twist]{b030@$\ast$! Action of $T_{\eL}$ on \ptwds}
 \begin{equation}\label{eq:actionLRT}
(r_i, \rho_e)\,\ast\, \bigl(X,(\eta_{(i)}),\,  (\sigma_e)\bigr)
\= \biggl(X, \bigl(r_i\dots r_{-1}\eta_{(i)}\bigr),\,
(\rho_e \ast \sigma_e)\biggr)
\end{equation}
where $\rho_e \ast \sigma_e$ is the \prma
$\pin[q^-][\eta] \to \pout[q^+][\eta]$ at the node~$q$
corresponding to~$e$
given by $\sigma_e$ post-composed with the rotation by $\arg(\rho_{e})$.
Note that this is the exponential version of the action described
in item~(2) of Section~\ref{sec:ConPMD}. If~\changed{$\ol{X}_{\bfsigma}$} is moreover marked
by~$f$ we define $(\rho_e) \ast f$ to be the marking of
$(r_i, \rho_e)\,\ast\, \bigl(X, (\eta_{(i)}),\,  (\sigma_e) \bigr)$
obtained by post-composing~$f$ with a fractional Dehn twist of
angle $\arg(\rho_e)$ on each vertical edge~$e$.
This marking is well-defined up to an element in $\Tw$ only.
\par
Analogously, the simple level rotation torus acts on the set of \ptwds.
We can also assume that these differentials are marked by $f$, defined modulo
the action of the simple twist group. Using the map $\Tsimp \to\Tprong[\eL]$ given
in Proposition~\ref{prop:LRTchar}, the action $\ast$ defined in
Equation~\eqref{eq:actionLRT} is given in the triangular basis  by
\begin{equation}\label{eq:Tsimp_action}
\bft \ast (X, (\eta_{(i)}), (\sigma_{e}), f)
\= (X, ( t_i^{a_i} \dots \,t_{-1}^{a_{-1}} \eta_{(i)}), (f_{e}\ast \sigma_{e}),
(f_{e})\ast f)\,,
\end{equation}
where \changed{$\bft = (t_i)_{i\in L(\Lambda)}$ and} $f_{e}=\prod_{i=\lbot}^{\ltop-1} t_i^{a_i/\kappa_e}$ with
the integers $a_i$ defined in~\eqref{eq:aidef}. For later use, we recast
Proposition~\ref{prop:LRTchar} in terms of this action.
\par
\begin{cor} \label{cor:compLRT}
Equivalence classes of \ptwds up to the action~\eqref{eq:actionLRT} of the
level rotation torus are in bijection with connected components of the
subgroup of $(\CC^*)^{L(\eL)} \times (\CC^*)^{E(\eL)}$ cut out by
Equation~\eqref{eq:rrho}.
\end{cor}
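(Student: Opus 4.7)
The proof will identify both sides of the claimed bijection with the finite group $P_\Lambda/\mathrm{im}(\phi_\Lambda)$. First, I would unpack the right-hand side. By Proposition~\ref{prop:LRTchar}, the level rotation torus $T_\Lambda$ is the connected component of the identity of $G$, so $\pi_0(G) = G/T_\Lambda$. The projection $G \to (\CC^*)^{L(\Lambda)}$ is a finite \'etale cover with fiber the prong rotation group $P_\Lambda$, and the monodromy computation in the proof of Proposition~\ref{prop:LRTchar} shows that a loop $\bfn \in \pi_1((\CC^*)^{L(\Lambda)}) = \ZZ^{L(\Lambda)}$ acts on the fiber by $\phi_\Lambda(\bfn) \in P_\Lambda$. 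The identity component $T_\Lambda$ is thus the Galois subcover corresponding to $\mathrm{im}(\phi_\Lambda)$, and hence $\pi_0(G) \cong P_\Lambda/\mathrm{im}(\phi_\Lambda)$.

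Next I would identify the left-hand side with the same quotient. Fix an underlying twisted differential $(X, \eta_0)$ of type $(\mu, \Lambda)$ and let $S$ be the set of prong-matched twisted differentials $(X, c\eta_0, \sigma)$ where $c \in (\CC^*)^{L(\Lambda)}$ scales $\eta_0$ level-by-level and $\sigma$ is a prong-matching of $c\eta_0$. I would extend the $T_\Lambda$-action on $S$ given by~\eqref{eq:actionLRT} to an action of the full group $G$ by the same formula. This is well-defined precisely because the defining equation~\eqref{eq:rrho} is the compatibility condition that makes $\rho_e \ast \sigma_e$ a valid prong-matching of the rescaled differential at each vertical node. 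I would then verify that the extended $G$-action on $S$ is simply transitive: given two points $(c_1\eta_0, \sigma_1), (c_2\eta_0, \sigma_2) \in S$, the ratio $(r_i) = (c_2/c_1)$ is uniquely determined, and the constraints~\eqref{eq:rrho} together with the rotation sending $\sigma_1$ to $\sigma_2$ pin down a unique $(\rho_e)$. Hence $S$ is a $G$-torsor, and its $T_\Lambda$-orbits are canonically in bijection with the cosets $G/T_\Lambda = \pi_0(G)$.

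The main technical point is the equivalence between the defining equations~\eqref{eq:rrho} of $G$ and the compatibility conditions for the extended $G$-action on $S$. This reduces to a node-by-node computation using the standard coordinates of Theorem~\ref{thm:standard_coordinates}: if we scale $\eta$ on the top side of a vertical edge~$e$ by $\alpha = r_{\ltop}\cdots r_{-1}$ and on the bottom side by $\beta = r_{\lbot}\cdots r_{-1}$, then the prongs at $q^+$ and $q^-$ are rotated by reciprocal $\kappa_e$-th roots of $\alpha$ and $\beta$ respectively. Tracking these rotations through the tensor-formulation of a prong-matching in Definition~\ref{def:pm2}, the condition that $\rho_e \ast \sigma_e$ remains a prong-matching of the rescaled differential becomes $\rho_e^{\kappa_e} = r_{\lbot}\cdots r_{\ltop-1}$, which is exactly the defining relation~\eqref{eq:rrho} of $G$ in the triangular basis.
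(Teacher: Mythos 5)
Your proof is correct and follows the route the paper intends: the corollary is stated as a direct recasting of Proposition~\ref{prop:LRTchar} with no separate argument given, and your identification of the set of prong-matched rescalings of a fixed twisted differential as a torsor under the full subgroup cut out by~\eqref{eq:rrho}, together with the monodromy computation already present in the proof of that proposition, is exactly the unpacking that justifies it. The node-by-node verification that~\eqref{eq:rrho} is the compatibility condition for $\rho_e\ast\sigma_e$ is the same computation that makes the action~\eqref{eq:actionLRT} well-defined in the first place, so nothing is missing.
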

\changed{
\begin{proof}
Recall that equivalence classes of \ptwds are the equivalence classes of prongs under the action of the level-wise twists, i.e. the orbits of $P_{\eL}$ under the action of $\ZZ^{L^\bullet(\eL)}$ given by the homomorphism $\phi_\eL^\bullet$ defined in \eqref{eq:definephi}. Hence there are in bijection with $\ZZ^{L^\bullet(\eL)}/\ker(\phi_\eL^\bullet)$. As discussed in the proof of Proposition \ref{prop:LRTchar}, $\ZZ^{L^\bullet(\eL)}$ acts transitively on the set of connected components of the subgroup given by Equation~\eqref{eq:rrho}, with the connected component of the identity mapped to itself precisely by $\ker(\phi_\eL^\bullet)$. Thus the set of connected components is also in bijection with $\ZZ^{L^\bullet(\eL)}/\ker(\phi_L^\bullet)$.
\end{proof}}

\subsection{The covering viewpoint} \label{sec:coverGH}

So far we have analyzed the group $\CC^{L(\Lambda)}$ acting on $\ptwT$
and defined twist groups as finite index subgroups of $\ZZ^{L(\Lambda)}$.
In what follows we will use compactifications of quotients of $\ptwT$ by
twist groups. We can alternatively construct them as finite covers
of $\OBteich$, as we explain now.
\par
The triangular basis provides an identification of $\Tsimp[\eL]$ with
$(\CC^*)^{L(\Lambda)}$ and we denote by $\Tsimp[\Lambda,i]$ the~$i$-th
factor of this torus. Recall the direct sum expression of $\svTw$
in~\eqref{eq:defsvTw}. We define the {\em \lw ramification groups}
to be $H_i = \ZZ_i/\svTw[\Lambda, i]$, where $\ZZ_i$ is the~$i$-th factor
of $\ZZ^{L(\Lambda)} \subset \CC^{L(\Lambda)}$.
By definition, we have the cardinality $|H_i| = a_i$ (defined in~\eqref{eq:aidef}) and
the identification
$H :=\oplus_{i \in L(\Lambda)} H_i = {\rm Ker}(\Tsimp  \to (\CC^*)^{L(\Lambda)})$.
On the other hand, we may define the {\em (full) ramification group}
associated with an enhanced level graph~$\Lambda$ to be
$G := {\rm Ker}(\Tprong[\eL] \to (\CC^*)^{L(\Lambda)})$.
\index[twist]{d080@$H$, $G$! Ramifications groups}
By definition we have an exact sequence of finite abelian groups
\begin{equation}\label{eq:exseqcover}
0 \to K_{\eL} = \vTw/\svTw \to H \to G \to 0 \, .
\end{equation}
Note that the map $H_i \to G$ is injective for every $i\in L(\eL)$, since
an element in~$H_i$ and its image in~$G$ act by the same fractional Dehn
twists the seams. The situation is summarized by the following diagram:
\bes
\begin{tikzpicture}[scale=1.2,decoration={
    markings,
    mark=at position 1 with {\arrow[]{>}}}]
\coordinate   (x0) at (-1,0); \node [] at  (x0) {$\ptwT/\sTw$};
\coordinate (x1) at (1,-1); \node [] at (x1) {$\ptwT/\Tw$};
\coordinate (x2) at (-1,-2) ; \node [] at (x2) {$\OBteich$};

 \draw[postaction=decorate](x0)+(.5,-.2) -- node[above right]{$/K_{\eL}$}  (1,-.75) ;
 \draw[postaction=decorate] (x0)+(0,-.2) --node[left]{$/H$}  (-1,-1.75) ;
\draw[postaction=decorate] (x1)+(0,-.2) -- node[below]{$/G$} (-.8,-1.75) ;
\end{tikzpicture}
\ees
Of course, all the maps in the diagram are unramified covers, but
they will become ramified with local ramification groups~$H_i$ at the
appropriate boundary divisors, once we consider the compactifications.
\par
\begin{exa} ({\em The twist group quotient $K_\Lambda$ can be non-trivial.})
\label{ex:trianglegraph}
Consider the enhanced level graphs $\eG_{1}$ and $\eG_{2}$ of our running example
in Section~\ref{sec:runningex}. In both cases, the \lw undegenerations
$\degen_{-1}$ and $\degen_{-2}$ as introduced in Section~\ref{sec:order} are both
equal to the graph with two vertices connected by two edges, labeled by~$1$ and~$3$
respectively.
 \par
First consider the enhanced level graph $\Gamma_1$ on the left of
Figure~\ref{cap:running}. Then the map
$$ \phi_{\Lambda_{1}}^{\bullet}\colon \ZZ^{L^\bullet(\Lambda)} \cong \ZZ^3 \to P_{\Gamma_1} \cong\ZZ/3\ZZ \times \ZZ/3\ZZ \times \ZZ/\ZZ $$
is given by
$$(n_0, n_{-1}, n_{-2}) \mapsto (n_0 - n_{-1}, n_{-1} - n_{-2}, n_0 - n_{-2}) $$
in the standard basis. Then $\ker(\phi_{\Lambda_{1}}^{\bullet})$  is the subgroup of $\ZZ^{3}$ consisting  of elements of
the form $(m, m+3k_1, m + 3k_2)$ for $m, k_1, k_2\in \ZZ$, and hence $\vTw = \Tw$
 is generated by the vectors $(0, 3, 0)$ and $(0, 0, 3)$.
The simple $\Lambda_{1}$-twist group $\svTw[\eL_{1}]$ is the direct sum
$\svTw[\Lambda_{1}, -1] \oplus \svTw[\Lambda_{1}, -2]$
where $\svTw[\Lambda_{1}, i] = ({\rm dg}_i)_{*}(\vTw[\Lambda_{1,i}])$. In this
case $\svTw[\Lambda_{1}, -1]$ is generated by $ (0, 3, 3)$ and $\svTw[\Lambda_{1}, -2]$
is generated by $(0, 0, 3)$, hence $\svTw[\eL_{1}]$ coincides with $\vTw[\eL_{1}]$.
The level rotation torus $T_{\Lambda_{1}}$ is an unramified cover of  $(\CC^{*})^2$
of degree~$9$ with Galois group equal to the prong rotation group  $P_{\Gamma_1}$.
The action of the level rotation group by $\phi_{\Lambda_{1}}^{\bullet}$ has a unique orbit, hence the nine prong-matchings are all equivalent.
\par
Next consider the enhanced level graph $\Gamma_2$ on the right of Figure~\ref{cap:running}. Then we have
$$ \phi_{\Lambda_{2}}^{\bullet}\colon \ZZ^{L^\bullet(\Lambda_{2})} \cong \ZZ^3 \to P_{\Gamma_2} \cong\ZZ/\ZZ \times \ZZ/\ZZ \times \ZZ/3\ZZ $$
given by
$$(n_0, n_{-1}, n_{-2}) \mapsto (n_0 - n_{-1}, n_{-1} - n_{-2}, n_0 - n_{-2}) $$
in the standard basis. Then $\ker(\phi_{\Lambda_{2}}^{\bullet})$ is the subgroup of $\ZZ^{3}$ consisting of elements of the form $(m, m+k_1, m + 3k_2)$ for $m, k_1, k_2\in \ZZ$, and hence $\vTw[\eL_{2}] = \Tw[\eL_{2}]$  is generated by the vectors  $(0, 1, 0)$ and $(0, 0, 3)$. The simple
$\Lambda_{2}$-twist group $\svTw[\eL_{2}]$ is the direct sum $\svTw[\Lambda_{2}, -1] \oplus \svTw[\Lambda_{2}, -2]$
where $\svTw[\Lambda_{2}, -1]$ is generated by
$(0, 3, 3)$ and $\svTw[\Lambda_{2}, -2]$ is generated by $(0, 0, 3)$, hence $\svTw[\eL_{2}]$ is a subgroup of index $3$ in $\vTw[\eL_{2}]$.
The level rotation torus~$T_{\Lambda_{2}}$ is an unramified cover of $(\CC^{*})^3$ of degree~$3$ with Galois group equal to the prong rotation group $P_{\Gamma_2}$. The action of the level rotation group has a unique orbit, hence the three prong-matchings are all equivalent.
\par
In both cases the local ramification groups are $H_i \cong \ZZ/3\ZZ$ for $i=-1,-2$,
and the group~$G$ coincides with the prong rotation group in each case. However, in
the first case $H \cong G$, while in the second case $H \twoheadrightarrow G$ has
kernel $K_{\eL_{2}}=\ZZ/3$. In particular, in the second case the quotient map of a
smooth space by $K_{\eL_{2}}$  will produce quotient singularities in our
compactification, which will be illustrated in Example~\ref{ex:quotientsing}.
\end{exa}
\smallskip
\par
In order to conclude this section we give a cautionary example.
\par
\begin{exa}
({\em The number of non-equivalent prong-matchings may decrease under degenerations.})
\label{ex:pmdecrease}
 We consider the degeneration of enhanced level graphs as shown in Figure~\ref{fig:pmdecrease}. The first graph is a two-level graph with two edges $e_{1}$ and $e_{2}$  between two vertices. Moreover we set $\kappa_{1}=\kappa_{2}=2$. We degenerate this graph to a three-level triangle with three edges $e_{1}$, $e_{2}$ and $e_{3}$ labeled as in Figure~\ref{fig:pmdecrease}. The edges are labeled by $\kappa_{1}=\kappa_{2}=2$ and $\kappa_{3}=1$.
 \begin{figure}[htb]
   \centering
\begin{tikzpicture}[scale=1]
\coordinate (a1) at (4,0);\fill (a1) circle (2pt);
\coordinate (a2) at (4.4,-1);\fill (a2) circle (2pt);
\coordinate (a3) at (4,-2);\fill (a3) circle (2pt);
\draw (a1)  .. controls ++(-30:.5) and ++(90:.2) .. (a2) coordinate[pos=.5] (b2);
\draw (a1)  .. controls ++(-150:.5) and ++(150:.5) ..  (a3) coordinate[pos=.5] (b1);
\draw (a2)  .. controls ++(-90:.2) and ++(30:.5) .. (a3) coordinate[pos=.5] (b3);
\node[left] at (b1) {$e_{1}$};
\node[right] at (b2) {$e_{2}$};
\node[right] at (b3) {$e_{3}$};

\draw[->,decorate,decoration={snake,amplitude=.4mm,segment length=2mm,post length=1mm}] (1.2,-.5) -- (2.8,-.5) coordinate[pos=.5] (b);
\node[above] at (b) {$\degen_{-1}$};

\coordinate (a1) at (0,0);\fill (a1) circle (2pt);
\coordinate (a2) at (0,-1);\fill (a2) circle (2pt);
\draw (a1) .. controls ++(-120:.3) and ++(120:.3) ..  (a2) coordinate[pos=.5] (b1);
\draw (a1) .. controls ++(-60:.3) and ++(60:.3) .. (a2) coordinate[pos=.5] (b2);
\node[left] at (b1) {$e_{1}$};
\node[right] at (b2) {$e_{2}$};
\end{tikzpicture}
\caption{A degeneration that decreases the number of prong-matchings.}
\label{fig:pmdecrease}
\end{figure}
\par
Clearly the action of the level rotation group of Equation~\eqref{eq:definephi} has two orbits in the first case. On the other hand one can check that in the second case it has only one orbit. Hence this degeneration decreases the number of non-equivalent prong-matchings from two to one.
\end{exa}

\subsection{The level rotation torus closure} \label{sec:LRTC}

The partial closures of tori we define here will give local models of
the toroidal part of our compactification. Recall that
the level rotation torus $T_\eL$ is by Proposition~\ref{prop:LRTchar}
naturally embedded in~$(\CC^*)^{L(\eL)} \times (\CC^*)^{E(\eL)}$, where it is
the connected component of the identity of the torus cut out
by Equation~\eqref{eq:rrho}. We define the {\em level rotation
torus closure $\ol{T}_{\eL}$} to be the closure of this identity component
in $\CC^{L(\eL)} \times \CC^{E(\eL)}$.
\par
On the other hand, the simple level rotation torus $T^{s}_\eL$ is naturally
identified with $(\CC^*)^{L(\eL)}$, with closure $\ol{T}_{\eL}^{s} =  \CC^{L(\eL)}$.
The group~$K_\eL = \Tw/\Tw^s$ introduced in the previous section acts
on $T^{s}_\eL = \CC^{L(\Lambda)}/\sTw$. Since each element in~$K_\eL$ acts diagonally
by a tuple of roots of unity, this action extends to an action of $K_\eL$ on
$\ol{T}_{\eL}^{s}$. The quotient will be the local model for the toroidal part
of the compactification that we will construct. Our goal here is to relate
this viewpoint with the closure of the level rotation torus.
\par
\begin{prop} \label{prop:charLRTC}
The projection map $p \colon T^{s}_\eL \to T_\eL$ given by Equation~\eqref{eq:TsT}
extends to $\ol{T}_{\eL}^{s}$ and descends to an isomorphism $\ol{p}\colon
\ol{T}_{\eL}^{s}/K_\eL \to \ol{T}_{\eL}^{{n}}$ to the normalization of the
level rotation torus closure.
\end{prop}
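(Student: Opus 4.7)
The map $p$ is written explicitly in \eqref{eq:TsT} as $(q_i)\mapsto (r_i,\rho_e)=(q_i^{a_i},\prod_{i=\lbot}^{\ltop-1}q_i^{a_i/\kappa_e})$, where all exponents $a_i$ and $a_i/\kappa_e$ are positive integers. Hence $p$ is given by monomials and extends to a morphism of affine varieties $\ol p \colon \ol T_\eL^s = \CC^{L(\eL)} \to \CC^{L(\eL)}\times\CC^{E(\eL)}$. Since the image contains the dense subset $T_\eL$ and the target is separated, the image of $\ol p$ lies in $\ol T_\eL$.

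Next I would check that $\ol p$ descends to the quotient by $K_\eL$. By definition $K_\eL=\vTw/\svTw$ is the kernel of $T_\eL^s\twoheadrightarrow T_\eL$, so each element of $K_\eL$ acts on $T_\eL^s=(\CC^*)^{L(\eL)}$ by multiplication by a fixed tuple of roots of unity; this linear action extends to $\ol T_\eL^s=\CC^{L(\eL)}$, and $\ol p$ is $K_\eL$-invariant on the dense open set $T_\eL^s$, hence everywhere. Therefore $\ol p$ factors through a morphism $\bar p\colon \ol T_\eL^s/K_\eL \to \ol T_\eL$.

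I would then verify that $\bar p$ is finite and birational. Birationality is immediate since on the open tori $\bar p$ restricts to the isomorphism $T_\eL^s/K_\eL\cong T_\eL$. For finiteness, note that the coordinate ring homomorphism $\CC[\ol T_\eL]\to \CC[\ol T_\eL^s]=\CC[q_i]$ sends $r_i\mapsto q_i^{a_i}$, so each generator $q_i$ is a root of the monic polynomial $T^{a_i}-r_i$ and hence integral over the image. This makes $\CC[q_i]$ a finitely generated module over the image of $\CC[\ol T_\eL]$, so $\ol p$ is finite; finiteness then descends to $\bar p$, and in particular $\bar p$ is closed and surjective (its image is closed, $T_\eL$-invariant, and contains $T_\eL$).

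Finally, $\ol T_\eL^s=\CC^{L(\eL)}$ is smooth, and the quotient of a normal affine variety by a finite group action is normal, so $\ol T_\eL^s/K_\eL$ is normal. A finite birational morphism from a normal variety to a reduced variety is, by the universal property of normalization, canonically isomorphic to the normalization map. Hence $\bar p$ induces the claimed isomorphism $\ol T_\eL^s/K_\eL \xrightarrow{\sim} \ol T_\eL^{\rm n}$. The main technical point is the integrality argument for finiteness of $\ol p$; once that is in place, the rest is formal from the universal property of normalization.
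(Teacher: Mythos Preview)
Your proof is correct and follows essentially the same approach as the paper: extend by monomials, descend to the $K_\eL$-quotient, observe that the quotient is normal, and conclude using that a finite birational morphism with normal source is the normalization. The paper phrases the last step slightly differently (it first factors through the normalization and then uses that a finite birational map to a normal target is an isomorphism), and it does not spell out the integrality argument for finiteness that you give; your explicit check that each $q_i$ satisfies $T^{a_i}-r_i$ is a useful addition.
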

\par
\begin{proof}
The map~$p$ extends to a map $p_2 \colon \ol{T}_{\eL}^{s} \to \ol{T}_{\eL}$
since it is given explicitly by monomials, in the coordinates used for
taking the closure. Since $K_\eL$ acts on~$\ol{T}_{\eL}^{s}$ and since~$p$
is the quotient by~$K_\eL$, the map $p_2$ factors through the quotient
to give $p_3\colon \ol{T}_{\eL}^{s}/K_\eL \to \ol{T}_{\eL}$. Since a quotient
of $\CC^{L(\eL)}$ by a finite group is normal, the map $p_3$ factors as
the normalization map precomposed with a map~$\ol{p}$. \changed{The map~$p_2$
is finite, since its composition with the projection onto the $(r_i)_{i
\in L(\Lambda)}$-components is the finite map $(q_i) \mapsto (q_i^{a_i})$.
Consequently,} the map $\ol{p}$ is finite and \changed{moreover it is}
birational since on the open set
$T^{s}_\eL/ K_\eL = T_{\eL}$. Since the target is normal, it follows that the map
$\ol{p}$ is an isomorphism (see \cite[Lemma~28.52.8]{stacks-project}).
\end{proof}
\par
\begin{exa}
({\em In general, $\ol{T}_{\eL}$ is not normal.})
\label{ex:nonnormal}
This can be seen from the example of a graph~$\Gamma$ with two vertices on two
levels, connected by two edges~$e_1$ and~$e_2$ with~$\kappa_1 = 2$ and $\kappa_2 = 3$.
Then $\ol{T}_{\eL}$ has a cusp, locally modeled on $\CC[f_1,f_2,s]/\langle f_1^2 -s,
f_2^3 -s \rangle$. Its normalization is $\CC[t]$, with the normalization map given
by $f_1=t^3$ and $f_2=t^2$. This change of coordinates also describes~$\ol{p}$ here,
since $\Tw = \Tw^s$ in this example.
\end{exa}

\section{Augmented \Teichmuller space of marked \msds}
\label{sec:AugTeich}

In this section, we formally introduce the notion of a \msd as well
as markings.  We introduce the augmented \Teichmuller space,
parameterizing equivalence classes of marked \msds, define its
topology, and establish basic topological properties. The main results
are the Hausdorff property of quotients of augmented \Teichmuller space
by any subgroup of the mapping class group in Theorem~\ref{thm:PaugHausdorff}
and the compactness of the moduli space in Theorem~\ref{thm:MSDcompact}.

\subsection{\Msds and markings} \label{sec:singleMSD}

The notation we developed so far enables us to make the
Definition~\ref{def:one_msd} of multi-scale differentials precise. The
notion of general families of multi-scale differentials is not needed for
the construction of augmented \Teichmuller space and Dehn space, and will thus
only be given in Section~\ref{sec:famnew}.
\par
\begin{df}
  \label{df:msd}
  Given an enhanced level graph $\Gamma$,
  a \emph{\msd  of type $(\mu, \Gamma)$} consists of the following data:
  \begin{enumerate}
  \item A pointed stable curve $(X, \bfz)$,
  \item an identification of the dual graph $\Gamma_X$ with $\Gamma$,
    \item a collection of differentials
      $\bfomega = (\omega_{(i)})_{i\in L^\bullet(\Gamma)}$ that
      give $(X, \bfz)$ the structure of a twisted differential of type
      $(\mu, \Gamma)$ compatible with $\Gamma$, and
    \item a global \prma $\bfsigma = (\sigma_e)_{e\in  E(\Gamma)}$,
    \end{enumerate}
where $(\bfomega,\bfsigma)$ is defined to be equivalent to $(\bfomega',\bfsigma')$
if and only if there exists an element of the level rotation torus $T_\Gamma$ that
sends $(\bfomega,\bfsigma)$ to $(\bfomega',\bfsigma')$ under the
action~\eqref{eq:actionLRT}.
\end{df}
\par
Given an enhanced multicurve $\Lambda\subset \Sigma \setminus \bfs$, a {\em
marked \msd of type $(\mu,\Lambda)$} is the \changed{equivalence class of $(\bfomega, \bfsigma, f)$ (up to the action
of $\CC^{L(\Lambda)}$ as in Section~\ref{sec:ConPMD})}, 
where $f$ is a marking (in the sense of Definition~\ref{df:marking}) of
the \ptwd defined by $(\bfomega, \bfsigma)$. 

Note that, since the vertical twist group
$\vTw\subset \CC^{L(\Lambda)}$ fixes $(\bfomega,\bfsigma)$, a marked
\msd may alternatively be defined as a $\Tprong$-equivalence class of
the data $(\bfomega, \bfsigma, [f])$, where $[f]$ is the equivalence
class of markings up to the action of $\Tw$.  More generally, we will
call this a $G$-marked \msd if the marking is taken up to the action of
a subgroup $G$ of the mapping class group.
\par
Projectivized \msds and their marked analogues are defined similarly,
replacing $\Tprong[\Gamma]$ with $\Textd[\Gamma]$.

\subsection{Augmented \Teichmuller space of \msds as a set.}
\label{sec:AugSet}

As a first step towards defining the augmented \Teichmuller space of \msds of type $\mu$, we introduce it as a set.
\begin{df}
\index[teich]{d060@$\Oaugteich$!Augmented \Teichmuller space of
marked flat surfaces of type $\mu$} The \emph{(projectivized) augmented
\Teichmuller space of \msds of type $\mu$} is the set of equivalence classes of
(projectivized) marked \msds of type $\mu$, which we denote by $\Oaugteich$ or
  $\Paugteich$ respectively.
\end{df}
\par
Given an enhanced multicurve $\Lambda\subset \Sigma \setminus \bfs$, we
define the \emph{(projectivized) $\Lambda$-boundary stratum}
\index[teich]{d065@$\msT[\Lambda]$! $\Lambda$-boundary stratum}
to be the set $\msT[\Lambda] \subset \Oaugteich$  or
$\pmsT[\Lambda]\subset \Paugteich$ of (projectivized) marked \msds of
type $(\mu, \Lambda)$. Note that the special case
$\msT[\emptyset] = \ptwT[\emptyset] = \Oteich$ is the \Teichmuller space
of marked flat surfaces of type $\mu$.
\par
\begin{lm} \label{le:OmBsmooth}
\changed{The boundary strata $\msT[\Lambda] = \ptwT[\Lambda] / \CC^{L(\Lambda)}$
and their projectivizations $\pmsT[\Lambda] = \ptwT / \CC^{L^\bullet(\Lambda)}$
are smooth complex manifolds.  Moreover, the groups $\CC^{L(\Lambda)}$ and
$\CC^{L^\bullet(\Lambda)}$ act freely on $\ptwT[\Lambda]$.}
\end{lm}
\par
\begin{proof}
\changed{Recall from Section~\ref{sec:ptwds} that there is an unramified
covering map $\msT[\Lambda] \to \OBteich$ where the fibers correspond to
simultaneous Dehn twists around the vertical nodes. The group
$(\CC^*)^{L(\Lambda)} = \CC^{L(\Lambda)} / \ZZ^{L(\Lambda)}$ acts on $\OBteich$
by projectivizing each (but the top) level of the level-wise product
of Teichmüller spaces of certain meromorphic differentials constrained by
residue conditions. Each of these quotients is smooth. The group
$\ZZ^{L(\Lambda)}$ acts by permuting the sheets of the covering. This implies
the first claim and the argument in the projectivized case is the same.
The second claim is an obvious consequence of the level graph being
connected, and of there being at least one vertex at each level.}
\end{proof}
\par
The augmented \Teichmuller spaces are the disjoint union of these strata
over the set of enhanced multi\-curves~$\Lambda\subset \Sigma \setminus \bfs$:
\be \label{eq:defbdstrata}
  \Oaugteich \= \coprod_{\eL} \msT[\eL]\quad \text{ and }\quad
  \Paugteich \= \coprod_{\eL} \pmsT[\eL]\,.
\ee
\par
The mapping class group $\Mod$ acts on $\Oaugteich$ and~$\Paugteich$
by pre-compo\-sition of the marking, that is, given $g\in \Mod$, an
equivalence class of
markings $f\colon \Sigma \to \overline{X}_\bfsigma$ is replaced with
$[f\circ g^{-1}]$
\par
\begin{prop}
  \label{prop:stabilizers}
  The subgroup of $\Mod$ fixing the boundary stratum $\msT[\Lambda]$
  pointwise is exactly the twist group $\Tw[\Lambda]$.  Moreover, if
  $\Lambda'$ is a degeneration of~$\Lambda$, then the twist group
  $\Tw[\Lambda]$ fixes the boundary stratum $\msT[\Lambda']$
  pointwise.  Both statements hold for the projectivizations as well.
\end{prop}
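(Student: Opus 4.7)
The plan is to prove the two directions of the first statement separately, then derive the degeneration claim from the inclusion $\Tw[\Lambda]\subseteq\Tw[\Lambda']$ already recorded immediately before Lemma~\ref{lm:monoLRT}, and finally note that the projectivized case follows identically. Throughout I would use the description from Section~\ref{sec:levrot} of $\Tw[\Lambda] = \vTw[\Lambda] \oplus \hTw[\Lambda]$ with $\vTw[\Lambda] = \tau_\Lambda(\ker(\phi_\Lambda))$.

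First, for the inclusion $\Tw[\Lambda] \subseteq \mathrm{Stab}(\msT[\Lambda])$, I would treat the two summands separately. A Dehn twist $\tw_\gamma \in \hTw[\Lambda]$ acts on a marking $f\colon\Sigma\to\overline{X}_{\bfsigma}$ by pre-composition; since $\gamma$ maps into a horizontal node of the almost-smooth welded surface and the twist is supported in an annulus that collapses to a point, $\tw_\gamma$ is isotopic to the identity in the class of almost-diffeomorphisms, so $\hTw[\Lambda]$ already acts trivially on $\ptwT[\Lambda]$. For the vertical part, $\tau_\Lambda(\bfn)$ with $\bfn\in\ker(\phi_\Lambda)$ is by construction in Section~\ref{sec:levrot} the product of integer Dehn twists around vertical seams dictated by $\widetilde{\phi}_\Lambda(\bfn)$; this is exactly how the level rotation $\bfn\in\CC^{L(\Lambda)}$ acts on the marking. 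Since $\bfe(n_i)=1$ and $\phi_\Lambda(\bfn)=0$ leave the underlying prong-matched twisted differential intact, $\bfn\in\CC^{L(\Lambda)}$ and $\tau_\Lambda(\bfn)\in\Mod$ determine the same point of $\ptwT[\Lambda]$, so their actions agree modulo $\CC^{L(\Lambda)}$, i.e.\ are trivial on $\msT[\Lambda]$.

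For the converse I would restrict to the open dense locus of $\msT[\Lambda]$ where the underlying prong-matched twisted differential has trivial automorphism group. If $g\in\Mod$ fixes a point $[X,\eta,\bfsigma,f]$ there, unwinding the $\CC^{L(\Lambda)}$-quotient gives $\bfd\in\CC^{L(\Lambda)}$ with $\bfd\cdot(X,\eta,\bfsigma,f)=(X,\eta,\bfsigma, f\circ g^{-1})$ in $\ptwT[\Lambda]$. Comparing underlying differentials and prong-matchings forces $\bfe(d_i)=1$ and $\phi_\Lambda(\bfd)=0$, so $\bfd\in\ker(\phi_\Lambda)\subseteq\ZZ^{L(\Lambda)}$. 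The residual equality of markings identifies the class of $g$ in $\Mod$ with $\tau_\Lambda(\bfd)$ up to self-diffeomorphisms supported in neighborhoods of horizontal nodes, which are exactly elements of $\hTw[\Lambda]$. Since $\ker(\phi_\Lambda)$ is discrete and the fixed-point condition is closed, $\bfd$ is locally constant on each connected component of the generic locus, so the pointwise identification promotes to $g\in\Tw[\Lambda]$.

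The second statement then follows at once: by the first part applied to $\Lambda'$, the subgroup of $\Mod$ fixing $\msT[\Lambda']$ pointwise is $\Tw[\Lambda']$, which contains $\Tw[\Lambda]$ via the monomorphism $\degen_*$ recorded before Lemma~\ref{lm:monoLRT}. The projectivized versions of both statements are proved along the same lines, merely replacing $\CC^{L(\Lambda)}$ by $\CC^{L^\bullet(\Lambda)}$ and $\ker(\phi_\Lambda)$ by $\ker(\phi_\Lambda^\bullet)$; the extra diagonal $\ZZ$-summand in $\ker(\phi_\Lambda^\bullet)$ lies in $\ker(\tau_\Lambda^\bullet)$ by Proposition~\ref{prop:rank_of_twist_group}, so it contributes trivially and produces the same twist group. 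The main obstacle I anticipate is the converse in the first statement: isolating a single $\bfd\in\ker(\phi_\Lambda)$ that works uniformly across the stratum when the underlying prong-matched twisted differential may carry automorphisms, which I plan to resolve by working on the locus of trivial automorphisms and using the discreteness of $\ker(\phi_\Lambda)$ to spread the identification across connected components.
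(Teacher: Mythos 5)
Your proof is correct and follows essentially the same route as the paper, which simply observes that the pointwise stabilizer consists of the Dehn-twist combinations along $\Lambda$ whose effect on markings is absorbed by the level rotation action (and that $\Tw[\Lambda]\subset\Tw[\Lambda']$ for the degeneration claim). The paper's own proof is a two-sentence assertion of these facts, so your version — spelling out both inclusions via $\ker(\phi_\Lambda)$, the triviality of $\hTw[\Lambda]$ on $\ptwT[\Lambda]$, and the generic-automorphism reduction — is just a more detailed rendering of the same argument; the uniformity worry in your last paragraph is in fact unnecessary, since an element fixing the stratum pointwise already fixes a single generic point.
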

\par
\begin{proof}
  The first statement follows directly from the definition of the
  equivalence relation on markings.  For the second statement, if
  $\Lambda'$ is a degeneration of~$\Lambda$, then
  $\Tw[\Lambda]\subset \Tw[\Lambda']$. Hence~$\Tw[\Lambda]$ fixes
  $\msT[\Lambda']$ pointwise.
\end{proof}

\subsection{Augmented \Teichmuller space  as a topological space.}
\label{sec:topspace}

We now give both augmented \Teichmuller spaces $\Oaugteich$ and $\Paugteich$
a topology.  We give a sequential definition of the topology  first (see e.g.\
\cite[Section~I.8.9]{borelji} for a precise discussion of defining a topology in
this way). In the proofs below we also give a definition by specifying a basis
of the topology. Unless stated otherwise,
$\bfomega$ in the tuple $(X, \bfz, \bfomega, \cleq, \bfsigma, f)$ refers to a
chosen representative of the equivalence class.  We also
write $\overline{X}_{\bfsigma_n}$ as shorthand for $(\overline{X}_n)_{\bfsigma_n}$.
\par
\begin{df}  \label{def:augmented_topology_def}
A sequence  $(X_n, \bfz_n, \bfomega_n, \cleq_n, \bfsigma_n, f_n)
\in \pmsT[\Lambda_n]$ converges to a point
$(X, \bfz, \bfomega, \cleq, \bfsigma, f)\in\pmsT[\Lambda]\subset\Paugteich$,
if there exist representatives (that we denote with the same symbols)
in $\ptwT[\Lambda_n]$ and $\ptwT$, a sequence of positive numbers~$\epsilon_n$ converging to~$0$,
and a sequence of vectors
$\bfd_n = \{d_{n,i}\}_{i \in L^\bullet(\Lambda)} \in \CC^{L^\bullet(\Lambda)}$ such that the
following conditions hold, where we denote $c_{n,i} = \bfe(d_{n,i})$:
\begin{enumerate}[(1)]
\item For sufficiently large~$n$ there is an undegeneration of
  enhanced multicurves $(\delta_n,D_n^\hor)$ with
  $\delta_n\colon L^\bullet(\Lambda) \to L^\bullet(\Lambda_n)$ (see Definition~\ref{def:degeneration}).
\item  For sufficiently large~$n$ there exists an \diffeogen $g_n
\colon \overline{X}_{\bfd_n \cdot\bfsigma_n}\to \overline{X}_{ \bfsigma}$   that
is compatible with the markings (in the sense that $g_n \circ (\bfd_n
\cdot f_n)$ is isotopic to~$f$ rel marked points) and such that $g_n^{-1}$
is conformal on the $\epsilon_n$-thick part $\Thick[(X, \bfz)][\epsilon_n]$.
\item The restriction of $c_{n,i} (g_n)_*(\bfomega_n)$ to the $\epsilon_n$-thick part of
the level~$i$ subsurface of $(X, \bfz)$ converges
uniformly on compact sets to $\omega_{(i)}$.
\item For any $i,j \in L^\bullet(\Lambda)$ with $i>j$, and any subsequence
along which $\delta_n(i)=\delta_n(j)$, we have
    \begin{equation*}
      \lim_{n\to\infty} \frac{|c_{n,i}|}{|c_{n,j}|} = 0\,.
    \end{equation*}
  \item The \diffeogens $g_n$ are asymptotically \tnp.
\end{enumerate}
\par
For convergence in $\Oaugteich$, we require moreover that $c_{n,0} = 1$
for the rescaling constant corresponding to the top level
of~$\Lambda$.  
\end{df}
\par
Note that the notion of convergence does not depend on the choice of
representative of $X$ in $\ptwT$
since if $X' = \bfd' \cdot X$ is another representative, then using $\bfd' + \bfd$
certifies convergence to~$X'$. Note moreover, that in item~(5) we could as well require
the difference of turning numbers to tend to $0$ for a fixed collection of arcs
dual to the collection of seams, since for any fixed arc $\gamma$
disjoint from the seams, $\tau(g_n^{-1}(\gamma)) \to \tau(\gamma)$, as
$g_n$ converges uniformly $C^1$ to the identity on $\gamma$.
\par
This topology can be given equivalently by a basis of open sets which
we now describe.  For a given marked \msd $X$, let $V_\epsilon(X)$ be
the set of marked \msds $(X_0, \bfz_0, \bfomega_0, \cleq_0, \bfsigma_0,
f_0)$ such that there exists $\bfd = \{d_i\}_{i \in L^\bullet(\Lambda)} \in\CC^{L^\bullet(\Lambda)}$ and
\begin{enumerate}[(i)]
\item a degeneration
  $(\Lambda_0,\cleq_0)\rightsquigarrow (\Lambda,\cleq)$
  given by a map~$\delta\colon L(\Lambda) \to L(\Lambda_0)$ and a subset of~$\Nhor$,
\item an \diffeogen $g\colon \overline{X}_{\bfd
    \cdot\bfsigma_0}\to\overline{X}_{ \bfsigma}$ with $g^{-1}$ conformal on the $\epsilon$-thick
  part of~$X$ and compatible with the markings,
\item letting $c_i = \bfe(d_i)$,
  the bound $||c_i g_*\omega_{0, (i)} - \omega_{(i)}||_{\infty} < \epsilon$ holds
  on the $\epsilon$-thick part of~$X$,
\item  for any $i,j \in L(\Lambda)$ with $i>j$ and $\delta(i)=\delta(j)$,
  we have $|c_i/c_j| < \ve$,
\item for each immersed arc $\gamma$ on $\overline{X}_\bfsigma$,
  $$|\tau(g^{-1}  \circ \gamma)-\tau( \gamma)|< \ve.$$
\end{enumerate}

\begin{prop}
  \label{prop:same_topology}
  The sets $V_\epsilon(X)$ are a basis for a topology on $\Oaugteich$, and
  the convergent sequences in this topology agree with the notion of
  convergence of Definition~\ref{def:augmented_topology_def}.
\end{prop}

Note that the topology defined by this basis is apparently stronger
than that defined by Definition~\ref{def:augmented_topology_def}
because (v) requires uniform control on turning numbers of all immersed
arcs.  In the next lemma we show that one can  uniformly control
the change in turning numbers of all immersed arcs by restricting to a
finite collection of arcs dual to the seams.  For this, fix a
collection $\{\gamma_1, \ldots, \gamma_k\}$ of immersed arcs on a \msd~$X$
which are dual to the seams of $X$, meaning that to each seam
corresponds exactly one $\gamma_i$ which crosses that seam once and is
disjoint from the others.  We define $V'_{\epsilon}(X)$ to be the set
of marked \msds which satisfy conditions (i)-(iv) above as well as
\begin{enumerate}
\item[(vi)] for each  arc $\gamma_i,$ we have
  $|\tau(g^{-1}  \circ \gamma_i)-\tau( \gamma_i)|< \ve.$
\end{enumerate}
\par
\begin{lm}
  \label{lm:dual_to_seams}
Consider $X\in \Oaugteich$ with a collection  of arcs dual to its
vertical seams as above. Then for any $\epsilon>0$, there is a $\delta>0$
such that $V'_\delta(X) \subset V_\epsilon(X)$.
\end{lm}

\begin{proof}
  We first claim that there is a $\delta_1>0$ such that for any
  $g\in V_{\delta_1}(X)$ for closed immersed loop $\gamma$ in
  $\Thick[(X, \bfz)][\epsilon]$ (where $\epsilon$ is small enough that
  $\Thick[(X, \bfz)][\epsilon]$ is the complement of neighborhoods of
  the punctures and nodes), $g$ preserves the turning number of $\gamma$.  To see
  this, choose a finite set of immersed curves $\{\alpha_i\}$ which
  form a basis of $H_1(T^1 \Thick[(X, \bfz)][\epsilon]; \zed)$.
  Choose $\delta_1$ small enough that $g$ is $C^1$-close-enough to the
  identity that it changes the turning number of each $\alpha_i$ by at
  most $1/2$.  Since closed curves have integral turning numbers,
  they must be fixed.  Since the $\alpha_i$ are a basis of homology,
  it follows that all closed curves must be fixed.
\par
  Now let $\gamma$ be any immersed arc which crosses exactly one seam, and
  let $\gamma_i$ be the chosen arc which crosses the same seam.  We
  may take arcs $\beta_1, \beta_2$ in $\Thick[(X, \bfz)][\epsilon]$
  which have bounded length (in terms of the genus of $X$) and join
  the endpoints of $\gamma$ to those of~$\gamma_i$, so that
  $\gamma'=\beta_1+\gamma+\beta_2$ is an immersed arc which has the same
  endpoints as $\gamma_i$.  They then differ by closed curves in the
  thick part, so have the same turning number.  Take $\delta_2$ small
  enough that $g$ changes the turning number of the $\beta_i$ by at
  most $\epsilon/2$.  Taking $\delta < \max(\epsilon/2, \delta_1,
  \delta_2)$ then ensures that~$g$ changes the turning number of $\gamma$
  by at most~$\epsilon$.
\end{proof}
\par
\begin{proof}[Proof of Proposition~\ref{prop:same_topology}]
  Suppose that $X_0 \in V_\ve(X)$. To check that the sets defined
  above are a basis of topology we want to find~$\rho$ such that
  $V_\rho(X_0) \subset V_\ve(X)$. Suppose that $X_1 \in
  V_\rho(X_0)$. Let
  $g_0\colon \overline{X}_{0,\bfd_0 \cdot\bfsigma_{0}} \to
  \overline{X}_{\bfsigma}$ and
  $g_1\colon
  \overline{X}_{1,\bfd_1\cdot\bfsigma_{1}}\to\overline{X}_{0,\bfsigma_{0}}$
  be the \diffeogens given by the definitions of~$V_\ve(X)$ and
  $V_\rho(X_0)$. We define $\bfd=\bfd_1 + \bfd_{0}$, denote the
  rescaling constant by $\bfc_{0} =\bfe(\bfd_{0})$
  and~$\bfc_1 =\bfe(\bfd_1)$, and set $\bfc = \bfe(\bfd_0 + \bfd_1)$.
\par
  First we remark that item (i) is
  automatically satisfied in $V_\ve(X)$. For item (ii), choose~$\rho$
  small enough such that the $\rho$-thick part of~$X_0$ contains the
  $g_0$-image of the $\ve$-thick part of~$X$, and let
  $g =  g_0 \circ F_{\bfd_0}\circ g_1\circ F_{\bfd_0}^{-1}\colon
  \overline{X}_{1,\bfd_1\cdot\bfsigma_{1}}\to\overline{X}_{\bfsigma}$. Then $g$ clearly
  satisfies (ii).
\par
  For (iii), to simplify notation and illustrate the main idea, we
  treat the case that~$X_1$ is smooth and that~$X$ and~$X_0$ have the
  same level graph with two levels.  Moreover, we assume that all
  rescaling constants on the top level are equal to one and we denote the
  rescaling constants on lower level by $c_0,c_{1}$ and
  $c=c_0c_{1}$. The general case follows by the same idea.  Under
  these assumptions, let $\omega^-$ and $\omega^-_0$ be differentials
  on the lower level of~$X$ and $X_0$ respectively.  By assumption,
  the norm
  $\epsilon'=|| c_{0} (g_{0})_*\omega^-_{0}-\omega^-||_{\infty}$
  satisfies $\epsilon'<\epsilon$. We estimate the sup-norms on the
  lower level subsurface of the $\ve$-thick part of~$X$ as follows:
  \bas ||c g_{\ast}\omega_{1} -\omega^-||_{\infty} &\,\leq\,
  ||c (g_{0})_{\ast}(g_1)_{\ast}\omega_{1} -
  c_{0}(g_{0})_{\ast}\omega_{0}^- ||_{\infty}
  \+ ||c_{0}(g_{0})_{\ast}\omega_{0}^- -\omega^-||_{\infty},\\
&\,<\, c_{0} \, || (g_{0})_{\ast}(c_1 (g_1)_{\ast}\omega_{1} -\omega_{0}^-)
||_{\infty} \+ \epsilon'\\
  &\, <\, c_{0} \,C_{g_{0}} \rho +\epsilon'\,.  \eas
Here $C_{g_0}$ is the supremum of the norm of the derivative $Dg_0$, with respect
to the hyperbolic metrics, on the $\ve$-thick part.  We now take $\rho$
  small enough so that $c_{0} \,C_{g_{0}} \rho +\epsilon'<\ve$.  This
  shows that item~(iii) holds for $X_1 \in V_\ve(X)$.
\par
  Item (iv) follows if we moreover choose $\rho$ such that
  $\rho < \ve c_0$.  Finally, item (v) follows from the triangle
  inequality.  Consequently, the $V_\ve(X)$ are indeed a basis of a
  topology.

  It is obvious that a sequence which converges in the topology
  defined by this basis also converges in the sense of
  Definition~\ref{def:augmented_topology_def}.  Conversely, given any
  $V_\epsilon(X)$ choose curves~$\{\gamma_i\}$ and $\delta$ such that
  $V_\delta'(X) \subset V_\epsilon(X)$.  Any sequence converging to
  $X$ in the sense of Definition~\ref{def:augmented_topology_def} is
  then eventually in $V_\delta'(X)\subset V_\epsilon(X)$.
\end{proof}
\par
\begin{thm} \label{thm:PaugHausdorff}
For any subgroup $G < \Mod$, the quotient $\Oaugteich/G$ and its
projectivized version $\Paugteich/G$ are Hausdorff topological spaces.
\end{thm}
\par
\begin{proof}
Suppose that $(X_n, \bfz_n, \bfomega_n, \cleq_n, \bfsigma_n, f_n)$ is a
sequence of marked \msds which converges to $(X, \bfz, \bfomega, \cleq, \bfsigma, f)$,
and suppose moreover we have a sequence $\{\gamma_n\}$ in $G$ so that
  $(X_n, \bfz_n, \bfomega_n, \cleq_n, \bfsigma_n, f_n\gamma_n^{-1})$
converges to $(X', \bfz', \bfomega', \cleq', \bfsigma',  f')$.
Let~$g_n$ and $g_n'$ be the respective sequences of maps
  exhibiting this convergence, and let $h_n = g_n'\circ g_n^{-1}\colon
  \overline{X}_\bfsigma \to \overline{X}'_{\bfsigma'}$ (strictly
  speaking, $h_n$ may be only defined on an exhaustion of the
  complement of the horizontal nodes).
\par
  Forgetting all but the underlying pointed stable curves, our
  topology gives the conformal topology on the Deligne-Mumford
  compactification, which is a Hausdorff space, so the maps
  $h_n$ must converge uniformly on compact sets to
  an isomorphism $h\colon (X, \bfz) \to (X',\bfz')$ of pointed stable
  curves.  We next show that~$\cleq$ and $\cleq'$ are the same (weak)
  full order. Suppose, for contradiction, that there exist irreducible
  components~$X_u$ and~$X_v$ of~$X$ such that $X_u \succ X_v$ but
  $X_u \cleq' X_v$. Since $\cleq_n$, for~$n$ sufficiently large, is an
  undegeneration of both~$\cleq$ and $\cleq'$, this is possible only
  if $X_u \asymp_n X_v$. We denote by~$\ell$ and~$\ell'$ some level
  functions inducing the full orders~$\cleq$ and~$\cleq'$,
  respectively. The specific choices of these level functions are not
  important, as we will only use them to match notation. Then
  condition~(3) of convergence of sequences implies that
  $||c_{n,\ell(u)} (g_n)_*\bfomega_n - \omega_u||_\infty < \ve_n$ and
  $||c'_{n,\ell'(u)} (g_n')_*\bfomega_n - \omega'_u||_\infty < \ve_n$,
  where $\omega_{u}$ is the restriction of~$\bfomega$ to the
  $\epsilon_n$-thick part of~$X_{u}$.  Pulling back the second
  inequality by~$h$ and choosing $\ve_n$ small enough, these
  conditions imply that the ratios $c_{n,\ell(u)}/c'_{n,\ell'(u)}$ are
  bounded away from zero and infinity.  Similarly, the same holds for
  $c_{n,\ell(v)}/c'_{n,\ell'(v)}$. However, condition~(4) of
  convergence implies that $|c_{n,\ell(u)}/c_{n,\ell(v)}| \to 0$,
  while on the other hand the hypothesis $X_u \cleq' X_v$ implies that
  (after possibly passing to a subsequence)
  $c'_{n,\ell'(u)}/c'_{n,\ell'(v)}$ is bounded away from
  zero. Combining these inequalities yields a contradiction.

  To verify that the form~$\bfomega$ is equal to $h^*\bfomega'$, we use that
  for every level~$i$ both inequalities
  $||c_{n,i} (g_n)_*(\bfomega_n)-\omega_{(i)}||_\infty <\ve_n$ and
  $||c'_{n,i} (g_n)_*(\bfomega_n)- h^*\omega'_{(i)}||_{\infty}< C\ve_n$
  hold for some constant~$C$ that depends on the map~$h$ but not
  on~$n$.  We multiply the second inequality by $c_{n,i}/c'_{n,i}$,
  use that this quantity is bounded away from zero and infinity, and
  thus deduce that
  $||c_{n,i}/c_{n,i}' \cdot h^*\omega_{(i)}'- \omega_{(i)}||_{\infty}$
  tends to zero on the $\ve_n$-thick part of~$X_{(i)}$. This implies
  the convergence of the sequence $c_{n,i}/c_{n,i}'$ for each~$i$, and
  also the equivalence as projectivized differentials, as desired.
\par
Finally, the maps~$h_n$  are asymptotically \tnp, so by Proposition~\ref{prop:jacuzzi}, $h^*\bfsigma' =
  \bfsigma$, and moreover the induced map $\overline{h}\colon
  \overline{X}_\bfsigma \to \overline{X}'_{\bfsigma'}$ is eventually isotopic to
  $h_n$.  We have $h_nf\simeq f'\gamma_n$ for each $n$, so eventually
  $hf\simeq f'\gamma_n$, so $h$ exhibits a $G$-equivalence of $X$ and
  $X'$, as desired.
\end{proof}
\par
Our next goal is to show that $\Oaugteich$ is a second countable
topological space with countable basis $\{V_\epsilon(X_n)\}$, where
$\epsilon\in \ratls$ and $\{X_n\}$ is a dense sequence.

\begin{lm}
  \label{lm:eventally_contains}
  Let $X_n\to X$ be a convergent sequence in a single stratum of
  $\Oaugteich$.  For any $\epsilon>0$, the neighborhoods
  $V_\epsilon(X_n)$ eventually contain $X$, and
  $V_\epsilon(X_n) \subset V_{4\epsilon}(X)$.
\end{lm}

\begin{proof}
  Let $g_n\colon \overline{X}_n\to \overline{X}_\bfsigma$ be a
  sequence of maps which exhibit convergence of this sequence as in
  Definition~\ref{def:augmented_topology_def}.  We wish to show that
  eventually $g_n^{-1}$ exhibits $X\in V_\epsilon(X_n)$.  Since $X$ and
  $X_n$ lie in the same stratum, it suffices to check items (ii),
  (iii), and (v) in the definition of $V_\epsilon(X)$.
\par
Let $N$ be large enough so that $\epsilon_N < \epsilon$.  By
Lemma~\ref{lm:converge_to_identity}, the $g_n^{-1}$ converge uniformly to
the identity as maps to the universal curve.  Since the vertical hyperbolic
metric is continuous, the image of $\Thick[(X, \bfz)][\epsilon_N]$
under $g_n^{-1}$ of eventually contains $\Thick[(X_n, \bfz_n)][\epsilon]$,
and moreover
  \begin{equation*}
    \frac{1}{C_n} \,\leq\, \| Dg_n^{-1} \|_{\epsilon_N, \infty} \,\leq\, C_n
  \end{equation*}
(meaning the sup-norm on the $\epsilon_N$-thick part) for $C_n \searrow 1$,
where the norm is defined via the \Poincare metrics.
\par
As a consequence the map~$g_n$ is eventually defined on
$\Thick[(X_n, \bfz_n)][\epsilon]$, and moreover on $\Thick[(X_n, \bfz_n)][\epsilon]$
  \begin{equation*}
    \|\bfomega_n - g_n^*\bfomega\|_{\epsilon,\infty} \leq C_n\| (g_n)_* \omega_n
    - \omega\|_{\epsilon_N, \infty}\to 0,
  \end{equation*}
  so $\|\bfomega_n - g_n^*\bfomega\|_\infty < \epsilon$ eventually.
\par
  Finally, $g_n$ changes turning numbers of immersed arcs as much as
  $g_n^{-1}$ does, so eventually~$g_n$ changes turning numbers of immersed
  arcs by at most $\epsilon$, so $X\in V_\epsilon(X_n)$.
\par
  Now, suppose $X' \in V_{\epsilon}(X_n)$ is exhibited by
$g_n'\colon \overline{X}'_{\bfsigma'}\to \overline{X}_n$.  We wish
to show that $g_n\circ g_n'$ eventually exhibits $X'\in V_{4\epsilon}(X)$.
The composition $(g_n\circ g_n')^{-1}$ is
eventually conformal on $\Thick[(X, \bfz)][\epsilon_N]$, and moreover
\begin{align*}
\|(g_n\circ g_n')_* \bfomega'- \bfomega\|_{\epsilon_N, \infty}
&\leq C_n \|(g_n')_*\bfomega ' - g_n^* \bfomega\|_{\epsilon,    \infty}\\
&\leq C_n \| (g_n')_*\bfomega' - \bfomega_n\|_{\epsilon,  \infty}
+ C_n \|\bfomega_n -g_n^*\bfomega\|_{\epsilon, \infty}\\
&\leq 2 C_n \epsilon < 4\epsilon. \qedhere
  \end{align*}
\end{proof}
\par
\begin{prop}
  \label{prop:second_countable}
  The augmented \Teichmuller space $\Oaugteich$ is second countable.
\end{prop}
\par
\begin{proof}
  Each stratum $\msT[\Lambda]$ is a complex manifold and thus separable,
  so $\Oaugteich$ is separable as well as it is a countable union of
  these strata.  Let $\{X_n\}$ be a sequence whose intersection with
  each stratum is dense.  We claim that the family
  $\mathcal{F} = \{V_\epsilon(X_n) : \epsilon\in \ratls\}$, is a basis
  of the topology on $\Oaugteich$.

  Consider any $X\in \msT[\Lambda]$ and $\epsilon>0$.  Take any
  subsequence $X_{n_k}\to X$ within $\msT[\Lambda]$ and rational
  $\epsilon'<\epsilon/4$.  By the previous Lemma, eventually
  $X \in V_{\epsilon'}(X_{n_k})\subset V_\epsilon(X)$, so
  $\mathcal{F}$ is a countable basis, as desired.
\end{proof}
\par
We finish this section with the following Proposition which allows us
to relax some of the conditions of
Definition~\ref{def:augmented_topology_def}.
\par
\begin{prop}
  \label{prop:weak_convergence}
  For convergence in Definition~\ref{def:augmented_topology_def},
  it suffices that the maps $g_n$ satisfy all of the conditions listed, with
  the following modifications:
  \begin{enumerate}
  \item[(2')] The $g_n^{-1}$ are \changed{almost-homeomorphisms which are}
    conformal on an open set $U$ which intersects each irreducible
    component of $X$ \changed{and which are} $K_n$-quasiconformal on the rest
    of the $\epsilon_n$-thick part, where $K_n\to 1$.
  \item[(3')] These forms converge in the weak locally $L^2$ topology.
  \item[(5')] Convergence of turning numbers is only required for
    \changed{regular Jordan} arcs whose endpoints lie in $U$.
  \end{enumerate}
\end{prop}

\changed{
  In item (5'), since $g_n$ is not smooth, the images of regular Jordan
  arcs under $g_n$ may not be smooth.  We use the discussion of
  Section~\ref{sec:turning_Jordan} to define the turning numbers of
  these arcs.
}

\begin{proof}
  Apply Lemma~\ref{lm:supercool} to produce a sequence $k_n\colon X\to
  X$ of $K_n$-quasiconformal maps, converging uniformly to the
  identity, such that \changed{$\bar{g}_n= k_n^{-1} \circ g_n$} is
  conformal \changed{on the $\epsilon_n$-thick part}, and the
  restriction of $k_n$ to $U$ is holomorphic.  \changed{Let $\tilde{g}_n$ be an
  almost-diffeomorphism isotopic to $\bar{g}_n$ that agrees with
  $\bar{g}_n$ on the $2\epsilon_n$-thick part.  This can be constructed by
  first approximating $\bar{g}_n$ by a piecewise-linear map (see
  \cite[Theorem~6.4]{Moise}) and then approximating the piecewise-linear map with a
  smooth one, which is always possible in dimension two (see \cite[Theorem~5.2]{Munkres}).}

  We claim that these maps $\tilde{g}_n$ satisfy all of the
  requirements of Definition~\ref{def:augmented_topology_def}.
  The uniform convergence of forms required by~(3) follows from
  Proposition~\ref{prop:topologies_are_the_same}.  For~(5), note that
  Lemma~\ref{lm:dual_to_seams} allows us to consider only turning
  numbers of Jordan arcs whose endpoints are contained in $U$.  Since
  the maps~$k_n$ converge uniformly, their derivatives converge as
  well on $U$.  \changed{By Proposition~\ref{prop:turning_Jordan}, the
    turning number of a Jordan arc can be computed using only its
    isotopy class (relative to endpoints and tangent vectors at
    endpoints).}  It follows that (5') implies that the $\tilde{g}_n$
    are asymptotically \tnp.
\end{proof}

\subsection{The moduli space of \msds as a topological space.}
\label{sec:msdastopo}

We are now in a position to define our central moduli space as a
topological space and establish its main topological properties.
\par
\begin{df}
\label{df:msds}
  The moduli space of \msds is the quotient space
  $\LMS = \Oaugteich/\Mod$, and its projectivization is the quotient
  $\PP\LMS = \Paugteich/\Mod = \LMS / \cx^*$.
\end{df}
\par
It follows immediately from Theorem~\ref{thm:PaugHausdorff} and
Proposition~\ref{prop:second_countable} that these spaces are Hausdorff and
second countable.
\par
\begin{thm} \label{thm:MSDcompact}
  The moduli space $\PP\LMS$ of
  projectivized \msds of type~$\mu$ is compact.
\end{thm}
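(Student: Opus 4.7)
The plan is to establish sequential compactness: given any sequence of points in $\PP\LMS$, we produce a subsequence that converges in $\PP\LMS$. Since $\PP\LMS$ is Hausdorff by Theorem~\ref{thm:MCGquotient}, this suffices. A representative of a point in $\PP\LMS$ may be taken in $\Paugteich$, but for compactness it is cleaner to start with classical representatives $[(X_n,\bfz_n,\omega_n)]$ living (after scaling) in the Hodge bundle over $\barmoduli[g,n]$, and then promote the limit to a projectivized multi-scale differential.

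First, since $\barmoduli[g,n]$ is compact, after extracting a subsequence the underlying pointed stable curves converge to some $(X,\bfz) \in \barmoduli[g,n]$, exhibited by conformal maps $g_n \colon K_n \to X_n$ on an exhaustion of $X^s \setminus \bfz$ as in Section~\ref{sec:classaugteich}. Next, for each irreducible component $Y$ of $X$ we apply Theorem~\ref{thm:compactness_for_differentials}: after a further subsequence, there exist complex rescaling factors $\lambda_{n,Y}\in\CC^*$ such that $g_n^*(\omega_n/\lambda_{n,Y})$ converges uniformly on compact subsets of $Y^s\setminus\bfz$ to a nonzero meromorphic differential $\eta_Y$. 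Extract a further subsequence so that for every pair of components $Y,Y'$ the ratio $|\lambda_{n,Y}|/|\lambda_{n,Y'}|$ tends to $0$, to $\infty$, or to a nonzero finite limit; this defines a weak full order on $V(\Gamma_X)$ and hence an integer-valued level function, which we normalize so that the largest $|\lambda_{n,Y}|$ corresponds to level $0$.

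The collection $\eta = (\eta_v)$ is then a twisted differential compatible with the resulting level graph: the prescribed vanishing orders at marked points follow from the type $\mu$; the matching order condition at each node is forced by the behavior of $g_n^*\omega_n$ near the two branches; the matching residue condition at horizontal nodes follows from Stokes' theorem applied to $\omega_n$ around the vanishing cycle; the partial order condition is built into our ordering of levels; and the global residue condition is exactly the content of the main theorem of~\cite{strata}, which we recalled in Section~\ref{sec:deftwd}, so the limit lies in the (normalization of the) IVC. The enhancement $\kappa_e$ of each vertical edge is read off from $\ord_{q_e^+}\eta+1$, and the stability/admissibility conditions are automatic from stability of $X_n$. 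Thus we have produced, as a set-theoretic limit, a projectivized twisted differential with enhanced level structure.

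To upgrade this to a point of $\PP\LMS$ and to verify convergence in the topology of Section~\ref{sec:AugTeich} we must choose a prong-matching and adjust phases. Since the prong rotation group $\Prot$ is finite (Equation~\eqref{eq:Prot}), after a further subsequence the prong-matchings $\bfsigma_n$ induced by the smoothings stabilize to a prong-matching $\bfsigma$ of the limit. Writing $\lambda_{n,i}=|\lambda_{n,i}|\bfe(\theta_{n,i})$ for a chosen representative at each level $i$, the phases $\theta_{n,i}$ live in $\RR/\ZZ$ up to the $\CC^{L^\bullet(\Lambda)}$-action on $\ptwT$ defined in Section~\ref{sec:ConPMD}; after extracting one more subsequence these phases converge in $\RR/\ZZ$, and by absorbing the limiting phases into $\bfsigma$ via~\eqref{eq:ConPMD2} we produce vectors $\bfd_n \in \CC^{L^\bullet(\Lambda)}$ realizing conditions (3)--(4) of Definition~\ref{def:augmented_topology_def}. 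The hardest step, which I would address last, is the nearly turning-number-preserving condition (5) for the maps $g_n$: using Proposition~\ref{prop:jacuzzi} and Remark~\ref{rem:jacuzzi} in annular neighborhoods of each vanishing cycle, together with the fact that any two compatible weldings differ by a fractional Dehn twist whose contribution to turning numbers is controlled precisely by the phase adjustment just made, one concludes that the $g_n$ can be isotoped to be nearly turning-number-preserving, at the cost of composing with elements of the mapping class group that vanish in the $\Mod$-quotient. This provides a representative in $\Paugteich$ converging to the constructed limit; projecting to $\PP\LMS = \Paugteich/\Mod$ gives the desired convergent subsequence.
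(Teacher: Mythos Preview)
Your overall strategy matches the paper's: sequential compactness via $\barmoduli[g,n]$, rescaling with Theorem~\ref{thm:compactness_for_differentials}, ordering levels by the ratios of rescaling factors, and finiteness of prong-matchings. Two points need tightening. First, Hausdorff alone does not make sequential compactness imply compactness; you need metrizability, which follows from the complex orbifold structure in Theorem~\ref{thm:MCGquotient}. Second, a boundary point of $\PP\LMS$ carries strictly more data than its image in the Hodge bundle, so your ``classical representatives $(X_n,\bfz_n,\omega_n)$'' only make sense for sequences in the interior. This is harmless once metrizability and density of the interior are in hand (a diagonal perturbation reduces to the smooth case), but you should say so; the paper instead keeps the full data $(\cleq_n,\sigma_n)$, passes to a subsequence of constant topological type and constant $\cleq_n$, and then refines that order further in the limit.

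The genuine gap is the prong-matching step. For smooth $X_n$ there is no ``prong-matching $\bfsigma_n$ induced by the smoothing''; the prong-matching lives on the limit $X$ and must be \emph{constructed} so that the $g_n$ become nearly turning-number-preserving. Proposition~\ref{prop:jacuzzi} is a uniqueness statement (two prong-matchings admitting such maps must coincide), not an existence statement, so it cannot do the work you assign it. The paper's argument is concrete: fix an arbitrary preliminary $\widetilde{\sigma}$ on $X$; extend each $g_n$ across the seams so that turning numbers are nearly preserved modulo $2\pi\ZZ$ (this much is forced by the convergence of forms); at each seam measure the integer defect $m$ in the turning number; rotate the local prong-matching by $m$ prongs and post-compose $g_n$ with the corresponding fractional Dehn twist to kill the defect; and only then invoke finiteness of $\Prot$ to pass to a subsequence with constant prong-matching. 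Your phase discussion via~\eqref{eq:ConPMD2} gestures at the right interaction between level rescaling and prong rotation but is not a substitute for this construction.
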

\par
\begin{proof}
  In a second countable space, compactness is equivalent to sequential
  compactness (see \cite[Proposition~1.6.23]{beattie_butzmann}), so it
  suffices to establish sequential compactness.

  Let $\left\{(X_n,\bfz_n,\bfomega_n, \cleq_n, \sigma_n, f_n)\right\}$
  be a sequence in~$\Paugteich$.  We wish to exhibit a convergent
  subsequence, after pre-composing the markings $f_n$ by a sequence in $\Mod$.

  Since $\barmoduli[g,n]$ is compact, after pre-composing the markings,
  we can pass to a subsequence so that
  $\left\{(X_n,\bfz_n, f_n)\right\}$ converges in $\oldaugteich$ to a
  marked surface $(X,\bfz, f)$.  Since there are finitely many
  enhanced multicurves up to the action of $\Mod$, we may pass to a further
  subsequence so that these surfaces lie in a single stratum $\msT$.

  By definition of convergence in $\oldaugteich$, there exists an
  exhaustion $K_n$ of $X^s \setminus \bfz$, and conformal maps
  $h_n\colon K_n\to X_n$ compatible with the markings.  Transporting the orders on $X_n$ to $X$ by
  $h_n$ induces a full order on~$X$ that we denote by $\cleq_0$.
  \par
  Now we use the sizes of the forms $\bfomega_n$ to refine the order
  $\cleq_0$.  Choose $\epsilon$ small enough so that for each irreducible
  component $Y$ of $X$, the $\epsilon$-thick part of $Y$ minus the
  marked points is connected.  Let $\lambda_n(Y)$ be the size of the
  corresponding component of $X_n$, in the sense of
  Equation~\eqref{eq:size}.  Passing to a subsequence, we may assume
  that for any pair of components $Y$ and $Y'$ the ratios $\lambda_n(Y)/ \lambda_n(Y')$
  converge, to a number in $\RR\sqcup\lbrace\infty\rbrace$.
  We then define an order $\cleq$ on the set of
  components of $X$ so that $Y\cleq Y'$ when $Y \cleq_0 Y'$, and
  moreover if $Y\asymp_0 Y'$, we define $Y \cleq Y'$ if $\lambda_n(Y)/\lambda_n(Y')\not\to\infty$.
  Theorem~\ref{thm:compactness_for_differentials} then implies that we
may pass to a subsequence so that on each component~$Y$ of~$X$
the rescaled forms  $\left\{h_n^{\ast} \bfomega_n/\lambda_n(Y)\right\}$ converge
to some form $\bfomega$.
\par
  For each level~$i$ of $\cleq$ we pick a component $Y_i$ at that
  level and define $c_{n,i} = \lambda(Y_i)^{-1}$.  Then the sequence of differentials
  $\left\{c_{n,i} h_n^{\ast}  \bfomega_n\right\}$ converges to some
  differential~$\omega_{(i)}$ on the $i$-th level $X_{(i)}$ of $X$
  with respect to $\cleq$.  We define~$\bfomega$ on~$X$ to be
  the collection of those differentials~$\omega_{(i)}$. We have to prove
  that~$\bfomega$ is a twisted differential compatible with the
  order~$\cleq$. The crucial conditions (matching orders, matching
  residues and GRC) can be verified by~$\bfomega$-path integrals or
  turning numbers (compare Section~4 in \cite{strata}). Hence these
  conditions carry over from the corresponding integrals on the
  sequence of surfaces $X_n$, using the convergence of one-forms and
  using (for the GRC) the fact that the rescaling constants~$c_{n,j}$
  depend on the levels only.
\par
  For each vertical node~$q$ choose a preliminary \prma
  $\widetilde{\sigma_q}$, forming together a global
  \prma~$\widetilde{\sigma}$, and choose a preliminary
  almost-diffeomorphism
  $\tilde{g_n}\colon \overline{X}_{\bfd \cdot \widetilde{\sigma}} \to
  \overline{X}_{\sigma_n}$ which is isotopic to $h_n^{-1}$ on the
  complement of the seams.  Conditions (1)--(4) of
  Definition~\ref{def:augmented_topology_def} are clearly satisfied
  for these $g_n$, taking $d_{n,i} = \frac{1}{2\pi i} c_{n,i}$, and it
  remains to show that the prong-matchings and~$g$ may be
  modified so that the $g_n$ are asymptotically \tnp.
\par
  Now choose a collection of immersed arcs $\gamma_i$, dual to the seams
  of $\overline{X}_{\widetilde{\sigma}}$ as in Remark~\ref{rem:coftop}.
  By convergence of the forms, $\tau(g_n^{-1}(\gamma_i))\to
  \tau(\gamma_i) \pmod \zed$.  We first modify each prong-matching
so that these turning numbers converge mod $\kappa_i$, and then modify~$g$
by an appropriate twist around each seam so that the turning numbers converge.
By  Remark~\ref{rem:coftop} convergence of the turning numbers of these
arcs ensure the~$g_n$ are asymptotically \tnp.
\end{proof}

\section{The model domain}
\label{sec:modeldomain}

In this section, we construct the \emph{model domain} $\barMD$, an
orbifold which will serve as a local model for the boundary of the
moduli space $\LMS$.  The model domain is constructed as the finite
quotient orbifold of the \emph{simple model domain} $\barMDs$, a
complex manifold which is in turn constructed as a bundle over a
product of \Teichmuller spaces.  We moreover construct a family of
differentials over the model domain, which we will call the {\em universal
family of model differentials}.  These objects will be used in
Section~\ref{sec:Dehn}, where we provide $\LMS$ with an atlas of
``plumbing maps'', which are defined on open subsets of the
model domain by a plumbing construction on the universal family.
The families of model differentials over arbitrary bases will be defined
by combining the definition of families of \msds in Section~\ref{sec:famnew}
and the families of markings in Section~\ref{sec:rob}. In Section~\ref{sec:uniMD} we will finally establish
that the family over the model domain is indeed universal.

\subsection{Construction of the model domain}
\label{sec:constructionMD}

As a first step in the construction, we define
\begin{equation}
\MDs  \= \PP\ptwT / \svTw \quad \text{and}  \quad \MD \= \PP\ptwT / \vTw\,,
\end{equation}
whose points represent prong-matched twisted differentials defined up
to rescaling all components simultaneously, together with a marking
defined up to the action of $\svTw$ or $\vTw$, respectively.  Recall from
Lemma~\ref{lm:defsvTw} that $\svTw$ is a finite index subgroup of
$\vTw$. The finite quotient group $K_\Lambda = \vTw/\svTw$ defined in
Equation~\eqref{eq:exseqcover} thus acts on~$\MDs$, with quotient~$\MD$.
\par
The model domains
$\barMD$ and $\barMDs$ are constructed informally by adding a boundary
consisting of differentials which are allowed to be identically zero
on some set of levels below the top level.
\par
More precisely, the simple level rotation torus $\Tsimp = \CC^{L(\Lambda)}/
\svTw\isom (\CC^*)^{L(\Lambda)}$
acts freely on~$\MDs$ via the action defined in~\eqref{eq:Tsimp_action}.
Recall from Section~\ref{sec:AugSet} that the projectivized~$\Lambda$-boundary
stratum~$\pmsT$ is the quotient $\MDs / \Tsimp$, so $\MDs$ is a
principal $(\CC^*)^{L(\Lambda)}$-bundle over~$\pmsT$ \changed{by the second
statement in Lemma~\ref{le:OmBsmooth}.} We define $\barMDs$
as the associated $\CC^{L(\Lambda)}$-bundle over~$\pmsT$, where
$(\CC^*)^{L(\Lambda)}$ acts on $\CC^{L(\Lambda)}$ by coordinate-wise
multiplication in the usual way.
\par
As the $K_\Lambda$-action on $\MDs$ commutes with the $\Tsimp$-action,
$K_\Lambda$ acts on $\barMDs$, and we define (as a complex orbifold)
\begin{equation*}
  \barMD \= \barMDs / K_\Lambda\,.
\end{equation*}
\par
We now provide notation to describe
the boundary $\partial\MDs = \barMDs\setminus \MDs$ of the simple model domain.
The boundary $\partial\MDs$  is a normal crossing divisor given by
$D = \cup_{i \in L(\Lambda)} D_i$ in $\barMDs$, where $D_i$ is fiber-wise defined by $\left\{t_i=0\right\}\subset\CC^{L(\Lambda)}$.
There is a stratification
\index[teich]{f020@$\barMDs$,  $\barMD$!(Smooth) model domain}
\begin{equation}\label{eq:stratMDs}
  \barMDs \=
  \coprod_{J\subset L(\eL)} \MD^{s,\Lambda_{J}}
\end{equation}
indexed by the vertical undegenerations of the multicurve~$\Lambda$ or, equivalently, by
the subsets $J=\left\{j_{1},\dots,j_{m}\right\}$ of $L(\Lambda)$
(see Section~\ref{sec:order} for the correspondence), where we define
$D_J = D_{j_{1}}\cap \dots \cap D_{j_{m}}$, and $\MD^{s,\Lambda_{J}}$ is defined by
\[\MD^{s,\Lambda_{J}} \= D_J  \setminus \bigcup_{J'\varsupsetneq J}D_{J'}\,.\]
In these terms, the space $\MDs$ corresponds to the subset~$J=\emptyset$,
or equivalently to the degeneration $\degen\colon \bullet \rightsquigarrow\Lambda$
from the trivial graph to~$\Lambda$. Moreover, $D_i$ corresponds to the subset
$J=\left\{i\right\}$, or equivalently to the two-level (un)degeneration
$\degen_i \colon \Lambda_i\rightsquigarrow\Lambda$ of~$\Lambda$.
\par
The model domain has an obvious non-projectivized variant.  The
quotient $\Omega\MDs \= \ptwT / \svTw$ is a
$(\CC^*)^{L(\Lambda)}$-bundle over~$\msT$.  We define $\Omega\barMDs$
as the associated $\CC^{L(\Lambda)}$-bundle,  and let
$\Omega\barMD = \Omega\barMDs / K_\Lambda$.
\par
The smoothness of $\pmsT$ \changed{(Lemma~\ref{le:OmBsmooth})}
and this description imply immediately the following result.
\par
\begin{prop} \label{prop:MDsmooth}
The simple model domain~$\barMDs$ is smooth, while $\barMD$
 has only finite quotient singularities.
\end{prop}
\par
\begin{exa}\label{ex:quotientsing}({\em A model domain with finite quotient
singularities})
To see that finite quotient singularities can actually occur in this way, we
analyze the second case of our running example in Section~\ref{sec:runningex}.
There, $\barMDs[\eL_{2}]$ is locally the product
of $\pmsT[\eL_{2}]$ with~$\CC^2$, and the two boundary divisors are the
coordinate axes. The generators of the \lw ramification groups $H_1$ and $H_2$
(see Section~\ref{sec:coverGH} and Example~\ref{ex:trianglegraph}) act on
the~$\CC^2$ factor by $(z_1,z_2) \mapsto (\zeta_3 z_1, z_2)$ and $(z_1,z_2)
\mapsto (z_1, \zeta_3 z_2)$ respectively, where~$\zeta_3$ is a third root of unity.
Consequently, the generator
of $K_{\Lambda_{2}} = {\rm Ker}(H \to G)$ acts by $(z_1,z_2) \mapsto (\zeta_3 z_1,
\zeta_3^{-1} z_2)$. The ring of invariant polynomials under this action is generated
by $u = z_1^3$, $v= z_2^3$ and $z= z_1z_2$, hence
the quotient has a singularity locally given by the equation $uv - z^3 = 0$.
\end{exa}
\par

\subsection{The universal family}
\label{sec:universalMD}

We now construct the universal family of model differentials over the
model domain. Once we formally define the notion of families of model
differentials over arbitrary bases,  we will see  in Proposition~\ref{prop:sMDuniv}
that this family is in fact the universal family of model differentials.
\par
Over the open part~$\Omega\MDs$, this universal family of model differentials
is simply given by the $\svTw$-quotient of the equisingular
family $(\pi\colon\calX \to \ptwT, \changed{\bfomega}, \bfz, \bfsigma, f )$ over
$\ptwT$, where $\bfomega$ is a universal relative one-form, $\bfsigma$
is a family of prong-matchings, $\bfz$ are sections marking the zeros
and poles, and $f$ is a family of markings (to be defined precisely in
Section~\ref{sec:rob}) up to the group $\svTw$.
\par
As the action of $\Tsimp$ on $\ptwT$ is trivial on the level of
underlying curves, the universal curve over $\Omega\MDs$ is the pullback of
the universal curve over the quotient $\pmsT$.  It follows that the universal
curve over $\Omega\MDs$ extends to a universal curve $\calX\to
\Omega\barMDs$ as the pullback of this bundle. \changed{By definition of
an associated $\CC^{L(\Lambda)}$-bundle the form $\bfomega$ extends
to a relative one-form on~$\calX$, also denoted by~$\bfomega$ and referred
to as the universal relative one-form. It vanishes on the component on~$i$-th
level precisely over the divisor~$D_i = \{t_i = 0\}$.}
\par
Consider an open set $\calV\subset \msT$ together with a section
$ \mathscr{S}\colon \calV \to \Omega\MDs$ of the $\Tsimp$-bundle.  Let
$\calW\subset \Omega\barMDs$ be the preimage of $\calV$ and
$\calX_\calW\to \calW$ the universal curve over it.  Informally, a point in
$\calV$ represents a $\Tsimp$-orbit of forms and compatible
prong-matchings, and $\mathscr{S}$ represents a holomorphic choice of
representative ``rescaled forms''~$\bfeta$ and compatible
prong-matchings $\bfsigma$.  The section $\mathscr{S}$ determines a
trivialization $\calW\to \calV \times \cx^{L(\Lambda)}\times \cx^*$,
and composing with the projection to $\cx^{L(\Lambda)}\times \cx^*$
determines a tuple of holomorphic functions
$\bft\colon \calW\to \cx^{L(\Lambda)}\times \cx^*$ which we call
\emph{simple rescaling parameters}.
\index[family]{d003@$\bft = (t_i)_{i \in L}$! Simple rescaling parameters}
\par
The rescaled differentials $\bfeta$ can be regarded as a tuple of
relative one-forms on~$\calX_\calW$ which do not vanish on any vertical
component of the universal curve, \changed{which are independent of
  $\bft$,} and which satisfy $\bft\ast \bfeta =
\bfomega$, where $\bfomega$ is the universal relative one-form over
$\Omega\barMDs$. Here we recall from~\eqref{eq:Tsimp_action} the
definition of the action
\begin{align} \label{eq:omTeta}
\index[twist]{b060@$\prodt$!Product of the $t_j^{a_j}$ for $j\geq i$}
\bft\ast \bfeta \= \left(\prodt\cdot \eta_{(i)} \right)_{i\in L(\Lambda)}
&\= \left(t_{-1}^{a_{-1}}\cdot t_{-2}^{a_{-2}}\dots t_i^{a_i} \cdot
              \eta_{(i)} \right)_{i\in L(\Lambda)}\\
&\= \left(s_{-1}\cdot\ldots\cdot s_{i} \cdot\cdot \eta_{(i)}
\right)_{i\in L(\Lambda)}\,,\nonumber
\end{align}
where we define the \emph{rescaling parameters} $\bfs$ to be the
powers $(s_i) = (t_i^{a_i})$.
\index[family]{d007@$\bfs = (s_i)_{i \in L}$! Rescaling parameters}
The prong-matchings $\bfsigma$ can be regarded as a continuously
varying family of prong-matchings for~$\bfeta$.

\changed{Together $\bfeta$ and $\bfsigma$ give each fiber $X$ of
  $\calX_\calW$ the structure of a \msd.  We may also assign this
  fiber a marking (as in the definition of a marked \msd following
  Definition~\ref{df:msd}) as the $(\cx^*)^{L(\Lambda)}$-orbit of the
  data $(\bfeta, \bfsigma, [f])$, where $[f]$ is a marking
  $(\Sigma, \bfs) \to (\overline{X}_\bfsigma, \bfz)$ up to
  precomposition by $\svTw$.}
\par
To summarize, we have defined locally, depending on a choice of
section $\mathscr{S}\colon \calV \to\Omega\MDs$, which we will in the
sequel call a \emph{local trivialization} of $\Omega\MDs$ over
$\calV$, a collection of rescaled differentials $\bfeta$, compatible
prong-matchings $\bfsigma$,  holomorphic functions~$\bft$, and
rescaling parameters $\boldsymbol{s}$ such that
the product $\bft\ast \bfeta$ agrees with the universal one-form
$\bfomega$ on the universal curve over $\Omega\barMDs$.
\par
The product
of $\bft$ with the projection to $\msT$ determines an isomorphism
$\calW \to \calV \times\cx^{L(\Lambda)}\times \cx^*$.  In working with
model differentials, we will often implicitly assume we have chosen
such a local trivialization.  In the sequel, the functions $\bft$
together with local coordinates on $\calV$ will give a convenient
system of local coordinates on \changed{$\Omega\barMD$.}  In
Section~\ref{sec:famnew}, the equivalence class of
$(\bfeta, \bfsigma, \bft)$ will be part of the data describing a
general family of model differentials.

\subsection{Topology of $\Omega\MD$}

The topology on the model domain may also be expressed in the language
of conformal maps. The following proposition follows immediately from
the definition of the conformal topology in
Section~\ref{sec:turning} and the topology on the
$\CC^{L(\Lambda)}$-bundle associated with a
$(\CC^*)^{L(\Lambda)}$-bundle. Given $\bft\in\CC^{L(\Lambda)}$, we
define $J(\bft) \subseteq L(\Lambda)$ to be the subset of indices~$i$
such that $t_i =0$.
\begin{prop} \label{prop:MDsconv}
A sequence $(X_m, \bfz_m, \bfeta_m, \bft_m, \cleq, \sigma_m, f_m)$
of (simple) \auxds in $\Omega\barMDs$ converges
to $(X, \bfz, \bfeta, \bft, \cleq, \sigma, f)$
if and only if, taking representatives with $t_{m,i}, t_i \in \{0,1\}$
for $X_m$ and $X$, there exist a sequence of positive numbers $\epsilon_m$
converging to~$0$ and a sequence of vectors $\bfd_m = \{d_{m,j}\}_{j \in J(\bft)}
\in \CC^{J(\bft)}$ such that the following conditions hold for sufficiently
large~$m$, where we let $d_{m,0}=0$ and denote $c_{m,j} = \bfe(d_{m,j})$:
\par
\begin{enumerate}[(i)]
\item There is an inclusion
  $\iota_m\colon J(\bft_m) \hookrightarrow J(\bft)$.
\item  For sufficiently large~$m$ there exists an \diffeogen $g_m
\colon \overline{X}_{\bfd_m \cdot\bfsigma_m}\to \overline{X}_{ \bfsigma}$   that
is compatible with the markings (in the sense that $g_m \circ\bfd_m \cdot f_m$
is isotopic to $f$ rel marked points) and such that $g_m^{-1}$ is conformal
on the $\epsilon_m$-thick part $\Thick[(X, \bfz)][\epsilon_m]$.
\item The restriction of $(g_m)_*(c_{m,i}\bfomega_m)$ to the $\epsilon_m$-thick part of
the level~$i$ subsurface of $(X, \bfz)$ converges
uniformly to $\omega_{(i)}$.
\item For any $i,j \in J(\bft)$ with $i>j$ and any subsequence
along which $[j,i] \cap {\rm im}(\iota_{m}) = \emptyset$, we have
    \begin{equation*}
      \lim_{m\to\infty} \frac{|c_{m,i}|}{|c_{m,j}|} \= 0 \,.
    \end{equation*}
  \item The \diffeogens $g_m$ are asymptotically \tnp.
\end{enumerate}
\end{prop}

\section{Modifying differentials and perturbed period coordinates }
\label{sec:modif}

The first goal of this section is to define {\em modifying differentials}~$\bfxi$ as
a preparation for the plumbing construction in Section~\ref{sec:Dehn}, which will enable us to give complex charts on~$\LMS$. The
second goal is to define local coordinates, which we will call {\em perturbed
period coordinates}, on the simple model domain. Once we define the plumbing construction and define families of
\msds, the universal property of the family of \auxds over the model domain will allow us to prove the universal property of $\LMS$ in
Section~\ref{sec:UnivDehn}.
\par
For defining perturbed period coordinates, in this section we restrict to the case with only vertical nodes. In Section~\ref{sec:extendperturbed} we will define extended perturbed period coordinates, to also account for the periods through horizontal nodes. This extension will require the plumbing setup introduced in Section~\ref{sec:Dehn}.
The perturbed period coordinates are similar to the usual period
coordinates, with the following modifications that will allow us to
transition from stable curves with many nodes to curves with fewer nodes.
\par
First, they are coordinates for the universal differential~$\bfeta$, but perturbed by the modifying
differentials~$\bfxi$, and rescaled by~$\bft$ as defined in \eqref{eq:omTeta}. The
reason for this is that the perturbed differential lives on the universal family
over the Dehn space~$\ODehn$, which will be defined after plumbing. Consequently, once
the plumbing construction is completed, perturbed period coordinates will turn out to be
coordinates on~$\ODehn$.
\par
Second, the plumbing construction cuts out the zero that used to be at the top
end of any vertical node. Thus to keep track of the relative period
corresponding to such a zero, we compute a period not to this zero, but to a
suitably chosen nearby point. The choice of this nearby point will be made
in such a way that under degeneration to the boundary of~$\Omega\barMDs$ the
difference between the perturbed period and the original period tends to zero.
\par
Third, the perturbed period coordinate system contains for each level one entry which
measures the scale of degeneration. This is not actually a period,
but rather an $a_i$-th root of a period of~$\bfeta$.
\par
In the whole of this section we work in the preimage
$\calW \subset \Omega\barMDs$ of an open set $\calV \subset \msT$
where we have chosen a local trivialization as in Section~\ref{sec:universalMD}.

\subsection{Modifying differentials and the global residue condition revisited.}
\label{sec:modifGRC}

In order to construct the plumbing map, we need modifying
differentials as in \cite{strata}, but now defined on the universal
family over an open subset of the model domain.  In this section, we
prove the existence of such families of modifying differentials, for
families that may have both horizontal and vertical nodes.
\changed{ We let $\tilde{\calX}\to\calX$ denote the partial normalization
at the vertical nodes, and let $\tilde\pi\colon\tilde\calX\to \calW$
denote its composition with $\pi$.}
\par
\begin{df}\label{df:modif}
A \emph{family of modifying differentials} over $\calW\subset\Omega\barMDs$ is a
family of meromorphic differentials $\bfxi$ on
$\tilde\pi\colon\tilde{\calX}\to\calW$,  such that:
\begin{enumerate}[(i)]
\item $\bfxi$ is holomorphic, except for possible simple poles along
  both horizontal and vertical nodal sections as well as marked poles;
\item \changed{$\xi_{(-N)}$ vanishes identically, and $\xi_{(i)}$ is divisible
  by $t_{i-1}^{a_{i-1}}$ for each
  $i\in L^\bullet(\Lambda) \setminus \{-N\}$, where $\xi_{(i)}$
  denotes the restriction of $\bfxi$ to the $i$'th level component of $\tilde{\calX}$;}
\item $\bft\ast(\bfeta  + \bfxi)$ has opposite residues at the two preimages of every
 node.
\end{enumerate}
\index[plumb]{b030@$\bfxi$!Family of modifying differentials}
\vskip-6mm
\end{df}
\changed{We emphasize that $\bfxi$ does not satisfy the
opposite-residue condition at the vertical nodes, hence it should
not be regarded as a family of differentials on $\calX\to\calW$
(although it could alternatively be defined as a meromorphic section of the
relative dualizing sheaf).  Also note that, while $\eta_{(i)}$ is independent
of the rescaling parameters~$\bft$, the modifying differential $\xi_{(i)}$ as
constructed below is only independent of the~$t_j$ for $j\geq i$.
Condition (iii) forces $\xi_{(i)}$ to depend on~$t_{i-1}$, and in general,
as it will be constructed recursively level by level, it may depend on
any~$t_j$ for $j<i$.}
\par
Recall that $q_e^\pm\colon \calW\to\tilde\calX$ denote the sections
corresponding to the top and bottom preimages of the vertical node~$e$, with
images~$Q_e^\pm\subset\tilde\calX$. Moreover, let $\calP$ be the reduced
divisor associated to~$\calZ^\infty$. Then~$\bft\ast\bfxi$ is a holomorphic
section of
$$\tilde\pi_* \omega_{\tilde{\calX} / \calW}
\left(\sum_{e\in\vertedge}(Q_e^+ + Q_e^-) + \calP  \right)\,,$$
which is divisible by $\bft^\bfa_{\lceil i-1\rceil}$ at level $i$, and chosen so that
as functions on~$\calW$
\begin{equation}
    \label{eq:opp_res}
    \Res_{q_e^+}\bft\ast (\bfeta  + \bfxi) + \Res_{q_e^-} \bft\ast (\bfeta  + \bfxi) \= 0
\end{equation}
for every vertical node $e\in\vertedge$.
\par
We start the construction of families of modifying differentials by recalling from \cite{kdiff}  a topological
restatement of the global residue condition. Consider the subspace
$V \subseteq H_1(\Sigma \setminus P_\bfs; \QQ)$ spanned by the
vertical curves~$\Lambda^v$, where $P_\bfs$ is the set of marked poles.
The order on $\Lambda$ determines a filtration
\begin{equation}\label{eq:filtration}
0\= V_{-N-1} \,\subseteq \, V_{-N} \,\subseteq\, \dots \,\subseteq\, V_{-1} \= V\,,
\end{equation}
where $V_i$ is generated by the image in~$V$ of all those
vertical curves in $\Lambda$ such that $\lbot\le i$.
Note that this convention differs slightly from the one of \cite{kdiff}: we
allow horizontal nodes, our~$V_i$
corresponds to~$V_{-i}$ there, and our~$N$ corresponds to~$N-1$ there.
\par
Suppose we are given a marked differential $(X, \eta)$ on a pointed stable curve
that satisfies the axioms (0)-(3) of a twisted differential. Fixing an orientation
of the individual curves of~$\Lambda^{v} $, the differential~$\eta$ defines
a \emph{residue assignment} $\rho\colon \Lambda^{v} \to \CC$. With the help of
these maps we give an alternative statement of the global residue condition.
\par
\begin{prop}[{\cite[Proposition~6.3]{kdiff}}] \label{prop:kdiffGRC}
A residue assignment $\rho\colon \Lambda^{v} \to \CC$
satisfies the global residue condition if and only if there exist
\emph{period homomorphisms}
\index[plumb]{b050@$\rho_i$!Period homomorphism}
$$ \rho_i\colon V_i/V_{i-1} \to \CC \quad \text{for any $i \in L(\Lambda)$}\,,$$
such that $\rho_i(\lambda) = \rho(\lambda)$ for all simple closed curves
$\lambda$ in $\Lambda^v$, where~$i = \ell(\lambda^-)$.
\end{prop}
\par
In what follows it will be convenient for us to lift the period homomorphisms to linear maps $\rho_i\colon V_i\to\CC$ such that $\rho_i(V_{i-1})=0$ for all $i\in L(\Lambda)$.
We are now ready to construct the family of modifying differentials, and we will then demonstrate the constructions in the proof by an example.
\par
\begin{prop}   \label{prop:constrModif}
The family $\pi\colon \OMDsfam   \to \calW$ equipped with the universal
differential $\bft \ast \bfeta$ has a family of modifying
differentials $\bfxi$.
\end{prop}
\par
\begin{proof}
Choose a maximal multicurve $\Lambda_{\rm \max} \supseteq\Lambda$
decomposing~$\Sigma\setminus P_\bfs$ into pants. Let $V'\subset
H_1(\Sigma \setminus P_\bfs; \QQ)$ be the subspace of homology generated by
the classes of all curves in $\Lambda_{\rm \max}$. Note that~$V'$ contains~$V$,
and projects to a Lagrangian subspace of $H_1(\Sigma; \QQ)$.  The restriction of
$\bft \ast \bfeta$ to levels $i$ or below determines a holomorphic
period map (extending $\rho_i$ above to families)
  \begin{equation*}
    \rho_i \colon \calW \to \Hom_\QQ(V_i, \CC)\,,
  \end{equation*}
such that $\rho_i$ restricts to zero on $V_{i-1}$. In period coordinates, $\rho_i$ is
simply a linear projection.  By \eqref{eq:Tsimp_action}, the map $\rho_i$ is
$\Tsimp$-equivariant, i.e.\
  \begin{equation}
    \label{eq:Tsimp_equivariant}
\rho_i(\bfq \ast (\calX, \bft \ast \bfeta)) \= \prod_{j \geq i} q_j^{a_j}
\,\cdot \,\rho_i(\calX, \bft \ast \bfeta)
\quad \text{for any} \quad \bfq \in \Tsimp\,.
\end{equation}
For each~$i\in L(\Lambda)$ we choose a sub-multicurve $B_i \subset \Lambda_{\rm \max}$
whose image in~$V'$ is a basis of $V'/V_i$, such that for any $i\in L(\Lambda)$
the inclusion $B_i\subset B_{i-1}$ holds. We then define the extension
$\widetilde{\rho_i} \colon \calW \to \Hom_\QQ(V', \CC)$ of $\rho_i$ by the
requirement $\widetilde{\rho_i}(b_i) = 0$ for all $b_i \in B_i$.
\par
Since $\Lambda_{\rm \max}$ is a maximal multicurve on $\Sigma\setminus P_\bfs$,
a meromorphic form on $\calX$, holomorphic except for at worst simple poles at
the nodes and at the marked poles $P_{\bfs}$, is specified uniquely by its periods
on~$V'$. We define $\bft\ast\bfxi$ \changed{on~$\tilde{\calX}$} so that its $V'$-periods, for
$\gamma\in \Lambda_{\rm \max}$ \changed{lying on $\tilde{\calX}_{(i)}$}, are
\begin{equation}
\label{eq:xi_def}
    \int_\gamma\changed{(\bft \ast  \bfxi)_{(i)}}(u) \= \sum_{j < i} \widetilde{\rho}_j(u)(\gamma)
   \quad \text{for all} \quad u \in \calW \,.
\end{equation}
By the equivariance \eqref{eq:Tsimp_equivariant}, we see that $\widetilde{\rho}_j$
is divisible by \changed{$t^{a_{i-1}}_{i-1}$}, and hence $\bft\ast\bfxi$ is also
divisible by $\bft^\bfa_{\lceil i-1\rceil}$.
\par
Given a curve $\gamma\in \Lambda^{v}$ joining level $i$ to level $j<i$,
we verify the opposite residue condition (iii) for
a modifying differential, which is given by~\eqref{eq:opp_res}, and states that
  \begin{equation*}
    \int_\gamma((\bft \ast (\bfeta+ \bfxi))_{(j)}  \=  \rho_{j}(\gamma)+ \sum_{k < j} \widetilde{\rho}_k(\gamma)
\= \sum_{k < i} \widetilde{\rho}_k(\gamma) \=\int_\gamma((\bft \ast (\bfeta+ \bfxi))_{(i)}\,.
  \end{equation*}
In the above the first equality follows from the fact that $\rho_j$ is the period map determined by $(\bft*\bfeta)_{(j)}$, and from the definition of $\xi_{(j)}$. The second equality follows from the global residue condition of~$\bft \ast \bfeta$ as restated in Proposition~\ref{prop:kdiffGRC}, which implies that $\rho_k(\gamma) = 0$ for all $j< k \leq i$. The last equality again follows from the definition of $\xi_{(i)}$ and the fact that $\rho_i(\gamma) = 0$.
\end{proof}
\par
This proof shows in particular the following.
\par
\begin{cor}\label{cor:modif}
The modifying differential~$\bfxi$ is uniquely determined by the choice of the
subspace $V'\subset  H_1(\Sigma \setminus P_\bfs; \QQ)$ and the multicurves~$B_i$.
Its \lw components $\xi_{(i)}$ depend only on~$t_j$ and~$\eta_{(j)}$ for $j<i$.
\end{cor}
\par
\begin{exa}
We illustrate the objects introduced in the proof of Proposition~\ref{prop:constrModif} in the context of a slight simplification of our running example, as pictured in Figure~\ref{cap:modifdiff}, with one pole denoted by~$p$ (so the level graph is still a triangle, but the irreducible components are simpler).  The family of modifying differentials~$\bfxi$ depends on the parameters $\bft=(t_{0},t_{-1},t_{-2})$.
 \par

\begin{figure}[htb]
	\centering
\begin{tikzpicture} [scale=.54]
\tikzset{->-/.style={decoration=
		{markings, mark= at position .8 with {\arrow{#1}},},postaction={decorate},},
	lw/.style={line width=.2}, 
	color1/.style={color=red}, 
	color2/.style={color=violet}, 
	color3/.style={color=green}, 
    color4/.style={color=blue} 
}
\draw plot [smooth cycle,tension=.6] coordinates {(0,0) (-.7,-2.8) (-.3,-4.1) (1.5,-4.8) (3.5,-4.5) (4.2,-3.5) (4,-2.2) (5.4,-1.2) (5.7,-.2) (5.4,.6) (4,1.5) (4.8,3.8) (4.2,5.8) (1.7,6.7) (-.8,5.3)};

\draw plot [smooth cycle,tension=.6] coordinates {(0,0) (.3,-2) (1.7,-2.8) (4,-2.2) (2.7,0) (4,1.6)  (2.9,2.8) (1.8,2.7) (.7,1.8)};

\coordinate (P1) at (0.25,1.34);
\coordinate (P2) at (4,2.35);
\coordinate (P3) at (4.1,-1.57);
\coordinate (z1) at (1.67,5);
\coordinate (z2) at (3.9,.35);
\coordinate (z3) at (1.8,-3.5);

\begin{scope}[lw] 

\begin{scope}[xshift=4.65cm,yshift=-5.25cm]
\draw[rotate around={135:(.8,4.3)}] (.8,4.2) -- (.8,4.4) node[right] {$p$};
\draw[rotate around={225:(.8,4.3)}] (.8,4.2) -- (.8,4.4);
\end{scope}

\begin{scope}
\draw (1.8,-3.4) -- (1.8,-3.6)  node[below] {$z$};
\draw[rotate around={60:(z3)}] (1.8,-3.4) -- (1.8,-3.6);
\draw[rotate around={120:(z3)}] (1.8,-3.4) -- (1.8,-3.6);
\end{scope}
\end{scope}


\begin{scope}[xshift=1.1cm, yshift=4cm, rotate=5]
\draw (0,0) arc (180:360:.6cm and .3cm);
\draw (.1,-.15) arc (180:0:.5cm and .2cm);
\end{scope}

\begin{scope} [xshift=4.1cm, yshift=-.25cm, rotate=0]
\draw (0,0) arc (180:360:.6cm and .3cm);
\draw (.1,-.15) arc (180:0:.5cm and .2cm);
\end{scope}

\draw (2,-5.9) node[] {$(X,\bfeta)$};
\node at (9,-3.9) {$(X_{(-2)},\eta_{(-2)})$};
\node at (9,-.5) {$(X_{(-1)},\eta_{(-1)})$};
\node at (9,4) {$(X_{(0)},\eta_{(0)})$};

\node at (.4,-.4) {$q_{1}$};
\node at (4.45,1.45) {$q_{2}$};
\node at (4.4,-2.5) {$q_{3}$};

\begin{scope}[xshift=-11cm]
\draw plot [smooth cycle,tension=.6] coordinates {(-.25,0) (-.7,-2.8) (-.3,-4.1) (1.5,-4.8) (3.5,-4.5) (4.2,-3.5) (4.2,-2.2) (5.4,-1.2) (5.7,-.2) (5.4,.6) (4.2,1.5) (4.8,3.8) (4.2,5.8) (1.7,6.7) (-.8,5.3)};

\draw plot [smooth cycle,tension=.6] coordinates {(.25,0) (.3,-2) (1.7,-2.8) (3.6,-2.2) (2.7,0) (3.7,1.5)  (2.9,2.8) (1.8,2.7) (.7,1.8)};

\coordinate (P1) at (0.25,1.34);
\coordinate (P2) at (4,2.35);
\coordinate (P3) at (4.1,-1.57);
\coordinate (z1) at (1.67,5);
\coordinate (z2) at (3.9,.35);
\coordinate (z3) at (1.8,-3.5);



\begin{scope}[lw] 

\begin{scope}[xshift=4.65cm,yshift=-5.25cm]
\draw[rotate around={135:(.8,4.3)}] (.8,4.2) -- (.8,4.4) node[right] {$s_{2}$};
\draw[rotate around={225:(.8,4.3)}] (.8,4.2) -- (.8,4.4);
\end{scope}
\begin{scope}
\draw (1.8,-3.4) -- (1.8,-3.6)  node[below] {$s_{1}$};
\draw[rotate around={60:(z3)}] (1.8,-3.4) -- (1.8,-3.6);
\draw[rotate around={120:(z3)}] (1.8,-3.4) -- (1.8,-3.6);
\end{scope}
\end{scope}


\begin{scope}[xshift=1.1cm, yshift=4cm, rotate=5]
\draw[color1] (.42,-.92) circle [x radius=.13cm, y radius=.67cm,  rotate=-23];
\draw[color1, -latex, rotate=-23] ($(.75,-.8) + (10:.13cm and .67cm)$(P) arc (10:15:.13cm and .67cm);
\draw (0,0) arc (180:360:.6cm and .3cm);
\draw (.1,-.15) arc (180:0:.5cm and .2cm);
\node[color=black] at (3.7,-6.2) {$\lambda_{6}$};
\end{scope}

\begin{scope} [xshift=4.1cm, yshift=-.25cm, rotate=0]
\draw[color1] (.6,-.9) circle [x radius=.13cm, y radius=.61cm, rotate=-5];
\draw[color1, -latex, rotate=-5] ($(.67,-.9) + (0:.13cm and .61cm)$(P) arc (0:5:.13cm and .61cm);
\draw (0,0) arc (180:360:.6cm and .3cm);
\draw (.1,-.15) arc (180:0:.5cm and .2cm);
\node[color=black] at (-2.3,2.3) {$\lambda_{8}$}; 
\end{scope}

\begin{scope}[color4, xshift=.8cm, yshift=-.5cm, rotate=-90]
\draw (.6,-.9) circle [x radius=.13cm, y radius=.25cm];
\draw[-latex] ($(.6,-.9) + (0:.13cm and .61cm)$(P) arc (0:5:.13cm and .61cm);
\node[color=black] at (.2,-1.85) {$\lambda_{1}$};
\end{scope}

\begin{scope}[color4, xshift=4.91cm, yshift=1.6cm, rotate=-90]
\draw (.6,-.9) circle [x radius=.13cm, y radius=.61cm];
\draw[-latex] ($(.6,-.9) + (0:.13cm and .61cm)$(P) arc (0:5:.13cm and .61cm);
\node[color=black] at (.45,-1.95) {$\lambda_{2}$};
\end{scope}

\begin{scope}[color4, xshift=4.74cm, yshift=-1.75cm, rotate=-90]
\draw (.6,-.9) circle [x radius=.13cm, y radius=.31cm];
\draw[-latex] ($(.6,-.9) + (0:.13cm and .61cm)$(P) arc (0:5:.13cm and .61cm);
\node[color=black] at (.8,-.2) {$\lambda_{3}$};
\end{scope}

\begin{scope}[color1, xshift=4.45cm, yshift=.15cm, rotate=-90]
\draw (.6,-.9) circle [x radius=.13cm, y radius=.70cm];
\draw[-latex] ($(.6,-.9) + (0:.13cm and .61cm)$(P) arc (0:5:.13cm and .61cm);
\node[color=black] at (.6,-1.95) {$\lambda_{4}$};
\end{scope}

\begin{scope}[color1, xshift=5.46cm, yshift=1.12cm, rotate=-40]
\draw (.6,-.9) circle [x radius=.13cm, y radius=.40cm];
\draw[-latex] ($(.6,-.9) + (0:.13cm and .61cm)$(P) arc (0:5:.13cm and .61cm);
\node[color=black] at (.6,.05) {$\lambda_{5}$};
\end{scope}

\begin{scope}[color1, xshift=1.05cm, yshift=6.31cm, rotate=0]
\draw (.6,-.9) circle [x radius=.13cm, y radius=1.29cm];
\draw[-latex] ($(.6,-.9) + (0:.13cm and .61cm)$(P) arc (0:5:.13cm and .61cm);
\node[color=black] at (.7,1.0) {$\lambda_{7}$};
\end{scope}
\draw (2,-5.9) node[] {$\Sigma$};

\draw[->] (6.5,0) -- node[above] {$f$}  (9.5,0) ; 
\end{scope}

\end{tikzpicture}
	        \caption{The marked surface together with the multicurves $\Lambda$ and $\Lambda_{\rm \max} $.}
	\label{cap:modifdiff}
\end{figure}
\par
The vertical multicurve $\Lambda^{v}$ (in \textcolor{blue}{blue} in
Figure~\ref{cap:modifdiff}) consists of curves $\lambda_{1}$, $\lambda_{2}$
and~$\lambda_{3}$, which are all homologous to each other. The filtration of
the $V_{i}$ induced by the multicurve $\Lambda^{v}$ is then given by
$$0 \= V_{-3}\subset V_{-2} \= \langle\lambda_{1}\rangle \= V_{-1} \= V\,,$$
where $\langle\cdot\rangle$ denotes the linear span. Hence the maps $\rho_{i}$ are
given by $\rho_{-2}(\bft)(\lambda_{1}) = t_{-2}^{a_{-2}} \cdot a \in \CC$ and $\rho_{-1}(\bft)(\lambda_{1})=0$, where $a$ is $2\pi\sqrt{-1}$ times the residue of $\eta$ at the corresponding node.
We choose the maximal multicurve $\Lambda_{\rm max}\supset \Lambda$ by adding the
curves $\lbrace \lambda_4,\dots,\lambda_8\rbrace$, shown in \textcolor{red}{red}.
Then we have in homology the equalities
$$\lambda_1 \= \lambda_2 \= \lambda_4+\lambda_5 \= \lambda_3 \=
\lambda_4+\lambda_6 \= \lambda_7 + \lambda_8\,,$$
and thus $V'=\langle\lambda_{1},\lambda_{4},\lambda_{7}\rangle$. We choose the sets $B_{i}$ to be  $B_{-2}=\lbrace \lambda_{4},\lambda_{7}\rbrace=B_{-1}$. Then the extension $\tilde\rho_{-2}$ of the map $\rho_{-2}$ defined on $V_{-2}=\langle\lambda_1\rangle$ is given by requiring $\tilde\rho_{-2}(\bft)(B_{-2})=0$. That is,
\[\tilde\rho_{-2}(\bft)(\lambda)=\left\{
  \begin{array}{rl}
    t_{-2}^{a_{-2}}\cdot  a\,,  & \text{ if }  \lambda \= \lambda_{1}\,, \\
    0\,,  & \text{ if } \lambda \= \lambda_{4},\lambda_{7} \,.\\
  \end{array}
\right.\]
Similarly, the extension $\tilde\rho_{-1}$ of $\rho_{-1}$ from
$V_{-1}=\langle\lambda_1\rangle$ to $V'$ is defined by requiring $\tilde\rho_{-1}(\bft)(B_{-1})=0$,
hence $\tilde\rho_{-1}$ is simply identically zero.
\par
We can now define the modifying differentials $\xi_{(i)}$. Following the construction
in the proof, we see that the differential $\xi_{(-2)} = 0$ identically. The
differential $\xi_{(-1)}$ is supported on the component on the right, which is at
level~$-1$. It has simple poles at~$q_2$ and~$q_3$ with residues $\pm t_{-2}^{a_{-2}}\cdot a/2\pi \sqrt{-1}$,
is holomorphic at $p$, has period zero over $\lambda_4$, and has periods~$t_{-2}^{a_{-2}}\cdot a$
over~$\lambda_{5}$ and~$\lambda_{6}$.  Finally, the differential $\xi_{(0)}$ has
simple poles at~$q_1$ and~$q_2$, with residues $\pm t_{-1}^{a_{-1}}t_{-2}^{a_{-2}}\cdot a/2\pi \sqrt{-1}$, has period
zero over~$\lambda_7$, and period~$t_{-1}^{a_{-1}}t_{-2}^{a_{-2}}\cdot a$ over $\lambda_{8}$.  To see these,
consider for example the period of $\xi_{(0)}$ at $\lambda_{8}$. By definition it is given by
$$\int_{\lambda_8} \xi_{(0)} \= \tilde\rho_{-2}(\lambda_{8}) + \tilde\rho_{-1}(\lambda_{8})
\=\rho_{-2}(\lambda_{1}) + \rho_{-1}(\lambda_{1}) \= t_{-2}^{a_{-2}}\cdot a + 0 \= t_{-2}^{a_{-2}}\cdot a\,,$$
since $\lambda_1$ is homologous to $\lambda_7 + \lambda_8$, and since $\tilde\rho_{-2}
(\lambda_7) = \tilde\rho_{-1} (\lambda_7) = 0$ for $\lambda_7 \in B_{-2}, B_{-1}$.
The other cases can be computed similarly.
\end{exa}

\subsection{Perturbed period coordinates}
\label{sec:perturbed}
We will now perturb the usual notion of period coordinates, to avoid using marked
points and zeros that are at the nodes, and instead choosing different \changed{points near the nodes to be endpoints of period integrals}.
We first introduce the preparatory material in full generality, and then define
the perturbed period coordinates under the simplifying assumption that there are
no horizontal nodes. We extend these coordinates to the case with horizontal nodes
in Section~\ref{sec:extendperturbed}.
\par
To define the perturbed period map we need to specify additional
marked points near the vertical \changed{nodes} of~$\eta$ and we need to
recall various spaces defined by residue conditions, together with the
dimension estimates from~\cite{kdiff}.
\par
Recall that $\Sigma_{(i)}^{c}$ was defined in the paragraph preceding  Definition~\ref{def:degeneration} as the subsurface of $\Sigma$ at level $i$ where the boundary curves have been collapsed to points.
The \Teichmuller\ markings up to twist group of the welded surfaces
in the model domain induce markings $f_i\colon  \Sigma_{(i)}^{c} \to \calX_{(i)}$ of the
families of connected components of the subsurfaces at level~$i$.
Denote by $P_{\bfs,i}$ and $Z_{\bfs,i}$ \changed{the} 
marked poles and zeros that lie on the compact level~$i$ subsurface~$\Sigma_{(i)}^c$.
We denote by $Q_{E,i}^{\pm}$ \changed{the set of 
sections} on~$\Sigma_{(i)}^c$ mapping to the preimages of the nodes~$E$. We define the sets of points
\begin{equation}
\label{eq:PiZi}
P_i = P_{\bfs, i} \cup Q_{E,i}^{-} \quad \text{and} \quad Z_i = Z_{\bfs,i} \cup Q_{E,i}^{+}
\end{equation}
for each level~$i$.
\par
\medskip
The perturbed period coordinates are roughly the product of the coordinates~$t_i$
and coordinates of the projectivization of certain subspaces~$\calR_i^{\rm grc}$
of $H^1(\Sigma^c_{(i)}\! \setminus\! P_i, Z_i; \CC)$.
Coordinates on the latter are as usual given by all but one of the periods.
\par
To define~$\calR_i^{\rm grc}$  we start with the map
$\changed{H_1}(\Sigma^c_{(i)}\! \setminus\! P_i, Z_i; \CC) \to \CC^{|P_i|}$ given by taking the
integrals over small loops around the points~$P_i$. Note that the image of this map
is contained in the subspace cut out by the residue theorems on each component.
Let $R_i^{\rm grc} \subset \CC^{|P_i|}$ be the subspace cut out further by the matching
residue condition at the horizontal nodes, and the global residue condition, as stated in
Section~\ref{sec:deftwd}. The {\em GRC space} $\calR_i^{\rm grc} \subseteq
\changed{H_1}(\Sigma^c_{(i)}\! \setminus\! P_i, Z_i; \CC) $
is then defined as the preimage of $R_i^{\rm grc}$. If we denote by~$H$ the number
of horizontal nodes of~$\Lambda$, then \cite[Theorem~6.1]{kdiff} can
be restated as follows.
\par
\begin{prop} \label{prop:kdiffDim}
The (open) simple model domain $\Omega\MDs$ is locally modeled on
the sum of the GRC spaces $\oplus_i  \calR_i^{\rm grc}$. This space has dimension
\bes \sum_{i \in L^\bullet(\Lambda)} \dim(\calR_i^{\rm grc}) \= \dim \omodulin(\mu) - H\,.
\ees
\end{prop}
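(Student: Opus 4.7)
The plan is to put local period coordinates on $\Omega\MDs$, identify them with coordinates on $\bigoplus_{i \in L^\bullet(\Lambda)} \calR_i^{\rm grc}$, and then read off the dimension identity from \cite[Theorem~6.1]{kdiff}. The first step is to reduce the claim to $\OBteich$: by construction $\Omega\MDs = \ptwT / \svTw$ where $\svTw$ is a discrete group of rank $N = |L(\Lambda)|$ (Proposition~\ref{prop:rank_of_twist_group} and Lemma~\ref{lm:defsvTw}) acting freely on $\ptwT$, and $\ptwT \to \OBteich$ is a local biholomorphism by Proposition~\ref{prop:univPTWT}; hence locally $\Omega\MDs$ is biholomorphic to $\OBteich$.

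Next I would describe $\OBteich$ via period coordinates. As explained in Section~\ref{sec:TeichTwds}, $\OBteich$ sits as a subspace of a finite unramified cover of the product over $v \in V(\Lambda)$ of twisted Hodge bundles over the \Teichmuller spaces of the pieces $\Sigma_v^c$, and on each such twisted Hodge bundle the global period map is locally biholomorphic onto the corresponding relative cohomology group. Grouping vertices by level, the ambient space of the local period coordinates becomes $\bigoplus_{i \in L^\bullet(\Lambda)} H^1(\Sigma^c_{(i)} \setminus P_i, Z_i; \CC)$. The further constraints that single out $\OBteich$ inside this cover are the matching residue condition at horizontal nodes and the GRC. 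The crucial observation is that, although the GRC couples different levels a priori, its reformulation via period homomorphisms $\rho_i\colon V_i/V_{i-1} \to \CC$ in Proposition~\ref{prop:kdiffGRC} reduces it to a linear constraint at each fixed level $i$ on the residues of $\eta_{(i)}$ at $P_i$ --- which is precisely the definition of $\calR_i^{\rm grc}$. Thus the local model for $\OBteich$, and hence for $\Omega\MDs$, is $\bigoplus_{i \in L^\bullet(\Lambda)} \calR_i^{\rm grc}$.

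Finally, the dimension equality $\sum_i \dim \calR_i^{\rm grc} = \dim \omodulin(\mu) - H$ is \cite[Theorem~6.1]{kdiff}, which computes the codimension of the stratum of twisted differentials compatible with $\lGp$ inside the incidence variety compactification of $\PP\omodulin(\mu)$, with each horizontal node contributing one matching-residue constraint and hence the overall correction $-H$. The main delicate point in the plan is precisely the decoupling of the GRC into level-wise residue conditions, which rests on the triangular structure of the filtration $V_{-N} \subseteq \cdots \subseteq V_{-1} = V$ from Proposition~\ref{prop:kdiffGRC} --- the same structural feature that underlies the inductive construction of the modifying differential in Proposition~\ref{prop:constrModif}.
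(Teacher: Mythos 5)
Your proposal is correct and matches the paper's treatment: the paper offers no independent proof but presents the proposition as a restatement of \cite[Theorem~6.1]{kdiff}, with the local model implicit in the construction of $\OBteich$ as a subspace of (a finite cover of) products of twisted Hodge bundles, cut out by the matching-residue and global residue conditions, exactly as you describe via $\Omega\MDs = \ptwT/\svTw \to \OBteich$ and level-wise period coordinates. The only cosmetic point is that no genuine ``decoupling'' of the GRC is needed, since condition (4) of Section~\ref{sec:deftwd} is already, for each level $i$, a linear condition on the residues of $\eta_{(i)}$ alone, which is precisely how $\calR_i^{\rm grc}$ is defined.
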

\par
For each half-edge~$h$ of~$\Gamma(\Lambda)$ with non-negative $m_{h}$, i.e.\ for
each non-polar marked point in the smooth part of~$\calX$, we denote by
$\bfz(h)$ the corresponding section  of~$\calX \to \calW$. We choose nearby
sections $\sigma_e^+\colon \calW \to \calX$ and  $\sigma_h\colon \calW
\to \calX$ so that
\begin{equation}\label{eq:constdistance}
\int_{\changed{q^+_e(w)}}^{\sigma_e^+(w)} \eta_{(i)} \= {\rm const}
\quad \text{and} \quad
\int_{\changed{\bfz(h)(w)}}^{\sigma_h(w)} \eta_{(j)} \= {\rm const}\,,
\end{equation}
where $i = \ell(e^+)$ and $j = \ell (h)$ are the corresponding levels that contain
the (short fiber-wise) integration paths respectively.
\par
As the final preparation step, note that the form $\bft \ast (\bfeta + \bfxi)$
on $\calX$ may no longer have a zero of the prescribed order at $\bfz(h)$ because of
the modifying differential~$\bfxi$. In the process of plumbing in
Section~\ref{sec:plumbing}, we will describe a local surgery
of~$\calX$ in a neighborhood of the sections $\bfz(h)$ corresponding to the
half-edges~$h$, such that the images of the sections~$\sigma_h$
are untouched by the surgery, and the extension of $\bft \ast (\bfeta + \bfxi)$
to the resulting family again has a
zero of order $\ord_{\bfz(h)} \bfeta$ along a section that we still denote by
$\bfz(h)\colon \calW \to \calX$.
\smallskip
\par
Finally, we can now define the {\em perturbed period map at level $i$}
under the hypothesis that there are no horizontal nodes. We fix
homology classes $\gamma_1,\dots, \gamma_{n(i)}$  such that their periods
$\int_{\gamma_j} \eta_{(i)}$ form a basis of $\calR_i^{\rm grc}$.
Stability of the curve~$\calX$ implies that for each~$i$ at least one
of the periods $\int_{\gamma_j} \eta_{(i)}$ is non-zero, say for
$j=n(i)$. \changed{Since~$\xi_{(i)}$ is small, in fact divisible by a power
of~$t_{i-1}$, we may and will assume in the sequel that
the rescaled forms~$\bfeta$ (and along with them the rescaling parameters~$\bft$)
have been chosen in Section~\ref{sec:universalMD}  so that the last period at each is normalized, by the condition $\int_{\gamma_{n(i)}}
(\eta_{(i)} + \xi_{(i)}) = 1$.}
We thus denote by $\calR_i'\subset\calR_i^{\rm grc}$ the
codimension one subspace  generated by the periods of
$\gamma_1,\dots,\gamma_{n(i)-1}$ for all levels $i<0$, and we let
$\calR_0'=\calR_0^{\rm grc}$, as the differential on the top level is
not considered up to scale. \changed{We let $n'(i) = \dim \calR_i'$, i.e,
$n'(i) = n(i)-1$ for $i\neq 0$ and $n'(0)= n(0)$.}
\par
The perturbed period map is then built with the help of
\begin{equation}\label{eq:defPPer}
\PPer_i\colon \begin{cases} \begin{array}{ccl} \calW &  \to &
\calR_i'\,,  \\
[(X,\bfeta,\bft)] &\mapsto &
\left(\int_{\gamma_j}  (\eta_{(i)} +\xi_{(i)})\right)_{j=1}^{n'(i)}\,.
     \end{array}
\end{cases}
\end{equation}
  Here the integrals are over the $f_i$-images
of the cycles, but \changed{\emph{we integrate from the points
$\sigma^+_e(w)$} for cycles starting or ending at a point in $Q_{E,i}^{+}$},
where $w = [(X,\bfeta,\bft)]$.
\par
\changed{In what follows we denote by $\calW_\Lambda = \cap_{i \in L(\Lambda)} D_i$ the most degenerate stratum in $\calW$, where 
$D_i$ is fiber-wise defined by $\{ t_i = 0\}\subset \mathbb{C}^{L(\Lambda)}$ as introduced right above~\eqref{eq:stratMDs}.}
\par
\begin{prop} \label{prop:pertper}
The perturbed period map
\begin{equation}\label{eq:defPPnohor}
\PPer\colon \calW \to \CC^{L^\bullet(\Lambda)} \times \bigoplus_{i\in L^\bullet(\Lambda)}
\calR_i'\,, \qquad
[(X,\bfeta,\bft)] \mapsto \Big(\bft\,;\oplus_{i \in L^\bullet(\Lambda)}\PPer_i\Big)
\end{equation}
is open and locally injective on a neighborhood of $\calW_\Lambda$ inside of~$\calW$. 
\end{prop}
\par
\begin{proof}
We need to show that the derivative of $\PPer$ is surjective
along the boundary stratum $\calW_\Lambda$, since surjectivity is an open condition,
and since this surjectivity implies openness. At a point of $\calW_\Lambda$
the $i$-th summand $\PPer_i$ consists of the usual period coordinates for $\eta_{(i)}$,
shifted by a constant since we integrate from a nearby point
(using the absence of horizontal nodes by our assumption). Here along the most degenerate
stratum $\calW_\Lambda$, the integral
of $\xi_{(i)}$ is identically zero, because by definition $\xi_{(i)}$ is
divisible by~$t_{i-1}^{a_{i-1}}$, and $\calW_\Lambda$ is defined by the equations $t_j = 0$ for
all $j\in L(\Lambda)$. In the complementary directions, surjectivity is
obvious since the $t_i$ are coordinates on the domain and are included in the target
of $\PPer$.
\par
Since $\calW$ is smooth and of the same dimension as $\oplus_{i\in L^\bullet(\Lambda)}
\calR_i'$ by Proposition~\ref{prop:kdiffDim} (recall that we are still working under the assumption of no horizontal
nodes), surjectivity of the derivative
of $\PPer$ implies injectivity of the derivative map at any boundary point
in $\calW_\Lambda$, and hence local injectivity in some neighborhood.
\end{proof}
\par
\begin{exa}
 We give the description of the perturbed period coordinates in the setting of our
 running example of Section~\ref{sec:runningex}.  Hence the differentials that we consider are in the closure of the meromorphic stratum  $\Omega\calM_{5,4}(4,4,2,-2)$. More precisely, we consider the enhanced dual graph $\eG_{2}$ on the right of Figure~\ref{cap:running}. In this case the differential $\eta_{(-2)}$ is in $\omoduli[0,3](4,-2,-4)$, $\eta_{(-1)}$ is in  $\omoduli[1,4](0,4,-2,-2)$ and $\eta_{(0)}$ is in $\omoduli[3,3](2,2,0)$. Since the global residue condition imposes precisely the condition that the residue of~$\eta_{(-1)}$ at $q_{e_{1}}^{-}$ is zero, the GRC space is the product of the top and bottom~$H^{1}$ with the hyperplane of the middle $H^{1}$ given by this residue condition.
 \par
 \input{pic_plumbing_dot_tag_3_colored}
 \par
We will consider the deformations over a disc $\Delta^{2}=\Delta_{t_{-1}}\times
\Delta_{t_{-2}}$ which parameterizes the smoothing of the levels of $(X,\eta)$. Note that
the residues at the poles of the differential $\eta_{(-2)}$ are non-zero
(see \cite[Lemma~3.6]{strata}). The family of modifying differentials~$\bfxi$ consists
of $\xi_{(0)}$ on $X_{(0)}$ and $\xi_{(-1)}$ on $X_{(-1)}$, where $\xi_{(0)}$ is divisible
by $t_{-1}^{3}$, and $\xi_{(-1)}$ is divisible by $t_{-2}^{3}$. Moreover, $\bfxi$ vanishes
identically on~$X_{(-2)}$. In Figure~\ref{cap:changebasis} we show a basis of the cycles
of integration before and after the plumbing construction described later in
Section~\ref{sec:Dehn}. In this basis, the map $\PPer_{0}$ is given by the map which
associates the integrals of $\eta_{(0)}+ \xi_{(0)}$ along the cycles belonging to~$X_{(0)}$. The maps $\PPer_{-1}$
and $\PPer_{-2}$ are defined analogously.
 \par
 We now describe how the perturbed period coordinates behave over the base~$\Delta^{2}$.
Note that since our construction is local, we can identify the circles $\alpha_j$
and $\beta_j$ for $j=1,\dots,4$ with the circles $\alpha_j^{0}$ and $\beta_j^{0}$. On the
subsurface~$X_{(0)}$ the restriction of the differential $\eta_{(0)}+ \xi_{(0)}$ on $\alpha_j^{0}$  and  of the plumbed differential to~$\alpha_j$
clearly coincide (where all the~$t_i$ are non-zero) under this identification. The
case of the subsurface~$X_{(-1)}$ is similar. Note that if the modifying differentials
vanish, then the period of each cycle on~$X_{(0)}$ would be a constant and the period
of each cycle on~$X_{(-1)}$ would be of a constant times~$t_{-1}^{3}$.
\par
We now consider the relative cycles $\gamma_{k}$ which degenerate to the relative cycles~$\gamma_{k}^{0}$. The period for $\gamma_{1}$ is equal to the period for $\gamma_{1}^{0}$ plus a function of~$t_{-1}$ and $t_{-2}$ which is zero on $\{ t_{-1}t_{-2}=0 \}$. This function depends on the choice of the points near $z_{1}$, near the node, and the way that we glue the plumbing fixture in the nodal differential. The case of the cycles $\gamma_{k}$ for $k=2, 3$ is similar.
\par
Finally, note that the period of $\bft \ast \bfeta$ at the homotopic cycles
$\delta_1$ and $\delta’_1$ is a function $2\pi  i r$ that is divisible by
$(t_{-1} t_{-2})^3$, where~$r$ is the residue at the corresponding node.  This
is consistent with the GRC.
\end{exa}

\section{The Dehn space and the complex structure }
\label{sec:Dehn}

In order to understand the structure of~$\LMS$ at the boundary,
we introduce an auxiliary space~$\ODehns$, the \emph{simple Dehn
space}, which is a direct analogue of the classical Dehn space.
The goal of this section is to give, for each~$\Lambda$, the topological
space~$\ODehns$ the structure of a complex manifold, which will then be used
to give~$\LMS$ its complex structure.  This complex structure is induced
by \emph{plumbing maps}  $\OPl\colon  U\to \ODehns$, defined by a local
plumbing construction on the universal family of \auxds, which we will
show give an atlas of complex coordinate charts on the simple Dehn space.
\par
There is a natural open forgetful map $\ODehns\to\LMS$, and~$\LMS$ is
covered by the images of these maps as~$\Lambda$ ranges over all
enhanced multicurves. The conclusion of this section is summarized in
Theorem~\ref{thm:LMScomplex}, where we use the complex structures on
the $\ODehns$ to give $\LMS$ the structure of a smooth complex
orbifold.  \changed{Roughly speaking, this means that $\LMS$ has an
  atlas of charts locally modeled on quotients of domains in $\cx^N$
  by finite groups of biholomorphic automorphisms.  We refer the
  reader to the appendix to \cite{chenruan} or \cite[\S3.4]{acgh2} for
  careful introductions to orbifolds.}
\par
Throughout this section, we fix an enhanced multicurve~$\Lambda$ with
dual graph~$\Gamma$ having $N+1 = |L^\bullet(\Lambda)|$ levels and $H$ horizontal
nodes.

\subsection{The Dehn space}

Informally, $\ODehn$ is the moduli space of $\Tw$-marked \msds of type
$(\mu,\Lambda')$, where $\Lambda'$ is any undegeneration of $\Lambda$.
The simple Dehn space~$\ODehns$ is the analogous space of $\sTw$-marked
differentials.
\changed{Our definition of the Dehn space in this section
parallels the definition of the classical Dehn space in Section~\ref{sec:dehn-space-deligne}.
See also the construction in \cite[Section 7]{HubKoch}.}
\par
More formally, the \emph{Dehn space associated with~$\Lambda$}   \index[teich]{f040@$\ODehn $!Dehn space associated with~$\Lambda$}
is the topological space
\begin{equation}\label{eq:defDehn1}
\ODehn \=
\Bigl(\coprod_{\Lambda' \rightsquigarrow \Lambda} \msT[\Lambda']\Bigr)/\vTw\,,
\end{equation}
where we endow this disjoint union with the subspace topology induced from the
topology of the augmented \Teichmuller\ space of flat surfaces $\Oaugteich$, and
recall from~\eqref{eq:defbdstrata} that $\msT[\Lambda']$ are boundary strata
in $\Oaugteich$). We can write the space equivalently as
\begin{equation}\label{eq:defDehn2}
\ODehn \= \coprod_{\Lambda' \rightsquigarrow\Lambda} \ODstratum
\quad \text{where} \quad \ODstratum \= \msT[\Lambda'] / \vTw[\Lambda] \,.
\end{equation}
The \emph{simple Dehn space} is defined by
\begin{equation}\label{eq:defDehns2}
\ODehns \=
\Bigl(\coprod_{\Lambda' \rightsquigarrow \Lambda} \msT[\Lambda']\Bigr)/\svTw
\= \coprod_{\Lambda' \rightsquigarrow \Lambda} \ODstratums\,,
\end{equation}
where $\ODstratums \= \msT[\Lambda'] / \svTw[\Lambda]$.
\index[teich]{f050@$\ODehns $!Simple Dehn space associated
  with~$\Lambda$}
The \emph{simple vertical Dehn space} $\ODehnsv \subset \ODehns$ is
\index[teich]{f055@$\ODehnsv  $!simple vertical Dehn space}
the locus consisting of surfaces where every horizontal edge of
$\Lambda$ corresponds to a horizontal node.  In other words,
\begin{equation*}
  \ODehnsv  = \coprod_{\Lambda' \rightsquigarrow \Lambda} \ODstratums\,,
\end{equation*}
where the union is over all \emph{vertical} undegenerations $\Lambda'
\rightsquigarrow \Lambda$.
\par
The finite group $K_\Lambda = \vTw [\eL] / \svTw$ acts on $\ODehns$
with topological quotient $\ODehn$.  We will see that $\ODehns$ is in
fact a smooth manifold with quotient orbifold $\ODehn$.
\par
We write similarly $\Dehn$ and $\Dehns$ for the corresponding spaces where the top
level is projectivized, that is, $\Dehn$ is the quotient of $\ODehn$ under
the $\CC^*$-action.
\par
We refer to a point in the Dehn space (resp.~simple Dehn space) as (the moduli
point of the equivalence class of) a marked \msd $(Y,\bfz,\bfomega,\bfsigma,f)$
where the marking is up to the action of~$\Tw$ (resp.~$\sTw$). Pointwise this is
justified by definition.
\par
\medskip
We now outline the \emph{plumbing construction}.  We divide the
construction into two steps, \emph{vertical plumbing} and
\emph{horizontal plumbing}.
The vertical plumbing construction starts with the universal curve $\OMDsfam \to
\barOMDs$, which we restrict to a neighborhood $\calW_\epsilon$ of a
point $P$ in the deepest boundary stratum.  By cutting out neighborhoods of the
vertical nodes and gluing in standard plumbing fixtures, we construct a new
family of curves $\calY^v \to \calW_\epsilon$ whose generic fiber has only
horizontal nodes.
\par
The horizontal  plumbing construction requires an extra complex parameter for
each horizontal node, parameterizing the modulus and a twist parameter for the
annulus that is glued in.  We consider \changed{$\calY^v$} as a family over the product
$\calW_\epsilon\times \Delta^H$ (which does not depend on the second factor).
By cutting out a neighborhood of each horizontal node and gluing in a standard
plumbing fixture, with parameter given by the second factor, we then construct a
new generically smooth family of curves $\calY\to\calW_\epsilon\times \Delta^H$.
\par
We equip our standard plumbing fixtures with families of one-forms and
choose our gluing maps to identify these forms with those on the target,
so that the family $\calY$ comes with a degenerating family of one-forms $\omega$.
In fact, with more care we give the fibers of $\calY$ the structure of
$\sTw$-marked \msds.
\par
This horizontal plumbing construction is a version of the standard
plumbing construction used to construct coordinates near the boundary
of $\barmoduli[g,n]$, dating back at least to Bers \cite{bersplumb}.
We emphasize that the vertical construction differs from the usual plumbing
in that it does not require extra parameters to describe opening up the
nodes---this is rather prescribed by the relative size of the differentials,
so that they would glue on the plumbed surface.  These plumbing constructions
are not canonical and depend on choices made at several points in the construction.
\par
If the universal property for $\ODehns$ were available, the plumbed
family $\calY \to \calW_\epsilon\times \Delta^H$ would give a holomorphic map
$\calW_\epsilon\times \Delta^H \to \ODehns$.  Unfortunately,
the universal property is not available yet, as we wish to give
$\ODehns$ its complex structure, and then use it in establishing the
universal property.  Instead, we define a \emph{plumbing map}
$\OPl\colon \calW_\epsilon\times \Delta^H \to \ODehns$ stratum-by-stratum, using the
universal property for the boundary strata $\msT[\Lambda']$ parameterizing
equisingular loci in the augmented Teichm\"uller space.
Similarly, the family $\calY^v \to \calW_\epsilon$ will give rise to a
\emph{vertical plumbing map} $\OPlv\colon\calW_\epsilon\to \ODehnsv$.  As the
plumbing constructions are not canonical, neither are these plumbing maps, as
they depend on several choices.
\par
In Section~\ref{sec:PLUMBstandard_coordinates} below, we use the
normal forms from Section~\ref{sec:NF} to construct the gluing
maps used in the vertical plumbing construction.  In
Section~\ref{sec:plumbing} we define a vertical plumbing
construction and a vertical plumbing map.  In
Section~\ref{sec:plhomeo} we show that this
map is a local homeomorphism.  In Section~\ref{sec:plhoriz} we
introduce the horizontal plumbing construction and the full plumbing
map, and show that it is a local homeomorphism, which yields the following
main results of this section.
\par
\begin{thm}\label{thm:plumbing}
For any point~$P$ in the deepest stratum \changed{$\OMDstratum[\Lambda][s,\Lambda]$}
of the model domain  $\barOMDs$, there exists a neighborhood~$\calW_\epsilon\times
\Delta^H$ of $P \times {\bf 0} \in \barOMDs \times \Delta^H$,
and a plumbing map
$$ \OPl\colon \calW_\epsilon\times \Delta^H\to \ODehns\,, $$
which is a local homeomorphism. This map preserves the
stratifications~\eqref{eq:stratMDs} and~\eqref{eq:defDehns2}, is holomorphic
on each stratum,  and is $K_\Lambda$-equi\-variant.
Moreover, the plumbing map $\OPl$ can be chosen to be $\CC^*$-equivariant, and
thus to descend to a plumbing map
$$ \OPl\colon (\calW_\epsilon\times \Delta^H) / \CC^* \to \Dehns $$
that is also \changed{a local homeomorphism, stratum-preserving, holomorphic on each stratum, 
and $K_\Lambda$-equi\-variant.} 
\end{thm}
\par
Note that the deepest stratum of the model domain $\barOMDs[\Lambda]$
can be canonically identified with the deepest stratum of the
corresponding Dehn space (and we implicitly do so throughout this
section).
\par
\begin{thm} \label{thm:DehnIsOrbi}
The collection of all plumbing maps gives an atlas of charts which makes $\ODehns$
and the projectivized version $\Dehns$ a complex manifold. Moreover, for each point
of these spaces the plumbing construction provides a corresponding \msd.
\par
The spaces $\ODehn$ and $\Dehn$ have the structure of complex analytic spaces
(or orbifolds) with at worst abelian quotient singularities.
\end{thm}
\par
\changed{The key point of the proof of this theorem is to show that
  the transition functions of these plumbing maps are biholomorphic.
  This is done in Lemma~\ref{lem:transitions_are_holomorphic} using
  the Riemann Extension Theorem and the fact that plumbing maps are
  local homeomorphisms and holomorphic on generic strata.  (This was
  Bers' approach to define the complex structure of the
  classical Dehn space; see \cite{bers81}.) This shows
  in particular that the complex structure defined by the plumbing
  maps does not depend on any of the many choices made in the plumbing
  construction.   
}

We will discuss the universal
properties of the Dehn spaces in Section~\ref{sec:UnivDehn}.
\par
We now collect some of the properties already proved, which proves the first of
our main results, Theorem~\ref{intro:main} except for item~(4). Recall
that a divisor in an orbifold is said to be normal crossing if it
is the image of a normal crossing divisor in orbifold charts.
\par
\begin{thm} \label{thm:LMScomplex}
The moduli space of \msds is a  complex orbifold $\LMS$ containing
$\omoduli$ as an open dense suborbifold with complement a normal crossing
boundary divisor. The quotient  $\PP\LMS = \LMS/\CC^*$ is compact.
\par
The connected components of  $\LMS$ are in bijection with the connected
components of $\omoduli(\mu)$.
\end{thm}
\par
\begin{proof} The statement combines Theorem~\ref{thm:MSDcompact}
and Theorem~\ref{thm:DehnIsOrbi}. The orbifold structure and the
normal crossing boundary carry over from the model domain, as defined
in Proposition~\ref{prop:MDsmooth} and along with~\eqref{eq:stratMDs}.
The statement about components follows from the smoothness of the
orbifold chart.
\end{proof}

\subsection{The setup and notation for vertical plumbing.} \label{sec:the_setup}

We now set up the notation for the neighborhoods in which the vertical
plumbing construction is performed and for the plumbing
fixtures we need. We fix for the remainder of the plumbing
construction \changed{a base point in $\msT$ which represents the
  equivalence class of some surface $(X_0, \bfeta_0)$} and a local
chart $\Psi\colon \Delta_\epsilon^M \to \msT$, where $M = \dim \msT$,
parameterizing a neighborhood $\calV$ of
$\Psi(\bfzero) = (X_0, \bfeta_0)$.%
\index[plumb]{b010@$(X_0, \bfeta_0)$!Base surface in
  the~$\Lambda$-boundary stratum} \index[plumb]{e020@$\Psi$!Local
  chart center at the base surface}
As in Section~\ref{sec:universalMD}, we let $\calW\subset\barOMDs$ be
the neighborhood of $(X_0, \bfeta_0)$ that is the preimage of $\calV$.  We fix
for the rest of this section a local trivialization of the model
domain over $\calW$, which determines holomorphic functions
$\bft\colon \calW\to \cx^N\times \cx^*$, rescaled forms~$\bfeta$
(\changed{with the normalization of a period over $\gamma_{n(i)}$ at each
level $i\in L(\eG)$ as in the paragraph before~\eqref{eq:defPPer}}), and
prong-matchings $\bfsigma$, so that the tautological one-form
is~$\bft\ast \bfeta$.  
\par
The product map $\Psi\times\bft$ identifies $\calW$ with
$\Delta_\epsilon^{M}\times\cx^N\times\cx^*$.  We will subsequently work on
\index[plumb]{e010@$\calW_\epsilon $!Base of the plumbing construction}
\begin{equation*}
\calW_\epsilon \= \Delta_\epsilon^{M} \times \Delta_\epsilon^{N}
\times \CC^*\subset\calW\,,
\end{equation*}
for $\ve = \ve(X_0,\bfeta_0)$ sufficiently small and to be determined
(first by Theorem~\ref{thm:gluing_maps}, and then to be reduced a finite number of
times in the course of the construction).
\par
In the remainder of the section, we will make the above identification implicit and
simply write $\OMDsfam \to \calW_\epsilon$ for the restriction of the
universal curve to the domain of the chart. We will denote points in $\calW_\ve$
as $(\bfw,\bft)$ with $\bfw \in \Delta_\epsilon^M$, or by \changed{$(X,\bfeta,\bft)$.}
The boundary stratification of the model domain induces a
stratification of  $\calW_\epsilon$.  Given a subset $J\subset
L(\Lambda)$, we define $\calW_\epsilon^J =
\calW_\epsilon\cap\MD^{s,\Lambda_{J}}$.  In other words,
$\calW_\epsilon^J$ is the locus where $t_i=0$ if and only if $i\in J$.
\par
\medskip
We now introduce the notation for our standard annuli and plumbing fixtures,
and families of such. We define the standard round annulus
\index[plumb]{d@$ A_{\delta_1, \delta_2}$!Annulus of inner radius $\delta_{1}$ and outer
radius $\delta_{2}$}\bes
  A_{\delta_1, \delta_2} \= \{z\in \CC : \delta_1 < |z| < \delta_2\}
\ees
and use the base point $p = \sqrt{\delta_1\delta_2}\in A_{\delta_1, \delta_2}$ unless
specified differently. For $\delta = \delta(X_0,\bfeta_0)$ to be determined below,
and $s\in\CC$, we define the standard \emph{plumbing fixture}
\index[plumb]{d@$V({s}) $!Standard plumbing fixture}
\begin{equation}\label{eq:PlFix}
V({s}) \= \{(u,v) \in \Delta_\delta^2: uv = s\}
\end{equation}
together with the \emph{top plumbing annulus} and \emph{bottom plumbing annulus}
  \index[plumb]{d@$A^{\pm} $!Top and bottom plumbing annuli}
  \index[plumb]{d@$R,\delta $!Defining constants of the plumbing annuli}
  \begin{equation}
    \label{eq:PlAnnuli}
A^+ \= \{ \delta/R < |u| < \delta\} \quad\text{and}\quad
A^- \= \{ \delta/R < |v| < \delta\}
  \end{equation}
for some $R$ still to be specified. Unless specified otherwise, we will
use the basepoints 
  \begin{equation}
    \label{eq:pdef}
    p^\pm \= \delta/\sqrt{R} \in A^\pm
  \end{equation}
respectively. For $s=0$ the plumbing fixture is simply
$$ V(0) \= \Delta_{\delta}^+ \cup \Delta_{\delta}^-,$$
i.e.,~two disks joined at a node, with $u$ being the coordinate on~$\Delta_\delta^+$
and~$v$ on~$\Delta_\delta^-$.  \index[plumb]{d@$p^\pm$!Top and bottom marked points}
\par
For each vertical edge $e$ of $\Gamma = \Gamma(\Lambda)$, we define
the \emph{plumbing fixture} $\VV_e\to \calW_\epsilon$ to be the standard
model family of nodal curves over~$\calW_\ve$
\begin{equation}
  \label{eq:plumbing_fixture}
  \VV_e \= \left\{(\bfw, \bft, u, v) \in \calW_{\epsilon} \times \Delta_{\delta}^2
  \,\colon\,  uv = \bft_e \right\}, \quad\text{where}\quad \bft_e = \prod_{i
    = \lbot}^{\ltop-1} t_i^{m_{e,i}} ,
\end{equation}
\index[twist]{d@$\bft_e$! Product over the $t_i^{m_{e,i}}$ over the level
crossed by~$e$}
and where the integers $m_{e,i}$ are defined in \eqref{eq:aidef}. Note that the
fiber of $\VV_e \to \calW_\epsilon$ is an annulus if each $t_i$ in the product
in~\eqref{eq:plumbing_fixture} is non-zero, and a pair of disks meeting at
a node otherwise.
\par
We  denote by $r_e, r_e' \colon \calW_\epsilon \to \CC$ the residue functions
\begin{equation*}
  \changed{r_e \=  \Res_{q_e^-}  \bfeta \quad\text{and}\quad
  r'_e \=  \Res_{q_e^-}  (\bfeta+\bfxi)\,,}
\end{equation*}
where $\bfxi$ denotes \changed{a choice of a} modifying differential, satisfying conditions of Definition~\ref{df:modif}, constructed in Proposition~\ref{prop:constrModif}.
\par
We equip $\VV_e$ with the relative one-form~$\Omega_e$, given in
coordinates by
\begin{equation}\label{eq:reloneform}
  \changed{\Omega_e \= \prodttop\left( u^{\kappa_e} - r_e'\right) \frac{du}{u}
  \= \prodtbot\left(
    -v^{-\kappa_e} + r_e'\right)\frac{dv}{v}\,,}
\end{equation}
where the notation ${\prodttop}$ was introduced in~\eqref{eq:omTeta}. The
two expressions agree if $uv \neq 0$.
\par
In what follows we will carefully choose the sizes of $\delta$ and $\epsilon$ for
the plumbing fixtures in~\eqref{eq:plumbing_fixture}, so that the moduli of the
annuli are sufficiently large, as required by some later parts of the plumbing
construction. We start by fixing a constant
$R>1$, and denote $\delta =\delta(R)$ and $\epsilon = \epsilon(R)$ the corresponding
constants that will be provided by Theorem~\ref{thm:gluing_maps} below.
\par
We define families of disjoint annuli $\calA_e^+, \calA_e^- \subset
\VV_e$ by
\begin{align*}
\calA_e^+ &\= \{(\bfw, \bft, u,v) : |w_i|,|t_j| < \epsilon \text{ for all $i,j$,
and } \delta/R < |u|
 < \delta\} \quad\text{and}\\
\calA_e^- &\= \{(\bfw, \bft, u,v) : |w_i|,|t_j| < \epsilon \text{ for all $i,j$,
and } \delta/R < |v|
  < \delta\}\,.
\end{align*}
We will refer to $\calA_e^+$ and $\calA_e^-$ as the {\em top and bottom
plumbing annuli} corresponding to the vertical edge~$e$. \changed{We call
$\VV_e \setminus \{ \calA_e^+ \cup \calA_e^-\}$ the \emph{trimmed plumbing fixture.}}
\changed{At several points in the sequel we will implicitly identify $\calA_e^+$
and $\calA_e^-$ with $A^+\times \calW_{\epsilon}$ and $A^-\times
\calW_{\epsilon}$ via the coordinates~$u$ and~$v$.}

\subsection{Standard coordinates.} \label{sec:PLUMBstandard_coordinates}

We now apply the normal form theorems of Section~\ref{sec:NF} to the
family $\calX\to \calW_\epsilon$.  Several of these normal forms are
not unique, in which case we simply make an arbitrary choice.
\par
By an application of Strebel's original result
(Theorem~\ref{thm:standard_coordinates}) in families, we know that
for some $\delta_1>0$ and for each node \changed{$e$} there exist \changed{the following maps that are biholomorphic onto their images}
\begin{equation}
  \label{eq:standard_phi}
  \phi_e^+ \colon \calW_\epsilon \times \Delta_{\delta_1}\to \calX_{\ltop}
  \quad\text{and}\quad
  \phi_e^- \colon \calW_\epsilon \times \Delta_{\delta_1}\to \calX_{\lbot}
\end{equation}
(to keep the notation manageable, we write simply $\calX_{\ltop}$
instead of $\calX_{(\ltop)}$) whose restrictions to $\calW_\epsilon\times\{0\}$ correspond  to the loci $Q_e^+$ and $Q_e^-$ respectively, and
which put the form $\bft\ast \bfeta$  in the normal form.
\index[plumb]{b015@$\phi_e^\pm$!Local coordinates at a node $e$}
For a vertical node~$q_{e}$, this normal form is
\index[plumb]{b020@$r_{e}(\bft)$! Residue of $\bfomega$ at the node
  $e$}
\begin{align*}
  (\phi_e^+)^*( \bft\ast\bfeta) &\=  \prodttop u^{\kappa_e} \frac{du}{u}
                                  \quad\text{and}\quad\\
  \changed{(\phi_e^-)^*( \bft\ast\bfeta )} &\changed{\=  \prodtbot \left( -v^{-\kappa_e} +
                                   r_e\right) \frac{dv}{v}\,.}
\end{align*}
\par
As these standard coordinates are not unique, we use the prong-matching $\sigma_e$
to restrict their choice as follows.  Given a choice of $\phi_e^\pm$, in these
coordinates the prong-matching must be of the form $\sigma_e = \zeta du \otimes dv$,
where $\zeta$ is some $\kappa_e$'th root of unity.  We require that our choice of
$\phi_e^\pm$ makes this root equal to 1, so that
\begin{equation}
  \label{eq:sigmae}
  \sigma_e \= du \otimes dv.
\end{equation}
\par
In general, the modified differential $\bft\ast (\bfeta + \bfxi)$ does not admit such
a simple normal form in a neighborhood of a vertical node.  Consider a vertical node
with top section $q_e^{+}\colon \calW_\epsilon \to \calX_{\ltop}$, which is a zero of order
$\kappa_e-1$ of $\bft\ast \bfeta$. Then this zero breaks up into a simple
pole and $\kappa_e$ nearby zeros of the differential $\bft\ast (\bfeta+\bfxi)$.
These extraneous nearby zeros should not belong to our plumbed family, so we will
construct a family of disks~$\calE_e^+$ containing these nearby zeros, which
we will then cut out of~$\calX$.  These disks will be bounded by a family of annuli
$\calB_e^+$, and come with a family of \emph{gluing maps}
$\Upsilon_e^+\colon\calA_e^+\to\calB_e^+$ in which the differential~$\bft\ast (\bfeta+\bfxi)$ has normal form on a family of annuli over~$\calW_\epsilon$.  These objects are constructed in
Theorem~\ref{thm:gluing_maps} below.  This is the basic analytic ingredient in our plumbing
construction.  In Section~\ref{sec:plumbing}, we will use these gluing maps to glue
in the standard plumbing fixture $\VV_e$ defined above.
\par
Adding the modifying differential $\bfxi$ creates a similar problem at the
zero sections~$\bfz(h)$ of~$\calX$.  When the modifying differential
is added, a zero of order~$m_{h}$  breaks into~$m_h$ nearby zeros, but we wish to
construct a family where the order of the zero remains constant.  The solution
is similar, that is, we construct below a family of disks $\calE_h$ around $\bfz(h)$,
and gluing maps that put $\bft\ast (\bfeta + \bfxi)$ into the normal form on a
family of annuli surrounding $\calE_h$. In Section~\ref{sec:plumbing} we
will then cut out these disks and glue in a standard family of disks $\DD_h$.
\par
\begin{thm}
  \label{thm:gluing_maps}
  There exists a constant $\delta>0$ such that for any $R>0$ there
  exists a constant $\epsilon>0$  such that for each vertical
edge $e$ and for each half-edge $h$ of~$\Gamma$ there are families of conformal maps
of annuli
\index[plumb]{e060@$ \upsilon_e^+ $, $\upsilon_e^-$, $\upsilon_h $! Conformal maps
on annuli putting $\bft\ast (\bfeta + \bfxi)$ in
standard form}
\begin{align*}
    \upsilon_e^+ &\colon \calW_\epsilon \times A_{\delta/R, \delta} \to
    \calX_{\ltop}\,,\\
    \upsilon_e^- &\colon \calW_\epsilon \times A_{\delta/R, \delta} \to
    \calX_{\lbot}\,,\quad\text{and}\\
    \upsilon_h &\colon \calW_\epsilon \times A_{\delta/R, \delta} \to \calX_{\ell(h)}\,.
\end{align*}
These maps commute with the projections of the source and target to $\calW_e$,
and have the following properties:
\begin{enumerate}[(i)]
  \item The images of $\upsilon_e^+$, $\upsilon_e^-$, $\upsilon_h$ are families of annuli
    $\calB_e^+, \calB_e^-$, $\calB_h$ that do not contain any zeros or
poles of $(\calX, \bft\ast \bfeta)$.  The families of  annuli $\calB_e^+,\calB_e^-$,
and $\calB_h$ bound families of disks $\calE_e^+, \calE_e^-,$ and $\calE_h$, respectively, where
    \begin{equation*}
      Q_e^+ \subset \calE_e^+ \subset  \calX_{\ltop}\,, \quad
      Q_e^- \subset \calE_e^- \subset  \calX_{\lbot}\,,\quad\text{and} \quad
      \bfz(h) \subset \calE_h \subset  \calX_{\ell(h)}\,.
    \end{equation*}
\item The pullback of $\bft\ast (\bfeta + \bfxi)$ under each of the maps
$\upsilon_e^+$, $\upsilon_e^-$, and $\upsilon_h$, has the normal form on the annulus,
that is
    \begin{align*}
      (\upsilon_e^+)^*(\bft\ast (\bfeta + \bfxi)) &\= \changed{\prodttop\left( z^{\kappa_e} - r_e'\right)
      \frac{dz}{z}}\,, \\
      (\upsilon_e^-)^*(\bft\ast (\bfeta + \bfxi)) &\= \changed{\prodtbot\left( -z^{-\kappa_e} +
        r_e')\right) \frac{dz}{z}}\,, \quad\text{and} \\
      \upsilon_h^*(\bft\ast (\bfeta + \bfxi)) &\= \prodthor\cdot z^{m_{h}}  dz \,.
    \end{align*}
\item The maps $\upsilon_e^+$, $\upsilon_e^-$, and $\upsilon_h$ agree with the corresponding
maps $\phi_e^+$, $\phi_e^-$, and $\phi$ of Theorem~\ref{thm:standard_coordinates}
on the subset of  $\calW_\epsilon\times  A_{\delta/R, \delta}$
where $t_{L-1}= \dots = t_{-N} =0$ with $L=\ltopbot$ or $L=\ell(h)$
respectively.
 \end{enumerate}
Moreover, we may take $\delta$ sufficiently small that the maps
$\upsilon_e^\pm$ and $\upsilon_h$ are injective and have mutually disjoint images.
\end{thm}
We need to allow the constant $R$ to be arbitrarily large to
facilitate the proof that $\OPlv$ is locally injective.  See
Lemma~\ref{lm:acrossthin}, where the choice of $R$ is made.
\par
The location of these annuli is illustrated in the left part of
Figure~\ref{cap:plumbing}. The images of the marked points $p_e^\pm$ and $p_h$
in~$\calX$ are denoted by $b_e^\pm\in\calB^\pm$ and $b_h\in\calB_h$, respectively, for
each vertical edge or half-edge.
\index[plumb]{e070@$b_e^\pm$, $b_h$! Image of $p^{\pm}$ in $\calB^{\pm}$}
\changed{We  use the points $b_e^+$ as the nearby points used
for defining perturbed period coordinates, which were denoted by~$\sigma_e^+$
in~\eqref{eq:constdistance}.}
\par
\begin{proof}
  In the $\phi_e^+$-coordinates, the modifying differential $\bft\ast\bfxi$ becomes
  \begin{equation*}
    (\phi_e^+)^* (\bft\ast\bfxi) = \prodttop\cdot\alpha_e \frac{du}{u}\,,
  \end{equation*}
where $\alpha_e$ is a holomorphic function on the product $\calW_\epsilon\times
\Delta_\epsilon^N \times \Delta_{\delta_1}$  satisfying
\begin{equation*}
  \changed{\prodttop\cdot\alpha_e(\bfw, \bft, 0)= -r_e'(\bfw, \bft)}\quad
  \text{and}\quad \alpha_e(\bfw, {\bf 0}, z) \equiv 0.
\end{equation*}
\par
By
Theorem~\ref{thm:deformed_standard_coordinates}, after possibly decreasing
$\epsilon$, there is a family of conformal maps
  \begin{equation*}
\psi_e \colon \calW_\epsilon \times A_{\delta/R, \delta} \to\calW_\epsilon
\times \Delta_{\delta_1}\,,
  \end{equation*}
which cover the identity map of $\calW_\epsilon$, fix the section
  $\calW_\epsilon\times \{p\}$ (where $p=\delta/\sqrt{R}$), and put $(\phi_e^+)^*(\bft\ast (\bfeta + \bfxi))$
  in the desired normal form as follows:
  \begin{eqnarray*}
(\phi_e^+\circ \psi_e)^*(\bft\ast (\bfeta + \bfxi))
      &=& \changed{\prodttop\psi_e^*\left(  z^{\kappa_e} + \alpha_e(\bfw, \bft, z)\right)\frac{dz}{z}} \,,\\
      &=& \changed{\prodttop \left(     z^{\kappa_e} - r_e'\right) \frac{dz}{z}}\, .
  \end{eqnarray*}
Since $\alpha_e$ is holomorphic,   we may in particular choose $\epsilon$ small
enough so that the zeros of the rightmost form belong to the disk of
radius $\delta/R$. Over the locus where $t_{L-1}=\dots= t_{-N}=0$, the modifying
differential~$\bfxi$ vanishes on level~$L$ (see Corollary~\ref{cor:modif}), which means $\psi_e$ preserves
the form $z^{\kappa_e}\frac{dz}{z}$ and fixes the point $b_e$, and
  thus~$\psi_e$ is the identity over this locus.   We then define
  $\upsilon_e^+ = \phi_e^+\circ \psi_e$. The desired family of disks
  $\calE_e^+$ is then $\phi_e^+(\calV_e)$, where $\calV_e$ is the
  bounded component of the complement of the family of annuli
  $\psi_e(\calW_\epsilon \times A_{\delta/R, \delta})$.
\par
The construction of $\upsilon_e^-$ is much simpler at a pole, as then we need only
to apply Theorem~\ref{thm:standard_coordinates} to construct a map
$\upsilon_e^-$ putting $\bft\ast (\bfeta  + \bfxi)$ in the normal form.
This works in a neighborhood of the node, and we may of course restrict to a family
of annuli.
\par
In the case of a half-edge, the construction of the map $\upsilon_h$ follows
from the same technique.  In this case, the modifying differential $\bfxi$ is
holomorphic along the zero section $\bfz_{h}$, so the resulting
normal form of $\upsilon_h^*(\bft\ast (\bfeta+\bfxi))$ has no residue.
\end{proof}

\subsection{The vertical plumbing construction}
\label{sec:plumbing}

We now present the basic plumbing construction. The plumbing starts from a
family $\OMDsfam \to \calW_\epsilon$ equipped with the family of differentials
$\bft\ast \bfeta$ (as defined in Section~\ref{sec:the_setup}) together with a \changed{chosen}
modifying differential~$\bfxi$, and builds a family of meromorphic stable differentials
$(\calY^v \to \calW_\epsilon,\omega,\bfz)$ that vanishes identically on the lower
level components over the boundary divisor and is elsewhere holomorphic and
nonzero, except for the prescribed zeros and poles $\bfz(h)$.
\par
We define conformal isomorphisms $\Upsilon_e^\pm \colon \calA_e^\pm \to
\calB_e^\pm\subset\calX$ by
\begin{equation*}
\Upsilon_e^+(\bfw, \bft, u, v) \= \upsilon_e^+(\bfw, \bft, u)
\quad\text{and}\quad
\Upsilon_e^-(\bfw, \bft, u, v) \= \upsilon_e^-(\bfw, \bft,v)\,,
\end{equation*}
where $\calB_e^\pm$ and $\upsilon_e^\pm$ are defined in Theorem~\ref{thm:gluing_maps}.
These maps identify each standard form $\Omega_e$ on the annuli with $\bft\ast (\bfeta + \bfxi)$, as desired.
By abuse of notation, we will refer to both $\calA^+_e$ and its image $\calB^+_e$ as
the top plumbing annuli, and to both $\calA^-_e$ and $\calB^-_e$
as the bottom plumbing annuli corresponding to the edge~$e$.
\index[plumb]{e040@$\calB^-$, $\calB^+$!Bottom and top plumbing annuli in~$\calX$}
\par
For each half-edge $h$, we denote the family of conformal isomorphisms provided in
Theorem~\ref{thm:gluing_maps} by
\begin{equation*}
  \Upsilon_h \= \upsilon_h\colon \calA_h \to \calB_h\subset \calX\,.
\end{equation*}
We let $\calY^v\to \calW_\epsilon$ be the family of curves obtained by removing
from~$\calX$ the families of disks $\calE_e^\pm$ and $\calE_h$ and attaching
each family $\VV_e$ and~$\DD_h$ by identifying the $\calA$-annuli to the
$\calB$-annuli via the $\Upsilon$-gluing maps.  As the gluing maps respect the one-forms,
the family $\calY^v$ inherits a relative one-form $\bfomega$.
\par
We denote the plumbing annuli as subsurfaces of $\calY^v$ by
$\calC_e^\pm$ and $\calC_h$, denote the middle annuli bounded by the
$\calC_e^\pm$ as $\calF_e$, and denote $c_e^\pm$ the images of the
points~$p^{\pm}$ in $\calC_e^\pm$.
\index[plumb]{e050@$\calC^-,\calC^{+}$!Bottom and top plumbing annuli
  in $\calY^v$} \index[plumb]{e080@$c_e^\pm$! Image of the points
  $p^{\pm}$ in $\calC_e^\pm$} These points are defined near the
corresponding vertical and horizontal nodes, but for the latter the
sign is an arbitrary auxiliary choice. The final result of plumbing is
illustrated on the right in Figure~\ref{cap:plumbing}.
\input{pic_plumbing_dot_tag}
\par
We will later use the following consequence of the construction and the fact
that the modifying differential~$\bfxi$ on level~$i$ depends only on the
levels below~$i$ and the topological data, see Corollary~\ref{cor:modif}.
\par
\begin{prop} \label{prop:annuluslocation}
\changed{Given the modifying differential~$\bfxi$,} for each edge~$e$ the location of the family
of annuli $\calB_{e}^+ \subset \calX_{\ltop}$ depends only on the
subsurfaces $(\calX_{(i)}, \eta_{(i)})$ and on the values of~$t_i$ for $i \leq \ltop$.
\end{prop}

\subsection{ \changed{The vertical plumbing map}} \label{sec:defPlmap}
We now construct the vertical plumbing map $\OPlv\colon \calW_\epsilon \to
\ODehnsv$.  \index[plumb]{a020@$\OPlv$! Vertical plumbing map}
Over the generic stratum $\calW_\epsilon^\emptyset$, as~$\omega$ does not vanish
identically on any fiber, this map arises from the standard universal
property of the stratum of marked differentials $\Oteich$.
\par
Now consider a boundary stratum $\calW_\epsilon^J$, with
$J\subset L(\Lambda)$ a nonempty set of levels, and let
$\calX^{v, J} \to \calW_\epsilon^J$ and
$\calY^{v, J} \to \calW_\epsilon^J$ be the restrictions of $\calX^v$
and $\calY^v$.  The family $\calY^v$ is then equisingular with dual
graph the enhanced multicurve $\Lambda_J\rightsquigarrow\Lambda$.  To
define the plumbing map on this stratum, we restrict appropriate
rescalings $\bfomega^J$ of $\omega$ to the irreducible components of
$\calY^{v, J}$, construct prong-matchings $\sigma^J_e$ along the
vertical nodes of $\calY^{v, J}$, and define a marking of
the welding of $\calY^{v, J}$ along these prong-matchings.
\par
We first equip each level of $\calX^J$ with the rescaled forms
$\eta_{(i)}^J$, defined by the equations, 
\begin{equation}  \label{eq:therescaledform}
\eta_{(i)}^J = \bft^\bfa_{\lceil i \rceil \setminus J}  \eta_{(i)} 
\end{equation}
where $\bft^\bfa_I = \prod_{i \in I} t_i^{a_i}$ and 
$\lceil i \rceil = \{j: j \geq i\}$, extending the definition
in~\eqref{eq:omTeta}.  We assign to each edge $e$ of $\Lambda$ the
prong-matching 
\begin{equation}
  \label{eq:sigmaeJ}
\sigma_e^J \= (\bft_{e}^J)^{-1} \, du\otimes dv, \quad\text{where}\quad \bft_{e}^J
\= \prod_{\substack{\lbot \leq k < \ltop \\ k \not\in J}} t_k^{m_{e, k}},
\end{equation}
which are easily checked to be prong-matchings for $\bfeta^J$.  These
prong-matchings define for each fiber $X_{(\bfw,\bft)}$ of
$\calX^{v, J}$ a welding $\overline{X}_{(\bfw, \bft)}$ which together 
with a marking
$f\colon(\Sigma, \bfs) \to (\overline{X}_{(\bfw, \bft)}, \bfz)$
is defined up to precomposition by $\svTw$ (see  Section~\ref{sec:universalMD}).
For each vertical edge $e$ that  does not persist in $\Lambda_J$,
define on $\VV_e$ the form
\begin{equation*}
\Omega_e^J \= \bft^\bfa_{\lceil \ltop \rceil \setminus J}\left(u^{\kappa_e} - r_e'\right)
\frac{du}{u} \=  \bft^\bfa_{\lceil \lbot \rceil \setminus J}
\left( -v^{-\kappa_e} + r_e'\right) \frac{dv}{v}\,. 
\end{equation*}
\par
The gluing maps for such
edges identify these rescaled forms with $\bfeta^J$, which defines on
$\calY^{v, J}$ a collection of rescaled forms $\bfomega^J$ that do not
vanish identically on any fiber.  For the edges $e$ that do persist
in $\Lambda_J$, the prong-matchings $\sigma_e^J$ from
\eqref{eq:sigmaeJ} may also be regarded as prong-matchings for
$\bfomega^J$, which defines for each fiber $Y_{(\bfw, \bft)}$ of
$\calY^{v, J}$ a welding $\overline{Y}_{(\bfw, \bft)}$.
\par
We  define markings for the fibers of $\calY^{v, J}$ by
transporting the markings of the fibers of $\calX$ by \diffeogens
\be \label{eq:fJ}
f^J_{(\bfw, \bft)} \colon \overline{Y}_{(\bfw, \bft)}\to
\overline{X}_{(\bfw, \bft)}
\ee
between the corresponding welded
surfaces. We define these maps in Section~\ref{sec:mpa} below.
\par
These forms, prong-matchings, and markings give each fiber of
$\calY^{v, J}$ the structure of a marked multi-scale differential.
Applying the standard universal property of the boundary stratum
$\msT[\Lambda_J]$ to the family $(\calY^{v, J}\to \calW_\epsilon^J,
\bfomega^J, \bfsigma^J)$ defines a plumbing map $\OPlv$ on $\calW_\epsilon^J$.

\subsection{\changed{Mapping plumbing fixtures}}
\label{sec:mpa}

In this section, we define continuous maps of families $f^J\colon \calY^{v, J}
\to \overline{\calX}^{v,J}$ whose restrictions to the fiber over~$(\bfw, \bft)$
give the desired maps in~\eqref{eq:fJ}.  These maps complete the
construction of the vertical plumbing map $\OPlv$ and have further properties
which will be needed below in proving that $\OPlv$ is a local homeomorphism.  
\par
We will define these maps $f^J$ by defining maps $f_e^J\colon \VV_e^J\to
\overline{\calX}^{v,J}$ on each plumbing fixture, which agree with the gluing
maps $\Upsilon_e^\pm$ on its boundaries.  On subannuli $\AsharpJ_{e}$ and $\AflatJ_{e}$
(see~\eqref{eq:fiveannuli} below) 
of modulus roughly a quarter of the modulus of~$\VV_e$, we define~$f^J$ to
be the standard coordinates~$\phi^\pm$ for~$\bfeta$, defined
in~\eqref{eq:standard_phi}.  When the modulus of~$\VV_e$ is large, these
maps~$\Upsilon_e^\pm$ and~$\phi_e^\pm$ will nearly agree, and we interpolate
between them by nearly conformal maps on annuli $\Adoublesharp_{e}$
and~$\Adoubleflat_{e}$, using the straight line extension, defined below.
The remainder of $\VV_e$ is an annulus~$\Anatural_{e}$, which we map
to~$\overline{\calX}^{v,J}$ by a smooth map. We now define these maps formally.
\par
Consider the round annulus $A = \{r_1 < |z| < r_2\}\subset\cx^*$ for
$0<r_1<r_2<\infty$, which we equip with the flat metric $|du|/u$,
together with a given $C^1$ map $\alpha\colon \bdry A \to \cx^*$.  For~$\alpha$
sufficiently close to the identity embedding we define
the \emph{straight line extension} of $\alpha$ to be the map
$F\colon A \to \cx^*$ sending the radial geodesic from $r_1 e^{i \theta}$
to $r_2 e^{i\theta}$ to the geodesic joining $\alpha^-(r_1 e^{i \theta})$
to $\alpha^+(r_2 e^{i \theta})$, taking the parameterization proportional
to arclength, where~$\alpha^-$ and~$\alpha^+$ denote the restrictions
of~$\alpha$ to the inner and outer boundaries of~$A$.
\par
\begin{lm} \label{lm:SLE}
Suppose $\alpha_n\colon \bdry A \to \cx^*$ is a sequence of $C^1$
maps converging $C^1$-uniformly to the identity embedding, and let
$F_n\colon A \to \cx^*$ be their straight line extensions.  Then
the $F_n$ also converge $C^1$-uniformly to the identity, and are
eventually $C^1$ diffeomorphisms onto their image. In particular, $F_n$
is $K_n$-quasiconformal with~$K_n \to 1$. 
\end{lm}
\par
\begin{proof}
We identify~$A$ with the annulus $B = \{0 < \operatorname{Im} z < h\}
\subset \cx / \zed$ by using logarithmic coordinates.
In these coordinates, the straight line extension is simply
\begin{equation*}
  F_n(x+i y) \= \alpha_n^-(x)(1-y/h) + \alpha_n^+(x+ i h) y/h,
\end{equation*}
and we compute
\begin{equation*}
  F_n(x+ iy) - (x+iy) \= (\alpha_n^-(x) - x)(1 - y/h) + (\alpha_n^+(x +ih)
  - (x+ ih))y/h,
\end{equation*}
which easily implies the second claim.  It follows that if $\alpha_n$
is sufficiently close to the identity, the Jacobian determinant of
$F_n$ is nowhere vanishing, so $F_n$ is a $C^1$ diffeomorphism onto
its image.
\end{proof}
\par
We now construct the maps $f^J\colon \calY^{v, J}\to \overline{\calX}^{v,J}$,
following the idea outlined above. 
To construct~$f_e^J$, first subdivide the trimmed plumbing fixture
$\VV_e^J\setminus \bigcup_e\calA_e^\pm$ into five annuli:
\ba \label{eq:fiveannuli}
  \AdoublesharpJ_{e} &\= \{\delta/R^2 < |u| < \delta/R\}\\
  \AsharpJ_{e} &\= \{|\bft_e^J|^{1/4}\delta^{1/2}/R < |u| <\delta/R^2\}\\
  \AnaturalJ_{e}&\= \{ |\bft_e^J|^{3/4}R/\delta^{1/2} < |u| < |\bft_e^J|^{1/4}\delta^{1/2}/R \}\\
      &\= \{ |\bft_e^J|^{3/4}R/\delta^{1/2} < |v| < |\bft_e^J|^{1/4}\delta^{1/2}/R \}\\
  \AflatJ_{e} &\= \{|\bft_e^J|^{1/4}\delta^{1/2}/R < |v| <\delta/R^2\}\\
  \AdoubleflatJ_{e} &\= \{\delta/R^2 < |v| < \delta/R\}
\ea
On the upper plumbing annulus $\mathcal{A}_e^+=\{\delta/R< |u|  <\delta\}$, we define
$f_e^J= \Upsilon_e^+$, and similarly on the lower annulus
$\mathcal{A}_e^-=\{\delta < |v| < \delta/R\}$, we define $f_e^J= \Upsilon_e^-$.
On $\AsharpJ_{e}$ and $\AflatJ_{e}$, we define
$f_e^J(u,v)=\phi_e^+(u)$ and $f_e^J(u,v)=\phi_e^-(v)$, respectively.
This defines $f_e^J$ on both boundaries of each of $\AdoublesharpJ_{e}$ and
$\AdoubleflatJ_{e}$, and we extend it across these annuli by the
straight line extension.  Finally, we define $f_e^J$ on $\AnaturalJ$
to be a smooth map sending radial curves (with $\arg u$ and $\arg v$
constant) to radial curves in the standard coordinates near the node
$q_e$ of 
$\overline{\calX}^{v,J}$, and sending the circle $\{|u|=|v|=
|\bft_e^J|^{1/2}\}$ to the seam.  The prong-matchings of $\calX^{v,J}$
have been chosen so that this is continuous along the seam.
\par
We write $f_{e,(\bfw, \bft)}^J$ for the restriction to the fiber
$X_{(\bfw, \bft)}$ of $\calW_\epsilon^J$.  We summarize below the
relevant properties of these maps.
\begin{prop}
\label{prop:properties_of_extension}
Let $(\bfw_n, \bft_n)$ be a sequence in $\calW_\epsilon^{J'}$ converging to
$(\bfw, \bft) \in \calW_\epsilon^{J}$ for some set of levels $J' \subset J$. Denote
by $g_n\colon X_{(\bfw_n, \bft_n)} \to
X_{(\bfw, \bft)}$ the maps on fibers exhibiting this convergence, as in Proposition~\ref{prop:MDsconv} , and let
$h_n= g_n\circ f_{e,(\bfw_n, \bft_n)}.$  Let $e\in E(\Lambda)$ be an edge joining levels $i>j$ that does
not persist in $\Lambda_J$. Then: 
\begin{itemize}
\item The map $h_n^{-1}$ is $K_n$-quasiconformal, with $K_n\to 1$,
  on an exhaustion of a neighborhood of the node $q_e$.
\item The rescaled differentials $\bft^{-\bfa}_{n,\lceil \ltop \rceil \setminus J}(h_n)_*
  \Omega_e^J $ and $\bft^{-\bfa}_{n,\lceil \lbot \rceil \setminus J}(h_n)_* \Omega_e^J$
converge in the weak locally $L^2$ topology to $\eta_{(i)}$ and $\eta_{(j)}$.
\item For any radial curve $\gamma$ on $\overline{X}_{(\bfw, \bft)}$ whose endpoints
belong to $\calA_e^\pm$, the turning numbers $\tau(h_n^{-1}(\gamma))$ converge to $0$.
\end{itemize}
\end{prop}
\par
\begin{proof}
By Theorem~\ref{thm:gluing_maps}, the gluing maps $\Upsilon_e^\pm$
converge to the standard coordinates $\phi_e^\pm$ as $\bft_e^{J'}\to 0$.
Working in the $\phi_e^\pm$-coordinates, Lemma~\ref{lm:SLE} then
implies that the map $f_{e,(\bfw_n, \bft_n)}^{J'}$ is~$C^1$ on
$\AdoublesharpJ_{e}$ and $\AdoubleflatJ_{e}$, and converges in $C^1$ to
the identity on these annuli, as $n\to\infty$.  Since $f_{e,(\bfw_n, \bft_n)}^J$ is conformal on
$\AsharpJp_{e}$ and $\AflatJp_{e}$, it is $K_n$-quasiconformal on 
$\VV_e^J\setminus \AnaturalJp_{e}$ with $K_n\to 1$.  The annuli
$h_n(\AsharpJp_{e})$ and $h_n(\AflatJp_{e})$ then have moduli tending
to infinity, so their images form an exhaustion of a neighborhood of
the node, on which $h_n^{-1}$ is $K_n$-quasiconformal.
\par
Convergence of the forms follows from the $C^1$ convergence of the
maps on the interior of each of the five annuli defined in \eqref{eq:fiveannuli}.
\par
Recall from Section~\ref{sec:turning_Jordan} that the turning number of a Jordan arc is
  determined by the value of the Gauss map on its end-vectors together
  with its homotopy class.  Convergence of the Gauss maps on the
  end-vectors of $h_n^{-1}(\gamma)$ follows from the above convergence
  of forms, which is uniform on the plumbing annuli $\calA_e^\pm$.
  The curves $h_n^{-1}(\gamma)$ are nearly radial, so the turning
  numbers converge.
\end{proof}

\subsection{Plumbing is a local homeomorphism}
\label{sec:plhomeo}

Our next goal is to establish that the  vertical plumbing
map is a local homeomorphism, and we start by proving its continuity.

\begin{prop} \label{prop:Plcont}
The vertical plumbing map $\OPlv\colon \calW_\epsilon \to \ODehnsv$ constructed
above is continuous.
\end{prop}
\par
\begin{proof}
We fix a sequence $(X_n, \bfeta_n, \bft_n)$ in $\Omega\barMDs$ that
converges to $(X,\bfeta,\bft)\in \calW_\epsilon^J$ and let
$(Y_n, \bfomega_n)$ and $(Y, \bfomega)$ denote the corresponding
plumbed surfaces.  Passing to a subsequence, we may assume this
sequence belongs to a fixed stratum of $\Omega\barMDs$.  We define
rescaled differentials $\bfeta^J$ and $\bfeta_n^J$ as in
\eqref{eq:therescaledform} as well as prong-matchings $\bfsigma^J$
and $\bfsigma_n^J$ as in \eqref{eq:sigmaeJ}.  Let $\overline{X}$ and
$\overline{X}_n$ be the weldings defined by these prong-matchings,
and denote by $g_n\colon \overline{X}_n \to \overline{X}$ the maps
exhibiting the convergence as in Proposition~\ref{prop:MDsconv}.
This means that there are subsurfaces
$X^\circ_n\subset X_n \setminus \bfz_n$ such that~$g_n$ is conformal
on~$X^\circ_n$, the images $g_n(X^\circ_n)$ exhaust $X\setminus \bfz$, and
$(g_n)_*\bfeta_n^J\to \bfeta^J$ uniformly on compact sets.
Moreover, the $X^\circ_n$ eventually contain the plumbing annuli
$\calB_e^\pm$ because these annuli vary continuously in the
universal curve, and $(g_n)^{-1}$ converges uniformly on compact sets to
the identity map on~$X$ by Lemma~\ref{lm:converge_to_identity}.
\par
As defined in~\eqref{eq:fJ}, we have \diffeogens
$f_n \colon \overline{Y}_n \to \overline{X}_n$ and
$f \colon \overline{Y} \to \overline{X}$ which are conformal on the
complements~$Y_n'$ and~$Y'$ of the trimmed plumbing fixtures. We define
\changed{$\widetilde{h}_n=f^{-1} \circ g_n\circ f_n\colon \ol{Y}_n \to \ol{Y}$.}
These maps satisfy all of the requirements to exhibit convergence
from Definition~\ref{def:augmented_topology_def}, except that $\widetilde{h}_n^{-1}$ is
conformal only on~$Y'$.  \changed{We address this by modifying~$\widetilde{h}_n$ on the
trimmed plumbing fixtures to obtain homotopic maps ${h}_n$ such that the~$h_n^{-1}$ are
$K_n$-quasiconformal with $K_n\to 1$ on an exhaustion of~$\ol{Y}$. This suffices thanks to
the modifications allowed by Proposition~\ref{prop:weak_convergence}.}
\par
\changed{On the trimmed plumbing fixtures of $\ol{Y}_n$ corresponding to nodes~of~$X$
where~$Y$ has been obtained by plumbing, we define ${h}_n$ as the straight line
extension of the restriction of~$\widetilde{h}_n$ to the boundary of~$Y_n'$ (which is
also the boundary of the trimmed plumbing fixture).  Since
$(\widetilde{h}_n)_*\bfomega_n\to \bfomega$ on $Y'$, Lemma~\ref{lm:SLE} implies
that on the whole trimmed plumbing fixture $\tilde{h}_n$ is $K_n$-quasiconformal with
$K_n\to 1$, and $(\tilde{h}_n)_*\bfomega_n\to \bfomega$.}
\par
\changed{On the trimmed plumbing fixtures of~$\ol{Y}_n$ corresponding to nodes of~$X$
where~$Y$ has \emph{not} been plumbed, we define~${h}_n$ to be the map~$h_e^J$
defined in Proposition~\ref{prop:properties_of_extension} (composed with the
identification of~$X$ with~$Y$ in a neighborhood of this node). This proposition
then gives the necessary properties for convergence.}
\end{proof}
\par
\begin{prop}\label{prop:plumbopen}
%
The vertical plumbing map $\OPlv\colon\changed{\calW_\ve}\to\ODehnsv$ is
open in a neighborhood of any point in the deepest stratum
$\OMDstratum[\Lambda][\Lambda,s]$.
\end{prop}
\par
For a surface~$X$ that corresponds to a point in the
subset~$\calW_\ve$ of the model domain we denote by $X_{(\leq i)}^-$
the subsurface consisting of the levels $\leq i$, including the
plumbing annuli~$\calB_e^-$ for all $e$ with $\lbot \leq i$
and~$\calB_h$ for $h$ with $\ell(h) \leq i$, but excluding
the discs $\calE_e^-$ for all $e$ with $\ltop >i$ and~$\calE_h$ for
$h$ with $\ell(h)=i$.  We
let $X_{(\leq i)}^+$ be the subsurface consisting of the levels
$\leq i$, including all the plumbing fixtures connecting to higher
levels all the way up to the top plumbing annuli $\calB_e^+$ for
$\ltop >i$.
\par
In the proof below, we will work with sequences of model differentials
$X_n\to X$ and adopt the convention that for any object defined in the
plumbing construction for $X$, the analogous object for $X_n$ includes
a subscript $n$, for example the basepoints $b_e^+\in X$ defined above
are denoted by $b_{n,e}^+=\phi_{n,e}(\delta/\sqrt{R})\in B_{n,e}^+ \subset X_n$.
\par 
\begin{proof}
\changed{To show~$\OPlv$ is open, it suffices to construct, for any convergent
sequence $(Y_n, \bfy_n, \bfomega_n)\to (Y,\bfy, \bfomega)$ in~ $\ODehnsv$, a sequence
$(X_n, \bfz_n,\bfeta_n, \bft_n)\to (X, \bfz, \bfeta, \bft)$ in~$\barOMDs$ such that
$\OPlv(X_n, \bfz_n, \bfeta_n, \bft_n) = (Y_n, \bfy_n,\bfomega_n)$
and $\OPlv(X, \bfz, \bfeta, \bft)=(Y, \bfy, \bfomega)$. We deal only with the case
that~$Y_n$ is smooth and $(X, \bfz, \bfeta, \bft)$ in the deepest stratum,
equivalently, $\bft=\bfzero$ . The general case is similar but notationally more
involved, since one needs to carry around notation for the edges that persist in~$Y_n$
and those that do not degenerate in~$X$.}
\par
We can choose representatives of $(X, \bfeta)$ in $\ptwT$ and
$(Y_n, \bfomega_n)$ in $\Oaugteich$ so that convergence still holds.
By the definition of convergence in $\Oaugteich$ as given in
Section~\ref{sec:AugTeich}, there is a sequence
$\bfd_n=\{d_{n, i}\} \in \CC^{L^\bullet(\Lambda)}$ and a sequence of
\diffeogens $g_n\colon Y_n \to \overline{X}$, 
\changed{which respect the marked points,} which are compatible with the markings,
asymptotically \tnp, \changed{and
whose inverses are conformal on an exhaustion~$K_{n, (i)}$ of each
level~$X_{(i)}$ minus the nodes.}  
\par
We start by choosing the sequence of coordinates~$\bft_n$ defined in
terms of these $d_{n, i}$ by
\begin{equation}\label{eq:deftfromd}
  t_{n,i} \= \bfe\left(\frac{d_{n,i+1} - d_{n,i}}{a_i}\right)\,.
\end{equation}
Similarly to $\prodt$ defined in Equation~\eqref{eq:omTeta}, we
denote  $\bft^\bfa_{n, \lceil i \rceil}= \prod_{j\geq i} t_{n,j}^{a_j}$.
Since we are not rescaling the top level, we have $d_{n,0}=0$ and it follows
that $\bft^\bfa_{n, \lceil i \rceil}= \bfe(-d_{n, i})$. By definition of convergence,
\begin{equation}\label{eq:1}
\bfe( d_{n, i}) (g_{n, (i)})_* \omega_n  \xrightarrow[n\to \infty]{}
\eta_{(i)}, \quad \text{that is,} \quad \frac{1}{\bft^\bfa_{n, \lceil i \rceil}}
(g_{n, (i)})_* \omega_n  \xrightarrow[n\to \infty]{} \eta_{(i)}.
\end{equation}
\changed{Recall from Section~\ref{sec:perturbed} that we normalize
the forms $\bfeta$ and parameters $\bft$ so that on each level,
$\int_{\gamma_{n(i)}}(\eta_{(i)} + \xi_{(i)})=1$.  We may assume
that the $d_{n,i}$ are chosen so that we have the corresponding
normalization, namely
\begin{equation}
  \label{eq:normalization}
  \int_{\gamma_{n(i)}}\frac{1}{\bft^\bfa_{n, \lceil i \rceil}}
(g_{n, (i)})_* \omega_n \=1\, .
\end{equation}
} 
\par
We will now construct \changed{by induction on the set of levels,
  starting from the bottom level,} the $(X_n, \bfeta_n)$ such that
$\OPlv (X_n, \bfeta_n,\bft_n) = (Y_n, \bfomega_n)$.  Recall that by
Proposition~\ref{prop:annuluslocation} it makes sense to consider
the effect of plumbing only on the bottom part of a surface and to
write $\OPlv(X_{n, (\leq i)}^\pm)$, suppressing the dependence on
$(\bfeta_n,\bft_n)$ for notational convenience. Note also that the
set of connected components of the $\ve_n$-thick part of~$Y_n$ is
eventually the disjoint union of sets of level~$i$
components~$Y_{n,(i)}$, where~$Y_{n,(i)}$ are those components
that~$g_n$ maps to $X_{n,(i)}$.  \changed{Alongside choosing the
surfaces, we inductively construct conformal maps
$h_{n, (i)}^+\colon\OPlv(X_{n, (\leq i)}^+)\to Y_n$ which identify
$\bfomega_n$ with the forms on the plumbed surface.  At the end of
the induction, we obtain the sequence
$(X_n, \bfeta_n)\to (X, \bfeta)$ together with a sequence of
isomorphisms $h_n \colon \OPlv(X_{n})\to Y_n$.}
\par
The base case of induction is to pick appropriately the surfaces with the
correct bottom level piece $(X_{n,(-N)}^-,\eta_{n,(-N)})\to(X_{(-N)}, \eta_{(-N)})$
among all surfaces parameterized by~$\calW_\ve$ and to construct a conformal
map on the bottom level
\bes h_{n, (-N)}\colon\OPlv(X_{n, (-N)}^-) \to Y_{n,(-N)}
\ees
which identifies the two differentials.   The conformal map $g_{n, (-N)}^{-1}$ is
eventually defined on a fixed subsurface $K_{(-N)}$ containing $X_{(-N)}^-$, but
this map only approximately identifies the rescaled differentials, as is indicated
in~\eqref{eq:1}.  We choose a sequence of surfaces
$(X_{n,(-N)}, \eta_{n, (-N)})$ of the same topological type as
$(X_{(-N)}, \eta_{(-N)})$ such that
\begin{equation}
  \label{eq:period}
  \Per(X_{n, (-N)}, \eta_{n, (-N)})
  \= \frac{1}{\prodt[n, \changed{\lceil-N\rceil}]} \Per_{(-N)}(Y_n,
  \bfomega_n)
\end{equation}
and such that $(X_{n, (-N)}, \eta_{n, (-N)}) $ converges to $ (X_{(-N)}, \eta_{(-N)})$.
In this equation $\Per_{(i)}\subset H^1(\Sigma_i \setminus P_i, Z_i; \CC)$ denotes
the relative periods in the level~$i$ subsurface.  We may choose
such a sequence because the period map is open.  By convergence in
the \changed{strong} conformal topology, there exist maps
$(g_{n, (-N)}^X) \colon X_{n,(-N)} \to X_{(-N)}$, \changed{respecting the
marked zeros and poles,}  whose inverses are conformal on the same subsurface.
We apply Theorem~\ref{thm:conformal_convergence} to the sequences
$(g_{n, (-N)}^X)_* (\bfeta_n)$ and $(\prodt[n, \lceil-N\rceil])^{-1}(g_{n, (-N)})_*
\bfomega_n$ to produce a conformal map~$k_n$ defined on~$K_{(-N)}$ which
identifies these forms.  The composition
\begin{equation*}
  h_{n,(-N)} \= g_{n, (-N)}^{-1}\circ k_n \circ g_{n, (-N)}^X
\end{equation*}
provides eventually a conformal map
\begin{equation*}
  h_{n,(-N)}\colon X_{n,
    (-N)}^- \to Y_n \quad \text{such that} \quad h_{n,(-N)}^*
  \bfomega_n \= \prodt[n, \changed{\lceil-N\rceil}] \eta_{n, (-N)}.
\end{equation*}
\par
The second step of the base case is to extend this map by analytic
continuation across the plumbing fixtures to obtain a conformal map
$h_{n, (-N)}^+\colon \OPlv(X_{n, (-N)}^+) \to Y_{n,(-N)}$.
For this analytic continuation, recall that the plumbed surface is
obtained by gluing into $X_n$ for all vertical nodes~$e$ the
plumbing fixtures $\VV_{n,e} \cong V(\bft_{n,e})$ equipped with the
standard form $\Omega_{n,e}$, where $\bft_{n,e}$ and $\Omega_{n,e}$
are as in~\eqref{eq:plumbing_fixture} and \eqref{eq:reloneform}.
At this point, the gluing map on the bottom plumbing annulus is
known, as we have chosen the lower level surface, and the gluing on
the top annulus will be known when we have chosen the upper level
surface $X_{n, (\ltop)}$.  By construction, the composition
\begin{equation}
  \label{eq:defnu}
  \nu_{n,e} \= h_{n, (-N)} \circ \upsilon_{n,e}^- \colon
  A_{\delta/R, R} \to Y_n
\end{equation}
(where $\upsilon_{n,e}^-$ was defined in
Theorem~\ref{thm:gluing_maps}) identifies the form $\bfomega_n$ on
$Y_n$ with the standard form $\Omega_{n,e}$ on the bottom plumbing
annulus. We show in Lemma~\ref{lm:acrossthin} below that for some
sufficiently large $R>0$ the maps $\nu_{n,e}$ can eventually be
analytically continued to a conformal map
$\nu_{n,e}\colon V_{n,e}' \to Y_n$, where $V'_{n,e}\subset V_{n,e}$
is a round subannulus containing the basepoint $p_{n,e}^+\in V_{n,e}$
whose $u$-coordinate is
$\delta/R^{1/2}$.   \emph{A fortiori} the analytic
continuation also identifies $\bfomega_n$ with $\Omega_{n,e}$.
We define basepoints $c_{n,e}^+ = \nu_{n,e}(p_{n,e}^+)$ in $Y_n$, for
those edges where $\nu_{n,e}$ has been defined.  
\par
For the inductive step, suppose that we have defined conformal maps
\bes h_{n, (i-1)}^+\colon\OPlv(X_{n, (\leq i-1)}^+) \to Y_{n,(\leq i-1)}\ees
which identify the forms on either side.  We also assume that we have
defined base points~$c_{n,e}^+$ near each of the lower boundary components
of $Y_{n, (i)}$ as in the second step of the base case.
\par
\changed{We will also need analogous basepoints $c_{n,h}$ near each marked
zero~$y_{n,h}$ of $Y_{n}$.  We define $c_{n,h}=\phi_{n,h}^Y(\delta/R^{1/2})$,
where~$\phi_{n,h}^Y$ is a choice (amounting to a choice of a prong at~$y_{n,h}$)
of standard coordinate sending~$0$ to the zero~$y_h$.  We choose~$\phi_{n,h}^Y$ by
requiring that $g_n\circ\phi_{n,h}^Y(c_{n,h})\to b_h$.}
\par
We wish to choose appropriately $(X_{n, (i)}, \eta_{n, (i)})$ and construct
conformal maps
\begin{equation*}
   h_{n, (i)}\colon \OPlv\left(X_{n, (\leq
    i)}^-, \eta_{n,(\leq i)}, \bft_n\right) \to Y_{n,(\leq i)}
\end{equation*}
which identify the forms and agree with $h_{n, (i-1)}^+$ on the
top plumbing annuli. We choose the
sequence~$X_{n, (i)}$ so that the perturbed period coordinates
satisfy 
\begin{equation}\label{eq:Pperiod}
  \PPer(X_{n, (i)}, \eta_{n, (i)} \changed{+ \xi_{n, (i)}}) \= \frac{1}{\prodt[n, \changed{\lceil i\rceil}]}
  \PPer(Y_{n,(i)}, \omega_n) \,
\end{equation}
using the basepoints $b_{n,e}^+$ and $b_{n,h}$ as the ``nearby
sections'' $\sigma_e^+$ in the definition of the perturbed period
coordinates.  The $\PPer$ on the right-hand side is a shorthand to
express that we compute on $Y_n$ periods in the same way as in the
definition of $\PPer$, i.e.\ we consider the surface cut open at the
lower ends and use integration based at the basepoints $c_{n,e}^+$
determined by the induction hypothesis instead of the $b_{n,e}^+$,
and similarly for the $c_{n,h}$.  Note that, while the perturbed
period coordinates do not specify the period over $\gamma_{n(i)}$, the
normalization \eqref{eq:normalization} ensures these periods agree
as well.  The choice of $(X_{n, (i)}, \eta_{n, (i)})$ with the
required perturbed periods is eventually possible, since the
perturbed period map is open by Proposition~\ref{prop:pertper}.
\par
\changed{ We would like to ensure that the conformal maps~$g_{n,(i)}$
and~$g_{n,(i)}^X$ respect the basepoints, that is, $g_{n,(i)}(c_{n,e}^+)
= g_{n,(i)}^X(b_{n,e}^+)=b_e^+$.  We first show that $g_{n,(i)}(c_{n,e}^+)$
and $g_{n,(i)}^X(b_{n,e}^+)$ both converge to~$b_{e}^+$.  Convergence of
$g_{n,(i)}^X(b_{n,e}^+)$ follows from continuity of the coordinates~$\phi_{n,e}^+$
and the convergence of the maps~$g_{n,(i)}^X$ to the identity map on~$X$.  Let
$f_{n,e}\colon V_{n,e}\to \overline{X}_n$ be the maps constructed in
Section~\ref{sec:mpa}.  The maps $\rho_{n,e} \coloneqq g_{n, (j)}\circ \nu_{n,e}
\circ f_{n,e}^{-1}\circ (g_{n, (i)}^X)^{-1}$ converge, in a neighborhood of the
node~$q_e$ of~$X$, to an automorphism of the one-form $\eta_{(i)}$, which must
be a rotation~$\rho_e$ of order~$\kappa_e$.  Note that
$\rho_{n,e}\circ g_{n, (i)}^X(b_{n,e}^+)= g_{n,(j)}(c_{n,e}^+)$, so to
show that $g_{n,(j)}(c_{n,e}^+)\to b_e^+$, it suffices to show that~$\rho_e$
is the identity.}
\par
\changed{On the lower level, we have similarly that $g_{n, (j)}\circ \nu_{n,e}
\circ f_{n,e}^{-1}\circ (g_{n,(j)}^X)^{-1}$ converges to an automorphism
of~$\eta_{(j)}$ which must in fact be the identity because by its construction,
it extends to an automorphism of~$X_{(j)}$ respecting its marking.  The compositions
$g_{n}\circ \nu_{n,e} \circ f_{n,e}^{-1}\circ (g_{n}^X)^{-1}$ are
moreover asymptotically \tnp, since the~$g_n$ and~$f_{n}$ are by
Proposition~\ref{prop:properties_of_extension}, and~$\nu_{n,e}$
are \tnp, since they respect the one-forms.  It
follows that $\rho_e=1$, so $g_{n,(j)}(c_{n,e}^+)\to b_e^+$, as desired.}
\par
\changed{We now modify the conformal maps $g_{n,(i)}$, using the strong
conformal topology from Section~\ref{sec:dehn-space-deligne}, to
obtain nearby conformal maps $\tilde{g}_{n,(i)}$, such that
$\tilde{g}_{n,(i)}(c_{n,e}^+)=b_e^+$ for each~$e$ and similarly
$\tilde{g}_{n,(i)}(c_{n,h})=b_h$ for each~$h$ corresponding to a
marked zero. To this end, let $D_e\subset X_{(i)}$ be small
embedded disks (with respect to the flat metric) centered at $b_e^+$.
Since the $g_{n,(i)}(c_{n,e}^+)$ converge to $b_e^+$, there is a
sequence $j_n\colon X_{(i)}\to X_{(i)}$ of $K_n$-quasiconformal maps
equal to the identity outside the disks
$D_e$, with $K_n\to 1$, and with
$j_n\circ g_{n,(i)}(c_{n,e}^+)=b_e^+$ for each $e$, and similarly for
the $c_{n,h}$.  For example, $j_n$
could be defined by mapping each geodesic segment from $c_{n,e}^+$ to
$p\in \bdry D_e$ to the radial segment from $b_e^+$ to $p$.  Regarding
the $c_{n,e}^+$ and $b_e^+$ as additional marked points, the maps
$j_n\circ g_{n,(i)}$ exhibit convergence of the $Y_n$ to $X$ in the
quasiconformal topology, and
Theorem~\ref{thm:topologiesarethesame} produces the desired conformal
maps $\tilde{g}_{n,(i)}$.}
\par
\changed{As in the base case, we now apply Theorem~\ref{thm:conformal_convergence}
to the sequences $(g_{n, (i)}^X)_* (\eta_{n, (i)}+ \xi_{n, (i)})$ and
$(\prodt[n, \lceil i\rceil])^{-1}(g_{n, (i)})_* \bfomega_n$ to produce a
conformal map~$k_n$ defined on~$K_{(i)}$ which identifies these forms and
fixes the basepoints~$b_{e}$ and~$b_{h}$.  The composition
  \begin{equation*}
    h_{n,(i)} \= g_{n, (i)}^{-1}\circ k_n \circ g_{n, (i)}^X
  \end{equation*}
  provides eventually a conformal map
  \begin{equation*}
    h_{n,(i)}\colon X_{n,(i)}^- \to Y_n \quad \text{such that} \quad h_{n,(i)}^*
    \bfomega_n \= \prodt[n, \lceil i\rceil] (\eta_{n, (i)}+\xi_{n, (i)}).
  \end{equation*}}
\par
\changed{We next show that the $h_{n,(i)}$ 
agree with the already constructed maps $h^+_{n, (j)}$ for any $j< i$
on any plumbing annulus $\mathcal{B}_e^+$ which joins them.  Since
these maps pull back $\eta_n$ to the same form, it suffices to show
that they agree at the basepoints $b_{n,e}^+$.  We check,
\ba    \label{eq:hrespectsbasepoints}
    h_{n, (i)}(b_{n, e}^+) &\=  g_{n, (i)}^{-1}\circ k_n \circ g_{n,
(i)}^X(b_{n,e}^+) \= g_{n, (i)}^{-1}\circ k_n(b_e^+) \\
& \= g_{n,(i)}^{-1}(b_e^+)\= c_{n,e}^+ \= h_{n, (j)}^+(b_{n,e}^+). 
\ea
The same argument shows that $h_{n, (i)}(b_{n,h}) = c_{n,h}$.}
\par
\changed{We finish the construction of $h_{n,(i)}$ by extending it over the
zeros which have been plumbed while constructing $\OPlv(X_{n, (i)})$.  For this,
we note that $h_{n,(i)} \circ \phi_h(\delta/R^{1/2}) = c_{n,h}=
\psi_n(\delta/R^{1/2})$, so we must have $h_{n,(i)} \circ \phi_h =
\psi_n$, since they agree at a point and identify the forms on their
respective targets with the standard form $z^m\, dz$ on
$A_{\delta/R, \delta}$.  It follows that we may extend $h_{n, (i)}$
over the plumbed disk by defining $h_{n, (i)} = \psi_n$ in the
standard coordinates on this disk.}
\par
\changed{If $i=0$, this finishes the induction.  Otherwise, for the second
part of the inductive step, we define as in \eqref{eq:defnu}
\begin{equation}
  \label{eq:defnu2}
  \nu_{n,e} \= h_{n, (i)} \circ \upsilon_{n,e}^- \colon A_{\delta/R, R} \to Y_n
\end{equation}
for each edge $e$ joining level $i$ to a higher level, and we define
the basepoints $c_{n,e}^+= \nu_{n,e}(p_{n,e}^+)$.}
%
\end{proof}
\par
In order to conclude the proof of the openness of the plumbing map,
it remains to justify the extension of the conformal map  across the thin part.
\par
Given $(Y_n, \omega_n)\to(X, \bfeta)$ as in the above proof, we
consider a fixed level~$i$ and continue to let $Y_{(i)}$ denote the
union of the components at this level of the $\varepsilon$-thick part
of $Y$.  For each edge $e$ with $\ltop=i$, we have a sequence of conformal maps
$\upsilon_{n,e}^-\colon A_{n,e}^-\to Y_{n,(<i)}$ of the bottom plumbing
annuli of the plumbing fixtures $\VV(\bft_{n,e})$ such that
$(\upsilon_{n,e}^{-})^{\ast}(\omega/ \prodt[n, \lceil i \rceil])
=\Omega_{e}$.
\par
\begin{lm} \label{lm:acrossthin} In the above situation, if $R$ is
  sufficiently large (depending only on the geometry of $(X, \eta)$),
  then the maps $\upsilon_{n,e}^-$ eventually extend to conformal maps
  $\upsilon_{n,e}$ whose domains contain the annulus
  $V_{n,e}^\circ = \{\delta/\changed{|\bft_{n,e}|} < |u| < \delta/\sqrt{R} \}$, such
  that $\upsilon_{n,e}^{\ast}(\omega/ \prodt)=\Omega_{e}$, and moreover,
  the images of $V^\circ_{n,e}$ and $V^\circ_{n,e'}$ are disjoint for any
  $e\neq e'$.
\end{lm}
\par
\begin{proof} [Proof of Lemma~\ref{lm:acrossthin}]
If $R$ is sufficiently large, the subsurface below the outer boundary
$\gamma_n = \gamma_{n,e}$ of $A_{n,e}^-$, given by $|v| = \delta/R$, is
convex (\changed{with respect to the flat metric}) for $R$ sufficiently small,
since $r_e'(\bft_n)$ tends to~$0$.  We use the orthogonal projection to~$\gamma_n$
to extend~$\upsilon_{n,e}^{-}$. That is, we map the equidistant curve of
\changed{flat} distance~$\ell$ to~$\gamma_n$ to the equidistant curve of
distance~$\ell$ to $\upsilon_{n,e}^{-}(\gamma_n)$, by mapping geodesic
rays orthogonal to $\gamma_n$ to geodesic rays orthogonal
to~$\upsilon_{_n,e}^{-}(\gamma_n)$.
\par
This gives a well-defined conformal map~$\upsilon_{n,e}$ of $V^\circ_{n,e}$ onto
its image as long as the $\Omega_e$-distance between the boundary components
of~$V^\circ_{n,e}$ is smaller than the $\omega_n$-distance from any
zero of~$Y_{n, (i)}$ to~$\upsilon_{_n,e}^{-}(\gamma_n)$.
The distance between these boundary components tends to
$\delta^\kappa / \kappa R^{\kappa/2}$.  Since
$(Y_n, \omega_n)\to(X, \bfeta)$, it suffices to take $R$ large
enough that $\delta^\kappa / \kappa R^{\kappa/2}$ is smaller than
the distance between any marked zero of $\eta_{(i)}$ and any zero at a node.
\par
Similarly, to ensure the $V^\circ_{n,e}$ are disjoint, it suffices
to take $R$ large enough that $\delta^\kappa / \kappa R^{\kappa/2}$
is smaller than half of the distance between any two nodal zeros of
$\eta_{(i)}$.
\end{proof}
\par
We now deal with the local injectivity of the plumbing map.

\begin{prop}
  \label{prop:plumbinj}
  The map $\OPlv$ is injective in a neighborhood of any point
  $(X,\bfeta)$ in the deepest stratum
  $\Omega\MDstratums[\Lambda][\Lambda]$.
\end{prop}
\par
\begin{proof}[\changed{Proof.}]
Consider a sequence $Y_n$ in $\ODehnsv$ converging to $X$ in the
deepest stratum, where $Y_n$ is obtained by plumbing two
sequences of model differentials $X_n^1$ and $X_n^2$, both converging
to $X$.  The proof of Proposition~\ref{prop:plumbopen} constructs
two sequences of isomorphisms $h_n^i\colon \OPlv(X_n^i, \bfeta_n^i, \bft_n)\to Y_n$, and
composing them gives isomorphisms
\begin{equation*}
  \phi_n \=(h_n^2)^{-1}h_n^1\colon\OPlv(X_n^1,\bfeta_n^1,\bft_n)\to\OPlv(X_n^2,\bfeta_n^2,\bft_n).
\end{equation*}
We will show by induction on the level $i$ that
$(X_{n,(i)}^1,\eta_{n,(i)}^1,\bft_n)=(X_{n,(i)}^2,\eta_{n.(i)}^2,\bft_n)$, starting from the
bottom level. For the base case $i=-N$, note that $\phi_n$ defines isomorphisms
$(X_{n,(-N)}^1,\eta_{n, (-N)}^1)\to (X_{n, (-N)}^2,\eta_{n, (-N)}^2)$ on the complement of the
top plumbing annuli, respecting the forms.  Extending these
isomorphisms to the entire surface, we see that $(X_{n,
  (-N)}^1,\eta_{n, (-N)}^1)= (X_{n, (-N)}^2,\eta_{n, (-N)}^2)$.
\par
For the inductive step, suppose $(X_{n,(-j)}^1,\eta_{n, (-j)}^1)=
(X_{n, (-j)}^2,\eta_{n, (-j)}^2)$ for all $j<i$.  We will show that
$\phi_n$ respects the basepoints in all bottom plumbing annuli of the
$i$'th level as well as the basepoints near the marked zeros.  From
this, it follows that $(X_{n,(-i)}^1,\eta_{n, (-i)}^1, \bft_n)$ and
$(X_{n,(-i)}^2,\eta_{n, (-i)}^2, \bft_n)$ have the same perturbed
periods, and thus they must be isomorphic by the local injectivity of perturbed
period coordinates.
\par
  It follows from the induction hypothesis that for each edge $e$
  connecting level $i$ to a lower level, the gluing maps $\nu_{n,e}$
  (defined in \eqref{eq:defnu2}) agree, so the two sequences define
  the same basepoints $c_{n,e}^+=\nu_{n,e}(p_{n,e}^+)$ in $Y_{n,(i)}$.
  By \eqref{eq:hrespectsbasepoints}, we have
  $h_{n,e}^i(b_{n,e}^{i,+})= c_{n,e}^+$, so $\phi_n(b_{n,e}^{1,+})=b_{n,e}^{2,+}$.
  Since the choice of basepoints $c_{n,h}$ near the marked zeros does
  not involve the $X_n$, we have $\phi_n(b_{n,h}^1)=b_{n,h}^2$ by the same argument.
\par
Alternatively, injectivity can be established by analyzing the proof
of Proposition~\ref{prop:plumbopen} to show that at every stage of
the construction of the sequence $X_n$ and the isomorphism $X_n\to Y_n$,
there is eventually a unique choice.  This uses in particular that the
perturbed period coordinates are injective, which was established in
Proposition~\ref{prop:pertper}.
\end{proof}

\par
\changed{
Combining Proposition~\ref{prop:Plcont}, Proposition~\ref{prop:plumbopen},
and Proposition~\ref{prop:plumbinj} we get:
}
\par
\begin{cor}
  \label{cor:plumblocdiff}
  The vertical plumbing map is a local homeomorphism on $\calW_\epsilon$.
\end{cor}
\par

\subsection{The horizontal plumbing construction}
\label{sec:plhoriz}
In this section, we complete the plumbing construction by  plumbing
the horizontal nodes of the family $(\calY^v, \omega) \to \calW_\epsilon$
constructed by the vertical plumbing map~$\OPlv$ to
obtain a generically smooth family $\calY\to \calW_\epsilon \times
\Delta^H$.  \changed{This version of the plumbing construction,
  defining coordinates by exponentiating relative periods of differentials with
  simple poles at the nodes, appeared in \cite{bainbridge07}.}
\par
We enumerate the horizontal edges as $e_1, \ldots, e_H$ and label the
branches through the corresponding nodes $q_i$ by an arbitrary choice of
sign.  Each $q_i$ is adjacent to two half-infinite cylinders
which each contain at least one zero of $\omega$.  We denote by~$z_i^+$
and~$z_i^-$ an arbitrary choice of a zero in the boundary of
each cylinder. We then apply Theorem~\ref{thm:standard_coordinates} to
choose standard coordinates
\begin{equation*}
  \upsilon_i^+ \colon \calW_\epsilon \times \Delta_{1}\to \calY^v
  \quad\text{and}\quad
  \upsilon_i^- \colon \calW_\epsilon \times \Delta_{1}\to \calY^v
\end{equation*}
covering a neighborhood of $q_i$ in the corresponding branch and such
that
\begin{equation*} (\upsilon_i^+)^*(\omega) \=
r_i'\frac{du}{u} \quad\text{and}\quad
(\upsilon_i^-)^*(\omega) \= -r_i'\frac{dv}{v}.
\end{equation*}
As these coordinates are unique up to multiplication by an
arbitrary constant, we normalize them  by requiring that the zeros
$z_i^+$ and $z_i^-$ correspond to $u=1$ and $v=1$ respectively.
\par
We define for each $j$ the (horizontal) plumbing fixture
\begin{equation}\label{eq:horplumbfix}
  \WW_j^\delta \= \left\{ (\bfw, \bft, \bfx, u, v) \in \calW_\epsilon\times
    \Delta_\epsilon^H\times \Delta_\delta^2 : uv=x_j\right\}\,,
\end{equation}
equipped with the standard relative holomorphic one-form
\begin{equation}\label{eq:horplumbform}
  \Omega_j \= -r_{e_j}'(\bft)\frac{du}{u} \= r_{e_j}'(\bft)\frac{dv}{v}\,,
\end{equation}
and define two families of annuli $\calA_j^\pm \subset \WW_j^1$ by
removing the upper or lower branches of the singular fibers:
\begin{equation*}
\calA_j^+ \= \WW_j^1 \setminus \{v=0\} \quad\text{and}\quad
\calA_j^- \= \WW_j^1 \setminus \{u=0\}\,.
\end{equation*}
We define two families of conformal maps
$\Upsilon_j^\pm\colon\WW_j^1\to \calX'$ by
\begin{equation}\label{eq:horizplumb}
\Upsilon_j^+(\bfw, \bft, \bfx, u, v) \= \upsilon^+_{j}(\bfw, \bft, u)
\quad\text{and}\quad
  \Upsilon_j^-(\bfw, \bft, \bfx, u, v) \= \upsilon^-_{j}(\bfw, \bft, v)\,,
\end{equation}
which identify $\calA_j^\pm$ with families of annuli
$\calB_j^\pm\subset \calY^v \times \Delta^H$.  Note that, in contrast
to the vertical plumbing construction, these families of annuli have
moduli tending to $\infty$ over the singular fibers.
\par
We define the plumbed family $\calY \to \calW_\epsilon \times
\Delta^H$ by removing from $\calY^v$ the disks bounded by the annuli
$\calB_j^\pm$ and gluing in the plumbing fixtures $\WW_j^1$ by the maps
$\Upsilon_j^\pm$.  As the gluing maps preserve the one-forms, $\calY$
is equipped with a relative one-form which we continue to denote by
$\omega$.
\par
Alternatively, in terms of flat geometry, the family $\calY$ can be
obtained by cutting each half-cylinder bounding $q_i$ along a closed
geodesic, and gluing the corresponding boundary components by an
isometry to obtain a finite annulus $C_j$.  The heights of the core
geodesics and the gluing maps are determined by requiring that
\begin{equation*}
  \int_{\gamma_j} \omega \= r_j' \log x_j,
\end{equation*}
where the integral is along a curve $\gamma_j$ in $C_j$ from the
chosen zero $z_j^-$ to $z_j^+$.
\par
As for the vertical plumbing construction, standard universal
properties applied to the restriction of $\calY$ to the strata
$\calW_\epsilon^J$ define the plumbing map $\OPl \colon \calW_\epsilon
\times \Delta^H \to \ODehns$. \index[plumb]{a030@$\OPl$! Plumbing map}
\par
\begin{proof}[\changed{Proof of Theorem~\ref{thm:plumbing}}]
We claim that $\OPl$ is a homeomorphism onto its image.
\par
To see this, consider a sequence $(X_n, \bft_n,\bfx_n)$ and point $(X, \bft, \bfx)$
in $\calW_\epsilon \times \Delta^H$ (which we are implicitly identifying
with $\ODehnsv\times \Delta^H$ via the vertical plumbing map), and
let $Y_n$ and $Y$ in
$\ODehns$ be the corresponding  horizontally plumbed
surfaces.  For each annulus $C_i\subset Y$, fix a subannulus
$C_i'\subset C_i$ with geodesic boundary, and let $Y'\subset Y$ be
the complement of the $C_i'$.  By the definition of the plumbing
construction, $Y'$ is canonically identified with a subsurface
$X'\subset X$.
\par
Suppose $(X_n, \bft_n,\bfx_n)$ converges to $(X, \bft, \bfx)$, which is
exhibited by a sequence of maps $g_n\colon X^\circ_n \to X_n$ defined on
an exhaustion~$X^\circ_n$ of~$X$.  These $g_n$ are eventually defined on~$X'$
and define maps $h_n\colon Y^\circ_n\to Y_n$, defined on an exhaustion~$Y^\circ_n$
of~$Y'$.  These~$h_n$ satisfy the required
properties to exhibit convergence of $Y_n$ to $Y$, except they need
to be extended over the annuli $C_i'$ in order to be defined on an exhaustion
of~$Y$.
\par
  Using the coordinates on the plumbing fixtures, we identify $C_i$
  and the corresponding annuli $C_{i,n}\subset Y_i$ with the planar
annuli
\bes C_i \= \{|x_i| < |u| < 1\} \quad \text{and} \quad C_{i,n} = \{|x_{i,n}| < |u|
< 1\}\,.
\ees
Under this identification, $C_i'$ is the round subannulus $\{\delta_1 <
  |u| < \delta_2\}$.  In these coordinates, the restrictions of $h_n$ to $C_i
  \setminus C_i'$ must converge to the identity map, as in the limit they fix a
  point in each component of $\bdry C_i$ corresponding to a marked
  zero $z_i^\pm$, and preserve the form $du/u$.  In particular, the
  restriction of $h_n$ to $\bdry C_i'$ converges  $C^1$-uniformly to
  the identity.  We extend $h_n$ to a smooth map on $C_i'$ by mapping a radial
  segment joining $\delta_1 e^{i \theta}$ and $\delta_2 e^{i\theta}$
  to the geodesic joining the images of these points.  The pullbacks
  $h_n^* du/u$ then converge uniformly to $du/u$.  The extended maps
  $h_n$ are then quasiconformal on an exhaustion of $Y$ and satisfy
  the hypotheses of convergence in the quasiconformal topology on
  forms from Section~\ref{sec:Space1}.   It follows that $Y_n\to Y$ in
  $\ODehns$, so $\OPl$ is continuous.

  If $Y_n\to Y$ in $\ODehns$, convergence of $X_n$ to $X$ is proved
  similarly, by transporting maps exhibiting convergence of $Y_n$ to
  $Y$ to an exhaustion of $X'$, and then extending across the
  complementary punctured disks by a straight-line extension.
  Convergence of the $\bft_n$ is obvious as they are relative periods
  of a convergent sequence of forms.  It follows that $\OPl^{-1}$ is
  continuous, so $\OPl$ is a local homeomorphism as desired.

  The equivariance of the map with respect to the group
  $K_\Lambda=\Tw/\sTw$ follows from the construction in
  Section~\ref{sec:defPlmap}, since $K_\Lambda$ acts on the markings
  and the simple rescaling parameters only, and they both have been
  transported from the family over the model domain to the plumbed
  family.
\end{proof}

\subsection{\changed{The complex structure on the Dehn space}}
\label{sec:DehnComplex}
We can now establish Theorem~\ref{thm:DehnIsOrbi}, constructing the
complex structure of the Dehn space.
\par
For any degeneration $\Lambda'\rightsquigarrow \Lambda$, the
quotient group $\sTw[\Lambda]/\sTw[\Lambda']$ acts properly
discontinuously on $\ODehns[\Lambda']$, and the quotient $\ODstratumso$
can be regarded as an open subset of $\ODehns$.  We denote by
$\pi_\Lambda^{\Lambda'}\colon \ODehns[\Lambda'] \to\ODehns$ the
resulting local homeomorphism.

We define a \emph{plumbing chart} for $\ODehns$ to be any map $\psi$ obtained
as the composition $\psi=\pi_\Lambda^{\Lambda'}\circ\OPl$, where $\OPl$ is
a plumbing map of a small coordinate neighborhood $U\subset\calW_\epsilon
\times \Delta^H$  (which, recall, depends on numerous choices) with target $\ODehns[\Lambda']$, where the domain of $\OPl$ is sufficiently
restricted so that this composition is injective.  Since
$\pi_\Lambda^{\Lambda'}$ and $\OPl$ are both local homeomorphisms, any
plumbing chart is a homeomorphism onto its image.
\par
\begin{lm}
  \label{lem:transitions_are_holomorphic}
For any two plumbing charts $\psi_i\colon U_i\to \ODehns$, the
  transition function $\phi=\psi_2 \circ \psi_1^{-1}$ is biholomorphic
  on $V= \psi_1^{-1}\psi_2(U_2)$.
\end{lm}
\par
\begin{proof}
Recall that the generic stratum $\ODstratums[\Lambda][\emptyset] =
\Oteich/\sTw$ already has the structure of a complex manifold.
\par
Let $D_i\subset U_i$ be the divisor of coordinate hyperplanes.  By construction, each
plumbing chart sends $U_i\setminus D_i$ to
$\ODstratums[\Lambda][\emptyset]$, since no nodes are plumbed. Moreover, each plumbing chart is holomorphic on $U_i\setminus D_i$, since it was defined there by the universal property of
the generic stratum.  By the Riemann Extension Theorem (see
\cite[Theorem~I.C.3]{gunning}), the restriction of $\phi$ to
$V\setminus D_1$ then extends to a holomorphic function on $V$.  Since
$\phi$ is continuous on $V$, it must agree with this extension, so
$\phi$ is holomorphic.
\end{proof}
\par
\begin{proof}[Proof of Theorem~\ref{thm:DehnIsOrbi}]
We give $\ODehns$ an atlas of charts by taking the plumbing charts 
  defined above (with all possible choices made in the construction) together with any atlas of holomorphic charts for the generic
  stratum.  Since the charts of the form $\pi_\Lambda^{\Lambda'}\circ\OPl$
  cover the stratum $\ODstratums[\Lambda][\Lambda']$, altogether the
  collection of all these charts covers all of $\ODehns$.  Since all the transition
  functions are biholomorphic by the lemma above, this gives $\ODehns$ the structure of a
  complex manifold.
\par
The complex structure on $\ODehn$ stems from that on
  $\ODehns[\Lambda]$ and the $K_\Lambda$-equivariance of the plumbing
  map.
\end{proof}

\subsection{Horizontal extension of perturbed period coordinates}
\label{sec:extendperturbed}

We finish this section by extending the perturbed period coordinates
discussed in Section~\ref{sec:modif} to the case with horizontal
nodes.  These can be regarded as a generalization of the classical
period coordinates, giving explicit local coordinates at the boundary
of $\LMS$.  These coordinates will not be used in the current paper,
but the construction is essential for further
applications, e.g.\ in \cite{CoMoZaEU} or \cite{bdg}.
\par
More precisely, suppose that we have chosen the local gluing data
to plumb each of the $H$ horizontal nodes by a plumbing fixture $\WW = \WW(x)$
for $x \in \Delta_\ve$, see~\eqref{eq:horizplumb}.
Let $\calY \to U = \calW \times \Delta_\ve^H$ be the family that results
from plumbing the horizontal nodes of $\calX \to \calW$ as in
Section~\ref{sec:plhoriz}. Our goal now is to extend the perturbed period
map~$\PPer$ from Section~\ref{sec:perturbed} to a local diffeomorphism whose
domain is~$U$.
\par
Suppose the $j$-th horizontal node $q_j$ lies on the level $i=i(j)$ subsurface
$\Sigma_{(i)}\subset\Sigma$. Let $\beta_j$ be a path that stays in $\Sigma_{(i)}$,
which represents a homology class in $\Sigma$ relative to~$Z_i$ \changed{defined in~\eqref{eq:PiZi}}
 (or a path ending at the points
in the image of $\sigma^{+}$ if needed) and that crosses once the seam
of~$q_j$, and crosses no other seams. Such a path exists, since each
component of $X$ contains at least one point of the set~$Z_i$.
Let $\alpha_j$ be the loop around~$q_j$. We define the {\em perturbed period map}
\begin{equation}\label{eq:PPM}
\ePP \= \PPer \times \Phor\colon U \to \CC^{L^{\bullet}(\Lambda)} \times \oplus_i
\calR_i' \times \CC^H\,,
\end{equation}
where
\begin{equation}\label{eq:horextPPM}
\Phor\colon  \begin{cases} \begin{array}{ccl}
U &\to&  \CC^H, \quad \\
([X,\bfeta,\bft],(x_j)) &\mapsto& \left(
\bfe \Biggl(\frac{\int_{\beta_j}  \eta_{(i)} + \xi_{(i)}}{
\int_{\alpha_j}  \eta_{(i)} +  \xi_{(i)}}\Biggr)_{j=1,\dots, H}\right)\,,
     \end{array}
\end{cases}
\end{equation}
and for integration we use the $f$-images of the
\index[family]{d030@$x_j$! Smoothing parameters for the horizontal nodes}paths in the corresponding
fiber of  the family $\calY \to U$. Note that we integrate the form in the fibers
of $\calY$ which is the family obtained after the horizontal plumbing map. In particular,
the horizontal node~$q_j$ has been smoothed out in the fibers of $\calY \to U$ above
the locus $x_j\neq 0$, using the standard plumbing fixture. The exponentiation makes this map
well-defined, despite the fact that~$f$ is only well-defined up to composition by elements in the
twist group $\Tw$. Indeed, any two images of $\beta_j$ differ by a power of the Dehn
twist about~$\alpha_j$.
\par
\begin{prop} \label{prop:extPP}
The perturbed period map $\ePP$ is a local diffeomorphism in
a neighborhood of $\calW \times \bfzero$.
\end{prop}
\par
\begin{proof} Using Proposition~\ref{prop:pertper} the claim follows from the
fact that the components of the map $\Phor$ are non-constant,
holomorphic (since the $\alpha_j$-periods tend to a non-zero residue
and the imaginary part of the $\beta_j$-period divided by the $\alpha_j$-period
goes to $+\infty$), and independent of each other by the construction of plumbing
fixtures disjointly and independently.
\end{proof}
\par
\begin{exa}
 We describe the perturbed period coordinates in the case of a curve with two irreducible components $X_{1}$ and $X_{2}$ that meet at two horizontal nodes.  Take a model differential such that its restriction $\eta_{1}$ to the component $X_{1}$ is in $\omoduli[1,3](2,-1,-1)$ and the restriction $\eta_{2}$ to $X_{2}$ is in $\omoduli[1,4](1,1,-1,-1)$. In this case, the GRC space is the subset of  the product of the $H^{1}$ such that the sum of the residues of the $\eta_i$ at each node is zero. Of course one of the two equations is redundant because of the residue theorem, and hence the GRC space is a hyperplane (i.e.~of codimension one only). Moreover, since the twisted differential has only one level, there is no modifying differential, hence the perturbed period map $\PPer$ is the usual period map.
 \par
 Now let us denote by $(x_{1}, x_{2})$ the coordinates in $\Delta_\ve^2$. Moreover for $i=1,2$, let $\beta_i$ be an immersed arc crossing exactly once the seam of the node $q_i$ from the double zero of $\eta_{1}$ to one of the zeros of $\eta_{2}$.
 Then the map $\Phor|_{{\bf0}\times \Delta_\ve^2}$ is given by
 \begin{equation}\label{eq:exhorper}
  (x_{1},x_{2})\mapsto (k_{1}x_{1} , k_{2}x_{2})\,,
 \end{equation}
where the $k_i$ are non-zero constants.
\par
To see this, we decompose the path $\beta_i$ into three paths as
follows.  The first path $\beta_i^{1}$ joins the double zero of
$\eta_{1}$ to the marked point in $X_{1}$ used to put the plumbing
fixture. The second path $\beta_i^{2}$ is the (image in the plumbed
surface of the) path in the plumbing fixture of
Equation~\eqref{eq:horplumbfix} joining the two marked points. The
last path $\beta_i^{3}$ joins the point of $X_{2}$ used for the
plumbing fixture and the endpoint of $\beta_i$. In this setting, the
period of $\beta_i$ is the sum of the periods of the three arcs
$\beta_i^j$. Note that the periods of $\beta_i^{1}$ and $\beta_i^{3}$
are constants $c_i^{1}$ and $c_i^{3}$ above $\Delta^{2}$. An easy
computation using Equation~\eqref{eq:horplumbform} shows that the
period of $\beta_i^{2}$ is equal to $r_i\log(x_i)$, where $r_i$ is the
residue of $\eta$ at~$q_i$. Hence the $\beta_i$-period is of the type
$c_i^{1} +r_i\log(x_i) + c_i^{3}$. This gives
Equation~\eqref{eq:exhorper}, where the $k_i$ are the exponentials of
$(c_i^{1} + c_i^{3})/r_i$.
 \end{exa}

\section{Families of \msds} \label{sec:famnew}

In this section we define families of \msds, generalizing the definition of
a single \msd in Section~\ref{sec:AugTeich}. Eventually we will give an
algebraic description of the stack of multi-scale differentials as a blowup.
The starting point will be a flat family of pointed stable curves
$(\pi\colon\calX\to B, \bfz)$ over an arbitrary base~$B$, possibly reducible and
non-reduced.  We will first define a germ of \msds at a point $p\in B$.  Roughly
speaking, this will consist of four pieces of data: the structure of an enhanced
level graph on the dual graph $\Gamma_p$ of the fiber $X_p$, a \emph{rescaling ensemble},
which is a germ of a morphism  $R_p\colon B_p \to \Tnorm[\Gamma_p]$
\index[family]{d020@$R_p\colon B_p \to \Tnorm[\Gamma_p]$! Rescaling ensemble}
to the normalization of the level rotation torus closure (recall the definition and details of this from Section~\ref{sec:LRTC}), a {\em collection of rescaled
differentials} $\omega_{(i)}$, and finally prong-matchings at all nodes of the
family, such that for every non-semipersistent node (as defined below) the prong-matching
is naturally induced by the family. These data satisfy some restrictions analogous
to those of a single twisted differential, and there is an equivalence relation given by
the action of the level rotation torus, analogous to the definition of a single \msd.
\par
We will show in Proposition~\ref{prop:mds1dim} that in favorable circumstances,
for example
for a family over a smooth base curve~$B$ with no persistent nodes, giving a \msd
simply amounts to giving a family of stable differentials of type~$\mu$, that does not
vanish identically on any fiber, \changed{but which may vanish on some
irreducible components of fibers}.  

\subsection{Germs of families of \msds} \label{sec:germfam}

We will define all the notions locally first, so until
Section~\ref{sec:presheafmsd}
we will work with a family over the germ $B_p$ of an analytic space~$B$ at a point
$p\in B$.
\par
Recall e.g.\ from \cite[Proposition~X.2.1]{acgh2} that for each node~$q_e$
of~$X_p$ there is a function $f_e\in \calO_{B, p}$, which we call a  {\em smoothing
  parameter},
\index[family]{d050@$f_e$! Smoothing parameters}
so that the family has the local form $u_e v_e = f_e$ in some coordinates in 
a neighborhood of~$q_e$.  The parameter $f_e$ is only defined up to multiplication
by a unit in $\calO_{B, p}$. We will write $[f_e]\in \calO_{B, p} / \calO_{B, p}^*$
for its equivalence class.
\par
Given an enhanced level graph $\Gamma_p$, suppose we have a morphism
$R\colon B\to \Tnorm[\Gamma_p]$.  This morphism determines for each vertical edge~$e$
a function $f_e\in \calO_B$ and for each level~$i$ a function $s_i\in \calO_B$,
such that if an edge $e$ joins levels $j<i$, then
\begin{equation}
  \label{eq:2}
  f_e^{\kappa_e} \= s_j \dots s_{i-1}\,.
\end{equation}
\par
\begin{df} \label{def:RescEns}
A \emph{rescaling ensemble} is a morphism $R\colon B\to \Tnorm[\Gamma_p]$ such
that the parameters $f_e\in \calO_B$ for each vertical edge~$e$ determined by~$R$
lie in the equivalence class~$[f_e]$ determined by the family $\pi\colon\calX\to B$.
\end{df}
\par
The $s_i$ will be called the \emph{rescaling parameters} determined by $R$,
in parallel with the notion defined in Section~\ref{sec:universalMD}.
The rescaling ensemble $R$ can be thought of as a choice of these parameters
that satisfies \eqref{eq:2} for each edge~$e$ of $\Gamma_p$, together with the
choices of appropriate roots of these which define a lift to $\Tnorm[\Gamma_p]$,
see Proposition~\ref{prop:charLRTC} for the precise statement.
\par
\begin{df}  \label{def:collRD}
A \emph{collection of rescaled differentials of type~$\mu$} at $p\in B$
is a collection of germs of sections $\omega_{(i)}$ of $\omega_{\calX/B}$
defined on open subsets $U_i$ of~$\calX$, indexed by the levels~$i$ of the
enhanced level graph~$\Gamma_p$.  Each $U_i$ is required to be a neighborhood
of the subcurve~$X_{p,( \leq i)}$ with the points of its intersection with
$X_{p, (>i)} \cup \calZ^\infty$ removed. For each level~$i$ and each edge~$e$
of $\Gamma_p$ whose lower vertex is at level~$i$ or below, we define
$r_{e, (i)}\in \calO_{B, p}$ to be the period of $\omega_{(i)}$ along the oriented
vanishing cycle~$\gamma_e$ for the node~$q_e$.  We require the collection
to satisfy the following constraints:
\begin{enumerate}
\item For any levels $j<i$ the differentials satisfy $\omega_{(i)}
  \= s_j \cdots s_{i-1} \omega_{(j)}$ on $U_i\cap U_j$ for some $s_k \in \calO_{B, p}$
  with $s_k(p)=0$ (where $k=j,\ldots,i-1$).
\item For any edge $e$ joining levels $j<i$ of $\Gamma_p$, there are germs of
functions $u_e, v_e$ defined on a neighborhood of the corresponding
node in $\calX$, and a germ of a function $f_e$ on~$B$, such that the family locally has the
form $u_e v_e = f_e$, and in these coordinates
\begin{equation}\label{eq:5}
  \omega_{(i)} \= (u_e^{\kappa_e} + f_e^{\kappa_e}r_{e, (i)})\frac{du_e}{u_e}
  \quad\text{and}  \quad
\omega_{(j)} \= - ( v_e^{-\kappa_e} + r_{e, (j)}) \frac{dv_e}{v_e}\,,
\end{equation}
where $\kappa_e$ is the enhancement of $\Gamma_p$.  The irreducible components
of $\calX|_{V(f_e)}$ where $\omega_{(i)}$ is zero or $\infty$ are called
respectively {\em vertical zeros} and {\em vertical poles}.
\item The $\omega_{(i)}$ have order~$m_k$ along the sections $\calZ_k$
that meet the level-$i$ subcurve of~$X_p$; these are called {\em horizontal
zeros and poles}. Moreover, $\omega_{(i)}$ is holomorphic and non-zero away
from its horizontal and vertical zeros and poles.
\item (Global Residue Condition)  Let $\Sigma$ be the topological surface
obtained by smoothing each node of $X_p$, and regard the vanishing
cycles~$\gamma_e$ as oriented curves on~$\Sigma$. Then each relation
\bes
\qquad
\sum_{e\colon \lbot \leq i} \alpha_e \gamma_e = 0 \quad \text{in $H_1(\Sigma
\smallsetminus P_s,\QQ)$
for some $\alpha_e \in \QQ$ implies}\! \sum_{e \colon \lbot \leq i}
\alpha_e r_{e,(i)} = 0.
\ees
\end{enumerate}
If the rescaling and smoothing parameters for the collection $\omega_{(i)}$
agree with those of the rescaling ensemble~$R$, we call them {\em compatible}.
We denote the collection by $\bfomega = (\omega_{(i)})_{i \in L^\bullet(\Gamma_p)}$
or by $\bfomega_p$.
\end{df}
\par
Some remarks to unravel the meaning of this definition are in order.
Condition (2) is often automatic from Theorem~\ref{thm:NF}. However, for the case of semipersistent nodes defined below that theorem does not
apply, and the condition needs to be imposed.
\par
Condition (3) ensures that each $\omega_{(i)}$ is not identically zero on a
neighborhood of~$X_{p,(i)}$.  Condition (1) ensures that $\omega_{(i)}$
vanishes on~$X_{p,(<i)}$. Moreover, $\omega_{(i)}$ vanishes
on a neighborhood in~$\calX$ of~$\changed{X_{(j)}}$ for some level $j<i$, if some~$s_k$ with
$j\le k\le i-1$ vanishes in a neighborhood of~$p$ on~$B$.
\par
Conditions (4) and (1) together imply the usual global residue
condition in each nearby fiber $X_q$ (using the enhanced structure of
$\Gamma_{X_q}$ which we define below by undegenerating from $\Gamma_p$). Note
that~$r_{e,(i)}$ agrees with $2\pi \sqrt{-1} $ times the residue of $\omega_{(i)}$ at~$q_e$
over the locus where the node~$q_e$ persists. By condition (1), given two levels $j<i$
and any edge $e$ such that $\lbot \leq j < i$ we have
\begin{equation*}
  r_{e,(i)} \= s_j \cdots s_{i-1} r_{e, (j)}\,.
\end{equation*}
In particular, if $s_j=0$ for some $\lbot \leq j$,  then $r_{e, (i)}=0$. Consequently,
the relations reflect the global residue condition as stated
in Proposition~\ref{prop:kdiffGRC}, as we now explore in an example.
\par
\begin{exa} ({\em Definition~\ref{def:collRD} extends to fiberwise GRC.})
\label{ex:GRCinfdef}
We consider the level graph given by Figure~\ref{fig:rescond}, where  $\kappa_{e_i}=2$ for
every~$i$. Consider a collection of rescaled differentials
with $\omega_{(-1)} \= t^2 \omega_{(-2)}$  (while~$\omega_{(0)}$ will not matter for us) over $B= \Spec \CC[t]/(t^{3})$, and let $r_{e,(i)}\colon B \to \CC$ be as in Definition~\ref{def:collRD}. The usual GRC from Section~\ref{sec:deftwd}
states that the residues at the point $t=0$ satisfy $r_{e_1,(-1)}(0) =
r_{e_2,(-1)}(0) = 0$.
 \begin{figure}[htb]
   \centering
\begin{tikzpicture}[scale=1]
\coordinate (a1) at (-1,0);\fill (a1) circle (2pt);
\coordinate (a2) at (1,0);\fill (a2) circle (2pt);
\coordinate (a3) at (0,-1);\fill (a3) circle (2pt);
\coordinate (a4) at (0,-2);\fill (a4) circle (2pt);
\draw (a1)  -- (a3) coordinate[pos=.4] (b1);
\draw (a2)  -- (a3) coordinate[pos=.4] (b2);
\draw (a1)  .. controls ++(-90:1) and ++(120:.5) .. (a4) coordinate[pos=.5] (b3);
\draw (a2)  .. controls ++(-90:1) and ++(60:.5) .. (a4) coordinate[pos=.5] (b4);

\node[right] at (b1) {$e_{1}$};
\node[left] at (b2) {$e_{2}$};
\node[left] at (b3) {$e_{3}$};
\node[right] at (b4) {$e_{4}$};
\end{tikzpicture}
\caption{An enhanced level graph illustrating versions of the GRC.}
\label{fig:rescond}
\end{figure}
\par
Since the vanishing cycles corresponding to the edges~$e_{1}$ and~$e_{3}$ are
homologous, Definition~\ref{def:collRD}~(4) states that
\bes
0 \= r_{e_{1},(-1)} + r_{e_{3},(-1)} \= r_{e_{1},(-1)} + t^2r_{e_{3},(-2)} \,.
\ees
This condition reproduces the GRC when setting $t=0$, and imposes a stronger
constraint on the higher order terms of the expansion in~$t$ of the residues.
\end{exa}
\par
\medskip
In preparation for the notion of prong-matchings in families, we define a subtle variant
of the usual notion of persistent nodes that becomes relevant over a non-reduced base,
and thus in particular for first order deformations.
\par
\begin{df} \label{def:semiper} Given a germ of a family $\pi \colon \calX \to B_p$
at~$p$, we say that a node $e$ is \emph{persistent} if $f_e=0$. If the dual
graph~$\Gamma_p$ has been provided with an enhanced level graph structure, we say
that a node $e$ is \emph{semipersistent} if $f_e^{\kappa_e}=0$.
\end{df}
\par
We start with a discussion of prong-matchings in families, generalizing the definitions
in Section~\ref{sec:prmatch}.
Suppose first that $q_e$ is a persistent node joining levels $j<i$ of $\Gamma_p$.  In
local analytic coordinates, the family is of the form $u_ev_e=0$.  We write $Q_e$ for
the nodal subspace cut out by $u_e=v_e=0$, so that $Q_e$ can be thought of as the image
of a nodal section $B\to \calX$.  We write $\calN_e^+$ and $\calN_e^-$ for the normal
bundles to $Q_e$ in each branch of~$\calX$ along~$Q_e$. These are line bundles on~$Q_e$,
because~$Q_e$ is a Cartier divisor in each branch, and by pullback via the nodal
section they can be regarded as line bundles on $B$.
\par
We also have the rescaled differentials $\omega_{(i)}$ and $\omega_{(j)}$ defined
near $Q_e$ in its respective branches, and choose local coordinates $u_e$ and $v_e$
so that these differentials are
in their normal form \eqref{eq:5} (with $f_e=0$).  The \emph{prongs} of $Q_e$ are
then the $\kappa_e$ sections of the dual line bundles $(\calN_e^\pm)^*$ given by
$v_+ = \theta_+ \frac{\partial }{\partial u_e}$ and
$v_- = \theta_- \frac{\partial }{\partial v_e}$, where $\theta_\pm$ range over all
possible $\kappa_e$-th roots of
unity.  A \emph{prong-matching} at $Q_e$ is a section $\sigma_e$ of
$\calP_e=\calN_e^+ \otimes \calN_e^-$ such that $\sigma_e(v_+ \otimes v_-)^{\kappa_e} = 1$
for any two prongs~$v_+$ and~$v_-$.  Intuitively each prong $v_+$ is matched to the
unique prong $v_-$ such that $\sigma_e(v_+ \otimes v_-)=1$.
\par
Prong-matchings can be defined similarly for a non-persistent node.  In this case,
the function~$f_e$ defines a subspace $B_e\subset B$ over which this node persists.
The entire discussion of the previous paragraph can be carried out over~$B_e$, and
one defines a prong-matching as an appropriate section of $\calP_e$, which is now
a line bundle over $B_e$.
\par
For a non-semipersistent node $e$, there is a natural \emph{induced prong-matching}
$\sigma_e$ over~$B_e$ which is defined by the choice of the rescaled
differentials~$\omega_{(i)}$ and the rescaling ensemble.  This prong-matching~$\sigma_e$
is defined explicitly in local coordinates by writing it as $\sigma_e=d u_e
\otimes dv_e$, where $u_e$ and $v_e$ are as in \eqref{eq:5}.  Any two possible
choices of~$u_e$ and $v_e$ are of the form $\alpha u_e$ and $\alpha^{-1} v_e$
for some unit $\alpha \in \calO_{B, p}^*$, so the induced prong-matching does
not depend on this choice.
\par
We can now package everything into the local version of our main objects.
\par
\begin{df} \label{def:germMSD}
Given a family of pointed stable curves $(\pi\colon\calX\to B, \bfz)$ and $B_{p}$
a germ of~$B$ at~$p$, the {\em germ of a family of \msds} of type $\mu$ over $B_{p}$
is an equivalence class of the following set of data:
\begin{enumerate}
\item the structure of an enhanced level graph on the dual graph $\Gamma_p$ of the
fiber $X_p$,
\item a rescaling ensemble $R\colon B \to \Tnorm[\Gamma_p]$, compatible with
\item a collection of rescaled differentials $\bfomega = (\omega_{(i)})_{i \in
L^\bullet(\Gamma_p)}$ of type~$\mu$, and
\item a collection of prong-matchings $\bfsigma = (\sigma_e)_{e \in E(\Gamma)^v}$, which
are sections of $\calP_e$ over~$B_e$.  For the non-semipersistent nodes, these
are required to agree with the induced prong-matchings defined above.
\end{enumerate}
The $\calO_{B,p}$-valued points of the  level rotation torus $T_{\Gamma_p}(\calO_B)$
act on all of the above data, and we consider
the data $(\omega_{(i)}, R,\sigma_e)$ to be {\em equivalent} to $(\rho \cdot \omega_{(i)},
\rho^{-1}\cdot R,\rho \cdot \sigma_e )$ for any $\rho\in T_{\Gamma_p}(\calO_B)$.
Here the torus action is defined by $\rho\cdot \omega_{(i)} = s_i \omega_{(i)}$
and $\rho\cdot \sigma_e = f_e \sigma_e$, and  $\rho^{-1}\cdot (\ )$ denotes
post-composition with the multiplication by~$\rho^{-1}$.
\par
  Replacing $T_{\Gamma_p}(\calO_{B, p})$ with the extended level rotation
  torus $\Textd[\Gamma_p](\calO_{B, p})$, the analogous object is called a
{\em germ of a family of projectivized \msds}.
\end{df}
\par
\begin{rem} \label{rem:ptwdismsd}
  Note that over a reduced point, this definition of a family of \msds agrees with Definition~\ref{df:msd}.
\end{rem}
\par
A \emph{morphism between two germs of \msds}
\be \label{eq:defmorphismMSD}
(\pi'\colon\calX'\to B', \bfz', \Gamma_{p'}, \bfomega', \bfsigma')
\,\to\, (\pi\colon\calX\to B, \bfz, \Gamma_{p}, \bfomega, \bfsigma)
\ee
is a pair of germs of morphisms $\phi\colon B'\to B$ and $\tilde{\phi}\colon
\calX'\to \calX$ that jointly define a morphism of families of pointed
stable curves (see {\cite[p.~281]{acgh2}}), such that the induced isomorphism
of dual graphs $ \Gamma_{p'}\to \Gamma_p$ is also an isomorphism of enhanced
level graphs and such that $\tilde{\phi}^* (\bfomega,\bfsigma)$ is equivalent
to $(\bfomega', \bfsigma')$.
\par
For later use in the context of marked differential, and as an auxiliary object
in the next subsection, we define a {\em simple rescaling ensemble} to be
a (germ of a) morphism $R^s \colon B \to \ol{T}_{\Gamma_p}^{s}$
\index[family]{d030@$R^s \colon B \to \ol{T}_{\Gamma_p}^{s}$! Simple rescaling ensemble}
to the simple level rotation torus closure, such that the composition with
$\ol{T}_{\Gamma_p}^{s} \to \Tnorm[\Gamma_p]$ is a rescaling ensemble in the above
sense. Concretely, the map~$R^s$ is given by \emph{simple rescaling parameters}
$t_i \colon B \to \CC$ (in terminology consistent with that in
Section~\ref{sec:universalMD}). Given a simple rescaling ensemble~$R^s$,
the associated (non-simple) rescaling ensemble is obtained by taking
\begin{equation}\label{eq:fesifromti}
s_i \= t_i^{a_i} \quad \text{and} \quad f_e \= \prod_{i=\lbot}^{\ltop-1} t_i^{m_{i,e}}\,,
\end{equation}
similarly to~\eqref{eq:TsT}.

\subsection{Restriction of germs to nearby points}
\label{sec:presheafmsd}

Now we allow $B$ to be any complex analytic space containing a point~$p$. Before
giving the global definition of families, we need to  define restrictions
of germs. For this purpose consider a germ of \msds at~$p$ given by the
data $(\Gamma_p, \bfomega =(\omega_{(i)}), \bfsigma = (\sigma_e))$
with $\bfomega$ compatible with~$R_p$. Let $U\subset B$ be a
neighborhood of~$p$ over which~$R_p$ and all~$\sigma_e$ are defined. For every
$q\in U$ we wish to define the germ of \msds at $q$ induced by this germ  at $p$.
\par
First we explain how this datum  defines an undegeneration of enhanced level
graphs $\degen\colon\Gamma_{q}\rightsquigarrow \Gamma_p$ as in Section~\ref{sec:order}. With notation of that section, this undegeneration is given by $J=\{j_{-1},\dots,j_{-M}\}$, where $j_{k}\in J$ if and only if $s_{j_{k}}(q) = 0$. Moreover recall that $j_0=0$ and $j_{-M-1} = -N-1$ (though they are not part of $J$).
More explicitly, the map of dual graphs $\delta\colon\Gamma_p\to\Gamma_{q}$ is obtained by
contracting every vertical~edge $e$ such that $f_e({q})\neq 0$.  (Whether
horizontal edges are contracted or not is determined by the fiber~$X_{q}$.)
If~$e$ is contracted and joins levels $j<i$, then since $f_e^{\kappa_e} = s_j
\cdots s_{i-1}$, the rescaling parameter $s_k({q})\neq 0$ for each $j\leq k < i$.
We then define the order on~$\Gamma_{q}$ so that the $k$'th level of $\Gamma_{q}$ corresponds to a maximal interval $(j_{k-1}, j_k]$ in $L^\bullet(\Gamma_p)$
such that $s_{j_k}({q})=0$ for $j_{k}\in J$ and $s_i({q})\neq 0$ for every smaller~$i$ in this
interval.
The map $\delta$ is then compatible with
the enhancements of these dual graphs and defines a degeneration
$\degen$, as desired.
\par
Second, we define the restriction of $R_p$ to a rescaling
ensemble at~$q$. The undegeneration $\degen$ induces a corresponding
homomorphism $\degen_*\colon T_{\Gamma_{q}}\to T_{\Gamma_p}$ defined
in Section~\ref{sec:levrottori} and a homomorphism $\degenbar_*\colon
\Tnorm[\Gamma_q]\to\Tnorm[\Gamma_p]$, which is equivariant with respect to
the action of each torus on the normalization of its closure.
\par
\begin{prop} \label{prop:restRE}
Given a rescaling ensemble $R_p\colon B\to \Tnorm[\Gamma_p]$, there exists
a neighborhood $V\subset B$ of $q$, a rescaling ensemble $R_q\colon V
\to \Tnorm[\Gamma_q]$, and $\tau\in T_{\Gamma_p}$ such that
\begin{equation*}
   \degenbar_* \circ R_q \= \tau\cdot R_p\,
\end{equation*}
as germs at $q$. Any two such $\tau$ differ by composition with an element
of $T_{\Gamma_q}$.
\end{prop}
\par
\begin{proof}
We take the fiber product of~$R_p$ with the finite map $\ol{p} \colon
\ol{T}_{\Gamma_p}^{s}\to \ol{T}_{\Gamma_p}^{s}/K_{\Gamma_p} =\Tnorm[\Gamma_p]$ to obtain some simple
rescaling ensemble $R_p^s\colon B^s \to \ol{T}_{\Gamma_p}^{s}$, defined on some
ramified cover~$B^s$ of~$B$. We solve the equation for the corresponding simple
objects and then descend. Let $q'\in B^s$ denote some preimage of $q\in B$.
Two consecutive levels $i,i+1\in L^\bullet(\Gamma_p)$ have the same image
in $L^\bullet(\Gamma_q) = L^\bullet(\Gamma_{q'})$ if and only if $t_i(q')\ne 0$. As
in Lemma~\ref{lm:monoLRT}, the image of the monomorphism $\degenbar^s_* \colon
\ol{T}_{\Gamma_{q'}}^{s} \to \ol{T}_{\Gamma_p}^{s}$ of simple level rotation tori is
cut out by the equations $t_i=1$ for all levels~$i$ such that the images of level~$i$ and~$i+1$
are the same in $L^\bullet(\Gamma_q)$. We define $\tau^s\in T_{\Gamma_p}^{s}$ by
\bes
(\tau^s)_i \=\begin{cases}
1/ t_i(q'), & \mbox{if $\degen(i)=\degen(i+1)$} \\
1, & \mbox{otherwise}\,.             \end{cases}
\ees
This ensures that $\tau^s \cdot R_p^s$ is in the image of $\degen^s_*$,
and so there exists a simple rescaling ensemble~$R_q^s$ defined on a neighborhood of $q'$ in $B'$ such that
$\degenbar_*^s \circ R^s_q \= \tau^s\cdot R^s_p$.
\par
The multiplication map $\tau^s$ is $K_{\Gamma_p}$-equivariant, since the torus
$T_{\Gamma_p}^{s}$ is commutative. Since $R_p^s$ and $\degenbar_*^s$  are
$K_{\Gamma_p}$-equivariant by construction as fiber product, the map~$R_q^s$
descends to the required map $R_q$, and we let~$\tau = \ol{p}(\tau^s)$.
\par
To show the uniqueness of $\tau$ up to the action of $T_{\Gamma_q}$,  observe
that if for some other~$\tau'$ the composition $\tau'\cdot R_p$ were to also
lie in the image of $\degenbar_*$, then the values of $\tau'\cdot R_p$
and $\tau\cdot R_p$ must all be equal on all the edges of~$\Gamma_p$
that are contracted in $\Gamma_q$, and on all levels $i\in L^\bullet(\Gamma_p)$
such that levels~$i$ and~$i+1$ have the same image in $L^\bullet(\Gamma_q)$.
Thus $\tau'\cdot\tau^{-1}\in T_{\Gamma_p}$ must act trivially on all such edges
and levels. But this is precisely to say that $\tau'\cdot\tau^{-1}$ lies in
the image of $T_{\Gamma_q}$ embedded into $T_{\Gamma_p}$.
\end{proof}
\par
Third, we define the collection of rescaled differentials at~$q$. For
each~$k \in L^\bullet(\Gamma_q)$ let $(j_{k-1}, j_k]$ be its preimage in
$L^\bullet(\Gamma_p)$. We act on $(\bfomega,\bfsigma)$ by the~$\tau$
from the preceding proposition.  The rescaling ensemble $\degenbar_* \circ R_q$
has $s_i=1$ for each $i \in(j_{k-1}, j_k)$ and moreover, for any edge~$e$
of $\Gamma_p$ joining two levels in this interval, we have $f_e=1$.
By Condition~(1) in Definition~\ref{def:collRD} the restrictions of
$(\tau\cdot\bfomega)_{(i)}$ to a neighborhood of the fiber over~$q$
agree for $i \in (j_{k-1},j_k]$ on their overlap. So we define $\bfomega_q$ to be
the collection of differentials over~$q$ obtained by this gluing for
all $k \in L^\bullet(\Gamma_q)$.
\par
The last datum to define is a prong-matching for each edge of $\Gamma_q$.
An edge~$e$ of~$\Gamma_p$ persists in~$\Gamma_q$ exactly when $q \in B_e$,
the subscheme defined by~$f_e = 0$. The prong-matching $\sigma_e$
is a section of~$\calP_e$ and as such restricts to a germ of a section
over the neighborhood of $q$ intersected with $B_e$.
\par

\subsection{The global situation.} \label{sec:globalfam}

We finally obtain global objects by patching together germs using the restriction
procedure of the previous subsection. Essentially, we mimic the definition of
sheafification of a presheaf.
\par
\begin{df} \label{def:MSD}
Given a family of pointed stable curves $(\pi\colon\calX\to B, \bfz)$ \changed{over an analytic space~$B$}, a {\em
family of \msds} of type $\mu$ over~$B$ is a collection of germs of \msds of
type~$\mu$ for every point $p \in B$ such that if the germs at~$p$ and at~$p'$ are
both defined at~$q$, their restrictions to~$q$ are equivalent germs.
\end{df}
\par
\changed{Morphisms (and hence isomorphisms) of such families are defined
in the obvious way using the germwise notion of morphisms~\eqref{eq:defmorphismMSD}.}
We usually refer to a \msd by $(\bfomega_p, \bfsigma)_{p \in B}$ or just by $\bfomega$,
suppressing~$\Gamma_p$ and~$R_p$ to simplify notation. 
\par
Given a family of \msds over~$B$ and a map $\varphi \colon B' \to B$,
we can pull back the family to a family of \msds over~$B'$ by pulling
back each germ.  For this purpose we note that rescaling ensembles and
prong-matchings have obvious pullbacks by pre-composing the maps
with~$\varphi$, and that the collection of rescaled differentials can be
pulled back as sections of the relative dualizing sheaf.  The notion
of a family of \msds can be regarded as a moduli functor
$\MSfun\colon (\text{Analytic\ spaces}) \to (\text{Sets})$
\index[teich]{h010@$\MSfun$!Functor of \msds} that associates to an
analytic space~$B$ the set of isomorphism classes of families of \msds
of type~$\mu$ over $B$.  Similarly, there is a projectivized analogue
$\PP\MSfun$.  The notion of families of \msds defines in an obvious
way a {\em groupoid $\MSgrp$} \index[teich]{h020@$\MSgrp$!Grupoid of
  \msds} that retains the information of isomorphisms (Section~X.12 of
\cite{acgh2} provides a textbook introduction, highlighting the
difference between~$\MSgrp$ and~$\MSfun$).  In Section~\ref{sec:moduli
  space} we will see that this is a Deligne-Mumford stack.
\par
\medskip
Much of the data of \msds is determined automatically in good circumstances.
The reader should keep in mind the following situation that will be a special
case of the considerations in Section~\ref{sec:triangsystem}.
\par
\begin{prop} \label{prop:mds1dim}
If~$B$ is a smooth reduced curve, then giving a \msd of type~$\mu$ on a family $\calX \to B$
without persistent nodes simply amounts to specifying a family~$\omega$ of stable
differentials of type~$\mu$ in the generic fiber, which does not identically vanish on any fiber.
\end{prop}
\par
\begin{proof}
Since $B$ is smooth and one-dimensional, Proposition~\ref{prop:normal-rescalable}
below implies that the family $(\calX\to B, \omega)$ is adjustable and
hence orderly (see Definitions~\ref{def:rescalable} and~\ref{def:orderly}
below). The claim then follows from Proposition~\ref{prop:orderly-msd}.
\end{proof}
\par
In contrast to this we observe:
\par
\begin{exa}\label{ex:rescond} ({\em Lower level differentials are not
determined by~$\omega_{(0)}$.}) If~$\calO_B$ admits a zero divisor~$s$, say $s\cdot y = 0$,
then differentials on the lower level components of a collection of rescaled
differentials with given~$\omega_{(0)}$ may be not uniquely determined. In fact,
if $\omega_{(0)} = s \omega_{(-1)}$, then we also have $\omega_{(0)} = s (\omega_{(-1)}
+ y \xi)$ for any differential~$\xi$.
\end{exa}

\subsection{Adjustable and orderly families} \label{sec:triangsystem}

In this section we analyze the ingredients of multi-scale differentials and
when their existence is automatic. The study here will be needed for the
description of the moduli space of \msds as a blowup of the normalization of
the IVC in Section~\ref{subsec:blowup}.
\par
For families of pointed stable differentials $(\pi\colon\calX\to B, \omega,
\bfz)$ considered in this section, we make a {\em standing assumption}
that $\omega$ does not vanish identically on any fiber of~$\pi$.
\par
\begin{df}\label{def:rescalable}
A family of pointed stable differentials $(\pi\colon\calX\to B, \omega, \bfz)$
is called \emph{adjustable of type $\mu$}, if for every $p\in B$ and for
every irreducible component~$X$ of the fiber $X_p$ over $p$, there exists
a family of differentials~$\changed{\eta_X}$ defined over a neighborhood of~$X$
minus the
horizontal poles and minus the intersection with the other components of~$X_p$,
such that
\begin{itemize}
\item there exists a non-zero regular function $h\in \calO_{B,p} \setminus \{0\}$
such that $\omega = h\changed{\cdot \eta_X}$,
\item \changed{for every~$X$} the differential \changed{$\eta_X$ does} not
vanish identically on~$X$,
\item and $\eta_X|_X$ has zero or pole order $m_j$ prescribed by $\mu$ at every
marked point $z_j \in X$, and has no other zeros or poles in the smooth locus
of~$X$.
\end{itemize}
\par
Such a function~$h$ is called an \emph{adjusting parameter}
\index[family]{d070@$h$, $h_{v}$, $h_i$! Adjusting parameter}
for $(\calX, \omega)$ at the component~$X$, and~\changed{$\eta_X$} is called an
\emph{adjusted differential}.
\end{df}
Later we will show that under some mild assumptions an adjustable family
naturally yields the data of a family of multi-scale differentials (see
Proposition~\ref{prop:orderly-msd}).
\par
The adjusting parameter~$h$ is not unique, since multiplying~$\eta_X$ by a
unit in $\calO_{B,p}$ and multiplying $h$ by the inverse of such a unit gives
another adjusting parameter. The following example
shows that the existence of adjusting parameters is a non-trivial condition.
\par
\begin{exa}({\em Adjusting parameters may not exist.})  Recall from
  \cite[Example~3.2]{strata} that there exist pointed stable
  differentials whose associated twisted differentials are not
  unique. Consider such a pointed stable differential $(X,\omega)$ and
  two distinct associated twisted differentials $(X,\eta_{1})$ and
  $(X,\eta_{2})$. Take two families of generically smooth stable
  differentials $(\pi_{i}\colon \calX_{i} \to B_{i},\omega_{i})$ above
  smooth curves $B_{i}$ for $i = 1, 2$, such that the adjusted
  differentials induce the twisted differentials $\eta_{i}$ at the
  points $p_{i}$. Construct a nodal curve $B$ by taking the union of
  $B_{1}$ and $B_{2}$ glued at $p_{1}$ and $p_2$. Since the fiber over~$p_1$ of the family of stable differentials $(\calX_1,\omega_1)$ coincides with the fiber of $(\calX_2,\omega_2)$ over $p_2$, we can glue $\calX_1$ and $\calX_2$ to form a family of pointed
stable differentials $(\pi\colon \calX \to B,\omega)$.  This family \changed{of
pointed stable differentials}
  is not adjustable since the adjusted differentials of the two
  branches do not coincide \changed{on the fiber of $\calX$} over the node of the base curve $B$.
\end{exa}
\par
Next we show that if the base is sufficiently nice, adjusting parameters
do exist.
\par
\begin{prop}
  \label{prop:normal-rescalable}
  \changed{Given any family $(\calX, \omega)$ satisfying the standing assumption, if the base $B$ is normal and if $\omega$ does not vanish identically on any irreducible component of $\calX$, then $(\calX, \omega)$ is adjustable.}
Moreover, any two adjusting parameters
for a given point~$p \in B$ and a given irreducible component~$X$ of $X_p$ differ by
multiplication by a unit in $\calO_{B,p}$.
\end{prop}
\par
We first recall some terminology. Denote $\calZ = \sum_{j=1}^n m_j \calZ_j$ a divisor on~$\calX$, where $\calZ_j\subset\calX$ is the image of the section of the~$j$-th zero or pole $z_j$ of~$\omega$. We call an effective Cartier divisor $V\subset\calX$ a {\em vertical} divisor if the image $\pi(V)\subset B$ is a divisor. Note that any section~$B\to\calX$ is not vertical because it projects to all of~$B$. In particular, the divisors~$\calZ_j$ and $\calZ$ are not vertical. A vertical divisor is called a
vertical zero divisor of~$\omega$ if it is contained in the zero locus of~$\omega$ (and being vertical ensures that it is not contained in the support of~$\calZ$).
\par
\begin{proof}
Suppose~$\omega$ vanishes identically on an irreducible component~$X$
of the fiber $X_p$ for some $p\in B$. Then~$X$ is contained in the
vertical zero divisor of~$\omega$. More precisely, let $W\subset
\calX$ be a small neighborhood of \changed{a smooth point of $X\setminus (X_p\setminus X)$}, and let $U = \pi(W) \subset B$ be the corresponding neighborhood of $p$ in $B$. Then $W\cong U \times \Delta$ where $\Delta$ is a disk. Let $V\subset W$ be the vertical zero divisor of~$\omega$ in~$W$, i.e.
$V$ is the zero divisor of~$\omega$ regarded as a holomorphic section of the twisted dualizing line bundle $\omega_{\calX/U}(-\calZ)$ restricted to $W$, so that
in particular $V$ contains the generic point of $X$. Since $V$ is the zero locus of a holomorphic section of a line bundle, it is an effective Cartier divisor (possibly reducible and non-reduced), and we denote by $h\in \calO_W \cong \calO_{U \times \Delta}$ the local defining equation of $V$.
\par
We claim that $h$ does not depend on the second factor $\Delta$, i.e.~$h$ can be regarded as a function defined on the base $U$.
If this were not the case, then for a generic point $b\in U$ we would be able to solve the equation $h(b, x)=0$, but then~$V$ would map onto~$U$, which contradicts that $V$ is a vertical divisor. We thus conclude that $h\in \calO_U$.
\par
Let $W'\subset W$ be the smooth locus of $W$. Then $(h^{-1} \omega)|_{W'}$ is holomorphic and can have horizontal zeros only, as the vertical zero divisor $V$ is canceled out by $h^{-1}$. Since $U\subset B$ is normal, it implies that $W \cong U \times \Delta$ is normal and the singular locus $W\setminus W'$ has codimension two or higher. By Hartogs' theorem, $(h^{-1} \omega)|_{W'}$ extends to~$W$ holomorphically and can still have horizontal zeros only, as the zero locus of $h^{-1} \omega$ must be of codimension one (if not empty). It implies that
the zero locus of $h^{-1} \omega$ in $W$ does not contain the generic point of $X$, and hence $(h^{-1} \omega) |_{X\cap W}$ is holomorphic and not identically zero. Since $X\cap W$ contains the generic point of~$X$, it follows that $h^{-1}\omega$ does not vanish identically on~$X$. Thus $h$ is the desired adjusting parameter for~$X$ and $\eta = h^{-1}\omega$ is the corresponding adjusted differential.
\par
Suppose that $h_1$ is another adjusting parameter for~$X$.  Note that $h_1^{-1}\omega = (h/h_1) \eta$.
If $h/h_1$ has a zero or pole at $p$, then $h_1^{-1}\omega$ would have a zero or pole along the entire~$X$, which contradicts the definition of adjusting parameter. We thus conclude that any two adjusting parameters for~$X$ differ by multiplication by a unit in $\calO_{B,p}$.
\end{proof}
\par
In the algebraic situation, we show that adjusting parameters exist
\'etale locally, which will be used in the proof of Theorem~\ref{thm:LMS-blowup}
as a step towards the algebraicity of the moduli space of multi-scale differentials.
\par
\begin{prop}\label{prop:alg-rescalable}
Under the assumptions of Proposition~\ref{prop:normal-rescalable}, if moreover
the family $(\calX \to B, \omega)$ is algebraic with~$B$ irreducible and smooth
generic fiber, then there exists  an \'etale
base change $B' \to B$ and a preimage~$p'$ of~$p$ such that the adjusting
parameters for the pullback family $(\calX' \to B', \omega')$ at~$p'$ can
be chosen in the algebraic local ring $\calO^{\alg}_{B',p'}$.
\end{prop}
\par
\begin{proof} The existence of a function in the local ring after an \'etale base
change of $R = \calO^{\alg}_{B,p}$ is by definition equivalent to finding such
a function in the (strict) Henselization $R^h$ of~$R$. Let $h_{\an} \in \calO^{\an}_{B,p}$
be an analytic adjusting parameter for an irreducible component $X$ of $X_p$
provided by Proposition~\ref{prop:normal-rescalable}. We view $h_{\an}$ as an
element of the local ring completion $\widehat{R}$. The proof consists of two steps.
First we show that there exists an algebraic function~$h \in R$ such that
$h_{\an}$ divides~$h$, as elements of~$\widehat{R}$. Secondly,
we show that for any factorization $h = h_1 \cdot h_2$ in~$\widehat{R}$ there exist
$h_1', h_2' \in R^h$ with $h = h_1' \cdot h_2'$ and such that $h_i/h_i'$ is a unit
in~$\widehat{R}$. Applying this to $h_1 = h_{\an}$ gives the result by taking $h'_1$
as the desired (algebraic) adjusting parameter.
\par
For the first claim, note that the vertical divisor~$V$ (as in the proof of
Proposition~\ref{prop:normal-rescalable})
is contained in the locus of singular fibers~$S \subset \calX$, which is
the $\pi$-preimage of a Cartier divisor $D \subset B$. Let~$m$ be the vanishing
order of~$V$ at a generic point near the component~$X$ under consideration.
Then $h_{\an}$ divides the defining equation~$h$ of $mS$,
where~$h$ can be regarded as a function in~$R$ defining the Cartier
divisor~$mD$.
\par
For the second claim, consider the factorization as decomposition of the
associated Cartier divisor $D = D_1 \cup D_2$, where $D = V(h)$ and
$D_i = V(h_i)$, in $\Spec(\widehat{R})$. Note that a prime ideal of~$R^h$
remains prime after lifting in $\widehat{R}$, as a consequence of Artin approximation
(see e.g.\ \cite[Section~3A, Proposition~1.9]{HandAlg2}). It follows that the
decomposition of~$D$ induces a decomposition $D = D_1' \cup D_2'$
in $\Spec(R^h)$ into (a priori Weil) divisors with $D_i' = V(I_i')$ and
$I_i' \otimes_{R^h} \widehat{R} = \langle h_i \rangle$. Since $R^h \to
\widehat{R}$ is a faithfully flat morphism, this implies that $I_i'$
is locally free of rank one, too, and their generators $h_i'$ are the
elements we wanted to construct.
\end{proof}
\par
Suppose for the rest of this section that~$(\calX, \omega)$ is an adjustable
family of differentials over~\changed{an analytic space} $B$. Given $p\in B$, let~$V_p$
be the quotient of the set $\calO_{B,p} \setminus \{0\}$ by
the multiplicative group of units $\calO_{B,p}^\times$. The divisibility relation
induces a partial order on $V_p$, and we write $h_2\preccurlyeq h_1$
if $h_1\mid h_2$.  For each fiber $X_p$ the structure of the
family near~$p$ can be encoded by decorating the dual graph $\Gamma_p$.
We assign to each edge corresponding to a non-persistent node the germ
$f_e\in V_p$, where $uv=f_e$ is a model for the family near the node
represented by~$e$.  We assign to each vertex the function $h_v\in V_p$,
where $h_v$ is an adjusting parameter for the family at the component
represented by~$v$. We emphasize that each of the functions~$f_e$ and~$h_v$ is
only defined up to multiplication by some unit in the local ring.
\par
The vertices of $\Gamma_p$ have the usual partial order as defined in
Section~\ref{sec:deftwd}. This partial order can be understood also in terms
of the divisibility relation on the set of~$h_v$.
Suppose an edge~$e$ connects two vertices~$v$ and~$v'$.  Then the edge~$e$ is
horizontal if $h_{v}\asymp h_{v'}$, and vertical otherwise, with $v \preccurlyeq v'$
if and only if $h_v \preccurlyeq h_{v'}$.  In this case, we in fact have
\begin{equation}
  \label{eq:h_ratio}
  h_v/h_{v'} \= f_e^{\kappa_e}
\end{equation}
 as shown in the proof of Theorem~\ref{thm:NF}.
\par
In general, the divisibility relation among the $h_v$ may not be a full order,
because for two vertices~$v$ and~$v'$ that are {\em not} connected by an edge,
it can happen that $h_v$ and~$h_v'$ do not divide each other (see e.g.\
Example~\ref{ex:cherry}).
We will be especially interested in families for which it is a full order.
\par
\begin{df}
  \label{def:orderly}
  An adjustable family $(\pi\colon\calX\to B, \omega)$ is called
  \emph{orderly} if for every point $p\in B$,
the divisibility relation induces a full order on the set of adjusting
parameters $\left(h_v\right)_{v\in \Gamma_p}$.
\end{df}
\par
After these preparations we will now show that all the ingredients in the
definition of a family of \msds can be read off from an orderly family,
except possibly missing a compatible rescaling ensemble, whose existence
can be further guaranteed when the base of the family is normal.
\par
\begin{prop}
  \label{prop:orderly-msd}
An orderly family $(\pi\colon\calX\to B, \omega,\bfz)$ over a normal
base~$B$ determines an enhanced level graph, a collection of rescaled differentials
of type $\mu$, a collection of prong-matchings for every $p\in B$ and a
compatible rescaling ensemble as described in Definition~\ref{def:germMSD}.
Namely, such $(\calX, \omega,\bfz)$ determines a unique family of multi-scale differentials
of type~$\mu$.
\end{prop}
\par
\begin{proof}
The divisibility order of the family $(\calX, \omega,\bfz)$ gives the dual graph
$\Gamma_p$ the structure of a level graph, which we normalize so that the level
set is~$\uN$. For each level~$i$, we denote by $h_i$ the adjusting parameter
for some arbitrarily chosen vertex of level~$i$.
\par
Define the germs of holomorphic functions~$s_i \in \calO_{B,p}$ by
$s_0=h_0=1$ and
\begin{equation} \label{def:ti_from_orderly}
  s_i \,:=\, h_{i}/h_{i+1}
\end{equation}
for all $i<0$.  For each~$i$, define the germ of a family of differentials
\begin{equation}
  \label{eq:rescaled_def}
  \omega_{(i)}:= \omega/(s_0 \dots s_i) = \omega/h_{i}
\end{equation}
which is generically holomorphic and non-zero on each level~$i$ component of $X_p$,
vanishes identically on all lower lever components, and has poles along each higher
level component.  For an edge $e$ of $\Gamma_p$ joining levels $j < i$, the pole
order of $\omega_{(j)}$ (minus one) at the corresponding node determines the
enhancement $\kappa_e$. Moreover, the local normal form expressions of  $\omega_{(i)}$
and $\omega_{(j)}$ as in \eqref{eq:5} follow from Theorem~\ref{thm:NF}.
The coordinates $u_e, v_e$ putting the differential in the normal form can also be used to define the
prong-matching $\sigma_e = du_e\otimes dv_e$ at $e$. We thus conclude that
the $\omega_{(i)}$ give a collection of rescaled differentials of type~$\mu$ at~$p$
with the~$s_i$ as rescaling parameters as in Definition~\ref{def:collRD}.
\par
We will show the existence of a compatible rescaling ensemble $R\colon B\to \Tnorm[\Gamma_p]$ in three steps.  First,
as a consequence of Theorem~\ref{thm:NF}, a map $R'\colon B \to \overline{T}'_{\Gamma_p}$ can always be found by using the tuples $s_i$ and $f_e$ as above such that they satisfy \eqref{eq:2}, where $\overline{T}'_{\Gamma_p}$ denotes the entire torus cut out by Equation \eqref{eq:rrho}. Next, we want the image of $R'$ to lie in the desired connected component $\overline{T}_{\Gamma_p}$ of  $\overline{T}'_{\Gamma_p}$, and this can be done as follows.  The torus $\overline{T}'_{\Gamma_p}$ has a map to $(\CC^*)^{N}$ by projection, which is an isogeny since these are two tori of the same dimension. Choose in each connected component of $\overline{T}'_{\Gamma_p}$ an element in the kernel of this projection. Note that modifying the tuples $s_i$ and $f_e$ by the chosen kernel elements does not change the rescaling parameters $s_i$, but it changes the $f_e$, and thus by choosing the suitable kernel elements the whole collection can be made to lie in the connected component $\overline{T}_{\Gamma_p}$. Finally, if the base $B$ is normal, the map $R'\colon B\to \overline{T}_{\Gamma_p}$ automatically factors through the normalization of the level rotation torus closure, by the universal property of normalization, and thus gives the rescaling ensemble $R\colon B\to\Tnorm[\Gamma_p]$.
\end{proof}

\section{Real oriented blowups}
\label{sec:rob}

The goal of this section is to define a canonical real oriented blowup
for a family of \msds; see Section~\ref{sec:weld} for a discussion of
welding in terms of the real oriented blowup for the case of one Riemann surface. This construction will be used in
Section~\ref{sec:SL2R}, where we will show that the action of $\SLtwoR$
extends naturally to the real oriented blowup of the moduli space of
\msds along its boundary. This blowup is also used to define families of
marked \msds in Section~\ref{sec:markfamily}.  In fact there are two
real oriented blowups, associated with a rescaling ensemble~$R$ and with
a simple rescaling ensemble~$R^s$.
\par
In Section~\ref{sec:markedauxds} we develop the parallel case of families of marked model differentials.

\subsection{The real blowup construction}
\label{sec:robconstruction}

We start with the local version, which only depends on the rescaling ensemble.
\par
\begin{prop} \label{prop:ORBL}
Let $\pi\colon\calX\to B_p$ be a germ of a family of curves with
a rescaling ensemble~$R$. Then there exists the (local) \emph{\lw real blowup},
which is a map $\widehat{\pi}\colon \widehat{\calX}\to \widehat{B}_p$
of topological spaces with the following properties:
\index[family]{b050@$\widehat{\pi}\colon \widehat{\calX}\to \widehat{B}$!Level-wise real blowup of a family of \msds}
\begin{itemize}
\item[(i)] There are surjective \changed{continuous} maps
$\varphi_\calX \colon \widehat{\calX}\to \calX$ and $\varphi_B \colon
\widehat{B}_p\to B_p$, \changed{which are diffeomorphisms away from the nodal locus},
such that $\pi \circ \varphi_\calX = \varphi_B \circ
\widehat{\pi}$.
\item[(ii)] All fibers of $\widehat{\pi}$ are vertically welded
  surfaces (in the sense of Section~\ref{sec:weld}).
\item[(iii)] The fiber of $\varphi_B$ over the point~$p \in B_p$ is
a disjoint union of tori isomorphic to~$(S^1)^{L(\Gamma_p)}$.
\end{itemize}
\par
Moreover, the \lw real blowup is functorial under pullbacks
via maps $B_p' \to B_p$ of the base.
\end{prop}
\par
Note that the \lw real blowup does not modify the neighborhoods
of horizontal nodes. Hence in general the fibers of $\widehat{\pi}$ remain
nodal.
\par
The fibers of $\varphi_B$ are connected if $\pi$ has no vertical
persistent nodes, but may not be connected in general. The
prong-matching singles out a specific connected component
in each fiber of $\varphi_B$. We now perform the above construction
globally.
\par
\begin{thm} \label{thm:ORBL}
A family of \msds~$(\bfomega,\bfsigma)$ on $\pi\colon\calX\to B$ singles
out a connected component $\ol{B_p}$ of the local \lw real blowup
$\widehat{B_p}$ for each germ~$B_p$. We denote by $\overline{\pi}_p \colon
\overline{\calX} \to \overline{B_p}$ the restriction of the local
\lw real blowup to $\overline{B_p}$.
\par
These germs glue to a global surjective differentiable map
$\overline{\pi}\colon \overline{\calX} \to \overline{B}$,
the {\em (global) \lw real blowup}.
\index[family]{b060@$\ol{\pi}\colon \ol{\calX}\to \ol{B}$! Global \lw
real blowup}
Moreover, the global \lw real blowup is functorial under pullbacks
via maps $B' \to B$ of the base.
\par
If $B$ is a manifold, then $\overline{B}$ is a manifold with corners.
\end{thm}
\par
Our construction is closely related to a number of real oriented blowup
constructions that appear in the literature, e.g.\  the Kato-Nakayama blowup
of a log structure \cite{KatoNakayama}, see also  \cite{Kato} and
\cite{ACGHOSS}. The distinguishing feature here is that the blowup is
determined by the level structure of \msds.
\par
\begin{proof}[Proof of Proposition~\ref{prop:ORBL}]
The rescaling ensemble $R$ gives a collection of rescaling and smoothing
parameters $(s_i, f_e)_{i\in L(\Gamma_p), e\in E(\Gamma_p)^v}$
which are germs of functions on $B_p$. 
We introduce for each of the variables an $S^1$-valued partner variable,
denoted by the corresponding capital letter. Concretely, we define
$\widehat{B}_p \subset B_p \times (S^1)^{\vertedge[\Gamma_p]} \times
(S^1)^{L(\Gamma_p)}$ by the equations
\begin{equation}\label{eq:defhatB}
F_e|f_e| \= f_e\,, \quad S_i|s_i| \= s_i\,, \quad
\text{and} \quad F_e^{\kappa_e} \= S_j\dots S_{i-1}\,,
\end{equation}
where $\kappa_e$ is the enhancement at the edge~$e$ joining
levels $j<i$ of $\Gamma_p$. Note that these equations still make
sense if some $s_i$ (or $f_e$) is identically zero, in which case~$S_i\in S^1$ (resp.~$F_e$) is an independent variable, not related to $s_i$ or $f_e$. The map $\varphi_B:\widehat{B}_p \to B_p$ is then given by
the projection onto the first factor.
\par
Next we define the family $\widehat{\pi} \colon \widehat{\calX} \to \widehat{B}$
as follows. Near a smooth point in the fiber~$X_p$  we simply pull back a
neighborhood via $\varphi_B$. In the neighborhood $Y$ of a vertical node~$q_{e}$ given by the equation $u_{e}^{+}u_{e}^{-}=f_{e}$, we define
\index[family]{b015@$u_{e}^{+}u_{e}^{-}=f_{e}$!Local equation of a family near the node  $q_{e}$}
$\widehat{Y} \subset \varphi_B^*(Y) \times (S^1)^{2}$ by
\begin{equation}\label{eq:defhatX}
U_e^\pm |u_e^\pm| \= u_e^\pm \quad \text{and} \quad U_e^+ U_e^- \= F_e\,.
\end{equation}
The fibers of $\widehat{\pi}$ are not yet smooth in a neighborhood
of the preimages of the vertical nodes (as can be seen by computing the
Jacobian matrix of the defining equations), but we are in the
setting of~\cite[Section~X]{acgh2}, see in particular p.~154 \changed{and
Proposition~9.16.} There it is shown that \changed{the topological spaces
underlying~$\widehat{\pi}$ can be provided with the structure of a  fibration
of real-analytic manifolds which agrees with the initial analytic structure
outside the blown-up subspaces.}
\par
The functoriality of this construction is obvious.
\end{proof}
\par
\begin{proof}[Proof of Theorem~\ref{thm:ORBL}]
  In view of Remark~\ref{rem:ptwdismsd}, Corollary~\ref{cor:compLRT} and the
  definition of $\hat B$ given in Equation~\eqref{eq:defhatB} imply
the first claim.
\par
Suppose the germs at~$p$ and at~$p'$ are both defined at~$q$
and differ there by the action of $(r_i,\rho_e) \in T_{\Gamma_q}$.
Then multiplying $S_i$ by $r_i/|r_i|$ and $F_e$ by $\rho_e/|\rho_e|$
provides the identification of the additional parameters of the
\lw real blowup.
\end{proof}
\par
In the special case that all nodes of~$\pi$ are persistent, the base
$\ol{B}$ of the \lw\ real blowup is isomorphic to $B \times (S^1)^{L(\Gamma_p)}$, with parameters $\bfS = (S_i)$.
Denoting by $\theta(\bfS)$ the argument of the $S_i$ divided by~$2\pi$, the fiber
of $\overline{\pi}$ over a point $(p,\bfS) \in \ol{B}$ is simply the surface~$\calX_p$
welded according to the
\prma $\theta(\bfS) \cdot \bfsigma$, where this map  is  defined in~\eqref{eq:ConPMD2}.
This also justifies the use of overlines for both constructions.
\par

\subsection{Families of marked \msds}
\label{sec:markfamily}

We aim to define a marked version of families of \msds. The general strategy
is that we only mark families of vertically welded surfaces. We get rid
of persistent vertical nodes by welding and we remove non-persistent vertical
nodes using the \lw real blowup. The following construction of marking
appears also in \cite[Section~5]{HubKoch}, for curves without a differential.
\par
Let $\bfs \subset \Sigma$ be a collection of~$n$ points on a
topological surface~$\Sigma$. Let $(\pi\colon \calY \to B,\bfz)$ be a pointed
family of vertically welded surfaces.
We define the \emph{presheaf of markings}
${\rm Mark}(\calY/B)$ by associating with an open set~$U \subseteq B$
the set of \diffeogens $\Sigma \times U \to \pi^{-1}(U)$ respecting
the marked sections~$\bfs$ and~$\bfz$, up to isotopies over~$U$.
A {\em marking $f$ of the family~$\pi\colon\calY\to B$} is a global section
of the sheaf associated with ${\rm Mark}(\calY/B)$, i.e.\ a compatible
collection of $f_{U_i} \in {\rm Mark}(\calY/B)(U_i)$ for sets~$U_i$ that
cover~$B$.
\par
For any fixed subgroup~$G$ of the mapping class group $\Mod$ we similarly define
the {\em presheaf of $G$-markings ${\rm Mark}(\calY/B; G)$} by enlarging the
equivalence relation (from merely isotopies) to include pre-composition of the
diffeomorphisms by an element in~$G$. A {\em $G$-marking $f$} of~$\pi$ is
a global section of the sheaf associated with ${\rm Mark}(\calY/B; G)$.
\par
\smallskip
We can now define the marked version of families of \msds. It starts with germs and
glues them by sheafification, as in the unmarked case. Let $\Lambda$
be an enhanced multicurve and $\Gamma = \Gamma(\Lambda)$ the underlying
enhanced level graph.
\par
\begin{df} \label{def:markedmsds}
Given a family of pointed stable curves $(\pi\colon\calX\to B, \bfz)$
and $B_{p}$ a germ of~$B$ at~$p$, the {\em germ of a $\Lambda$-marked
family of \msds} of type $\mu$ over $B_{p}$ is an equivalence class
of the following set of data:
\begin{itemize}
\item[(i)] a germ $(\pi\colon\calX\to B, \bfz, \Gamma_{p}, \bfomega',
\bfsigma')$ of a family of \msds, and
\item[(ii)] a $\Tw[\eL]$-marking~$f$ of the \lw\ real
blowup~$\overline{\pi}\colon \overline{\calX} \to \overline{B}_p$.
\end{itemize}
The level rotation torus $T_{\Gamma_p}(\calO_B)$ acts on all of the above
data (see~\eqref{eq:defFD} for the action on the marking), and we consider
two germs~\emph{equivalent} if they differ by the action of an element
in $T_{\Gamma_p}(\calO_B)$.
\par
A morphism between two germs  $(\calX', \bfomega', \bfsigma', f')$ and
$(\calX, \bfomega, \bfsigma, f)$ is a morphism $(\phi,\widetilde{\phi})$
of the underlying \msds such that the induced map $\overline{g}
\colon \overline{\calX'} \to \overline{\calX}$ commutes with the
$\Tw[\eL]$-marking, up to an isotopy respecting the marked points.
\end{df}
\par
If $B$ is a (reduced) point, then $\overline{B}$ is the $\arg$-image of the
level rotation torus,  and a marked \msd is a family of markings of the family
of welded surfaces over $\overline{B}$.
\par
Given a map $\psi \colon B' \to B$, the functoriality
of the \lw\ real blowup allows to define the pullback of markings
along $\psi $ by pulling back the germ of the family as in
Section~\ref{sec:famnew} and by restricting the markings along the induced
map $\overline{\psi} \colon \overline{B'} \to \overline{B}$.
In this way we define a \emph{family of $\Lambda$-marked \msds} by
sheafification, just as in Definition~\eqref{def:MSD}.
We have thus defined a moduli functor $\MSfmark$ of $\Lambda$-marked \msds.
\index[teich]{h030@$\MSfmark$!Functor of marked \msds}
The notion of a family of $\Lambda$-marked \msds and this functor has an obvious
projectivized version, denoted by $\PP\MSfmark$.
\par

\subsection{Families of simple marked \msds}
\label{sec:simplemarkfamily}

There is a similar definition of the notion of a family of simple marked
\msds of type~$\mu$. We aim to require a $\sTw[\eL]$-marking rather
than merely a $\Tw[\eL]$-marking but this requires to change the blowup.
Suppose $\pi\colon\calX\to B$ is a germ of a family of curves with
a simple rescaling ensemble~$R^s$. Then a version of Proposition~\ref{prop:ORBL}
holds verbatim, but the torus in its point~(iii) is the real torus
associated with the simple level rotation torus rather than associated
with the level rotation torus. To prove this version of the proposition we introduce
for each of the variables of $T_\Lambda$ an $S^1$-valued partner variable,
denoted by the corresponding capital letter. Concretely, we define
$\widehat{B} \subset B \times (S^1)^{L(\Gamma_p)}$  by the equations
\begin{equation}\label{eq:defhatBs}
T_i\cdot |t_i| \= t_i \,.
\end{equation}
(This torus is a finite cover of the one given in~\eqref{eq:defhatB}.
The map is given by letting $S_i = T_i^{a_i}$ and $F_e\= T_{j}^{m_{e,j}}\dots
T_{i-1}^{m_{e,i-1}}$, if~$e$ is an edge connecting levels $j<i$ and where
the~$m_{e,i}$ were defined in \eqref{eq:aidef}.
The remaining construction of the smooth family over this blowup is
the same as above.)  This procedure should properly be called simple \lw\ real blowup, but the
context (the rescaling ensemble) will always make it clear which version we use.
\par
The zoo of definitions given so far culminates in the following, the
moduli functor that will turn out to be indeed represented by a smooth space.
\par
\begin{df} \label{def:markedsimplemsds}
Given a family of pointed stable curves $(\pi\colon\calX\to B, \bfz)$
and $B_{p}$ a germ of~$B$ at~$p$, the {\em germ of a family of simple
  $\Lambda$-marked
\msds} of type $\mu$ over $B_{p}$ is an equivalence class of the
following set of data:
\begin{enumerate}
\item the structure of an enhanced level graph on the dual graph $\Gamma_p$
of the fiber $X_p$,
\item a simple rescaling ensemble $R^s \colon B \to \ol{T}_{\Gamma_p}^{s}$,
compatible with
\item a collection of rescaled differentials $\bfomega = (\omega_{(i)})_{i \in
L^\bullet(\Gamma_p)}$ of type~$\mu$, and
\item a collection of prong-matchings $\bfsigma = (\sigma_e)_{e \in E(\Gamma)^v}$.
For the non-semipersistent nodes, these are required to agree with the
induced prong-matchings defined before Definition~\ref{def:germMSD}.
\item a $\sTw[\eL]$-marking~$f$ of the simple \lw\ real
blowup~$\overline{\pi}\colon \overline{\calX} \to \overline{B}$.
\end{enumerate}
The simple level rotation torus $T^s_{\Gamma_p}(\calO_B)$ acts on all of the
above data and we consider two germs~\emph{equivalent} if the differ by
the action of an element in $T^s_{\Gamma_p}(\calO_B)$.
\end{df}
\par
With the obvious definition of morphisms, pullbacks and sheafification,
this defines the notion of a \emph{family of simple marked \msds}.
This defines a functor that we denote by $\sMSfmark$ and its projectivized
variant by $\PP\sMSfmark$.
\par
\medskip
The group $K_\Gamma$ acts on germs (and on families where the level graph
is an undegeneration of~$\Gamma$) by post-composing the marking~$f$
with the given element in~$K_\Gamma$. The quotient functor
is exactly the functor of (non-simple) marked \msds, since in the presence
of just the resulting quotient~$\Tw[\eL]$-marking a simple rescaling
ensemble up to the equivalence relation generated by $T^s_{\Gamma_p}(\calO_B)$
is the same as a rescaling ensemble up to the  equivalence relation
generated by $T_{\Gamma_p}(\calO_B)$.
\par
The following proposition will be used in Sections~\ref{sec:UnivDehn}
and~\ref{sec:moduli space} to prove the universal property of the Dehn space
and of the moduli space of \msds.
\par
\begin{prop} \label{prop:markliftedfam}
For any family of \msds~$(\pi\colon \calX \to B, \bfz, \bfomega,\bfsigma)$
and any $p \in B$, for any \changed{enhanced} multicurve~$\eL$ such that $\Gamma(\eL)$ is a degeneration of~$\Gamma_p$, there exists a neighborhood~$U$ of~$p$ such that $\pi|_U$ can
be provided with a $\Tw[\eL]$-marking.
\par
If the family admits a simple rescaling ensemble~$R^s$, then there exists
a neighborhood~$U$ of~$p$ such that $\pi|_U$ can
be provided with a $\sTw[\eL]$-marking.
\end{prop}
\par
\begin{proof}
We need to provide the \lw real blowup~$\overline{\pi}|_U$ with a
$\Tw[\eL]$-marking~$f$. For this purpose we take~$U$ to be simply connected,
provide some fiber of $\overline{\pi}$ with a marking and transport the
marking along local smooth trivializations of $\overline{\pi}$. We only
need to make sure that the monodromy in this process is contained
in $\Tw[\eL]$. By the choice of~$U$, and since by Theorem~\ref{thm:ORBL}
the fibers of $\ol{U} \to U$ are ($\arg$-images of) level rotation tori,
the monodromy is generated by level rotation. From the definition of level
rotation tori at the beginning of Section~\ref{sec:levrottori},
it is now obvious that the  monodromy is $\Tw[\eL]$.
\par
The second statement follows in the same way using the simple version
of the real blowup.
\end{proof}

\subsection{Families of marked model differentials}
\label{sec:markedauxds}

To highlight similarities and differences, and for further use, we now define (the easier) families of marked and simple marked model differentials. This will be used to verify the relevant universal properties of the model domain in Section~\ref{sec:UnivDehn}.

Recall that families of model differentials
are constrained to be equisingular, but as a trade-off they carry for each
level an additional parameter~$t_i$ that is allowed to be zero, thus mimicking
degenerations. While for families of \msds we needed to start with
a germwise definition to be able to control degeneration, here we can give
the global definition right away.
\par
While \msds are based on a collection of rescaled differentials, the
simpler notion of a model differential is based on the simple notion of
twisted differentials. We adapt the definition from Section~\ref{sec:deftwd}
to families.
\par
\begin{df}\label{df:famtwisteddiffs}
A \emph{family of twisted differentials $\bfeta$ of type~$\mu$} on
an equisingular family~$\pi \colon \calX \to B$ of pointed stable
curves compatible with~$\Gamma$ is a collection of families of
meromorphic differentials~$\eta_{(i)}$ on the subcurves~$\calX_{(i)}$
at level~$i$, which satisfies the obvious analogues of the conditions in Section~\ref{sec:deftwd}, interpreting the residues as regular
functions on the base~$B$.
\end{df}
\par
\begin{df} \label{def:aux_germ}
Let $(\pi\colon\calX\to B, \bfz)$ be an equisingular family of pointed stable
curves. A {\em family of $\Lambda$-marked simple  \auxds of type~$\mu$ over $B$}
is an equivalence class of the following set of data:
\par
\begin{enumerate}
\item the structure of an enhanced level graph on the dual graph~$\Gamma$
of any fiber of~$\pi$,
\item a \changed{map $R^{s} \colon B \to \ol{T}_{\Gamma}^s$, called simple rescaling ensemble,}
\item a collection $\bfeta = (\eta_{(i)})_{i \in L^\bullet(\Gamma)}$ of families of
twisted differentials of type~$\mu$ compatible with~$\Gamma$,
\item a collection $\bfsigma = (\sigma_e)_{e \in E(\Gamma)^v}$ of \prmas
for~$\bfeta$,
\item a $\sTw[\eL]$-marking~$f$ of the \lw\ (simple) real oriented
blowup~$\overline{\pi}\colon \overline{\calX} \to \overline{B}$
defined using~\eqref{eq:defhatBs}.
\end{enumerate}
The simple level rotation torus $T^s_{\Gamma}(\calO_B)$ acts on the above
data by \changed{its natural action on~$(\bfeta,\bfsigma)$ and by
  postcomposing~$R^s$
with the multiplication map by the inverse.
Two elements in the same $T^s_{\Gamma}(\calO_B)$-orbit} are defined to be
\emph{equivalent}.
\par
Replacing $T_{\Gamma}^s(\calO_{B})$  with the extended level rotation
torus $\Tsextd[\Gamma](\calO_{B})$, the analogous object is called
a {\em family of marked simple projectivized \auxds of type~$\mu$ over~$B$}.
\end{df}
\par
The notion of a morphism is derived from morphisms of pointed stable
curves as in~\eqref{eq:defmorphismMSD}. We denote the functor of model
differentials by $\sMDfmark$  and its projectivized version by $\PP\sMDfmark$.
\index[teich]{h040@$\sMDfmark$!Functor of  \auxds}
\par
\begin{rem} \label{rem:namemodeldiff}
Since a simple rescaling ensemble is simply a collection  of
functions $\bft = (t_i)_{i \in L(\Gamma)}$  in $\calO_{B}$,  we will denote
a family of simple marked \auxds interchangeably by the representatives
$(\bfeta, R^{s},\bfsigma, f)$ or by $(\bfeta, \bft,\bfsigma, f)$
of the $T^s_{\Gamma}(\calO_{B,p})$-orbits. \changed{Obviously here we do not require
any compatibility of the rescaling ensemble with the local equations at the
nodes, as the family is equisingular.}
\end{rem}
\par
Similarly we can define (non-simple) marked \auxds by taking the
$K_\Gamma$-quotient or by using non-simple rescaling examples; we can also define the unmarked versions. Since these other versions will not be needed for what follows, we do not give the details.

\section{The universal property of the Dehn space}
\label{sec:UnivDehn}

The purpose of this section is to show the following two results.
\par
\begin{thm} \label{thm:univnonsimpleMMS}
\changed{For any enhanced level graph~$\Lambda$}
the Dehn space $\ODehn$ is the fine moduli space, in the category
of complex analytic spaces, for the functor~$\MSfmark$
of marked \msds.
\end{thm}
\par
To obtain this, we first prove the simple marked version of this
statement, and then descend by the $K_\Lambda$-action.
\par
\begin{prop}  \label{prop:univMMS}
\changed{For any enhanced level graph~$\Lambda$}
the simple Dehn space $\ODehns$ is the fine moduli space, in the category
of complex analytic spaces, for the functor~$\sMSfmark$
of simple marked \msds.
\end{prop}
\par
\changed{Since the complex structure on $\ODehns$ stems from the model domain,
we will first establish the universal property of the model domain. There, the
candidate for the universal family has been given in
Section~\ref{sec:universalMD}, over an open cover of the simple model domain.
In Proposition~\ref{prop:sMDuniv} we will deduce the universal property of the simple model domain
from the obvious universal properties of the \Teichmuller space and the associated
vector bundle. Here and for the functor~$\sMSfmark$ we use the fact that a functor that is a sheaf, and which has an open cover by representable subfunctors,
is representable. The sheaf property for model differentials is obvious, while for multi-scale differentials it has been established in
Section~\ref{sec:presheafmsd}. It thus suffices to fix one of the open
patches~$\calW_\epsilon \times \Delta^H$ on which the map $\OPl$ was defined
in Section~\ref{sec:Dehn}, 
and then to establish the universal property for a given family $\pi\colon\calY \to B$
of stable curves, endowed with a family of simple marked \msds $(\bfomega, \bfsigma, f)$,
such that all fibers are isomorphic to a fiber in the plumbed family over $\calW_\epsilon \times \Delta^H$.
}
\par
To be able to use the universal property
of the model domain, we will need to define an {\em unplumbing} construction that
takes \msds on~$\calY$ to model differentials on an equisingular family
$\calX \to B$. Like the plumbing, the unplumbing construction will
depend on several choices, and we will need to carefully
arrange the choices consistently on $\calY$ and on the
universal family.
\par

\subsection{The universal family over the model domain.}
\label{sec:uniMD}

We first exhibit the functor that the space $\ptwT$ represents.
The following definition extends to families the pointwise definition
that appeared already in Section~\ref{sec:prmatch}, using the notion of
markings in families that is now at our disposal (compare also to the definitions
in Section~\ref{sec:markedauxds}).
\par
\begin{df}\label{def:famPTWD}
An {\em equisingular  family of \ptwds of type $(\mu,\eL)$ over an
analytic space $B$} is
\begin{enumerate}[(i)]
\item a family $(\eta_v)_{v \in V(\Gamma)}$ of twisted differentials of type~$\mu$,
compatible with $\Gamma(\eL)$ as in Definition~\ref{df:famtwisteddiffs},
\item a family of \prmas~$\sigma$, and
\item a family of markings  $f\in {\rm Mark}(\ol{\calX}_{\sigma}/B)$
of the welded family.
\end{enumerate}
\vspace{-1.5em}
\end{df}
\par
This definition is much simpler than Definition~\ref{def:markedmsds} or
Definition~\ref{def:aux_germ} and does not require a blowup of the base,
since there is no equivalence relation by the action of a level rotation
torus, which has a non-trivial fundamental group.
\par
\smallskip
We can now state the universal property of the space $\ptwT$. The proof
is rather obvious and mainly serves to recall notation.
\par
\begin{prop} \label{prop:univPTWT}
Let $\eL \subset \Sigma$ be a fixed enhanced multicurve.
The \Teichmuller space of \ptwds
$\ptwT$ is the fine moduli space for the functor that associates
to an analytic space~$B$ the set of equisingular families
of \ptwds of type $(\mu,\eL)$ over~$B$.
\end{prop}
\par
\begin{proof}
An equisingular family of pointed stable curves defines, by normalization,
a collection of families of pointed smooth curves with additional marked
sections corresponding to the branches of the nodes. Conversely, such a
collection of families of smooth pointed curves, together with a pairing of a subset
of the marked sections, defines an equisingular family. From this observation
it is obvious that the boundary stratum $\Bteich$ of the classical
augmented \Teichmuller space comes with a universal family
$(\pi\colon \unifam \to \Bteich, \bfz, (f_v)_{v \in V(\Lambda)})$
of pointed stable curves equisingular of type $\Gamma(\Lambda)$,
constructed by gluing families of smooth curves $\pi\colon \unifam_{v} \to \Bteich$
along the nodes given by the marked sections $q_e^\pm$
corresponding to the edges $e$ of $\Gamma(\Lambda)$. Here
$f_v \in {\rm Mark}(\unifam_{v}/\Bteich)$ is a \Teichmuller marking
by the surface $\Sigma_v$ (corresponding to the component $v \in V(\Gamma)$
of $\Sigma \setminus \Lambda$, with the boundary curves contracted to points).
The universal property follows from the universal properties for the
\Teichmuller spaces of the pieces $(\calX_v, \bfz_v, \bfq_e^{\pm}, f_v)$.
\par
Recall from Section~\ref{sec:TeichTwds} that there is a closed subspace
$\BPteich\subset \Bteich$ defined to be the quotient of $\OBnoGRC$ under the
action of~ $(\CC^*)^{V(\eL)}$.
The family $\pi$ can be restricted to $\BPteich$, pulled back to $\OBnoGRC$, and
then restricted to $\OBteich$.
Since the total space of a vector bundle represents the functor of sections
of the bundle, $\OBteich$ comes with a universal family $(\pi\colon \unifam \to
\changed{\OBteich}, \bfz,(f_v)_{v \in V(\Lambda)}, (\eta_v)_{v \in V(\Lambda)})$, where
$\bfeta = (\eta_v)_{v \in V(\Lambda)}$ is a twisted differential of type $(\mu,\eL)$,
and the remaining data are as above.
\par
Now we construct the family of markings in the welded surfaces  $\ol{\pi}\colon
\barunifam_{\sigma} \to \ol{\ptwT} \cong \ptwT$. Then we mark the welded surfaces
by~$\Sigma$ in such a way that fiberwise after  pinching~$\Lambda$ we obtain
the collection $(f_v)_{v \in V(\Lambda)}$. The remaining data are the pullbacks of
the ones defined above. Since $\ptwT\to \OBteich$ is an (infinite) covering
map (see Section~\ref{sec:ptwds}), the universal property follows from the
universal properties of covering spaces.
\end{proof}
\par
The family of model differentials over the model domain was already
constructed in Section~\ref{sec:modeldomain}, and its universal property
follows from the construction and from the universal property of the
augmented \Teichmuller space of flat surfaces.
\par
\begin{prop} \label{prop:sMDuniv}
The simple model domain~$\Omega\barMDs$ is the fine moduli space for the
functor of simple marked \auxds $\sMDfmark$, and
$\barMDs$ is the fine moduli space for~$\PP\sMDfmark$.
\end{prop}
\par
The model domain~$\Omega\barMD$ (considered as a quotient stack)
is thus isomorphic to the functor of \auxds $\MDfmark$.
\par
\begin{proof}
\changed{
Recall the construction of Section~\ref{sec:universalMD}. There we worked with a sufficiently small open set $\calV \subset \msT$, and denoted $\calW\subset \Omega\barMDs$ its preimage. We then constructed a family of curves~$\pi: \calX \to \calW$, with a family of
twisted differentials~$\bfeta$ and a collection of functions~$\bft$ on~$\calW$,
together with a collection of prong-matchings~$\bfsigma$ and
$\Lambda$-markings induced by the markings of the family over $\ptwT$, as in the
proof of Proposition~\ref{prop:markliftedfam}. By Remark~\ref{rem:namemodeldiff},
this is indeed a family of model differentials in the sense of
Definition~\ref{def:aux_germ}.
All this data depended on the choice of a section~$\mathscr{S}\colon \calV \to \calW$. Any other choice of a section can be obtained from~$\mathscr{S}$ by multiplying it by an
element of $T^s_{\Gamma}(\calV)$, and thus leads to an equivalent
family of model differentials, in the sense of Definition~\ref{def:aux_germ}. In particular, all these families constructed locally over each~$\calW$ glue to a family over all of~$\Omega\barMDs$.}
\par
\changed{To verify the universal property, we take a test family
$(\pi'\colon\calX'\to B', \bfz', \bfeta', \bft',\bfsigma', f')$
of $\Lambda$-marked simple \auxds of type~$\mu$. Just as in the previous
Proposition~\ref{prop:univPTWT}, the obvious universal property of the space $\msT = \ptwT[\Lambda] /
\CC^{L(\Lambda)}$, for families as
in Definition~\ref{def:famPTWD}, considered up to $\CC^{L(\Lambda)}$-action, yields the map $m_0: B' \to  \msT$. Working locally, we assume that the image of $m_0$ is contained in~$\calV$. Thus to verify the universal property we need to define a moduli map $m:B'\to\calW$ that lifts the map $m_0$, and show that~$m$ is unique up to equivalence. Then by existence and uniqueness of such local moduli maps, they must agree on various neighborhoods~$\calW$, and define a moduli map for a test family over an arbitrary base~$B'$ (whose image~$m_0(B')$ is not contained in a single~$\calV$).}

\changed{To construct the map $m$, note that since the family of differentials $\bfeta'$ is equivalent to the pullback of the universal family of differentials $\bfeta$ over~$\calX$, there exists an element of $T^s_{\Gamma}(\calV)$ acting by which maps $m_0^*\bfeta$ to $\bfeta'$. We act by this element to identify these two families of differentials, and then define $m$ to be the unique lift of $m_0$ such that $\bft' =
m \circ \bft$ (explicitly, recalling from Section~\ref{sec:universalMD} that we have a local trivialization $\calW \to \calV \times\cx^{L(\Lambda)}\times \cx^*$, so that defining~$m$ amounts to defining local functions $\calB'\to \cx^{L(\Lambda)}$, this means we simply take the ratios $\bft'/\bft$).}
\end{proof}
\par
%
%
\subsection{The unplumbing construction}\label{sec:unplumb}

The unplumbing construction associates with a family of \msds  a family
of \auxds over the same base. The rough idea is to pinch off neighborhoods degenerating to
nodes, in order to create equisingular families,
and then to record the degeneracy of the nodes as the parameters $\bft$
of \auxds. Technically, we cannot pinch off curves
without modifying the differential, due to the presence of non-trivial periods
over what we want to be the vanishing (pinching) cycles.
This forces us to subtract beforehand some perturbation differentials,
whose role is inverse to that of the modifying differentials. \changed{For the following construction we fix once and for all a base point~$p_h$ in a disc
and base points~$p^\pm$ in the upper and lower end of a plumbing fixture.}
\par
\begin{prop} \label{prop:unplumbconst} \changed{Suppose we are given} 
a family of simple marked \msds with all data $(\calY \to B, (\omega_{(i)})_{i \in
L^\bullet(\Gamma_p)}, R^s, \bfsigma, f)$, \changed{and denote by $\Lambda$ the enhanced multicurve of $X_p$. Suppose we fix moreover a maximal multicurve $\Lambda_{\max} \supseteq \Lambda$, sections $\sigma_e^\pm$ on the top and bottom end of each vertical node and on either side of a horizontal node, and a section~$\sigma_h$ for each half-edge corresponding to a marked zero.
If these sections are sufficiently close to the node (resp.\ the zero)
then} there is an unplumbing construction that produces, \changed{after
possibly restricting~$B$ to a neighborhood of~$p$,} a family $(\calX \to B, (\eta_{(i)})_{i \in L^\bullet(\Gamma_p)}, R^s, \bfsigma', f)$ of~$\Lambda$-marked simple \auxds with the following properties:
\begin{itemize}
\item[(i)]  The construction is the identity over the locus~$B^\Lambda$
of all $q \in B$ such that $\Gamma_q = \Gamma(\Lambda)$.
\item[(ii)] If $B$ is an open neighborhood of~$p$ in the simple Dehn space,
then the map $u \colon B \to \Omega\barMDs$ induced by the unplumbing of the
universal family of model differentials, restricted over $B$, is a local biholomorphism.
\end{itemize}
\changed{The unplumbing procedure is uniquely determined by the additional data
of~$\Lambda_{\max}$, $\sigma_e^\pm$ and $\sigma_h$.}
\end{prop}
\par
\begin{proof}
The unplumbing construction is \lw, similarly to how plumbing was defined
in Section~\ref{sec:Dehn}.
For simplicity of the exposition, we only treat in detail the case where $\eG$
has two levels, and no horizontal nodes. (\changed{The case of more levels is done
by induction, and the horizontal nodes are straightforward as we simply
invert classical plumbing there.}) We may thus \changed{restrict~$B$
to the open set where the enhanced multicurve is an undegeneration of~$\Lambda$
and write $\omega_{(0)} = t^{a_1} \cdot \omega_{(-1)}$, and $B^\Lambda\subset B$ is
then precisely the vanishing locus of~$t$. (The reader should keep in mind that
the subsequent construction is the identity if $B^\Lambda = B$, i.e., if the family
is equisingular to start with.)}
\par
For the definition of a  perturbation differential
denote by~$V$ the image of~$\Lambda$ in $H_1(\Sigma \setminus P_\bfs; \ratls)$ and, as in Proposition~\ref{prop:constrModif}, we let~$V'$ be the subspace generated by curves in~$\Lambda_{\max}$ and loops around points in~$P_{\bfs}$. Let  $\rho\colon B \to \Hom_\QQ(V,\CC)$ be the periods of~$\omega_{(0)}$ along~$\Lambda$ and let $\widetilde{\rho}$ be the extension of~$\rho$ by zero on a subset~$S$ of $\Lambda_{\max}$ generating $V'/V$. A {\em perturbation differential} is a
meromorphic section~$\xi$ of the relative dualizing sheaf $\pi_* \omega_{\calY/B}$
such that the periods of~$\xi$ are~$\widetilde{\rho}$. A perturbation
differential exists, and is uniquely determined by the choice of the
topological datum~$\Lambda_{\max}$ and the subset~$S$. Since $\widetilde{\rho}$
is divisible by~$s=t^{a_1}$, the perturbation differential vanishes identically on the fibers over~$B^\Lambda$.
\par
\changed{We first use Theorem~\ref{thm:NF} near each vertical node to put the differential into normal form $\omega_0 = (u^\kappa + r)du/u$ on the image
of a plumbing fixture. This theorem also implies that the chart is uniquely
determined by requiring that~$p^-$ in the plumbing fixture maps to~$\sigma_e^-$. Cutting off at the seam $|u|=|v|$ of the plumbing fixture and glueing a pole (with residue~$-r$) in the $v$-coordinates gives the lower level surfaces $(X_{-1},\omega_{(-1)})$.}
\par
\changed{Second we consider the other piece after cutting, the upper level
surface. We apply Theorem~\ref{thm:deformed_standard_coordinates} to obtain
a coordinate~$u'$ that puts $\omega_{(0)} - \zeta = du_e/u_e$ in standard form.
With the requirement that the point~$p^+$ in the $u_e$-disc maps to~$\sigma_e^+$ this theorem also gives the uniqueness of the coordinates~$u'$. We now glue in 
the family of discs $(\Delta \times B,u_e^{\kappa_e} \tfrac{du_e}{u_e})$. We proceed the same way with all the marked zeros (that may have been blurred by adding~$\zeta$), using~$p_h$ and~$\sigma_h$ to specify the glueing uniquely.
Third, we split horizontal nodes the same way using Theorem~\ref{thm:NFhoriz}
and rely on the nearby sections for uniqueness. We thus obtain an equisingular
family $\pi\colon \calX \to B$.}
\par
For such an equisingular family, the space of prong-matchings is an unramified
cover of the base. Thus, to obtain the prong-matching $\bfsigma'$
we simply extend the prong-matching~$\bfsigma|_p$ in a locally constant way.
The \lw real blowup of $(\calY,R^s)$ and the \lw real blowup of~$\calX$
as defined in Section~\ref{sec:rob} are almost-diffeomorphic.
(The \diffeogen
is given by the identity on the upper and lower surface, blurred near the
marked zeros, and both the degenerate plumbing fixtures in~$\calX$
and the plumbing fixtures of~$\calY$ are replaced by the welded fixtures
as defined in~\eqref{eq:defhatX}.) We can thus transport the marking~$f$ via
this almost-diffeomorphism. The rescaling ensemble~$R^s$ is the same on both sides of
the construction. Finally we verify that the equivalence relations are the
same on both sides, since in both cases they stem from the $\Tsimp$-action for the
differentials and prong-matchings, and from $\sTw[\eL]$ for the markings.
\par
Property~(i) is obvious. Finally, to prove (ii)  it suffices to show that the tangent map to~$u$ is surjective at any point of~$B^\Lambda$. We argue similarly to the alternative proof of Proposition~\ref{prop:plumbinj} \changed{(as indicated in the last paragraph of that proof)}, using the fact that perturbed periods
give local coordinates on the model domain, as shown in Proposition~\ref{prop:pertper}.
Indeed, by construction, on the restriction of the family $\calY|_{B^\Lambda}\to
B^\Lambda$ over $B^\Lambda$, the map~$u$ is the identity. Working in perturbed periods
coordinates on the
model domain, it thus suffices to show that the directions corresponding to
changing the parameters~$\bft$ of the model differential are in the range of
the tangent map to~$u$. This is indeed obvious since those parameters are given
by~$R^s$, which is part of the datum of the unplumbed model differential.
\end{proof}
\par

\subsection{\changed{Log period coordinates}}\label{sec:eLogPer}

Consider a family $\calY\to B$ of multi-scale differentials around a
point $p\in B$. In this section we define a collection of functions on~$B$,
possibly after restricting to a smaller neighborhood of~$p$. We will show
in Proposition~\ref{prop:elogPerCoords} that these functions, which we call
extended log period coordinates, are indeed local coordinates on Dehn space.
The construction of extended log periods combines the ideas from
\cite{benirschke} and of exponentiation used to define extended perturbed periods
in Section~\ref{sec:extendperturbed}, to construct coordinates that are intrinsic
to the family besides topological choices, i.e., neither depending on local
parameters near horizontal nodes (as in \cite{benirschke}) nor on modification
differentials and nearby sections (as perturbed period coordinates do). While we
are motivated by the construction from \cite{benirschke},  we give direct
prooofs, not relying on any results from~\cite{benirschke}, which appeared after,
and was based on, the current manuscript.
\par
Recall from Section~\ref{sec:perturbed} that the construction of perturbed
period coordinates starts by choosing  a basis
$\gamma^{(i)}_1,\dots,\gamma^{(i)}_{n(i)}$ of the GRC subspace $\calR_i^{\rm grc}
\subseteq H_1(\Sigma^c_{(i)}\! \setminus\! P_i, Z_i; \CC)$
for each level~$i$. (We often drop the upper level index.)
Since in~$\Sigma^c_{(i)}$ nodes have been pinched and poles have been removed,
the paths representing homology classes~$\gamma_j$ do not intersect horizontal
nodes.
\par
For each such path~$\gamma_j$ we define a~\emph{downward extension} to
be an element $\wt{\gamma}_j \in H_1(\Sigma \setminus P, Z; \CC)$ such that $\wt{\gamma}_j$ can
be represented by a path whose image (after node contraction and cutting off
lower levels) at levels~$i$ and above, that is in
$\oplus_{I \geq i} H_1(\Sigma^c_{(I)}\! \setminus\! P_I, Z_I; \CC)$,
is homologous to~$\gamma_j$ and if moreover the intersection number of~$\wt{\gamma}_j$ with
the vanishing cycle around any horizontal node is zero.
\par
\begin{lm} \label{le:downext}
Every~$\gamma_j$ has a downward extension~$\wt{\gamma}_j$. 
\end{lm}
\par
\begin{proof}
Every flat surface with a pole (at the node connecting to the level above)
has at least one zero. If all these zeros are at nodes, continue further downward.
\end{proof}
\par
To motivate the definition of the log period we consider in the case
that~$B$ is a reduced analytic space the formula due to Benirschke
\cite{benirschke}. We may cut off~$\wt{\gamma}_j$ at the highest level
below~$i$ that is persistent, replace~$\wt{\gamma}_j$
by any nearby representative~$\ol{\gamma}_j$ that does not cross the nodes and let
\bes
\Psi(\wt{\gamma}_j)\coloneqq \int_{\ol{\gamma}_j}\omega_{(i)}
- \sum_{e\in E(\Gamma)}\langle \wt\gamma_j, \lambda_e\rangle r_e \ln (s_e)\,.
\ees
Here $\lambda_e$ is the vanishing cycle corresponding to the node~$e$,
the `residue' $r_e \coloneqq \tfrac{1}{2\pi i}\int_{\lambda_e}\omega_{(i)}$ is
the integral over it, and $\ln (s_e)=\sum_{\ell(e-)}^{\ell(e+)-1}a_i\ln (t_i)$
for some branch of the logarithm. Note that the $\wt{\gamma}_j$-period
is multi-valued, but so is the logarithm, and the linear combination above is
chosen precisely so that these two ambiguities cancel. 
\par
For general~$B$ we cannot work with nearby paths. Instead we incorporate
manually the effect of subtracting the log terms, namely removing the residue
contributions to the integral. For this purpose we cover the
neighborhoods of the vertical nodes of~$\calY$ with disjoint plumbing
fixtures~$\VV_k$ with sections~$p_k^\pm$ at the upper and lower end of the fixtures.
We label them so that $\wt{\gamma}_j$ crosses precisely the fixtures
for $k=1,\ldots,r$ (with $r=0$ possible, e.g., if~$\gamma_j$ is closed).
We decompose~$\wt{\gamma}_j$ into a union of path segments $\wt{\gamma}_j^{I}$
at level $I \leq i$ starting and ending at the points $p_k^\pm$ and
into the segments joining~$p^+_k$ to~$p_k^-$ within~$\VV_k$. Finally,
using Theorem~\ref{thm:NF}
we may choose normal form coordinates on the plumbing fixtures so that
the differentials are $\omega_{(\ell^+_k)} = (u^{\kappa_k} +r)du/u$.
\par
\begin{df} \label{def:logperiod}
The \emph{log period} of the downward extension~$\wt{\gamma}_j$ is defined to be
\be \label{eq:logperiod}
\Psi({\wt{\gamma}_j}) \,\coloneqq\, 
\left(\sum_{I\leq i} \int_{\wt{\gamma}_j^{I}} \omega_{(i)}\right)
\,\,+\,\, \sum_{k=1}^r \pm  \left(\prod_{m=\ell^+_k}^{i-1} t_m^{a_m}\right)
\int_{p_k^-}^{p_k^+} u^\kappa \, \frac{du}u \qquad \in \calO_B
\ee
using the positive sign if and only if~$\wt{\gamma}_j$ crosses the plumbing
fixture in the upward direction.
\end{df}
\par
One checks that for a family over a reduced base~$B$, when both definitions make sense, they agree for the right choices of the
branches of log, while a different choice of log would give functions that
differ from~$\Psi({\wt{\gamma}_j})$ by a $\ZZ$-linear combination of
$2\pi i r_e$.
\par
Homotopy invariance of path integrals and coordinate independence of
residues shows:
\begin{lm}
The log period~$\Psi({\wt{\gamma}_j})$ is independent of the choices
of the path decomposition made in its construction.
\end{lm}
\par
Next, we consider the extension of these functions that records the data of
horizontal nodes. Recall that we chose in Section~\ref{sec:extendperturbed}
for each horizontal
node~$q_j$ a path~$\beta_j$ crossing the seam corresponding to~$q_j$ and
no other seams. Let~$\alpha_j$ be the vanishing cycle corresponding to~$q_j$,
i.e., the path along the seam. We may assume that a basis of the span of
the set of vanishing cycles of horizontal nodes is among the~$\gamma_j$'s.
In particular, since~$\alpha_j$ is closed, we may assume that~$\alpha_j$ is
its own downward extension. Just as in Lemma~\ref{le:downext},
one shows that each~$\beta_j$ has a downward extension~$\wt{\beta}_j$, where
we now require that the intersection number of~$\wt{\beta}_j$ with the vanishing
cycle around any horizontal node except for~$\alpha_j$ is zero.
\par
Suppose that~$q_j$ it at level~$i$. Using Theorem~\ref{thm:NFhoriz}, choose local
coordinates $uv = f$ near that node so that the differential~$\omega_{(i)}
= r_j du/u$ is in standard form, where $r_j = \tfrac{1}{2\pi i} \int_{\alpha_j}
\omega_{(i)}$. We decompose~$\wt{\beta}_j$ at two sections~$\sigma^u$ and~$\sigma^v$
near the horizontal node (with $u(\sigma^u) \neq 0$ and $v(\sigma^v) \neq 0$) into
the path segment~$\wt{\beta}^u_j$ ending at~$\sigma^u$, the path
segment~$\wt{\beta}^v_j$ starting at~$\sigma^v$ and the path segment
from~$\sigma^u$ to~$\sigma^v$.
\par
\begin{df} \label{def:Elogperiod} We define
\bes
\Psi^E({\wt{\beta}_j}) \,\coloneqq\,
\bfe \Bigl(\frac{\Psi(\wt{\beta}^u_j) + \Psi(\wt{\beta}^v_j)}{r_j}\Bigr)
\cdot \frac{f}{ u(\sigma^u) v(\sigma^v)}
\ees
to be the \emph{exponentiated log period} of the downward
extension~$\wt{\beta}_j$.
\end{df}
\par
If~$B$ is reduced we may take, away from the locus where the horizontal
node persists, a nearby representative of~$\wt{\beta_j}$,  and observe that
\bes
\Psi^E({\wt{\gamma}_j}) \= \bfe \Bigl( \frac{\Psi(\wt{\beta_j})}{r_j} \Bigr),
\ees
since the additional summand in the exponential here is
\bes
\frac{1}{r_j}\int_{\sigma^u}^{\sigma^v} \omega_{(i)} \=  \log(u(\sigma^v))- \log(u(\sigma^u))
\=  \log( f/v(\sigma^v)u(\sigma^u),
\ees
since $u(\sigma^v) = f/v(\sigma^v)$.

\subsection{\changed{The universal family and its extended log periods}
}\label{sec:UniveLogPer}

Heading for the proof of Proposition~\ref{prop:univMMS}
we may, as explained in the paragraph after its statement, 
restrict attention to the open subfunctor such that all fibers are
isomorphic to those over a subset $W = \OPl(\calW_\epsilon \times \Delta^H)
\subset\ODehns$ over which a family of curves and a family of differential
forms~$\bfomega$ has been constructed in Section~\ref{sec:Dehn}, see in
particular Figure~\ref{cap:plumbing} and Equation~\eqref{eq:therescaledform}.
We denote this family $\pi^{\rm uni}: \calY^{\rm uni} \to W$ in the sequel.
\par
We provide the family $\calY^{\rm uni}$ with the remaining data to make it a family
of multi-scale differentials, our candidate for the universal family.
First, we endow it with the induced prong-matchings, explicitly given
in~\eqref{eq:sigmaeJ}, and declare the holomorphic functions~$\bft$ to be the
rescaling ensemble. Finally, we define the (`universal') marking. 
Since plumbing is fiberwise an almost-homeomorphism between the welded original
surface and the plumbed surface (the map $f^{L(\Lambda)}$ constructed in
Section~\ref{sec:mpa}), we can apply the real blowup construction in families,
given by Theorem~\ref{thm:ORBL}, both to the original family of model differentials
and to the plumbed family of multi-scale differentials, obtaining an
\diffeogen of the level-wise real blowups of these families. We then define
the marking to be the postcomposition of the marking on the universal family of
model differentials with this almost-homeomorphism.
\par
\begin{prop} \label{prop:elogPerCoords}
The rescaling parameters~$t_i$ for $i=-1,\ldots,-L$ together with the log
periods~$\Psi(\wt{\gamma}_j^{(i)})$ for $i=0,\ldots,L$ and $j=1,\ldots,
n(i)-1 + \delta_{i,0}$ and the exponentiated log periods $\Psi^E(\wt{\beta}_j)$
for $j=1,\ldots,H$ of the universal family $\pi^{\rm uni}$ form local coordinates
near any point of~$W$.
\end{prop}
\par
We call these coordinates the \emph{extended log periods}
$\mathrm{ELP}\coloneqq (\bft, \{\Psi(\wt{\gamma}_j^{(i)})\}_{i,j}, \{\Psi^E(\wt{\beta}_j)\}_j)$.
\par
\begin{proof} It suffices to show that the coordinate change from
extended perturbed periods to extended log periods is invertible at 
any point~$p$. The perturbed periods of $\gamma_j$ are simply the summand
$I=i$ of the first sum in $\Psi(\wt{\gamma}_j)$ if we choose to break
at the points $p^+_k = \sigma_e^+$ for the indices~$k$ corresponding to
the nodes~$e$ where $\gamma_j$ starts or ends (if any). The remaining
terms in the first sum of~\eqref{eq:logperiod} are~$O(t_{i-1})$ and so are all
terms in the second sum except for the contributions of up to two plumbing
fixtures that touch level~$i$. For those we decompose the integral as
$\int_{0}^{p_k+}$, which is a constant by the choice of~$\sigma_e^+$,
and $\int_{0}^{p_k^-}$, which is again~$O(t_{i-1})$. In short, there exist
constants~$C_{i,j}$ such that
\bes
\Psi(\wt{\gamma}^{(i)}_j) \= \PPer_i(\gamma^{(i)}_j) + C_{i,j} + O(t_{i-1})\,.
\ees
The same decomposition for the $\beta_j$-curves gives
\be
\Psi^E(\wt{\beta}_j) \= \Phor_j \cdot \bfe(C_{j}/r_j) \cdot (1+ O(t_{i-1}))
\ee
for some constants~$C_j$. Consequently, the determinant of the Jacobi
matrix~$J$ for the coordinate change between the extended perturbed periods and extended log periods is
\bes
\det(J) \= \prod_{j=1}^H \bigl(\bfe(C_{j}/r_j) \cdot (1+ O(t_{i-1})\bigr),
\ees
which is clearly non-zero.
\end{proof}
\par

\subsection{Consistent unplumbing and the proof of Proposition~\ref{prop:univMMS}}
\label{sec:consist}

\changed{Given a family of simple multi-scale differentials $\calY \to B$,
we assume the base $B\ni p$ to be sufficiently small, and that the fiber over~$p$
is represented by some point of~$W$. We will now define the moduli
map~$m\colon B\to W$, after possibly further shrinking~$B$. Once we prove the uniqueness
of such a moduli map locally, the existence of the moduli map over an arbitrary
base~$B$ follows by gluing such locally defined moduli maps.}
\par
\changed{We can moreover assume that the pinched multicurve $\Lambda_p$ is equal
to $\Lambda$, by proving the proposition inductively on multicurves, under
inclusion. As in the unplumbing construction, we give the details for the case when~$\calY_p$ has just two levels, as the general case is analogous, but would require more cumbersome notation.}
\par
\changed{We recall that the complex structure on the Dehn space is defined by
using plumbing, and we will thus apply the unplumbing construction to two
different families.} First, we apply unplumbing to~$\pi \colon \calY\to B$, to obtain a family of model differentials $\pi_\calX \colon \calX \to B$. Second, we apply unplumbing \changed{to $\pi^{\rm uni} \colon \calY^{\rm uni} \to W$ to obtain an (unplumbed) family of model differentials $\pi^{\rm unp} \colon \calX^{\rm unp} \to W$. To ensure that the comparison of~$\pi_\calX$ and~$\pi^{\rm unp}$ yields the correct moduli map~$m$ for~$\calY$, we need} to make consistent choices for these two unplumbing constructions, as follows. We choose a maximal
multicurve~$\Lambda_{\rm max}$ on~$\calY_p$, as required for Proposition~\ref{prop:unplumbconst}, and choose the same maximal multicurve on~$\calY$ and on~$\calY^{\rm uni}$, which is possible since all the fibers of $\pi^{\rm uni}$ are marked surfaces. \changed{Next, we choose both on~$\calY$ and $\calY^{\rm uni}$ the sections~$\sigma_h$ near the marked zeros to be the same fixed (small) $\omega_{(0)}$-period away from the corresponding zero. Finally we specify the choices for~$\sigma_e^\pm$. For~$\sigma_e^-$ observe that a differential on lowest level always has a zero~$z$ and choose~$\sigma_e^-$ near the lower end of the node corresponding to~$e$ a fixed $\omega_{(-1)}$-period away from~$z$. (If~$\calY_p$ has more than two levels and a middle level has no zeros except at nodes going down choose a downward extension~$\wt{\gamma}$ of a path to such a zero and place~$\sigma_e^-$ so that~$\Psi(\wt{\gamma})$ is constant.) Choose~$\sigma_e^+$ to be a fixed~$\omega_{(0)}$-period away from~$\sigma_h$, if the component at the upper end of~$e$ has a marked zero. If not, choose an arbitrary nearby section~$\sigma_e^+$ near a first upper end of a node, and use this in place of~$\sigma_h$ as reference point for all other~$\sigma_e^+$. We will see that this arbitrary choice will not matter.}
\par
Let $u_B \colon B \to \barMDs$ be the moduli map obtained by applying the universal
property for the simple model domain to the unplumbed family $\pi_\calX \colon
\calX\to B$ and let~$u\colon W \to \barMDs$ be the moduli map for the
unplumbing~$\pi^{\rm unp}$ of the (candidate) universal family, as in
Proposition~\ref{prop:unplumbconst}. The latter map is locally invertible by statement~(iii) of that proposition. We claim that (after possibly shrinking~$B$ to fit domains) the
composition
\bes
m \= u^{-1} \circ u_B \colon B \to W
\ees
will then be the moduli map for~$\pi$, i.e.,~that the family of multi-scale
differentials $\pi \colon \calY\to B$ is the pullback of the \changed{candidate
universal family~$\pi^{\rm uni}$ under the map~$m$. By definition there is an
isomorphism of families of model differentials $h_\calX \colon m^* \calX^{\rm unp}
\to \calX$, and thus to show the pullback property we need to modify~$h_\calX$
to an isomorphism of families of \msds $h \colon m^* \calY^{\rm uni}
\to \calY$.}
\par
This isomorphism~$h$ is constructed level by level, and for clarity of exposition we will again only deal with the two-level situation without horizontal nodes, as in the proof of Proposition~\ref{prop:unplumbconst}. \changed{Suppose each top level component has a marked zero. Then since the location of the points~$\sigma_e^\pm$ and~$\sigma_h$ is specified by periods, $h_\calX$ identifies these points on~$m^* \calY^{\rm uni}$ and on~$\calY$. We proceed as in the plumbing construction, inverting the steps in the unplumbing construction. That is, we use the point~$\sigma_e^\pm$ to glue the plumbing fixture ot the lower level so that~$p^\pm$ is mapped to~$\sigma_e^\pm$. Using the modification differential~$\xi$ determined by~$\Lambda_{\max}$, the differential on the plumbing fixture glues both to~$\eta_{(-1)}$ and to~$\eta_{(0)} + \xi$. Proceeding similarly (see Sections~\ref{sec:plhomeo} and~\ref{sec:plhoriz}) near the zeros and near the horizontal nodes,  we get back~$\calY$ from~$\calX$ and $\calY^{\rm uni}$ from~$\calX^{\rm unp}$. We define~$h$ to be~$h_\calX$ on the upper and lower level subsurfaces and the identity on the plumbing fixtures. These maps respect the differentials $\omega_{(i)}$, since~$\xi$ is determined by the topological choice of~$\Lambda_{\max}$, and they glue since $h_\calX$ identifies the marked points as remarked above.}
\par
\changed{If a top level component of~$\calX$ does not have a marked zero,
$h_\calX((\sigma_e^+)_{\rm uni})$ will in general differ from $\sigma_e^+$ on~$\calX$ at the `first' upper and of the node where the choice was arbitrary. However we can perform a relative period deformation ('Schiffer variation', see e.g.\ \cite[Section~2]{McMtwists} for various viewpoints of the construction) to translate the neighborhood of zero corresponding to~$e$, so that $h_\calX((\sigma_e^+)_{\rm uni})$ and~$\sigma_e^+$ agree. If we apply the \emph{same} Schiffer variation to all upper ends of nodes of this component of~$\calX$, our choice of~$\sigma_e^+$ implies that $h_\calX((\sigma_e^+)_{\rm uni}) = \sigma_e^+$ for all those nodes. We now proceed as in the previous paragraph.} 
%
%
%
%
\par
\changed{To show the uniqueness of the moduli map~$m$ we use the uniqueness
of the moduli map for model differentials, but we have to show that~$m$ does
not depend on the choices made for the unplumbing constructions. Suppose
a different collection of choices gives another unplumbed family
$\pi'\colon \calX' \to B$ with corresponding moduli map $u'_B \colon B
\to \barMDs$, and gives similarly~$u'$ and $m' = (u')^{-1} \circ u'_B$, with
an isomorphism   $h' \colon (m')^* \calY^{\rm uni} \to \calY$ of families of \msds.
To show that $m = m'$ we use the extended log period coordinates.
These are independent of any choices except for the paths
$\wt{\gamma}_j$ and $\wt{\beta}_j$, which we choose at the central fiber
over~$p$ once and for all, and then propagate to a neighborhood of~$p$ in~$B$. Note that extended log periods are specifically
designed to not depend on any monodromy arising in that way. Fixing the same $\wt{\gamma}_j$ and $\wt{\beta}_j$ in the fiber over~$p$, the
extended log period coordinates can then be read off from~$\calY$, and
the isomorphisms~$h$ and~$h'$ imply that  $\mathrm{ELP} \circ m =
\mathrm{ELP} \circ m'$. Proposition~\ref{prop:elogPerCoords} now
gives the desired conclusion}.
\par
\changed{
\begin{rem}
If~$B$ is a reduced complex analytic space, we can alternatively prove uniqueness
of the moduli map without appealing to~$\mathrm{ELP}$, by stratifying~$B$
according to the pinched multicurve~$\Lambda$ and using that over each
stratum~$B_\Lambda$ a family of marked multi-scale differentials is just a family
of marked prong-matched twisted differentials. These come with a fine moduli
space, which proves the uniqueness of $m|_{B_\Lambda}$ for each~$\Lambda$, and thus of~$m$. If
$B$ is non-reduced, say a thick point, this argument fails to give uniqueness
outside the central fiber, necessitating the above use of extended log periods.
\end{rem}}
\par
\changed{
\begin{rem}
One could attempt to bypass the preceding proof of the uniqueness of the moduli map that uses consistent unplumbing, by using the log periods and constructing an isomorphism by integration. Indeed, using the extended log period coordinates one can define the moduli map $m\colon B\to W$ from any base~$B$ (possibly after shrinking~$B$) to a (sufficiently small)~$W$. One can then pull back the universal family over~$W$ to obtain two families of multi-scale differentials over~$B$, with the same central fiber and with the same extended log period coordinates, and would need to prove that they are isomorphic. This is rather straightforward by integration if all curves in the family are smooth. However, to make this work in full detail requires overcoming various complications: one has to choose the rescaling ensembles for the two families compatibly; then one should use the standard form near the nodes to argue that the two families can be made to locally agree near the (non-persistent) nodes. Finally the equality of extended log periods would then be used to show that the isomorphism of the families near the nodes can be chosen to extend to the entire families. Since this requires a similar amount of work as our consistent unplumbing constructions, we do not provide further details.
\end{rem}
}

\subsection{The proof of
Theorem~\ref{thm:univnonsimpleMMS}}\label{sec:Dehnfine}

As in the proof of Proposition~\ref{prop:univMMS}, we start with the local version and
then glue the moduli maps as above. Suppose we are given a family $(\calY \to B,
(\omega_{(i)})_{i \in L^\bullet(\Gamma_p)}, R, \bfsigma, f)$ of marked \msds, defined
on a neighborhood of~$p$. We proceed similarly to the proof of
Proposition~\ref{prop:restRE} and let $B^s \to B$ be the fiber product
of~$R \colon B \to \Tnorm[\Gamma_p]$ with the finite quotient map $\ol{p} \colon
\ol{T}_{\Gamma_p}^{s}\to \ol{T}_{\Gamma_p}^{s}/K_p =\Tnorm[\Gamma_p]$. The
pullback family $\calY^s \to B^s$ comes with a map $R^s \colon B^s \to
\ol{T}_{\Gamma_p}^{s}$ and is thus a family of simple marked \msds. By
Proposition~\ref{prop:univMMS} we obtain a moduli map $m^s \colon B^s \to
\ODehns$. Composing~$m^s$ with the quotient map $\ODehns \to \ODehn$ we get a map $B^s \to \ODehn$ that is clearly $K_{\Gamma_p}$-invariant by construction. It thus
descends to the required moduli map $m \colon B \to \ODehn$.
\par

\section{The moduli space of \msds}
\label{sec:moduli space}

We now have all the tools that are necessary to prove the main theorems
announced in the introduction. Denote by $\PP\obarmoduli[g,n]^{\rm{ninc}}(\mu)$
the normalization of the incidence variety compactification, where the incidence
variety is considered as a substack of $\pobarmoduli[g,n]$. In this section we
will show that the stack~$\PP\MSgrp$ of \msds can be obtained from
$\PP\obarmoduli[g,n]^{\rm{ninc}}(\mu)$ as the normalization of a certain
explicit (complex algebraic, not real oriented) blowup, called the orderly blowup.
We will then be able
to conclude the proof of Theorem~\ref{intro:main} and Theorem~\ref{intro:funct},
in particular proving algebraicity of $\LMS$. We will conclude this Section with
an example that illustrates the orderly blowup and the necessity of the
subsequent normalization.
\par

\subsection{A blowup description}
\label{subsec:blowup}

The incidence variety compactification in general can have bad singularities.
For instance, it can fail to be normal, as it can have multiple local
irreducible components along the locus of pointed stable differentials that
admit more than one compatible enhanced structure on the dual graph (see
e.g.~\cite[Example 3.2]{strata}), and its normalization may still be quite
singular, e.g.\ not even $\QQ$-factorial, as shown in the following example.
\par
\begin{exa}({\em The IVC may be not $\QQ$-factorial.})
\label{ex:q-factorial}
Consider a level graph with three levels such that the top level has one vertex $X_0$,
the level~$-1$ has two vertices~$X_1$ and~$X'_1$, and the bottom level has
one vertex $X_2$, where $X_0$ is connected to each of~$X_1$ and~$X'_1$ by one edge,
and~$X_2$ is connected to each of~$X_1$ and $X'_1$ by one edge.  In other words, the
graph looks like a {\em rhombus}.  Since the level graph has three levels and no
horizontal edges, the corresponding stratum has codimension two in the moduli space
of \msds $\PP\LMS$. On the other hand, since $X_1$ and $X'_1$ are disjoint, when
considering the incidence variety compactification $\PP\obarmoduli[g,n]^{\rm{inc}}(\mu)$
we lose the information of relative sizes of rescaled differentials $\lambda \eta_1$
and $\lambda'\eta'_1$ on $X_1$ and $X'_1$, where $\lambda, \lambda' \in \CC^*$, and
hence the corresponding locus has codimension three in
$\PP\obarmoduli[g,n]^{\rm{inc}}(\mu)$. Namely, the map $\PP\LMS \to
\PP\obarmoduli[g,n]^{\rm{inc}}(\mu)$ locally around these loci looks like a
$\PP^1$-fibration, where $\PP^1 = [\lambda, \lambda']$ (in the degenerate case
$\lambda = 0$, $X_1$ goes lower than $X'_1$ and the graph has four levels, and
vice versa for $\lambda'=0$). One can check that locally outside of these loci the
map does not have positive dimensional fibers. We thus obtain locally a {\em small
  contraction} (which means no divisors get contracted), and consequently the target
space $\PP\obarmoduli[g,n]^{\rm{inc}}(\mu)$ (as well as its normalization) is
not $\QQ$-factorial (see e.g.~\cite[Corollary 2.63]{kollarmori}).
\end{exa}
\par
\medskip
Given an adjustable but not necessarily orderly family $(\calX \to B,\omega)$ (as
defined in Section~\ref{sec:triangsystem}), we first describe a canonical way to
blow up the base~$B$ so that the pullback family under this base change becomes
orderly. Let $X_v$ and $X_{v'}$ be two irreducible components of the fiber $X_p$
over some $p\in B$. The family fails to be orderly if neither of the adjusting
parameters $h$ and $h'$ for $X_v$ and $X_{v'}$, respectively, divides the other one,
as elements in~$\calO_{B,p}$. Therefore, we perform the following blowup construction.
\par
Let $U\subset B$ be a (sufficiently small) neighborhood of $p$ such that there
exist adjusting parameters $\{h_1,\dots, h_n\}$ for the family $\calX|_U$.
The \emph{disorderly ideal} $\calD_U\subset \calO_{U,p}$ for $\calX|_U$ at~$p$
is the product of all ideals of the form $(h_{i_1}, \dots, h_{i_k})$, where
$\{ i_1,\dots, i_k\}$ ranges over all subsets of components of~$X_p$ on
which~$\omega$ vanishes identically.
\par
We denote by $\wt U$ the blowup of~$U$ along $\calD_U$, and call it the
\emph{orderly blowup}. If $U'$ is an open subset of~$U$ such that $\calX|_{U'}$ becomes less degenerate, namely, some $h_i$ becomes a unit in $U'$, or the ratio of some $h_i$ and $h_j$ becomes a unit, then $\calD_U|_{U'}$ possibly differs from $\calD_{U'}$ by some repeated factors of ideals.
Note that blowing up the principal ideal of a non-zero-divisor (i.e. the underlying subscheme is an effective Cartier divisor) is simply the identity map, and moreover, blowing up a product of ideals is the same as successively blowing up (the total transform of) each ideal (see e.g.~\cite[{Tag 01OF}]{stacks-project}). This implies that  for any two open subsets $U_1$ and $U_2$, we can glue~$\wt{U_1}$ and $\wt{U_2}$ along their common restriction $\wt{U_1\cap U_2}$. In other words, this local blowup construction chart by chart leads to a well-defined global space, which we denote by~$\wt B$, and there exists a blowdown morphism $\wt B\to B$ locally given by $\wt U\to U$.
\par
\begin{exa}
We illustrate the behavior of disorderly ideals by the following example. Suppose the special fiber $X_p$ consists of four irreducible components $X_0, X_1, X_2, X'_2$ such that $X_0$ is on top level which connects to $X_1$ on level $-1$, and $X_1$ connects to $X_2$ and $X'_2$ on lower levels that we cannot order.  Let $h_1, h_2, h'_2$ be the adjusting parameters for $X_1, X_2, X'_2$ respectively, and assume that they are not zero divisors.
 Then the partial order implies that $h_1$ divides both $h_2$ and~$h'_2$,
and hence
$$\calD_U = (h_1)(h_2)(h'_2)(h_1, h_2)(h_1, h'_2)(h_2, h'_2)(h_1, h_2, h'_2) = (h_1)^4(h_2)(h'_2)(h_2, h'_2)$$
for a sufficiently small neighborhood~$U$ of $p$. Suppose $q\in U$ is a nearby point such that the fiber $X_q$ is less degenerate in the sense that the nodes connecting $X_2, X'_2$ to~$X_1$ are smoothed, i.e.~suppose $X_q$ has only one lower level component with adjusting parameter~$h_1$ and both $h_2, h'_2$ become $h_1$ multiplied by some units in a neighborhood $U'\subset U$ of $q$. Then
$\calD_{U'} = (h_1)$, which differs from $\calD_{U}|_{U'} = (h_1)^7$ by a power of $(h_1)$. In particular, the ideals $(h_1)$ and $(h_1)^7$ define different
subschemes in $U'$. However, since both ideals are principal, blowup along each of them is thus the identity map, so the resulting spaces are isomorphic to each other.
\end{exa}
\par
We need the following lemmas about the properties of disorderly ideals.
\par
\begin{lm}
\label{lm:principal-product}
Let $R$ be a local ring and $I, J\subset R$ be two ideals such that the product ideal $IJ$ is a principal ideal generated by a non-zero-divisor. Then both $I$ and $J$ are principal ideals generated by non-zero-divisors.
\end{lm}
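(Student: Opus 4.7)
The plan is to exploit the two hypotheses, locality of $R$ and $f$ being a non-zero-divisor, to extract a single generator for each of $I$ and $J$ out of a finite sum expression for $f$. Since $f\in IJ$, I would write $f=\sum_{i=1}^{n} a_i b_i$ with $a_i\in I$ and $b_i\in J$, and then use the reverse inclusion $a_i b_i\in IJ=(f)$ to write $a_i b_i=c_i f$ for some $c_i\in R$. Substituting back gives $f=\bigl(\sum_i c_i\bigr)f$, and since $f$ is a non-zero-divisor this forces $\sum_i c_i=1$. Because $R$ is local (so its non-units form an ideal), at least one of the $c_i$ must be a unit; after reindexing, assume $c_1\in R^{\times}$.

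Next I would verify that the distinguished elements $a_1$ and $b_1$ are themselves non-zero-divisors. From $a_1b_1=c_1 f$ with $c_1$ a unit, if $x\in R$ satisfies $xa_1=0$, then $x\,a_1 b_1 = 0$, hence $xc_1 f=0$, and since $f$ is a non-zero-divisor and $c_1$ is a unit we deduce $x=0$; the argument for $b_1$ is symmetric.

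The heart of the proof is then to show $I=(a_1)$ (and symmetrically $J=(b_1)$). For an arbitrary $a\in I$ one has $ab_1\in IJ=(f)$, so $ab_1=\alpha f$ for some $\alpha\in R$. Multiplying by $c_1$ and using $c_1 f=a_1 b_1$ gives $c_1 a b_1 = \alpha a_1 b_1$, i.e.\ $b_1(c_1 a-\alpha a_1)=0$. Since $b_1$ is a non-zero-divisor, $c_1 a=\alpha a_1$, and since $c_1$ is a unit we get $a=c_1^{-1}\alpha\,a_1\in (a_1)$; the reverse containment $(a_1)\subseteq I$ is immediate.

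I expect no substantial obstacle: the whole argument is a short exercise in commutative algebra, and the only delicate point is the order of operations — one must establish that $a_1$ and $b_1$ are non-zero-divisors \emph{before} trying to cancel $b_1$ in the step $b_1(c_1 a-\alpha a_1)=0$. Everything else is formal manipulation, and the use of locality enters only once, to guarantee that among the $c_i$ whose sum is $1$ at least one is a unit.
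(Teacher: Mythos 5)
Your proof is correct and follows essentially the same route as the paper's: write the generator as a finite sum of products $a_ib_i$, observe each $a_ib_i$ is a multiple $c_if$ of the generator, cancel $f$ to get $\sum c_i=1$, and use locality to single out a unit coefficient. The paper phrases the cancellation step in the ring of fractions and is terser about why $I=(b_i)$ follows, whereas you spell out the non-zero-divisor checks and the final containment explicitly; the substance is the same.
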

\par
\begin{proof}
Suppose $IJ = (a)$ for some non-zero-divisor $a$.  Then there exist $b_i \in I$ and $c_i \in J$ such that $b_1c_1 + \dots + b_n c_n = a$, which implies that
$b_1 (c_1/a) + \dots + b_n (c_n / a) = 1$ as a relation in the ring of fractions. Since the (unique) maximal ideal of $R$ consists exactly of all non-unit elements, it follows that
some $b_i (c_i / a)$ must be a unit, hence $I = (b_i)$.
\end{proof}
\par
\begin{lm}
\label{lm:principal}
Let $R$ be a local ring and let $h_1, \dots, h_n\in R$ be some elements that are non-zero-divisors. Let $D = \prod (h_{i_1}, \dots, h_{i_k})$ be the product of ideals where $\{ i_1,\dots, i_k\}$ ranges over all subsets of $\{1, \dots, n\}$. Then $D$ is a principal ideal $(h)$ with $h$ being a non-zero-divisor if and only if $h_1,\dots, h_n$ are fully ordered by the divisibility relation.
\end{lm}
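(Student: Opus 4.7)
The plan is to treat the two implications separately. For the forward direction, after relabeling so that $h_1 \mid h_2 \mid \cdots \mid h_n$, each ideal $(h_{i_1}, \dots, h_{i_k})$ appearing as a factor of $D$ collapses to the principal ideal generated by $h_{\min(i_\ell)}$, since all other generators lie in that ideal. Hence $D$ is a product of principal ideals, each generated by a non-zero-divisor (namely some $h_i$), and is therefore itself principal, generated by the product of those generators, which is again a non-zero-divisor.

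For the backward direction, the approach is to extract pairwise comparability of the $h_i$ under divisibility and conclude that a total order emerges. The first step is an iterated application of Lemma \ref{lm:principal-product}: grouping $D$ as the product of any single factor $(h_{i_1}, \dots, h_{i_k})$ and the product of the remaining $2^n-2$ factors, the lemma guarantees that both pieces are principal and generated by non-zero-divisors. Repeating this argument on the residual product shows that every factor ideal $(h_{i_1}, \dots, h_{i_k})$ appearing in $D$ is principal and generated by a non-zero-divisor.

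The heart of the argument is then the pairwise case: for each $i \neq j$ we have $(h_i, h_j) = (g)$ with $g$ a non-zero-divisor. Writing $h_i = a g$, $h_j = b g$, and $g = u h_i + v h_j$, the identity $g(1 - ua - vb) = 0$ together with the fact that $g$ is a non-zero-divisor forces $ua + vb = 1$. Since $R$ is local, one of $a, b$ must then be a unit, for otherwise both $ua$ and $vb$ would lie in the maximal ideal and their sum could not equal $1$. If $a$ is a unit, then $(h_i) = (ag) = (g) \supseteq (h_j)$, so $h_i \mid h_j$; the symmetric conclusion holds if $b$ is a unit. Applied to every pair, this establishes that the divisibility relation on $\{h_1, \dots, h_n\}$ is total.

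The main obstacle is ensuring that the non-zero-divisor hypothesis propagates cleanly through the iterated factorization in the first step — but this is precisely what Lemma \ref{lm:principal-product} delivers on both factors — after which the argument reduces to the short calculation above in a local ring.
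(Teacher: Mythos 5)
Your proof is correct and follows essentially the same route as the paper: both directions rest on iterating Lemma~\ref{lm:principal-product} over the factors of $D$ and on the observation that in a local ring a sum of non-units cannot equal $1$. The only cosmetic difference is that you extract comparability from the pairwise factors $(h_i,h_j)$ directly, whereas the paper applies the same unit argument to the single factor $(h_1,\dots,h_n)$ to find a minimal element and then recurses on $(h_2,\dots,h_n)$.
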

\par
\begin{proof}
If $h_1, \dots, h_n$ are fully ordered by divisibility, it is clear that $D = (h)$ where $h$ is given by certain products of powers of the $h_i$, and by assumption each $h_i$ is a non-zero-divisor. Conversely if $D = (h)$ is principal with $h$ being a non-zero-divisor, then the same holds for each factor $(h_{i_1}, \dots, h_{i_k})$ by Lemma~\ref{lm:principal-product}. Suppose $(h_1, \dots, h_n) = (b)$ such that $h_i = b t_i$ for $t_i$ in $R$ and $b$ being a non-zero-divisor.  Then there exist $u_i$ in $R$ such that $u_1t_1 + \dots + u_nt_n = 1$.
If all of $t_1, \dots, t_n$ are not units, then the ideal $(t_1,\dots, t_n)$ is contained in the (unique) maximal ideal of the local ring $R$, which is absurd because it also contains $1$.
Hence we may assume that $t_1$ is a unit in $R$, which implies that~$h_1$ divides $h_2, \dots, h_n$. Carrying out the same analysis for the ideal $(h_2, \dots, h_n)$ and repeating the process thus implies the desired claim.
\end{proof}
\par
The orderly blowup construction possesses some functorial property.
\par
\begin{prop}
\label{prop:blowup}
Given an adjustable family of differentials $(\pi\colon\calX\to B,\omega,\bfz)$, the pullback family $\wt\pi\colon\wt\calX\to\wt B$ over the orderly blowup $\wt B\to B$ \index[family]{b060@$\wt\pi\colon\wt\calX\to\wt B$! Orderly blowup of an adjustable family of  differentials} is orderly. Moreover, any dominant map $\pi\colon B' \to B$, such that the pullback family $\calX'\to B'$ is orderly, factors through $\wt B$.
\end{prop}
\par
\begin{proof}
It suffices to check the claim locally over each~$U$, with the disorderly ideal $\calD_U$ in the preceding setup.
The first statement then follows from Lemma~\ref{lm:principal}. More precisely, on the orderly blowup, the pullback of $\calD_U$ becomes a principal ideal, and hence at every point of~$\wt U$ the pullback family of differentials has adjusting parameters (given by the pullback of the functions $h_i$) that are fully ordered by divisibility, which implies that the family is orderly over~$\wt U$.
\par
The second statement follows from the universal property of blowup (see e.g.~\cite[{Tag 01OF}]{stacks-project}). Let $U' = \pi^{-1}(U)$. Since $\pi$ is dominant, the pullback of any adjusting parameter $\pi^{*}h_i$ is a non-zero-divisor, and moreover $\pi^{*} \eta_{(i)} = \pi^{*}\omega / \pi^{*}h_i $ holds for the adjusted differential $\eta$ on any irreducible component $X_i$ of any fiber $X_p$ over a point  $p\in U$. Hence these $\pi^{*}h_i$ can be used as adjusting parameters for the pullback family over~$U'$. Since the pullback family is orderly, these adjusting parameters $\pi^{*}h_i$ in $U'$ are fully ordered by divisibility, and consequently the corresponding disorderly ideal $\pi^{*}\calD_U$ in $U'$ is principal (and generated by a non-zero-divisor). Since the blowup of $\calD_U$ is the final object that turns $\calD_U$ into a principal ideal (generated by a non-zero-divisor), it implies that $\pi\colon U' \to U$ factors through $\wt U$.
\end{proof}
\par
We remark that there is some flexibility in choosing the local disorderly ideals. For instance, we can alternatively take $D = \prod (h_{i_1}, \dots, h_{i_k})$ to be the product of ideals ranging over all subsets of cardinality at least two.  This ideal differs from the original definition of the disorderly ideal by a product of principal ideals, and hence the blowup with center~$D$ gives the same space as the orderly blowup. We can also take the product $D = \prod (h_i, h_j)$ over all pairs of $h_i$ and $h_j$ that do not satisfy the divisibility relation. Then after blowing up the adjusting parameters are pairwise orderly, hence are orderly altogether.
\par
We warn the reader that the orderly blowup of a normal base may fail to be
normal, as illustrated by the following example.
\par
\begin{exa} ({\em A non-normal orderly blowup})\label{ex:orbnotnormal}
Let $x$ and $y$ be the standard coordinates of $B = \CC^2$. Then $x^2$
and $y^3$ do not divide each other in the local ring of the origin.  The orderly
blowup $\wt{B}$ for the ideal $(x^2, y^3)$ can be described by
\begin{eqnarray}
\label{eq:non-normal}
  \left\lbrace(x,y)\times [u, v] \in \CC^{2}\times \PP^{1} : x^{2}v-y^{3}u=0 \right\rbrace\,.
  \end{eqnarray}
Then we see that $\wt{B}$ is singular along
the entire exceptional curve over $x = y = 0$. It implies that $\wt{B}$ is not normal, since a normal algebraic surface can have only isolated singularities.
\end{exa}
\par
We are now ready to apply these considerations to the IVC.
\par
\begin{lm}
The incidence variety compactification $\PP\obarmoduli[g,n]^{\rm{inc}}(\mu)$ can
be considered as a closed substack of $\PP\obarmoduli[g,n]$.
\end{lm}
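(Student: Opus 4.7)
The plan is to realize the open stratum $\PP\omoduli[g,n](\mu)$ as a locally closed substack of $\PP\obarmoduli[g,n]$ and then take its closure. First I would make the ambient space precise: in the meromorphic case $\PP\obarmoduli[g,n]$ is understood as the projectivization of the twisted Hodge bundle $K\barmoduli[g,n](\tilde\mu)$, whose sheaf of sections is $\pi_*(\omega_{\calX/\barmoduli[g,n]}(-\sum_{j:m_j<0} m_j\calZ_j))$; this is a coherent sheaf on $\barmoduli[g,n]$ (locally free outside a codimension-two locus where the dualizing rank can jump, which is harmless), and its projectivization is a proper Deligne--Mumford stack because $\barmoduli[g,n]$ is. In the holomorphic case no twist is needed. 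Over the open locus $\moduli[g,n]$ this is simply the projectivized Hodge bundle of meromorphic differentials with the allowed pole orders.

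Next I would exhibit $\PP\omoduli[g,n](\mu)$ as a locally closed substack of $\PP\obarmoduli[g,n]$. The condition ``$\omega$ vanishes to order at least $m_j$ along the section $\calZ_j$'' cuts out a closed substack (defined by the vanishing of the first $m_j$ jets of $\omega$ along $\calZ_j$, an obviously functorial condition), while ``$\omega$ does not vanish to order $m_j+1$ along $\calZ_j$'' is its open complement; the intersection of these locally closed conditions across all marked points, restricted to the open locus $\PP\obarmoduli[g,n]|_{\moduli[g,n]}$, carves out $\PP\omoduli[g,n](\mu)$ as a locally closed substack. No residue or global conditions need to be imposed in the interior because on a smooth curve the only constraint is the divisor.

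I would then take the scheme-theoretic closure. For a noetherian Deligne--Mumford stack $\calM$ and a locally closed substack $\calN \subset \calM$, the closure exists as a closed substack: pass to an étale atlas $U\to\calM$, form the scheme-theoretic closure of the pullback $\calN\times_\calM U$ in $U$, and check (using the fact that scheme-theoretic closure commutes with flat, in particular étale, base change) that the resulting closed subschemes descend to a closed substack of $\calM$. Applied to $\PP\omoduli[g,n](\mu) \subset \PP\obarmoduli[g,n]$, this produces a closed substack whose underlying topological space is precisely the closure defining the IVC; by the definition at the start of the excerpt this is $\PP\obarmoduli[g,n]^{\rm inc}(\mu)$.

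I don't expect a real obstacle here; the only subtle point is making the scheme-theoretic versus reduced choice explicit. Since later the construction passes to the normalization $\PP\obarmoduli[g,n]^{\rm ninc}(\mu)$, I would endow the IVC with its reduced induced substack structure, which suffices for all subsequent uses (normalization, forgetful maps, the blowup description in Theorem~\ref{thm:LMS-blowup}), and avoids having to track possibly embedded components along the boundary.
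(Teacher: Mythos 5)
Your argument is correct, but it takes a genuinely different route from the paper's. You prove the lemma purely formally: exhibit $\PP\omoduli[g,n](\mu)$ as a locally closed substack of the projectivized twisted Hodge bundle (jet conditions along the sections $\calZ_j$ over the smooth locus) and then invoke the general fact that the scheme-theoretic closure of a locally closed substack of a noetherian Deligne--Mumford stack is a closed substack, checked by descent along an \'etale atlas; the reduced induced structure then suffices for the later normalization and blowup. The paper instead works modularly: it uses the characterization of the points of the IVC from \cite[Theorem~1.5]{strata} (recalled in Section~\ref{sec:deftwd}) and observes, near a stable curve with fixed dual graph, that the existence of a twisted differential compatible with an enhanced level structure is a closed condition readable off from a family --- both the existence of sections of the relevant line bundles with prescribed vanishing and the global residue condition are closed conditions. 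Your approach is shorter and needs no knowledge of the boundary points, but it yields only the closure as an abstract closed substack; the paper's approach simultaneously identifies the closed substack with the locus cut out by explicit, family-readable conditions, which is the description that feeds into the adjustability and orderly-blowup arguments of Section~\ref{subsec:blowup}. One small caveat: the twisted pushforward $\pi_*\bigl(\omega_{\calX/\barmoduli[g,n]}(-\sum_{m_j<0} m_j\calZ_j)\bigr)$ is in fact locally free on all of $\barmoduli[g,n]$ (the twist kills $H^1$ fiberwise), so your hedge about a codimension-two locus is unnecessary, though harmless.
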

\par
\begin{proof} We can restrict to the neighborhood of a stable curve with
dual graph~$\Gamma$. It suffices to realize that conditions of the existence
of a twisted differential compatible with an enhanced level structure are
closed conditions that can be read off from a family of pointed stable curves.
This is clear both for the existence of differentials (i.e.\ sections of
a line bundle determined by the family and the marked points) and for the global
residue condition (vanishing condition for sums of residues associated with these
differentials).
\end{proof}
\par
\begin{thm}
\label{thm:LMS-blowup}
The moduli stack of \msds $\MSgrp$ is equivalent (as an analytic stack) to
the normalization $\wt{\Omega\calM}^{\rm n}_{g,n}(\mu)$ of the orderly blowup
of the normalization~$\obarmoduli[g,n]^{\rm{ninc}}(\mu)$ of the incidence
variety compactification. This orderly blowup $\wt{\Omega\calM}_{g,n}(\mu)$, and thus also $\MSgrp$,
is the analytification of an algebraic stack.
\end{thm}
\par
See \cite[Chapter~XII]{acgh2} for a general introduction to (algebraic) stacks
and \cite{ToenTheses} for analytic stacks and analytification.
\par
\begin{proof} First, we make sure that the operations of normalization
(e.g.\ \cite[Example~8.3]{acgh2}) and orderly blowup make sense in this
context, considering $\obarmoduli[g,n]^{\rm{ninc}}(\mu)$
as an analytic stack. Proposition~\ref{prop:normal-rescalable} ensures that
over $\obarmoduli[g,n]^{\rm{ninc}}(\mu)$ the family of one-forms that are the top
level forms of the twisted differential is adjustable.
In a local quotient groupoid presentation $[U/G]$ we see that $G$-pullbacks
of adjusting parameters are again adjusting parameters. Consequently, the
disorderly ideal is~$G$-invariant and in view of
Lemma~\ref{lm:disorderlycompatible} below we can consider the orderly blowup
$\wt{\Omega\calM}^{\rm n}_{g,n}(\mu)$ as an analytic stack.
\par
Proposition~\ref{prop:orderly-msd} then ensures that the resulting orderly
family over $\wt{\Omega\calM}^{\rm n}_{g,n}(\mu)$ gives a family of
multi-scale differentials of type $\mu$. This family induces a map
of stacks $\wt{\Omega\calM}^{\rm n}_{g,n}(\mu) \to \MSgrp$.
\par
Conversely, a family in the stack $\MSgrp$ is orderly by definition, hence
by Proposition~\ref{prop:blowup} we obtain a map of stacks $\MSgrp \to
\wt{\Omega\calM}_{g,n}(\mu)$. Since $\MSgrp$ is normal, this map factors through
$\wt{\Omega\calM}^{\rm n}_{g,n}(\mu)$, which gives the desired inverse map.
\par
To show that the orderly blowup of~$\obarmoduli[g,n]^{\rm{ninc}}(\mu)$
is an algebraic stack, use that the normalization $\obarmoduli[g,n]^{\rm{ninc}}(\mu)$
is an algebraic stack and pass from a local quotient groupoid presentation $[U/G]$
to a presentation  $[U'/G']$ on a sufficiently large \'etale cover $U' \to U$ such
that adjusting parameters exist (algebraically) on~$U'$. The existence of such a~$U'$
is guaranteed by Proposition~\ref{prop:alg-rescalable}. That the blowup is a stack is justified
as in the analytic case. Since we could have used $[U'/G']$ also as a presentation
in the analytic case, we have justified the last statement of the theorem.
\end{proof}
\par
In the preceding proof we used the following general fact.
\par
\begin{lm} \label{lm:disorderlycompatible}
Let $X$ be an algebraic or analytic variety and let $\calI$ be
a coherent ideal sheaf. Let $f\colon \tilde{X} \to X$ be a finite \'etale
Galois cover with group~$G$ and let $\calJ = f^{-1} \calI$ be the
pullback (that agrees with the inverse image here). Then there
is a natural $G$-action on $\Bl_\calJ\tilde{X}$ and an isomorphism
$\Bl_\calI X \cong (\Bl_\calJ\tilde{X}) /G$ (in the algebraic or analytic
category).
\end{lm}
\par
\begin{proof}
The existence of the $G$-action on  $\Bl_\calJ\tilde{X}$ follows directly
from the universal property for~$\calJ$. The universal property of~$\calI$
(in the version \cite[Corollary II.7.16]{hartshorne}) also gives a map
$\Bl_\calJ\tilde{X} \to \Bl_\calI X$. This map is equivariant with respect
to the $G$-action on the domain and the trivial action on the target. It thus
descends to a map $\Bl_\calJ\tilde{X} /G \to \Bl_\calI X$.
\par
Conversely, take the fiber product $F = \tilde{X} \times_X  \Bl_\calI X$.
Since the pullback of $\calI$ to~$F$ (via $\Bl_\calI X \to X$) is principal,
the pullback of~$\calJ$ to~$F$ is principal. The universal property
for~$\calJ$ gives a map~$F \to  \Bl_\calJ\tilde{X}$, which is obviously $G$-equivariant.
Since $F/G = \Bl_\calI X$ by definition, as fiber product, the map descends
to the desired map $\Bl_\calI X \to \Bl_\calJ\tilde{X} /G$ on the $G$-quotients.
\end{proof}
\par
It is well-known that the blowup of a projective scheme along a globally defined ideal sheaf (or equivalently a globally defined subscheme) remains to be projective. Nevertheless, we remark that in general gluing local blowups can lead to a non-projective global space (recall the famous Hironaka's example, see e.g.~\cite[Appendix B.3, Example 3.4.1]{hartshorne}). The preceding subsection thus does not answer the question on the projectivity of the coarse space associated
with $\PP\MSgrp$.
\par

\subsection{The universal property}

Recall that for a complex orbifold with local orbifold charts $(U,G)$ there
is an underlying complex space with charts being the (in general singular)
complex spaces $U/G$.
\par
\begin{thm} \label{thm:coarseMS}
The complex space associated with the moduli space of \msds $\LMS$ is the
coarse moduli space for the functor~$\MSfun$ of \msds of type~$\mu$.
\end{thm}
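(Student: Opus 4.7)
The plan is to build the natural transformation $\MSfun \to \Hom_{\mathrm{an}}(-,\LMS)$ locally via the universal property of the Dehn space (Theorem~\ref{thm:univDehn}), then descend to $\LMS$ using that $\LMS$ is, by construction, covered by images of $\ODehn$ under the quotient by $\Mod/\Tw[\eL]$. Coarseness then reduces to a pointwise bijection (which is essentially tautological from $\LMS = \Oaugteich/\Mod$) plus the descent of any competing moduli map through this quotient.

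First, given a family $(\pi\colon\calX\to B,\bfz,\bfomega,\bfsigma) \in \MSfun(B)$ and a point $p\in B$, I would apply Proposition~\ref{prop:markliftedfam} to obtain a neighborhood $U_p\subset B$, an enhanced multicurve $\eL_p$ (whose graph is a degeneration of~$\Gamma_p$), and a $\Tw[\eL_p]$-marking $f_p$ of the \lw real oriented blowup of the restricted family. By Theorem~\ref{thm:univDehn}, this marked family produces a holomorphic map $m_p\colon U_p\to \ODehn[\eL_p]$. Composing with the quotient map $\ODehn[\eL_p]\to \LMS$ (the descent through the action of $\Mod/\Tw[\eL_p]$, see \eqref{eq:defDehn2} and the proof of Theorem~\ref{thm:MCGquotient}) yields $\tilde m_p\colon U_p\to \LMS$. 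On an overlap $U_p\cap U_q$ the two $\Tw$-markings differ by an element of the mapping class group, so $m_p$ and $m_q$ land in the same $\Mod$-orbit of marked MSDs and hence define the same map into $\LMS$. This gluing produces a global map $m_B\colon B\to \LMS$, and the construction is clearly functorial in $B$, giving the required natural transformation.

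Second, I would verify that this transformation induces a bijection on points of any reduced one-point analytic space: a point of $\LMS$ is by definition an equivalence class of tuples $(X,\bfz,\omega,\bfsigma,f)$ in $\Oaugteich$ modulo $\Mod$, and forgetting the marking identifies $\Oaugteich/\Mod$ with the set of isomorphism classes of \msds of type~$\mu$, which is $\MSfun(\mathrm{pt})$ in view of Remark~\ref{rem:ptwdismsd}. Third, for the universal property, let $N$ be any complex space with a natural transformation $\alpha\colon \MSfun\to \Hom_{\mathrm{an}}(-,N)$. Applying $\alpha$ to the (unmarked) universal family carried by $\ODehn$ (obtained from Theorem~\ref{thm:DehnIsOrbi} by forgetting the $\Tw$-marking) gives, for each enhanced multicurve~$\eL$, a holomorphic map $\alpha_\eL\colon \ODehn\to N$. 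Because $\alpha$ only sees the underlying \msd, $\alpha_\eL$ is invariant under the action of $\Mod/\Tw[\eL]$ on $\ODehn$ and therefore descends to the image of $\ODehn$ in $\LMS$; functoriality of $\alpha$ ensures the descended maps agree on overlaps of these images, so they glue to a map $\LMS\to N$. Uniqueness of the factorization follows from the pointwise bijection combined with the fact that $\LMS$ is covered by open sets on which $m_B$ is surjective with connected fibers of points.

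The principal obstacle I anticipate is checking that the descent $\ODehn \to \LMS$ behaves well enough for the descended maps to remain holomorphic, rather than merely continuous. This is where the orbifold structure produced by Theorem~\ref{thm:MCGquotient} is crucial: the finite coset Lemma~\ref{lm:finitecoset} shows that the action of $\Mod/\Tw[\eL]$ on a neighborhood of any point of $\ODehn$ factors through a finite group (the isotropy in $\Mod$), so the descended maps to $N$ are holomorphic provided they are invariant under this finite group action, which is exactly the invariance supplied by the functoriality of $\alpha$. A secondary point requiring care is the compatibility between the rescaling ensemble datum used in $\MSfun$ and the $K_{\eL}$-quotient relating the simple Dehn space to $\ODehn$; this was already handled in the proof of Theorem~\ref{thm:univDehn} by fibering over $\Tnorm[\Gamma_p]$, and I would invoke the same argument here to ensure that the locally defined moduli maps $m_p$ are genuinely holomorphic and not merely set-theoretic.
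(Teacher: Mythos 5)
Your proposal is correct and follows essentially the same route as the paper: locally mark the family via Proposition~\ref{prop:markliftedfam}, invoke the universal property of the Dehn space (Theorem~\ref{thm:univDehn}), compose with the projection to $\LMS$, and glue using that any two markings differ by an element of the mapping class group. The paper's own proof is terser — it simply asserts that this argument also yields the pointwise bijection and the maximality — whereas you spell out the maximality/descent step explicitly (your only loose point being the phrase ``the action of $\Mod/\Tw[\eL]$ on $\ODehn$,'' since this group does not literally act there; the correct mechanism is the finite-coset statement of Lemma~\ref{lm:finitecoset}), but this does not change the substance of the argument.
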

\par
\begin{proof}
Given a family of \msds $(\pi\colon \calX \to B, \bfomega) \in \MSfun(B)$, we want to provide
the family locally near any point~$p$ with a marking,  and then construct
the moduli map $m\colon B \to \LMS$ as the
composition of the moduli map from Theorem~\ref{thm:univnonsimpleMMS}
and the natural quotient map.
\par
For this purpose, we choose for any point $p \in B$ an enhanced
multicurve~$\Lambda_p$ on~$\Sigma$ with $\Gamma(\Lambda_p)
= \Gamma_p$. For a sufficiently small neighborhood $U_p$ of~$p$ we apply
Proposition~\ref{prop:markliftedfam} to provide the family with a marking.
The moduli map given by Theorem~\ref{thm:univnonsimpleMMS}
composed with the projection then gives a map $U_p \to \ODehn \to
\LMS$. These maps glue, since any two choices
of marking differ by the action of an element in the mapping class
group. This argument, together with the universal property of $\ODehn$, also
implies the bijection on complex points and the maximality required as
properties of a coarse moduli space.
\end{proof}
\par
\begin{proof}[Proofs of Theorem~\ref{intro:main} and Theorem~\ref{intro:funct}
completed] For Theorem~\ref{intro:main}, the density~(1) and the description of the boundary divisor~(2) have
been taken care of in Theorem~\ref{thm:LMScomplex}. Compactness~(3)
is the content of Theorem~\ref{thm:MSDcompact}, and the coarse moduli space property~(4)
has been addressed in Theorem~\ref{thm:coarseMS}.
\par
The forgetful map~(5) is obvious, e.g.\ it follows from
Theorem~\ref{thm:LMS-blowup} and its proof.
\par
For Theorem~\ref{intro:funct}, the property of being a proper Deligne-Mumford stack carries over
from $\barmoduli[g,n]$ all the way up through $\PP\obarmoduli[g,n]$,
$\PP\obarmoduli[g,n]^{\rm{ninc}}(\mu)$, and $\wt{\PP\Omega\calM}^{\rm n}_{g,n}(\mu)$.
The isomorphism in the statement of Theorem~\ref{intro:funct} is then obvious
since our compactification does not alter the interior $\PP\omoduli[g,n](\mu)$,
and on a smooth curve a \msd is simply an abelian differential of type~$\mu$.
\par
It remains to show the algebraicity and properness of the stack
$\PP\LMS$. The algebraicity is a consequence of
Theorem~\ref{thm:LMS-blowup} which implies that the coarse moduli space
provided by Theorem~\ref{thm:coarseMS} is algebraic. This space is analytically
locally covered by quotient stacks $[U_X/G_X]$ near~$X$, where the group
$G_X$ is an extension of an automorphisms group of pointed curves by
the group $K_\Gamma$, by the definition of morphisms of marked simple
\msds. Here~$\Gamma$ is the level graph compatible with the point~$X$.
We need to show that the coarse space is also  covered
by quotient stacks $[U'_X/G_X]$ \'etale locally. This is a consequence of Artin's
approximation theorem \cite[Corollary~2.6]{artin} that provide an
\'etale neighborhood in the coarse space isomorphic to a neighborhood
of the origin in $\AA^n/G_X$. Intersecting the preimage in $\AA^n$
with its $G_X$-images to get a $G_X$-invariant neighborhood gives
the $U'_X$ we need. Finally, properness follows from  the usual composition
rules for proper map, using the fact that the map
for a stack to its coarse space is proper.
\end{proof}
\par
\begin{rem}
As a consequence of the orderly blowup description, we see
that the isotropy group of a point in the stack $\MSgrp$ has no
contribution from the group $K_\Gamma$ defined in Section~\ref{sec:coverGH}.
Consequently, whenever this group is non-trivial, the stacks
$\LMS$ and $\MSgrp$ are not locally isomorphic.
\par
Conversely, whenever $K_\Gamma$ is trivial, the functors of marked
and simple marked multi-scale differentials agree, as we have seen
in Section~\ref{sec:simplemarkfamily}. By definition
$\LMS$ and $\MSgrp$ are then isomorphic in a neighborhood of such a stratum.
This happens of course in the interior $\omoduli$, but also at the
generic point of any boundary divisor, by definition of $K_\Gamma$.
\end{rem}

\subsection{Some moduli spaces in genus zero and cherry divisors}

To illustrate the necessity of both the orderly blowup and the subsequent normalization
in the passage from IVC to~$\MSgrp$ we consider the following class of divisors.
\par
A {\em cherry divisor} is a boundary divisor of~$\LMS$ such that the generic \msd has one top-level component  and two components at the second level, each connected to the top level by one node. Note that the forgetful map from the moduli space of \msds to the incidence variety compactification (and hence to the Deligne-Mumford compactification) contracts any cherry divisor, as we saw in Example~\ref{ex:q-factorial}.
\par
\begin{exa} \label{ex:cherry}
  ({\em The cherry requires both the orderly blowup and the normalization})
We consider the incidence variety compactification of
$\PP\omoduli[0,5](2,1,0,0,-5)$, with two marked regular points on the surface. Note that in this
case the IVC is simply $\barmoduli[0,5]$, and in particular it is smooth.
On the right in Figure~\ref{fig:cherry} we schematically depict the local
structure of $\PP\obarmoduliinc[0,5]{2,1,0,0,-5}$ near the point that is
the image of the cherry divisor in the moduli space of \msds. We will study the cherry where the marked points meet the zeros of orders~$1$ and $2$, respectively. This point is the intersection of two boundary divisors of the IVC, the first one parameterizing the differentials where the zero of order $1$ meets a marked point, and the second parameterizing the differentials where the zero of order~$2$ meets the other marked point. We introduce local coordinates $x,y$ on the IVC near this cherry point, such that the first divisor is the locus $\lbrace x=0 \rbrace$ and the second one is $\lbrace y=0 \rbrace$. Note that the number of prongs is respectively equal to $\kappa_{1}=2$ and $\kappa_{2}=3$ along these two divisors.
  \par
    \begin{figure}[htb]
   \centering
\begin{tikzpicture}[scale=1]

\begin{scope}
 \draw (-2,0) .. controls ++(-30:1) and ++(210:1) ..(2,0)coordinate[pos=.5] (b3);
  \draw (-1.8,.1) .. controls ++(-90:.5) and ++(70:.5) ..(-2.1,-2.1) coordinate (b1);
  \draw (1.8,.1) .. controls ++(-90:.5) and ++(110:.5) ..(2.1,-2.1) coordinate (b2);
\coordinate (a1) at (0,.8);\fill (a1) circle (2pt);
\coordinate (a2) at (.3,.3);\fill (a2) circle (2pt);
\coordinate (a3) at (-.3,.3);\fill (a3) circle (2pt);
\draw (a1) -- (a2);
\draw (a1) -- (a3);
\draw (a1) --++ (90:.2) node[above] {\tiny{$-5$}} ;
\draw (a2) --++ (-125:.2)node[below] {\tiny{$0$}};
\draw (a2) --++ (-55:.2) node[below] {\tiny{$2$}};
\draw (a3) --++ (-125:.2) node[below] {\tiny{$1$}};
\draw (a3) --++ (-55:.2)node[below] {\tiny{$0$}};
\coordinate (a1) at (-2.4,1.2);\fill (a1) circle (2pt);
\coordinate (a2) at (-2.2,.7);\fill (a2) circle (2pt);
\coordinate (a3) at (-2.6,0.2);\fill (a3) circle (2pt);
\draw (a1) -- (a2);
\draw (a1) -- (a3);
\draw (a1) --++ (90:.2) node[above] {\tiny{$-5$}} ;
\draw (a2) --++ (-125:.2)node[below] {\tiny{$0$}};
\draw (a2) --++ (-55:.2) node[below] {\tiny{$2$}};
\draw (a3) --++ (-125:.2) node[below] {\tiny{$1$}};
\draw (a3) --++ (-55:.2)node[below] {\tiny{$0$}};
\coordinate (a1) at (2.4,1.2);\fill (a1) circle (2pt);
\coordinate (a3) at (2.2,.7);\fill (a3) circle (2pt);
\coordinate (a2) at (2.6,0.2);\fill (a2) circle (2pt);
\draw (a1) -- (a2);
\draw (a1) -- (a3);
\draw (a1) --++ (90:.2) node[above] {\tiny{$-5$}} ;
\draw (a2) --++ (-125:.2)node[below] {\tiny{$0$}};
\draw (a2) --++ (-55:.2) node[below] {\tiny{$2$}};
\draw (a3) --++ (-125:.2) node[below] {\tiny{$1$}};
\draw (a3) --++ (-55:.2)node[below] {\tiny{$0$}};
\coordinate (a1) at (-2.8,-1.2);\fill (a1) circle (2pt);
\coordinate (a2) at (-3,-1.7);\fill (a2) circle (2pt);
\draw (a1) -- (a2);
\draw (a1) --++ (90:.2) node[above] {\tiny{$-5$}} ;
\draw (a1) --++ (0:.2)node[right] {\tiny{$0$}};
\draw (a1) --++ (180:.2) node[left] {\tiny{$2$}};
\draw (a2) --++ (-125:.2) node[below] {\tiny{$1$}};
\draw (a2) --++ (-55:.2)node[below] {\tiny{$0$}};
\coordinate (a1) at (2.8,-1.2);\fill (a1) circle (2pt);
\coordinate (a2) at (3,-1.7);\fill (a2) circle (2pt);
\draw (a1) -- (a2);
\draw (a1) --++ (90:.2) node[above] {\tiny{$-5$}} ;
\draw (a1) --++ (0:.2)node[right] {\tiny{$0$}};
\draw (a1) --++ (180:.2) node[left] {\tiny{$1$}};
\draw (a2) --++ (-125:.2) node[below] {\tiny{$2$}};
\draw (a2) --++ (-55:.2)node[below] {\tiny{$0$}};
\node[below] at (b1) {$\lbrace x=0 \rbrace$};
\node[below] at (b2) {$\lbrace y=0 \rbrace$};
\node[below] at (b3) {$\lbrace x=0 \rbrace \cap \lbrace y=0 \rbrace$};
  \end{scope}

\draw[->] (3.7,-1) -- (5.1,-1);
\begin{scope}[xshift=7cm]
  \draw (.3,.4) .. controls ++(-90:.2) and ++(80:.7) ..(-1,-2.1)coordinate (b1);
  \draw (-.3,.4) .. controls ++(-90:.2) and ++(100:.7) ..(1,-2.1)coordinate (b2);
\coordinate (a1) at (1,0.5);\fill (a1) circle (2pt);
\coordinate (a2) at (1.4,0);\fill (a2) circle (2pt);
\coordinate (a3) at (.6,0);\fill (a3) circle (2pt);
\draw (a1) -- (a2);
\draw (a1) -- (a3);
\draw (a1) --++ (90:.2) node[above] {\tiny{$-5$}} ;
\draw (a2) --++ (-125:.2)node[below] {\tiny{$0$}};
\draw (a2) --++ (-55:.2) node[below] {\tiny{$2$}};
\draw (a3) --++ (-125:.2) node[below] {\tiny{$1$}};
\draw (a3) --++ (-55:.2)node[below] {\tiny{$0$}};
\coordinate (a1) at (-1.2,-1.2);\fill (a1) circle (2pt);
\coordinate (a2) at (-1.4,-1.7);\fill (a2) circle (2pt);
\draw (a1) -- (a2);
\draw (a1) --++ (90:.2) node[above] {\tiny{$-5$}} ;
\draw (a1) --++ (0:.2)node[right] {\tiny{$0$}};
\draw (a1) --++ (180:.2) node[left] {\tiny{$2$}};
\draw (a2) --++ (-125:.2) node[below] {\tiny{$1$}};
\draw (a2) --++ (-55:.2)node[below] {\tiny{$0$}};
\coordinate (a1) at (1.2,-1.2);\fill (a1) circle (2pt);
\coordinate (a2) at (1.4,-1.7);\fill (a2) circle (2pt);
\draw (a1) -- (a2);
\draw (a1) --++ (90:.2) node[above] {\tiny{$-5$}} ;
\draw (a1) --++ (0:.2)node[right] {\tiny{$0$}};
\draw (a1) --++ (180:.2) node[left] {\tiny{$1$}};
\draw (a2) --++ (-125:.2) node[below] {\tiny{$2$}};
\draw (a2) --++ (-55:.2)node[below] {\tiny{$0$}};

\node[below] at (b1) {$\lbrace x=0 \rbrace$};
\node[below] at (b2) {$\lbrace y=0 \rbrace$};
  \end{scope}
 \end{tikzpicture}
\caption{The orderly blowup of the incidence variety compactification of $\PP\omoduli[0,5](2,1,0,0,-5)$ at a cherry point.}
\label{fig:cherry}
\end{figure}
  \par
Let us perform the orderly blowup in the neighborhood of the cherry point. We have
to blow up the ideal $(x^{2},y^{3})$ discussed in Example~\ref{ex:orbnotnormal} (see~\eqref{eq:non-normal} for the description).
We recall that the total space of this blowup is not normal, and that the exceptional locus of this orderly blowup is a $\PP^{1}$ which is parameterized by the ratio of the differentials on the two lower level components. This exceptional locus meets the strict transforms of the two divisors $\lbrace x=0\rbrace$ and $\lbrace y=0\rbrace$ in two distinct points. The complete picture of this orderly blowup is represented in Figure~\ref{fig:cherry}. Hence in this case the moduli space of \msds is obtained by normalizing the orderly blowup of the IVC, and this normalization is not the identity map. We note moreover that in this case all prong-matchings are equivalent, and thus this difficulty is not due to the choice of a prong-matching.

\smallskip
\par
We now illustrate the fact that the orderly blowup does not see the prong-matchings in general. Consider  the stratum $\PP\omoduli[0,5](1,1,0,0,-4)$. We will study the cherry where the marked points meet respectively the simple zeros, so that the number of prongs is $\kappa_1=\kappa_2=2$, and there are two non-equivalent prong-matchings on the generic cherry curve.
\par
The orderly blowup is given by the equation
  \[  \left\lbrace(x,y)\times [u,v] \in \CC^{2}\times \PP^{1} : x^{2}v-y^{2}u=0 \right\rbrace\,.\]
Note that this space has two locally irreducible branches meeting along the exceptional divisor. In the moduli space of \msds, the limits from these two branches will give non-equivalent prong-matchings for the limiting twisted differential. But  in the orderly blowup,  both branches converge to the same limit. Hence it is not possible to distinguish the prong-matchings from the orderly blowup. However, the normalization of the orderly blowup precisely separates these two branches corresponding to the two non-equivalent prong-matchings.
\end{exa}
\section{Extending the $\GLtwoRplus$-action to the boundary} \label{sec:SL2R}

The goal of this section is to modify the boundary of the moduli space of
\msds in such a way that the $\GLtwoRplus$-action on the open stratum extends
to this boundary, and such that the quotient of this compactification
by rescaling by positive real numbers is compact. The reason we need to
consider rescaling by $\RR_{>0}$ instead of by $\CC^{*}$ is essentially due to
the fact that $\GLtwoRplus$ does not act meaningfully on $\omoduli[g,n](\mu) /
\CC^{*}$ but it does act on $\omoduli[g,n](\mu) / \RR_{>0}$,
as ${\rm SO}_2(\RR)$ is not  contained in the center of of $\GLtwoRplus$
but $\RR_{>0}$ is.
The concept of \lw real blowup provides the setup for this purpose. A related
bordification, also a manifold with corners,
is also studied in an ongoing project of Smillie and Wu with the goal of
understanding the $\GLtwoRplus$-action near the boundary. While the constructions
have certain similarities, they apparently differ e.g.\ in the treatment
of horizontal nodes.
\par
\begin{thm} \label{thm:extendSL2}
The $\GLtwoRplus$-action on the moduli space $\omoduli[g,n](\mu)$ extends to a
continuous $\GLtwoRplus$-action on the \lw real blowup $\RBLMS$
\index[teich]{f060@$\RBLMS$! Level-wise real blowup of  $\LMS$}of the moduli
space of multi-scale differentials $\LMS$ along its total boundary divisor.
\end{thm}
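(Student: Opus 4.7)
The plan is to define the $\SLtwoR$-action level-by-level on representatives of $\RBLMS$, verify it descends to the equivalence relations, and then establish continuity using the plumbing charts together with the \lw real blowup construction. A point of $\RBLMS$ is represented by a marked \msd $(X,\bfz,\bfomega,\bfsigma,f)$ together with angular data $\boldsymbol{S} = (S_i) \in (S^1)^{L(\Gamma)}$ (encoding the $\arg$-part of the rescaling parameters that remains unquotiented by the real blowup, as in the defining equations~\eqref{eq:defhatB}). For $g \in \SLtwoR$, I would define $g\cdot(X,\bfz,\bfomega,\bfsigma,f,\boldsymbol{S})$ by applying the standard Teichm\"uller $\SLtwoR$-action on each level differential $\eta_{(i)}$ (giving a new complex structure on the level-$i$ components), rotating the prong-matchings $\bfsigma$ according to how $g$ transforms horizontal directions at the nodes, and updating $\boldsymbol{S}$ using the $\arg$-shift induced by $g$'s action on the rescaling parameters $s_i$ (with $\abs$-shifts already accounted for in the $\LMS$-part).

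Next, I would check well-definedness under the equivalence relations defining $\RBLMS$. The mapping class group action and horizontal twists in $\hTw$ commute with $\SLtwoR$ since they are topological. The vertical twists in $\vTw$ interact with $\SLtwoR$ through the prong-matching transformation, and compatibility follows from the construction, since acting by a fractional Dehn twist and then by $g$ gives the same result as acting by $g$ and then performing the corresponding fractional twist after appropriate rotation. Finally, the $\RR_{>0}^N$-quotient implicit in passing from the welded surfaces of prong-matched differentials to $\RBLMS$ is respected because $\SLtwoR$ acts level-by-level and commutes with positive real rescaling of each level differential.

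Finally, continuity requires the main technical work. Working in a plumbing chart from Theorem~\ref{thm:plumbing}, pulled back along the real blowup map, I would parameterize a neighborhood of a boundary point of $\RBLMS$ by the model differential data $(X_0, \bfeta_0)$, the rescaling parameters $\bft$, the horizontal smoothing parameters, and the angular data in $(S^1)^N$. The $\SLtwoR$-action on $(X_0,\bfeta_0)$ is the usual smooth Teichm\"uller action on each level stratum, and I would show that it modifies $\bft$ and the angular data through smooth rules expressible in terms of the perturbed period coordinates of Section~\ref{sec:perturbed}. Using the explicit convergence criterion in Definition~\ref{def:augmented_topology_def}, together with the fact that $\SLtwoR$ acts linearly on periods and continuously on \diffeogens (so items (1)--(5) of that definition are preserved under $g$), I would verify that applying $g$ transforms a convergent sequence into a convergent sequence.

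The main obstacle will be in the continuity step, specifically tracking the angular data $(S_i)$ across the boundary strata. In $\LMS$ this information is quotiented out by the $\arg$-part of the level rotation torus, which is precisely the obstruction to extending the $\SLtwoR$-action; the real blowup $\RBLMS$ restores this information in the $(S^1)^N$ fibers. Verifying that the $\SLtwoR$-action on these phases is continuous requires uniform control of the plumbing parameters and the prong-matching rotations as one approaches a deeper boundary stratum, and in particular ensuring that the limits of the phase shifts through varying plumbing fixtures match those coming from the direct Teichm\"uller action on the welded boundary surface.
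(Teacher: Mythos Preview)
Your definition of the action agrees with the paper's, and your idea of verifying continuity through the sequential criterion of Definition~\ref{def:augmented_topology_def} is the right one. However, the paper's execution is considerably more direct than what you outline, and your proposal contains detours that would be hard to carry out.

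The paper does not track how $\SLtwoR$ acts on the plumbing parameters $\bft$ or on perturbed period coordinates at all. Since $\SLtwoR$ is not holomorphic, expressing its action in those complex charts would be messy and is unnecessary. Instead, given a sequence $\widehat{p}_n \to \widehat{p}$ in $\RBLMS$, the paper splits convergence into (a) convergence of the underlying points in $\ODehn$, witnessed by \diffeogens $g_n$ as in Definition~\ref{def:augmented_topology_def}, and (b) convergence of the $S^1$-valued blowup data $T_i, F_e$. For (a), the single observation is that the conjugates $A \cdot g_n \cdot A^{-1}$ serve as witnessing \diffeogens for $A \cdot \widehat{p}_n \to A \cdot \widehat{p}$; these are a priori defined only off the seams, but extend continuously across each seam because $\SLtwoR$ acts continuously on the circle $S^1$ of directions. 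For (b), convergence of $T_i(A\cdot\widehat{p}_n)$ and $F_e(A\cdot\widehat{p}_n)$ is immediate from the same continuity of $\SLtwoR$ on $S^1$.

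So what you identify as the ``main obstacle'' --- tracking the angular data uniformly across boundary strata --- is disposed of in one line in the paper, precisely because the angular data lives in $S^1$ and $\SLtwoR$ acts continuously there. Your lengthy well-definedness discussion is also subsumed in the paper by Theorem~\ref{thm:points_in_RBLMS}, which identifies points of $\RBLMS$ with real \msds; once the action is defined on those, well-definedness is automatic. Your approach would work, but the conjugation trick $A \cdot g_n \cdot A^{-1}$ is the efficient route.
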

\par
In comparison to Section~\ref{sec:rob}, note that $\RBLMS$
agrees with $\overline\LMS$ (where the long upper bar refers to the \lw real
blowup) because the generic fiber is smooth, and there are no persistent nodes.
\par
The basic objects parameterized by $\RBLMS$ are \emph{real} \msds,
replacing \msds. The definition is very similar to Definition~\ref{def:one_msd},
simply replacing the equivalence relation to be by real scaling.
\par
\begin{df}
  \label{def:one_rsd}
A {\em real \msd of type $\mu$} on a stable curve~$X$ is
\begin{itemize}
  \item[(i)] a full order $\cleq$ on the dual graph~$\Gamma$ of~$X$,
  \item[(ii)] a differential $\omega_{(i)}$ on each level $X_{(i)}$, such that
the collection of these differentials satisfies the properties of a twisted
differential of type~$\mu$ compatible with $\cleq$, and
\item[(iii)]  a \prma $\sigma=(\sigma_e)$ where $e$ runs through all vertical edges
of~$\Gamma$.
\end{itemize}
Two real \msds are considered equivalent if they differ by rescaling at each
level (except the top level) by multiplication by a positive real number.
\end{df}
\par
To properly work with families of such differentials, we have to leave the category
of complex spaces. Recall that {\em manifolds with corners} are topological spaces
locally modeled on $[0,\infty)^k \times \RR^{n-k}$. These spaces form a category
(with a notion of smooth maps, see \cite{joyce} for a recent
account with definitions and caveats, but we will not detail here). Since
 $\LMS$ already has non-trivial orbifold structures, we in fact have to work with
{\em orbifolds with corners}, where the local orbifold charts are manifolds with
corners and where the local group actions are smooth maps preserving the boundary.
\par
\begin{thm} \label{thm:points_in_RBLMS}
The \lw real blowup $\RBLMS$ is an orbifold with corners. Its points
correspond bijectively to isomorphism classes of real \msds.
The orbifold structure is exclusively due to automorphisms of
flat surfaces, as for $\omodulin$.
\end{thm}
\par
The last statement says that $\RBLMS$ resolves the quotient singularites
by the groups $K_\Lambda = \Tw/\sTw$ of $\LMS$:
\par
Given Theorem~\ref{thm:points_in_RBLMS}, we define
the action of $A \in \GLtwoRplus$ on $\RBLMS$ by
\bes
  A \cdot \left((X_{(i)}), (\omega_{(i)}), \sigma \right)
  \,\= \, \left((A \cdot (X_{(i)}, \omega_{(i)})), A\cdot \sigma\right)\,,
\ees
where $i\in L^\bullet(\Gamma)$. The
first argument is the usual $\GLtwoRplus$-action on the components
of the stable curve. For the second argument we use the action of~$A$ on
the set of directions and note that a matching of horizontal directions
(for $\bfomega$) gives a matching of directions of slope $A \cdot \left(
\begin{smallmatrix} 1 \\ 0 \end{smallmatrix}\right)$ (for $A \cdot \bfomega$) that can be
reconverted into a matching of horizontal directions.
\par
The notion of families of real \msds (over bases being orbifolds with
corners) can now be phrased as in Section~\ref{sec:famnew}, with the equivalence relation
changed from  $\Tprong[\Gamma](\calO_{B,p})$-action to \lw $\RR_{>0}$-multiplication,
as above. We will now essentially construct a universal object.
\par
All the statements in the above theorems are local, since continuity of the
$\GLtwoRplus$-action
can also be probed by a neighborhood of the identity element. We thus pick a
point~$p \in \LMS$ and work in a neighborhood~$U$ that will be shrunk for convenience,
e.g.\ to apply Proposition~\ref{prop:markliftedfam} and to find an enhanced
multicurve~$\eL$ with $\Gamma(\eL) = \Gamma_p$ and provide the restriction of the
universal family over (the orbifold chart of)~$U$ with a $\eL$-marking. We may thus
view $U \subset \ODehn$.
\par
We provide the pullback to the \lw real blowup~$\widehat{U}$ of the universal
family over~$U$ with real \msds. (Note that here as in
Definition~\ref{def:one_rsd}  above, real \msds live on stable
complex curves.) The smooth (differentiable) family $\widehat{X} \to \widehat{U}$
constructed in Theorem~\ref{thm:ORBL} is tacitly used for the marking, but we do not
treat the issue whether differentials can be pulled back there. Let $t_i$ be
the rescaling parameters of the \msd $\bfomega = (\omega_{(i)})_{i \in L^\bullet(\Lambda)}$
on~$U$, and let~$T_i$ and~$F_e$ be the $S^1$-valued functions used in the blowup
construction (Section~\ref{sec:robconstruction}).
\par
\begin{proof}[Proof of Theorem~\ref{thm:points_in_RBLMS}]
The second statement is an immediate consequence of Theorem~\ref{thm:coarseMS}
about the points in $\LMS$ and of Proposition~\ref{prop:ORBL}~(ii).
\par
For the first statement we may use charts of the \lw real blowup of $\ODehns$ as
orbifold charts. There, the boundary is a normal crossing divisor with one
component $D_i = \{t_i =0\}$ for each level (but the top level). The real blowup
of a normal crossing divisor is then known to be a manifold with corners
(see e.g.\ \cite{acgh2}, Section~X.9, in particular page~150),
as in the last statement of Theorem~\ref{thm:ORBL}.
\par
The fibers of the \lw real blowup are a torus consisting of $S^1$ for
each level below zero, see Proposition~\ref{prop:ORBL}. The group
$K_\Lambda = \Tw/\sTw$ acts freely on this torus. This proves the last
statement.
\end{proof}
\par
\begin{proof}[Proof of Theorem~\ref{thm:extendSL2}]
It remains to justify the continuity of the $\GLtwoRplus$-action. Consider a
sequence~$\{\widehat{p}_n\}$ converging to $\widehat{p}$ in $\widehat{U} \subset
\widehat{\ODehn}$. By definition of the topology, this is equivalent (with notations
as in Section~\ref{sec:rob})
to the convergence of the image points $\varphi_U(\widehat{p}_n)$ to $
\varphi_U(\widehat{p})$ in $ U \subset \ODehn$ and to the convergence of
$F_e(\widehat{p}_n) $ to $F_e(\widehat{p})$ and $T_i(\widehat{p}_n)
$ to $ T_i(\widehat{p})$. In turn, the convergence in $\ODehn$ is manifested
by diffeomorphism~$g_n$ satisfying Definition~\ref{def:augmented_topology_def} with
the compatibility with markings relaxed up to elements in $\Tw[\Lambda]$. We aim
to justify the convergence of the sequence of image points $\varphi_U(A \cdot
\widehat{p}_n) $ to $ \varphi_U(A \cdot \widehat{p}_n) \in U \subset \LMS$.
Since $T_i(A \cdot \widehat{p}_n) $ converges to $ T_i(A \cdot \widehat{p})$,
and similarly for~$F_e$, by continuity of the $\GLtwoRplus$-action on~$S^1$,
this will conclude the
proof.
\par
For our aim, we use the maps $A \cdot g_n \cdot A^{-1}$, where $A \cdot (\,)$ denotes
the induced $\GLtwoRplus$-action on pointed flat surfaces. This map is well-defined
away from the seams and we observe that it can be extended to a
differentiable map across each seam using the action of $\GLtwoRplus$
on the seam identified with~$S^1$.
\end{proof}
\par

\section*{Index of notation}

In this section we summarize the notations thematically. In each theme we mainly give the notations in chronological order, omitting the introduction.

\indexprologue{\phantom{bla}\ }
\printindex[surf]

\indexprologue{\phantom{bla}\ }

\printindex[graph]
\indexprologue{Most of the following spaces have a projectivized variant which is indicated with the symbol $\PP$.}

\printindex[teich]
\indexprologue{\phantom{bla}\ }
\printindex[family]
\indexprologue{\phantom{bla}\ }

\printindex[plumb]
\indexprologue{The groups below usually have  ``extended'' analogues
which we denote by a superscript~$\bullet$.}
\printindex[twist]
\indexprologue{\phantom{bla}\ }
\printindex[other]

\par
\printbibliography
\end{document} 
